\newcommand{\R}{\mathbb{R}}
\newcommand{\eqdef}{\stackrel{\text{def}}{=}}
\def\<#1,#2>{\left\langle #1,#2\right\rangle}
\newcommand{\argmin}{\mathop{\arg\!\min}}
\newcommand{\argmax}{\mathop{\arg\!\max}}
\definecolor{shadecolor}{gray}{0.9}
\declaretheoremstyle[
headfont=\normalfont\bfseries,
notefont=\mdseries, notebraces={(}{)},
bodyfont=\normalfont,
postheadspace=0.5em,
spaceabove=1pt,
mdframed={
  skipabove=8pt,
  skipbelow=8pt,
  hidealllines=true,
  backgroundcolor={shadecolor},
  innerleftmargin=4pt,
  innerrightmargin=4pt}
]{shaded}
\declaretheorem[style=shaded,within=section]{definition}
\declaretheorem[style=shaded,sibling=definition]{theorem}
\declaretheorem[style=shaded,sibling=definition]{corollary}
\declaretheorem[style=shaded,sibling=definition]{lemma}
\newcommand{\cW}{{\cal W}}
\newcommand{\EE}{\mathbf{E}}
\def\R{\mathbb{R}}
\def\R{\mathbb R}
\def\EE{\mathbb E}
\def\PP{\mathbb P}
\def\e{\varepsilon}
\def\la{\langle}
\def\ra{\rangle}
\def\vp{\varphi}
\def\y{\mathbf{y}}
\def\a{\mathbf{a}}
\def\x{\mathbf{x}}
\def\Rbf{\mathbf{R}}
\def\one{{\mathbf 1}}
\def\tL{\tilde{L}}
\def\tg{\tilde{g}}
\def\tnabla{\tilde{\nabla}}
\def\tx{\tilde{x}}
\def\ty{\tilde{y}}
\def\Bxi{\boldsymbol{\xi}}
\def\Bxi{\boldsymbol{\xi}}
\def\bld{\boldsymbol}
 \def\eg#1{{#1}} 
\title{Optimal Decentralized Distributed Algorithms for Stochastic Convex Optimization}
\author[1,2,3]{Eduard Gorbunov}
\author[2,3,4]{Darina Dvinskikh}
\author[1,3,5]{Alexander Gasnikov}
\affil[1]{Moscow Institute of Physics and Technology, Russia}
\affil[2]{Sirius University of Science and Technology, Russia}
\affil[3]{Institute for Information Transmission Problems RAS, Russia}
\affil[4]{Weierstrass Institute for Applied Analysis and Stochastics, Germany}
\affil[5]{National Research University Higher School of Economics, Russia}
\date{February 16, 2020\thanks{The content of this version is the same as in the version from February 16, 2020. The changes are only in the restructuring of the paper.}}
\begin{document}

\maketitle

\begin{abstract}%
  We consider stochastic convex optimization problems with affine constraints and develop several methods using either primal or dual approach to solve it. In the primal case, we use a special penalization technique to make the initial problem more convenient for using optimization methods. We propose algorithms to solve it based on Similar Triangles Method \cite{gasnikov2018universal, nesterov2018lectures} with Inexact Proximal Step for the convex smooth and strongly convex smooth objective functions and methods based on Gradient Sliding algorithm \cite{lan2012optimal} to solve the same problems in the non-smooth case. We prove the convergence guarantees in the smooth convex case with deterministic first-order oracle.

We propose and analyze three novel methods to handle stochastic convex optimization problems with affine constraints: {\tt SPDSTM}, {\tt SSTM{\_}sc} and {\tt R-RRMA-AC-SA$^2$}. We develop convergence analysis for these methods for the unbiased (for {\tt R-RRMA-AC-SA$^2$}) and biased (for {\tt SPDSTM} and {\tt SSTM{\_}sc}) stochastic oracles.

Finally, we apply all aforementioned results and approaches to solve the decentralized distributed optimization problem and discuss the optimality of the obtained results in terms of communication rounds and the number of oracle calls per node.
\end{abstract}

{\tableofcontents}

\section{Introduction}\label{sec:intro}
In this paper, we are interested in the convex optimization problem
\begin{equation}
     \min\limits_{x\in Q\subseteq \R^n}f(x),\label{eq:main_problem}
\end{equation}
where $f$ is a convex function and $Q$ is closed and convex subset of $\R^n$. More precisely, we study particular case of \eqref{eq:main_problem} when the objective function $f$ could be represented as a mathematical expectation
\begin{equation}
    f(x) = \EE_\xi\left[f(x,\xi)\right]\label{eq:objectve_expectation},
\end{equation}
where $\xi$ is a random variable. Problems of this type play a central role in a bunch of applications of machine learning \cite{shalev2014understanding,shapiro2014lectures} and mathematical statistics \cite{spokoiny2012parametric}. Typically $x$ represents the feature vector defining the model, only samples of $\xi$ are available and the distribution of $\xi$ is unknown. One possible way to minimize generalization error \eqref{eq:objectve_expectation} is to solve empirical risk minimization or finite-sum minimization problem instead, i.e.\ solve \eqref{eq:main_problem} with the objective
\begin{equation}
    \hat f(x) = \frac{1}{m}\sum\limits_{i=1}^m f(x,\xi_i),\label{eq:erm_problem}
\end{equation}
where $m$ should be sufficiently large to approximate the initial problem. \eg{Indeed, if $f(x,\xi)$ is convex and $M$-Lipschitz continuous for all $\xi$, $Q$ has finite diameter $D$ and $\hat x = \argmin_{x\in Q}\hat f(x)$, then (see \cite{cesa-bianchi2002generalization,shalev2009stochastic}) with probability at least $1-\beta$}
\eg{\begin{equation}
    f(\hat x) - \min\limits_{x\in Q} f(x) = O\left(\sqrt{\frac{M^2D^2n\ln(m)\ln\left(\nicefrac{n}{\beta}\right)}{m}}\right),\label{eq:convex_erm_argmin_property}
\end{equation}}
\eg{and if additionally $f(x,\xi)$ is $\mu$-strongly convex for all $\xi$, then (see \cite{feldman2019high}) with probability at least $1-\beta$}
\eg{\begin{equation}
    f(\hat x) - \min\limits_{x\in Q} f(x) = O\left(\frac{M^2D^2\ln(m)\ln\left(\nicefrac{m}{\beta}\right)}{\mu m} + \sqrt{\frac{M^2D^2\ln\left(\nicefrac{1}{\beta}\right)}{m}}\right).\label{eq:str_convex_erm_argmin_property}
\end{equation}}
\eg{In other words, to solve \eqref{eq:main_problem}+\eqref{eq:objectve_expectation} with $\varepsilon$ functional accuracy via minimization of empirical risk \eqref{eq:erm_problem} it is needed to have $m = \widetilde{\Omega}\left(\nicefrac{M^2D^2n}{\varepsilon^2}\right)$ in the convex case and $m = \widetilde{\Omega}\left(\max\left\{\nicefrac{M^2D^2}{\mu\varepsilon},\nicefrac{M^2D^2}{\varepsilon^2}\right\}\right)$ in the $\mu$-strongly convex case where $\widetilde{\Omega}(\cdot)$ hides a constant factor, a logarithmic factor of $\nicefrac{1}{\beta}$ and a polylogarithmic factor of $\nicefrac{1}{\e}$.}

Stochastic first-order methods such as Stochastic Gradient Descent ({\tt SGD}) \cite{gower2019sgd,nemirovski2009robust,nguyen2018sgd,RobbinsMonro:1951,vaswani2019fast} or its accelerated variants like {\tt AC-SA} \cite{lan2012optimal} or Similar Triangles Method ({\tt STM}) \cite{dvurechensky2017randomized,gasnikov2018universal, nesterov2018lectures} are very popular choice to solve either \eqref{eq:main_problem}+\eqref{eq:objectve_expectation} or \eqref{eq:main_problem}+\eqref{eq:erm_problem}. In contrast with their cheap iterations in terms of computational cost, these methods converge only to the neighbourhood of the solution, i.e.\ to the ball centered at the optimality and radius proportional to the standard deviation of the stochastic estimator. For the particular case of finite-sum minimization problem one can solve this issue via variance-reduction trick \cite{defazio2014saga, gorbunov2019unified, johnson2013accelerating, schmidt2017minimizing} and its accelerated variants \cite{allen2016katyusha,zhou2018direct,zhou2018simple}. Unfortunately, this technique is not applicable in general for the problems of type \eqref{eq:main_problem}+\eqref{eq:objectve_expectation}. Another possible way to reduce the variance is mini-batching. When the objective function is $L$-smooth one can accelerate the computations of batches using parallelization \cite{devolder2013exactness,dvurechensky2016stochastic,gasnikov2018universal,ghadimi2013stochastic}, and it is one of the examples where centralized distributed optimization appears naturally \cite{bertsekas1989parallel}.

In other words, in some situations, e.g.\ when the number of samples $m$ is too big, it is preferable in practice to split the data into $q$ blocks, assign each block to the separate worker, e.g.\ processor, and organize computation of the gradient or stochastic gradient in the parallel or distributed manner. \eg{Moreover, in view of \eqref{eq:convex_erm_argmin_property}-\eqref{eq:str_convex_erm_argmin_property} sometimes to solve an expectation minimization problem it is needed to have such a big number of samples that corresponding information (e.g.\ some objects like images, videos and etc.) cannot be stored on $1$ machine because of the memory limitations (see Section~\ref{sec:wasserstein} for the detailed example of such a situation).} Then, we can rewrite the objective function in the following form
\begin{equation}
    f(x) = \frac{1}{q}\sum\limits_{i=1}^q f_i(x),\quad f_i(x) = \EE_{\xi_i}\left[f(x,\xi_i)\right] \text{ or } f_i(x) = \frac{1}{s_i}\sum\limits_{j=1}^{s_i}f(x,\xi_{ij}).\label{eq:finite_sum_minimization}
\end{equation}
Here $f_i$ corresponds to the loss on the $i$-th data block and could be also represented as an expectation or a finite sum. So, the general idea for parallel optimization is to compute gradients or stochastic gradients by each worker, then aggregate the results by the master node and broadcast new iterate or needed information to obtain the new iterate back to the workers.

The visual simplicity of the parallel scheme hides synchronization drawback and high requirement to master node \cite{scaman2017optimal}. The big line of works is aimed to solve this issue via periodical synchronization \cite{khaled2019better, khaled2019first, stich2018local, yu2019linear}, error-compensation \cite{karimireddy2019error, stich2018sparsified}, quantization \cite{alistarh2017qsgd, horvath2019natural, horvath2019stochastic, mishchenko2019distributed, wen2017terngrad} or combination of these techniques \cite{basu2019qsparse,liu2019double}. 

However, in this paper we mainly focus on another approach to deal with aforementioned drawbacks~--- decentralized distributed optimization \cite{bertsekas1989parallel,kibardin1979decomposition}. It is based on two basic principles: every node communicates only with its neighbours and communications are performed simultaneously. Moreover, this architecture is more robust, e.g.\ it can be applied to time-varying (wireless) communication networks \cite{rogozin2019projected}.

\subsection{Contributions}\label{sec:contributions}
One can consider this paper as a continuation of work \cite{dvinskikh2019decentralized} where authors mentioned the key ideas that form a basis of this work. However, in this paper we provide formal proofs of some results announced in \cite{dvinskikh2019decentralized} together with couple of new results that were not mentioned. Our contributions include:
\begin{itemize}
    \item \textbf{Accelerated primal-dual method with biased stochastic dual oracle for convex and smooth dual problem.} We extent the result from the recent work \cite{dvinskikh2019dual} to the case when we have an access to the biased stochastic gradients. We emphasize that our analysis works for the minimization on whole space and we do not assume that the sequence generated by the method is bounded. It creates extra difficulties in the analysis, but we handle it via advanced technique for estimating recurrences (see also \cite{dvinskikh2019dual,gorbunov2018accelerated}).
    \item \textbf{Two accelerated methods with stochastic dual oracle for strongly convex and smooth dual problem.} For the case when the dual function is strongly convex with Lipschitz continuous gradient we analyze two methods: one is {\tt R-RRMA-AC-SA$^2$} and another is {\tt SSTM{\_}sc}. The first one was described in \cite{dvinskikh2019dual}, but in this paper we formally state the method and prove high probability bounds for its convergence rate. The second method is also well-known, but to the best of our knowledge there were no convergence results for it in such generality that we handle. That is, we consider {\tt SSTM{\_}sc} with \textit{biased} stochastic oracle applied to the \textit{unconstrained} smooth and strongly convex minimization problem and prove high probability bounds for its convergence rate together with the bound for the noise level. As for the convex case, we also do not assume that the sequence generated by the method is bounded. Then we show how it can be applied to solve stochastic optimization problem with affine constraints using dual oracle.
    \item \textbf{Analysis of {\tt STM} applied to convex smooth minimization problem with smooth convex composite term and inexact proximal step for unconstrained minimization.} Surprisingly, but before this paper there were no analysis for {\tt STM} in this case. The closest work to ours in this topic is \cite{stonyakin2019gradient}, but in \cite{stonyakin2019gradient} authors considered optimization problems on bounded sets.
\end{itemize}

\subsection{Outline of the Paper}\label{sec:outline}

In Section~\ref{sec:notation}, we introduce the notation and main definitions used in the paper. Then, we discuss optimal bounds for stochastic convex optimization in Section~\ref{sec:stoch_opt}. In Section~\ref{sec:primal}, we present the stochastic optimization problems with affine constraints and the state-of-the-art methods that solve the specific penalized unconstrained problem instead of the original one together with the novel approach which we call {\tt STP{\_}IPS} that aims to solve convex smooth unconstrained minimization problems with the smooth convex composite term and inexact proximal step. Next, we consider the same type of problems but using a dual approach and develop three different accelerated methods for this case together with the convergence analysis for each of them (Section~\ref{sec:dual}). The first one is Stochastic Primal-Dual {\tt STM} ({\tt SPDSTM}), and it uses a biased stochastic dual oracle to solve primal and dual problems simultaneously for the case when the primal problem is $\mu$-strongly convex and Lipschitz continuous on some ball centered at zero. The next two methods are {\tt R-RRMA-AC-SA$^2$} and {\tt SSTM{\_}sc}, and they solve the same problem when the primal functional is additionally $L$-smooth using a stochastic dual oracle. The difference between them is that {\tt R-RRMA-AC-SA$^2$} uses special restarts technique and works with unbiased stochastic oracle, while {\tt SSTM{\_}sc} is directly accelerated and able to work with biased stochastic gradients. Then we show how to apply the results of the previous sections to the decentralized distributed optimization problems and derive the bounds for the proposed methods in Section~\ref{sec:distributed_opt}. Finally, in Section~\ref{sec:discussion}, we compare bounds for the convergence rate in parallel and decentralized optimization, discuss the optimality of the obtained results, and present possible directions for future work. To illustrate how our theory works, we consider the problem of calculation of population Wasserstein barycenter in Section~\ref{sec:wasserstein}. We leave long proofs, auxiliary and technical results, and the whole section about {\tt STP{\_}IPS} in the appendix.

\section{Notation and Definitions}\label{sec:notation}
To denote standard inner product between two vectors $x,y\in\R^n$ we use  $\< x, y >\eqdef \sum_{i=1}^n x_iy_i$, where $x_i$ is $i$-th coordinate of vector $x$, $i=1,\ldots,n$. Standard Euclidean norm of vector $x\in\R^n$ is defined as $\|x\|_2 \eqdef \sqrt{\< x, x >}$. By $\lambda_{\max}(A)$ and $\lambda_{\min}^+(A)$ we mean maximal and minimal positive eigenvalues of matrix $A\in\R^{n\times n}$ respectively and we use $\chi(A) \eqdef \nicefrac{\lambda_{\max}(A)}{\lambda_{\min}^+(A)}$ to denote condition number of $A$. To define vector of ones we use $\one_n \eqdef (1,\ldots,1)^\top \in \R^n$ and omit the subscript $n$ when one can recover the dimension from the context. Moreover, we use $\widetilde{O}(\cdot)$, $\widetilde{\Omega}(\cdot)$ and $\widetilde{\Theta}(\cdot)$ that define exactly the same as $O(\cdot)$, $\Omega(\cdot)$ and $\Theta(\cdot)$ but besides constants factors they can hide polylogarithmical factors of the parameters of the method or the problem. Conditional mathematical expectation with respect to all randomness coming from random variable $\xi$ is denoted in our paper by $\EE_\xi[\cdot]$. We use $B_r(y)\subseteq \R^n$ to denote Euclidean ball centered at $y\in \R^n$ with radius $r$: $B_r(y) \eqdef \left\{x \in \R^n\mid \|x-y\|_2 \le r\right\}$. The Kronecker product of two matrices $A\in\R^{m\times m}$ with elements $A_{ij}$, $i,j = 1,\ldots, m$ and $B\in\R^{n\times n}$ is such $mn \times mn$ matrix $C \eqdef A \otimes B$ that
\begin{equation}
    C = \begin{bmatrix}
    A_{11}B & A_{12}B & A_{13}B & \dots  & A_{1m}B \\
    A_{21}B & A_{22}B & A_{23}B & \dots  & A_{2m}B \\
    \vdots & \vdots & \vdots & \ddots & \vdots \\
    A_{m1}B & A_{m2}B & A_{m3}B & \dots  & A_{mm}B
\end{bmatrix}.\label{eq:kronecker_product_def}
\end{equation}
By $I_n$ we denote $n\times n$ identity matrix and omit the subscript when the size of the matrix is obvious from the context.

Below we list some classical definitions for optimization (see, for example, \cite{nesterov2004introduction} for the details).
\begin{definition}[$L$-smoothness]
    Function $f$ is called $L$-smooth in $Q\subseteq \R^n$ with $L > 0$ when it is differentiable and its gradient is $L$-Lipschitz continuous in $Q$, i.e.\ 
    \begin{equation}
        \|\nabla f(x) - \nabla f(y)\|_2 \le L\|x - y\|_2,\quad \forall x,y\in Q.\label{eq:L-smoothness_def}
    \end{equation}
\end{definition}
\begin{definition}[$\mu$-strong convexity]
    Differentiable function $f$ is called $\mu$-strongly convex in $Q\subseteq \R^n$ with $\mu \ge 0$ if 
    \begin{equation}
        f(x) \ge f(y) + \< \nabla f(y), x-y > + \frac{\mu}{2}\|x-y\|_2^2,\quad \forall x,y\in Q.\label{eq:mu_strong_convexity_def}
    \end{equation}
\end{definition}
If $\mu > 0$ then there exists unique minimizer of $f$ on $Q$ which we denote by $x^*$, except the situations when we explicitly specify $x^*$ in a different way. In the case when $\mu = 0$, i.e.\ $f$ is convex, we assume that there exists at least one minimizer $x^*$ of $f$ on $Q$ and in the case when the set of minimizers of $f$ on the set $Q$ is not a singleton we choose $x^*$ to be either arbitrary or closest to the starting point of a method. When we consider some optimization method with a starting point $x^0$ we use $R$ or $R_0$ to denote the Euclidean distance between $x^0$ and $x^*$.

\section{Optimal Bounds for Stochastic Convex Optimization}\label{sec:stoch_opt}
In this section our goal is to present the overview of the optimal methods and their convergence rates for the stochastic convex optimization problem \eqref{eq:main_problem}+\eqref{eq:objectve_expectation} in the case when the gradient of the objective function is available only through (possibly biased) stochastic estimators with ``light tails'' or, equivalently, with $\sigma^2$-sub-Gaussian variance. That is, we are interested in the situation when for an arbitrary $x\in Q$ one can get such stochastic gradient $\nabla f(x,\xi)$ that
\begin{eqnarray}
    \left\|\EE_\xi\left[\nabla f(x,\xi)\right] - \nabla f(x)\right\|_2 &\le& \delta, \label{eq:primal_bias_in_stoch_grad}\\
    \EE_\xi\left[\exp\left(\frac{\left\|\nabla f(x,\xi) - \EE_\xi\left[\nabla f(x,\xi)\right]\right\|_2^2}{\sigma^2}\right)\right] &\le& \exp(1) \label{eq:primal_light_tails_stoch_grad},
\end{eqnarray}
where $\delta \ge 0$ and $\sigma \ge 0$. If $\sigma = 0$, let us suppose that $\nabla f(x,\xi) = \EE_\xi\left[\nabla f(x,\xi)\right]$ almost surely in $\xi$. When $\sigma = \delta = 0$ we get that $\nabla f(x,\xi) = \nabla f(x)$ almost surely in $\xi$ which is equivalent to the deterministic first-order oracle. For clarity, we start with this simplest case of stochastic oracle and provide an overview of the state-of-the-art results for this particular case in Table~\ref{tab:deterministic_bounds}. Note that for the methods mentioned in the table number of oracle calls and number of iterations are identical. In the case when the gradient of $f$ is bounded it is often enough to assume this only in some ball centered at the optimality point $x^*$ with radius proportional to $R$ \cite{gasnikov2018universalgrad,nesterov2009primal-dual,nesterov2018lectures}.

\begin{table}[t!]
    \centering
    \begin{tabular}{|c|c|c|c|}
         \hline
         Assumptions on $f$ & Method & Citation & \# of oracle calls \\
         \hline
         \makecell{ $\mu${-strongly convex,}\\ $L$-smooth} & {\tt R-STM} & \makecell{\cite{gasnikov2018universal,nesterov2018lectures}} & $O\left(\sqrt{\frac{L}{\mu}}\ln\left(\frac{\mu R^2}{\e}\right)\right)$ \\
         \hline
         $L$-smooth & {\tt STM} & \makecell{\cite{gasnikov2018universal,nesterov2018lectures}} & $O\left(\sqrt{\frac{LR^2}{\e}}\right)$\\
         \hline
         \makecell{ $\mu${-strongly convex,}\\ $\|\nabla f(x)\|_2 \le M$}& {\tt MD} & \makecell{\cite{ben-tal2001lectures,juditsky2012first-order}} & $O\left(\frac{M^2}{\mu\e}\right)$ \\
         \hline
         $\|\nabla f(x)\|_2 \le M$ & {\tt MD} & \makecell{\cite{ben-tal2001lectures,juditsky2012first-order}} & $O\left(\frac{M^2R^2}{\e^2}\right)$ \\
         \hline
    \end{tabular}
    \caption{Optimal number $N$ of deterministic first-order oracle calls in order to get such a point $x^N$ that $f(x^N) - f(x^*) \le \varepsilon$. First column contains assumptions on $f$ in addition to the convexity. {\tt MD} states for Mirror Descent.}
    \label{tab:deterministic_bounds}
\end{table}

In this paper we are mainly focus on smooth optimization problems and use different modifications of Similar Triangles Method ({\tt STM}) since it gives optimal rates in this case and it is easy enough to analyze at least in the deterministic case. For convenience, we state the method in this section as Algorithm~\ref{Alg:STM}. 
\begin{algorithm}[h]
\caption{Similar Triangles Methods ({\tt STM}), the case when $Q = \R^n$}
\label{Alg:STM}   
\begin{algorithmic}[1]
\REQUIRE $\tilde{x}^0=z^0=x^0$, number of iterations $N$, $\alpha_0 = A_0=0$
\FOR{$k=0,\dots, N$}
\STATE Set $\alpha_{k+1} = \nicefrac{(1+A_{k}\mu)}{2L} + \sqrt{\nicefrac{(1+A_{k}\mu)}{4L^2}+\nicefrac{A_k\left(1+A_k\mu\right)}{L}}$, $A_{k+1} = A_k + \alpha_{k+1}$
\STATE $\tilde{x}^{k+1} = \nicefrac{(A_kx^k+\alpha_{k+1}z^k)}{A_{k+1}}$
\STATE $z^{k+1} = z^k - \left( \nabla f(\tilde{x}^{k+1}) - \mu \tilde{x}^{k+1}\right)\nicefrac{\alpha_{k+1}}{(1+\mu)}$
\STATE $x^{k+1}=\nicefrac{(A_kx^k+\alpha_{k+1}z^{k+1})}{A_{k+1}}$
\ENDFOR
\ENSURE    $ x^N$ 
\end{algorithmic}
\end{algorithm}
Interestingly, if we run {\tt STM} with $\mu > 0$ to solve \eqref{eq:main_problem} with $\mu$-strongly convex and $L$-smooth objective, it will return $x^N$ such that $f(x^N) - f(x^*) \le \e$ after $N = O\left(\sqrt{\nicefrac{L}{\mu}}\ln\left(\nicefrac{L R^2}{\e}\right)\right)$ iterations which is not optimal, see\footnote{In some places we put references not to the first work where this bound was shown but to the works where this complexity bound was shown for either more convenient or more relevant to our work method.} Table~\ref{tab:deterministic_bounds}. To match the optimal bound in this case one should use classical restart of {\tt STM} which is run with $\mu = 0$ \cite{gasnikov2018universal}.

We notice that another highly widespread in machine learning applications type of problems is regularized or composite optimization problem
\begin{equation}
	\min_{x\in Q}f(x) + h(x), \label{eq:regularized_problem_intro}
\end{equation}
where $h$ is a convex proximable function. For this case {\tt STM} can be generalized via modifying the update rule in the following way \cite{gasnikov2018universal, nesterov2018lectures}:
\begin{equation}
	z^{k+1} = \argmin\limits_{z\in Q}\left\{\frac{1}{2}\|z - z^0\|_2^2 + \sum\limits_{l=0}^{k+1}\alpha_l\left(\<\nabla f(\tx^l), z - \tx^l> + h(z) + \frac{\mu}{2}\|z - \tx^l\|_2^2\right)\right\}.\label{eq:stm_prox_step_general_case}
\end{equation}
We address such problems with $L_h$-smooth composite term in the Appendix, see Section~\ref{sec:stp_ips} for the details.

Next, we go back to the problem \eqref{eq:main_problem}+\eqref{eq:objectve_expectation} and consider more general case when $\delta = 0$ and $\sigma^2 > 0$. In this case one can construct unbiased estimator
\begin{equation*}
    \nabla f(x,\{\xi_i\}_{i=1}^r) = \frac{1}{r}\sum\limits_{i=1}^r\nabla f(x,\xi_i),
\end{equation*}
where $\xi_1,\ldots,\xi_r$ are i.i.d.\ samples and $\nabla f(x,\{\xi_i\}_{i=1}^r)$ has $r$ times smaller variance than $\nabla f(x,\xi_i)$:
\begin{equation*}
    \EE_{\xi_1,\ldots,\xi_r}\left[\exp\left(\frac{\left\|\nabla f(x,\{\xi_i\}_{i=1}^r) - \nabla f(x)\right\|_2^2}{\nicefrac{\sigma^2}{r}}\right)\right] \le \exp(1).
\end{equation*}
Then in order to get such a point $x^N$ that $f(x^N) - f(x^*) \le \e$ with probability at least $1-\beta$ where $\beta\in(0,1)$ and $f$ is $\mu$-strongly convex ($\mu \ge 0$) and $L$-smooth one can run {\tt STM} for
\begin{equation}
    N = O\left(\min\left\{\sqrt{\frac{LR^2}{\e}}, \sqrt{\frac{L}{\mu}}\ln\left(\frac{LR^2}{\e}\right)\right\}\right)\label{eq:general_stoch_iteration_complexity}
\end{equation}
iterations with small modification: instead of using $\nabla f(\tx^{k+1})$ the method uses mini-batched stochastic approximation $\nabla f(\tx^{k+1},\{\xi_i\}_{i=1}^{r_{k+1}})$ where the batch size is
\begin{equation}
    r_{k+1} = \Theta\left(\max\left\{1,\frac{\sigma^2\alpha_{k+1}\ln\frac{N}{\beta}}{(1+A_{k+1}\mu)\e}\right\}\right).\label{eq:general_stoch_bath_size}
\end{equation}
The total number of oracle calls is
\begin{equation}
    \sum\limits_{k=1}^N r_k = O\left(N + \min\left\{\frac{\sigma^2 R^2}{\e^2}\ln\left(\frac{\sqrt{\nicefrac{LR^2}{\e}}}{\beta}\right), \frac{\sigma^2}{\mu\e}\ln\left(\frac{LR^2}{\e}\right)\ln\left(\frac{\sqrt{\nicefrac{L}{\mu}}}{\beta}\right)\right\}\right)\label{eq:general_stoch_number_of_oracle_calls}
\end{equation}
which is optimal up to logarithmic factors. We call this modification Stochastic {\tt STM} ({\tt SSTM}). As for the deterministic case we summarize the state-of-the-art results for this case in Table~\ref{tab:stochastic_unbiased_bounds}.

{\begin{table}[t!]
    \centering
    \begin{tabular}{|c|c|c|c|c|}
         \hline
         Assumptions on $f$ & Method & Citation & \# of oracle calls \\
         \hline
         \makecell{ $\mu${-strongly convex,}\\ $L$-smooth} & {\tt R-SSTM} & {\makecell{\cite{gasnikov2018universal,Lan2019lectures,nesterov2018lectures}}} &  $\widetilde{O}\left(\max\left\{{\color{blue}\sqrt{\frac{L}{\mu}}\ln\left(\frac{\mu R^2}{\e}\right)}, \frac{\sigma^2}{\mu\e}\right\}\right)$ \\
         \hline
         $L$-smooth & {\tt SSTM} & {\makecell{\cite{gasnikov2018universal,Lan2019lectures,nesterov2018lectures}}} & $\widetilde{O}\left(\max\left\{{\color{blue}\sqrt{\frac{LR^2}{\e}}}, \frac{\sigma^2 R^2}{\e^2}\right\}\right)$\\
         \hline
         \makecell{ $\mu${-strongly convex,}\\$\EE_\xi\left[\|\nabla f(x,\xi)\|_2^2\right] \le M^2$}& {\tt MD} & {\makecell{\cite{ben-tal2001lectures,juditsky2012first-order}}}  & $O\left({\color{blue}\frac{M^2}{\mu\e}}\right)$ \\
         \hline
         {$\EE_\xi\left[\|\nabla f(x,\xi)\|_2^2\right] \le M^2$} & {\tt MD} & {\makecell{\cite{ben-tal2001lectures,juditsky2012first-order}}} & $O\left({\color{blue}\frac{M^2R^2}{\e^2}}\right)$\\
         \hline
    \end{tabular}
    \caption{Optimal (up to logarithmic factors) number of stochastic unbiased first-order oracle calls in order to get such a point $x^N$ that $f(x^N) - f(x^*) \le \varepsilon$ with probability at least $1-\beta$, $\beta\in(0,1)$ and $f$ is defined in \eqref{eq:objectve_expectation}. First column contains assumptions on $f$ in addition to the convexity. Blue terms in the last column correspond to the number of iterations of the method.}
    \label{tab:stochastic_unbiased_bounds}
\end{table}}

\section{Stochastic Convex Optimization with Affine Constraints: Primal Approach}\label{sec:primal}
Now, we are going to make a step towards decentralized distributed optimization and consider convex optimization problem with affine constraints:
\begin{equation}
\label{PP}
\min_{Ax=0, x\in Q}f(x),    
\end{equation}
where $A \succeq 0$ and $\text{Ker} A \neq \{0\}$. Up to a sign we can define the dual problem in the following way
\begin{eqnarray}
\min_{y}\psi(y),&& \text{where}\label{DP}\\
\varphi(y) &=& \max_{x\in Q}\left\{\langle y,x\rangle - f(x)\right\},\label{eq:dual_phi_function}\\
\psi(y) &=& \vp(A^\top y) = \max_{x\in Q}\left\{\langle y,Ax\rangle - f(x)\right\}=  \langle A^\top y,x(A^\top y)\rangle - f(x(A^\top y)),\label{eq:dual_function}
\end{eqnarray}
where $x(y) \eqdef \argmax_{x\in Q}\left\{\la y, x\ra - f(x)\right\}$. Since $\text{Ker}A \neq \{0\}$ the solution of the dual problem \eqref{DP} is not unique. We use $y^*$ to denote the solution of \eqref{DP} with the smallest $\ell_2$-norm $R_y \eqdef \|y^*\|_2$. 

However, in this section we are interested only in primal approaches to solve \eqref{PP} and, in particular, the main goal of this section is to present first-order methods that are optimal both in terms of $\nabla f(x)$ and $A^\top A x$ calculations. Before we start our analysis let us notice that typically in decentralized optimization matrix $A$ from \eqref{PP} is chosen as a square root of Laplacian matrix $W$ of communication network \cite{scaman2017optimal} (see Section~\ref{sec:distributed_opt} for the details). In asynchronous case the square root $\sqrt{W}$ is replaced by incidence matrix $M$ \cite{hendrikx2018accelerated} ($W = M^\top M$). Then in asynchronous case instead of accelerated methods for \eqref{DP} one should use accelerated block-coordinate descent methods \cite{dvurechensky2017randomized,gasnikov2017modern,hendrikx2018accelerated,shalev-shwartz2014accelerated}.

To solve problem \eqref{PP} we use the following trick \cite{dvinskikh2019decentralized, gasnikov2018universalgrad}: instead of \eqref{PP} we consider penalized problem
\begin{equation}
\label{penalty}
\min_{x\in Q} \left\{F(x) = f(x) + \frac{R_y^2}{\e}\| Ax\|_2^2\right\},  
\end{equation}
where $\e > 0$ is the desired accuracy of the solution in terms of $f(x)$ that we want to achieve. The motivation behind this trick is revealed in the following theorem.
\begin{theorem}[See also Remark~4.3 from \cite{gasnikov2018universalgrad}]\label{lem:regularized-primal_connection}
    Assume that $x^N \in Q$ is such that
    \begin{equation}
        F(x^N)-\min_{x\in Q}F(x)\le \e.\label{eq:F(x^N)_guarantee}
    \end{equation}
    Then
    \begin{equation}
        f(x^N)-\min_{Ax = 0, x\in Q}f(x)\le \e,\quad \|Ax^N||_2\le \frac{2\e}{R_y}.\label{eq:F(x^N)_guarantee_consequence}
    \end{equation}
\end{theorem}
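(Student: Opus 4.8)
The plan is to prove the two conclusions separately, exploiting two facts: that the minimizer of the constrained problem is feasible for the penalized problem \eqref{penalty}, and that the penalty weight $\nicefrac{R_y^2}{\e}$ is calibrated against the minimum-norm dual solution $y^*$. Throughout let $f^* \eqdef \min_{Ax=0,x\in Q}f(x)$, let $x^*$ be a constrained minimizer (so $Ax^* = 0$ and $f(x^*) = f^*$), and let $F^* \eqdef \min_{x\in Q}F(x)$.

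First I would settle the functional bound, which is the easy half. Since $x^* \in Q$ and $Ax^* = 0$, the point $x^*$ is feasible for the unconstrained problem \eqref{penalty} and $F(x^*) = f(x^*) + \nicefrac{R_y^2}{\e}\|Ax^*\|_2^2 = f^*$, hence $F^* \le f^*$. Combining this with the assumed accuracy \eqref{eq:F(x^N)_guarantee} gives $F(x^N) \le F^* + \e \le f^* + \e$. Because the penalty term is nonnegative we have $f(x^N) \le F(x^N)$, and therefore $f(x^N) - f^* \le \e$, which is the first claim in \eqref{eq:F(x^N)_guarantee_consequence}.

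The constraint-violation bound is the more delicate half, and its key ingredient is a matching \emph{lower} bound on $f(x^N) - f^*$ obtained via duality. By strong duality for the convex problem with affine constraints, the optimal dual value equals $-f^*$, and since $y^*$ attains it, $\psi(y^*) = -f^*$. Plugging the particular point $x^N \in Q$ into the maximum defining $\psi(y^*)$ in \eqref{eq:dual_function} yields $\psi(y^*) \ge \langle y^*, Ax^N\rangle - f(x^N)$, i.e. $f(x^N) - f^* \ge \langle y^*, Ax^N\rangle$. Cauchy--Schwarz together with $\|y^*\|_2 = R_y$ then gives $f(x^N) - f^* \ge -R_y\|Ax^N\|_2$; here the minimum-norm choice of $y^*$ is precisely what makes the penalty weight sharp. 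I expect this dual step --- invoking strong duality to identify $\psi(y^*) = -f^*$ and using that $y^*$ is a dual optimizer --- to be the main obstacle, as it is the only place requiring structural regularity of the problem; the remainder is algebra.

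Finally I would combine the two bounds on $f(x^N) - f^*$. Keeping the penalty term rather than discarding it gives $f(x^N) - f^* \le \e - \nicefrac{R_y^2}{\e}\|Ax^N\|_2^2$, while the dual step gives $f(x^N) - f^* \ge -R_y\|Ax^N\|_2$. Writing $t \eqdef \|Ax^N\|_2 \ge 0$ and chaining these yields $\nicefrac{R_y^2}{\e}\,t^2 \le \e + R_y t$. It then remains to solve this quadratic inequality, which I would do by contradiction: assuming $t > \nicefrac{2\e}{R_y}$ forces $R_y t > 2\e$, so $\nicefrac{R_y^2 t^2}{\e} = (R_y t)\cdot\nicefrac{R_y t}{\e} > 2 R_y t$, and the inequality then gives $2R_y t < \e + R_y t$, i.e. $R_y t < \e$, contradicting $R_y t > 2\e$. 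Hence $\|Ax^N\|_2 \le \nicefrac{2\e}{R_y}$, the second claim. (Equivalently, the positive root of the quadratic is $\tfrac{1+\sqrt5}{2}\cdot\nicefrac{\e}{R_y} < \nicefrac{2\e}{R_y}$.)
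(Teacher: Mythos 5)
Your proposal is correct and follows essentially the same route as the paper: both derive $f(x^N)-f^*+\nicefrac{R_y^2}{\e}\|Ax^N\|_2^2\le\e$ from feasibility of the constrained minimizer, combine it with the dual lower bound $f(x^N)-f^*\ge\la y^*,Ax^N\ra\ge -R_y\|Ax^N\|_2$, and solve the resulting quadratic in $\|Ax^N\|_2$ (the paper likewise notes the root $\tfrac{1+\sqrt5}{2}\e<2\e$). The only cosmetic difference is that you invoke strong duality as a black box for $\psi(y^*)=-f^*$, whereas the paper establishes this identity explicitly in Lemma~\ref{lem:duality_basic_relations} via the Demyanov--Danskin theorem.
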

We start with the analysis of the case when $f$ is $L$-smooth and convex.
\begin{theorem}\label{thm:primal_convex_case}
    Let $f$ be convex and $L$-smooth, $Q = \R^n$ and $h(x) = \nicefrac{R_y^2\|Ax\|_2^2}{\e}$. Assume that full gradients of $f$ and $h$ are available. Then {\tt STM{\_}IPS} (see Algorithm~\ref{Alg:STM_inexact}, Section~\ref{sec:stp_ips}) applied to solve problem \eqref{penalty} requires
    \begin{equation}
        O\left(\sqrt{\frac{LR^2}{\e}}\right)\quad \text{calculations of } \nabla f(x), \label{eq:primal_convex_smooth_nabla_f_x}
    \end{equation}
    \begin{equation}
        \widetilde{O}\left(\sqrt{\frac{LR^2}{\e}\chi(A^\top A)}\right)\quad \text{calculations of } A^\top A x\label{eq:primal_convex_smooth_A^T_A_x}
    \end{equation}
    to produce point $x^N$ such that \eqref{eq:F(x^N)_guarantee} holds.
\end{theorem}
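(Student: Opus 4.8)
The plan is to treat the penalized problem \eqref{penalty} as a composite problem $\min_{x\in\R^n} f(x)+h(x)$ in which $f$ is the $L$-smooth part accessed through its exact gradient and $h(x)=\nicefrac{R_y^2\|Ax\|_2^2}{\e}$ is a smooth convex composite term accessed through an \emph{inexact} proximal step, and then to invoke the general convergence guarantee for {\tt STM{\_}IPS} established in Section~\ref{sec:stp_ips} (Algorithm~\ref{Alg:STM_inexact}). Since {\tt STM{\_}IPS} is an accelerated composite method whose outer iteration count is governed solely by the smoothness constant of the part handled via gradients, it produces a point $x^N$ with $F(x^N)-\min_x F(x)\le\e$ after $N=O(\sqrt{\nicefrac{LR^2}{\e}})$ outer iterations, exactly as in the smooth unconstrained bound of Table~\ref{tab:deterministic_bounds}; the constant $L_h$ of the composite term does not enter this count. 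Because each outer iteration evaluates $\nabla f$ only once (at $\tx^{k+1}$, as in line~4 of Algorithm~\ref{Alg:STM}), this immediately yields the bound \eqref{eq:primal_convex_smooth_nabla_f_x}.

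To bound the number of products with $A^\top A$, I would next cost out a single inexact proximal step. Writing $c_k \eqdef A_{k+1}\nicefrac{2R_y^2}{\e}$, the subproblem defining $z^{k+1}$ in \eqref{eq:stm_prox_step_general_case} (with $\mu=0$) is the minimization of a quadratic whose Hessian is $H_k = I + c_k A^\top A$, since $\nabla h(x)=\nicefrac{2R_y^2}{\e}A^\top A\,x$ is $L_h$-Lipschitz with $L_h=\nicefrac{2R_y^2\lambda_{\max}(A^\top A)}{\e}$. Thus $H_k$ is $1$-strongly convex with smoothness $1+c_k\lambda_{\max}(A^\top A)$, and solving the subproblem with an accelerated scheme (the strongly convex variant of {\tt STM}) costs one multiplication by $A^\top A$ per inner iteration, the number of inner iterations being $\widetilde{O}(\sqrt{\kappa_k})$ for the relevant condition number $\kappa_k$ of $H_k$.

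The crucial point is that $\kappa_k$ may be taken as the condition number of $H_k$ \emph{restricted to} $\text{range}(A^\top A)$ rather than on all of $\R^n$. Indeed $H_k$ acts as the identity on $\text{Ker}(A^\top A)$, so the kernel component of the subproblem solution is available in closed form (equivalently, initializing the inner solver at the shifted center makes the kernel residual vanish and remain zero); on $\text{range}(A^\top A)$ the eigenvalues of $H_k$ lie in $[\,1+c_k\lambda_{\min}^+(A^\top A),\,1+c_k\lambda_{\max}(A^\top A)\,]$, whence
\begin{equation*}
\kappa_k \;=\; \frac{1+c_k\lambda_{\max}(A^\top A)}{1+c_k\lambda_{\min}^+(A^\top A)} \;\le\; \frac{\lambda_{\max}(A^\top A)}{\lambda_{\min}^+(A^\top A)} \;=\; \chi(A^\top A),
\end{equation*}
the inequality following from $\lambda_{\min}^+(A^\top A)\le\lambda_{\max}(A^\top A)$. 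Hence each inexact proximal step costs $\widetilde{O}(\sqrt{\chi(A^\top A)})$ products with $A^\top A$, and summing over the $N=O(\sqrt{\nicefrac{LR^2}{\e}})$ outer iterations gives a total of $\widetilde{O}\big(\sqrt{\nicefrac{LR^2}{\e}}\cdot\sqrt{\chi(A^\top A)}\big)=\widetilde{O}\big(\sqrt{\nicefrac{LR^2}{\e}\,\chi(A^\top A)}\big)$ products, which is \eqref{eq:primal_convex_smooth_A^T_A_x}.

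The main work, which I would import from the analysis of {\tt STM{\_}IPS} in Section~\ref{sec:stp_ips}, is to make the inexact proximal step rigorous: one must specify the accuracy to which each inner subproblem is solved and verify that it is (i) cheap enough that the inner iteration count stays $\widetilde{O}(\sqrt{\chi(A^\top A)})$, the only cost being logarithmic factors absorbed by $\widetilde{O}$, and (ii) tight enough that the accumulated inexactness does not spoil the outer accelerated rate $O(\sqrt{\nicefrac{LR^2}{\e}})$, ideally without assuming the iterates stay in a bounded set. The one subtlety specific to \eqref{penalty} is the singularity of $A^\top A$ (recall $\text{Ker}\,A\neq\{0\}$): one must ensure the kernel component of each proximal subproblem is handled exactly, so that the effective condition number is $\chi(A^\top A)$ and not the far larger $1+c_k\lambda_{\max}(A^\top A)$; this is precisely the step that converts a naive $\lambda_{\max}$-dependence into the $\chi$-dependence of \eqref{eq:primal_convex_smooth_A^T_A_x}, and it is the part I expect to require the most care.
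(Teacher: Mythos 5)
Your proposal is correct and follows essentially the same route as the paper: decompose \eqref{penalty} as a composite problem, invoke the {\tt STM{\_}IPS} guarantee of Section~\ref{sec:stp_ips} (Corollary~\ref{cor:stp_inexact_main_coro}) for the outer count $O(\sqrt{\nicefrac{LR^2}{\e}})$ with one $\nabla f$ evaluation per iteration, and solve each quadratic proximal subproblem by an accelerated inner method whose iterates, when started from the shifted center $z^k - \alpha_{k+1}\nabla f(\tx^{k+1})$, remain in an affine subspace parallel to $\left(\text{Ker}\,A\right)^\perp$, so that the effective condition number is $\chi(A^\top A)$ exactly as in \eqref{eq:primal_complexity_of_subproblem}. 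The only cosmetic difference is that you take the coefficient of $h$ in the subproblem to be $A_{k+1}\cdot\nicefrac{2R_y^2}{\e}$ (from the cumulative form \eqref{eq:stm_prox_step_general_case}) rather than the paper's $\alpha_{k+1}\cdot\nicefrac{2R_y^2}{\e}$ (from \eqref{eq:g_k+1_sequence}), which does not affect the bound $\kappa_k\le\chi(A^\top A)$.
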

That is, number of $A^\top Ax$ calculations matches the optimal bound for deterministic convex and $L$-smooth problems of type \eqref{eq:main_problem} multiplied by $\sqrt{\chi(A^\top A)}$ up to logarithmic factors (see Table~\ref{tab:deterministic_bounds}).

We believe that using the same recurrence technique that we use in Sections~\ref{sec:stp_ips} and \ref{sec:dual} one can generalize this result for the case when instead of $\nabla f(x)$ only stochastic gradient $\nabla f(x,\xi)$ (see inequalities \eqref{eq:primal_bias_in_stoch_grad}-\eqref{eq:primal_light_tails_stoch_grad}) is available. To the best of our knowledge it is not done in the literature for the case when $Q = \R^n$. Moreover, it is also possible to extend our approach to handle strongly convex case via variants of {\tt STM}. 

We conjecture that the same technique in the case when $f$ is $\mu$-strongly convex and $L$-smooth gives the method that requires such number of $A^\top Ax$ calculations that matches the second rows of Tables~\ref{tab:deterministic_bounds} and \ref{tab:stochastic_unbiased_bounds} in the corresponding cases with additional factor $\sqrt{\chi(A^\top A)}$ and logarithmic factors. Recently such bounds were shown in \cite{fallah2019robust} for the distributed version of Multistage Accelerated Stochastic Gradient method from \cite{aybat2019universally}. However, this bounds were shown for the case when the stochastic gradient is unbiased.

Next, we assume that $Q$ is closed and convex and $f$ is $\mu$-strongly convex, but possibly non-smooth function with bounded gradients: $\|\nabla f(x)\|_2 \le M$ for all $x\in Q$. Let us start with the case $\mu = 0$. Then, to achieve \eqref{eq:F(x^N)_guarantee} one can run {\tt Sliding} method from \cite{Lan2019lectures,lan2016gradient} considering $f(x)$ as a composite term. In this case {\tt Sliding} requires
\begin{equation}
    O\left(\sqrt{\frac{\lambda_{\max}(A^\top A)R_y^2 R^2}{\e^2}}\right) \text{ calculations of $A^\top Ax$,} \label{eq:sliding_detrem_A^TAx_calculations}
\end{equation}
\begin{equation}
    O\left(\frac{M^2R^2}{\e^2}\right) \text{ calculations of $\nabla f(x)$.} \label{eq:sliding_detrem_grad_calculations}
\end{equation}

In the case when $Q$ is a compact set and $\nabla f(x)$ is not available and unbiased stochastic gradient $\nabla f(x,\xi)$ is used instead (see inequalities \eqref{eq:primal_bias_in_stoch_grad}-\eqref{eq:primal_light_tails_stoch_grad} with $\delta = 0$) one can show \cite{Lan2019lectures,lan2016gradient} that Stochastic {\tt Sliding} ({\tt S-Sliding}) method can achieve \eqref{eq:F(x^N)_guarantee} with probability at least $1 - \beta$, $\beta\in(0,1)$, and it requires the same number of calculations of $A^\top Ax$ as in \eqref{eq:sliding_detrem_A^TAx_calculations} up to logarithmic factors
and
\begin{equation}
    \widetilde{O}\left(\frac{(M^2+\sigma^2)R^2}{\e^2}\right) \text{ calculations of $\nabla f(x,\xi)$.} \label{eq:sliding_stoch_grad_calculations}
\end{equation}

When $\mu > 0$ one can apply restarts technique on top of {\tt S-Sliding} ({\tt RS-Sliding}) \cite{dvinskikh2019decentralized,uribe2017optimal} and get that to guarantee \eqref{eq:F(x^N)_guarantee} with probability at least $1-\beta$, $\beta\in(0,1)$ {\tt RS-Sliding} requires
\begin{equation}
    \widetilde{O}\left(\sqrt{\frac{\lambda_{\max}(A^\top A)R_y^2}{\mu\e}}\right) \text{ calculations of $A^\top Ax$,} \label{eq:sliding_stoch_A^TAx_calculations_str_cvx}
\end{equation}
\begin{equation}
    \widetilde{O}\left(\frac{M^2+\sigma^2}{\mu\e}\right) \text{ calculations of $\nabla f(x,\xi)$.} \label{eq:sliding_stoch_grad_calculations_str_cvx}
\end{equation}

We notice that bounds presented above for the non-smooth case are proved only for the case when $Q$ is bounded. For the case of unbounded $Q$ the convergence results with such rates were proved only in expectation. Moreover, it would be interesting to study {\tt S-Sliding} and {\tt RS-Sliding} in the case when $\delta > 0$, i.e.\ stochastic gradient is biased, but we leave these questions for future works.

\section{Stochastic Convex Optimization with Affine Constraints: Dual Approach}\label{sec:dual}
In this section we assume that one can construct a dual problem for \eqref{PP}. If $f$ is $\mu$-strongly convex in $\ell_2$-norm, then $\psi$ and $\varphi$ have $L_{\psi}$--Lipschitz continuous and $L_\varphi$--Lipschitz continuous in $\ell_2$-norm gradients respectively \cite{kakade2009duality,Rockafellar2015}, where $L_{\psi}=\nicefrac{\lambda_{\max}(A^\top A)}{\mu}$ and $L_\varphi = \nicefrac{1}{\mu}$. In our proofs we often use Demyanov--Danskin theorem \cite{Rockafellar2015} which states that
\begin{equation}
    \nabla \psi(y) = Ax(A^\top y),\quad \nabla\varphi(y) = x(y).\label{eq:gradient_dual_function}
\end{equation}
We notice that in this section we do not assume that $A$ is symmetric or positive semidefinite.

Below we propose a primal-dual method for the case when $f$ is additionally Lipschitz continuous on some ball and two methods for the problems when the primal function is also $L$-smooth and Lipschitz continuous on some ball. In the subsections below we assume that $Q = \R^n$.

\subsection{Convex Dual Function}\label{sec:conv_dual}
In this section we assume that the dual function $\varphi(y)$ could be rewritten as an expectation, i.e.\ $\varphi(y) = \EE_\xi\left[\varphi(y,\xi)\right]$, where stochastic realisations $\varphi(y,\xi)$ are differentiable in $y$ functions almost surely in $\xi$. Then, we can also represent $\psi(y)$ as an expectation: $\psi(y) = \EE_\xi\left[\psi(y,\xi)\right]$. Consider the stochastic function $f(x,\xi)$ which is defined implicitly as follows:
\begin{equation}
    \varphi(y,\xi) = \max\limits_{x\in \R^n}\left\{\la y, x \ra - f(x,\xi)\right\}.\label{eq:dual_stoch_func}
\end{equation}
Similarly to the deterministic case we introduce $x(y,\xi) \eqdef \argmax_{x\in \R^n}\left\{\la y, x \ra - f(x,\xi)\right\}$ which satisfies $\nabla\varphi(y,\xi) = x(y,\xi)$ due to Demyanov-Danskin theorem, where the gradient is taken w.r.t. $y$. As a simple corollary, we get $\nabla \psi(y,\xi) = Ax(A^\top y)$. Finally, introduced notations and obtained relations imply that $x(y) = \EE_\xi[x(y,\xi)]$ and $\nabla\psi(y) = \EE_\xi[\nabla \psi(y,\xi)]$.

Consider the situation when $x(y,\xi)$ is known only through the noisy observations $\tx(y,\xi) = x(y,\xi) + \delta(y,\xi)$ and assume that the noise is bounded in expectation, i.e.\ there exists non-negative deterministic constant $\delta_y \ge 0$, such that
\begin{equation}
    \left\|\EE_\xi[\delta(y,\xi)]\right\|_2 \le \delta_y,\quad \forall y\in \R^n. \label{eq:noise_level_x}
\end{equation}
Assume additionally that $x(y,\xi)$ satisfies so-called ``light-tails'' inequality:
\begin{equation}
    \EE_\xi\left[\exp\left(\frac{\left\|\tx(y,\xi) - \EE_\xi\left[\tx(y,\xi)\right]\right\|_2^2}{\sigma_x^2}\right)\right] \le \exp(1), \quad \forall y\in\R^n,\label{eq:light_tails_x}
\end{equation}
where $\sigma_x$ is some positive constant. It implies that we have an access to the biased gradient $\tnabla\psi(y,\xi) \eqdef A\tx(y,\xi)$ which satisfies following relations:
\begin{eqnarray}
    \left\|\EE_\xi\left[\tnabla \psi(y,\xi)\right] - \nabla \psi(y)\right\|_2 &\le& \delta, \quad \forall y\in \R^n,\label{eq:bias_stoch_grad}\\
    \EE_\xi\left[\exp\left(\frac{\left\|\tnabla \psi(y,\xi) - \EE_\xi\left[\tnabla \psi(y,\xi)\right]\right\|_2^2}{\sigma_\psi^2}\right)\right] &\le& \exp(1), \quad \forall y\in \R^d,\label{eq:super_exp_moment_stoch_grad}
\end{eqnarray}
where $\delta \eqdef \sqrt{\lambda_{\max}(A^\top A)}\delta_y$ and $\sigma_\psi \eqdef \sqrt{\lambda_{\max}(A^\top A)}\sigma_x$.
We will use $\tnabla\Psi(y,\Bxi^{k})$ to denote batched stochastic gradient:
\begin{equation}
    \tnabla\Psi(y,\Bxi^{k}) = \frac{1}{r_k}\sum\limits_{l=1}^{r_k}\tnabla \psi(y,\xi^{l}), \quad \tx(y,\Bxi^k) = \frac{1}{r_k}\sum\limits_{l=1}^{r_k}\tx(y,\xi^l)\label{eq:batched_biased_stoch_gradient}
\end{equation}
The size of the batch $r_k$ could always be restored from the context, so, we do not specify it here. Note that the batch version satisfies
\begin{eqnarray}
    \left\|\EE\left[\tnabla \Psi(x,\Bxi^k)\right] - \nabla \psi(x)\right\|_2 &\le& \delta, \quad \forall x\in\R^n,\label{eq:bias_batched_stoch_grad}\\
    \EE\left[\exp\left(\frac{\left\|\tnabla \Psi(x,\Bxi^k) - \EE\left[\tnabla \Psi(x,\Bxi^k)\right]\right\|_2^2}{O(\nicefrac{\sigma_\psi^2}{r_k^2})}\right)\right] &\le& \exp(1), \quad \forall x\in\R^n,\label{eq:super_exp_moment_batched_stoch_grad}
\end{eqnarray}
where in the last inequality we used combination of Lemmas~\ref{lem:jin_lemma_2} and \ref{lem:jud_nem_large_dev} (see two inequalities after \eqref{eq:radius_for_prima_dual_biased} for the details). We call this approach {\tt SPDSTM} (Stochastic Primal-Dual Similar Triangles Method, see Algorithm~\ref{Alg:PDSTM}). Note that Algorithm~4 from \cite{dvinskikh2019dual} is a special case of {\tt SPDSTM} when $\delta = 0$, i.e.\ stochastic gradient is unbiased, up to a factor $2$ in the choice of $\tL$.
 

\begin{algorithm}[h]
\caption{{\tt SPDSTM}}
\label{Alg:PDSTM}   
 \begin{algorithmic}[1]
\REQUIRE $\tilde{y}^0=z^0=y^0=0$, number of iterations $N$, $\alpha_0 = A_0=0$
\FOR{$k=0,\dots, N$}
\STATE Set $\tL = 2L_\psi$
\STATE Set $A_{k+1} = A_k + \alpha_{k+1}$, where $2\tL\alpha_{k+1}^2 = A_k + \alpha_{k+1}$
\STATE $\tilde{y}^{k+1} = \nicefrac{(A_ky^k+\alpha_{k+1}z^k)}{A_{k+1}}$
\STATE $z^{k+1} = z^k - \alpha_{k+1} \tnabla\Psi(\tilde{y}^{k+1},\Bxi^{k})$
\STATE $y^{k+1}=\nicefrac{(A_ky^k+\alpha_{k+1}z^{k+1})}{A_{k+1}}$
\ENDFOR
\ENSURE    $y^N$, $\tilde{x}^N = \frac{1}{A_N}\sum_{k=0}^N \alpha_k \tx(A^\top\tilde{y}^k,\Bxi^k)$. 
\end{algorithmic}
 \end{algorithm}

Below we present the main convergence result of this section.
\begin{theorem}[see also Theorem~2 from \cite{dvinskikh2019dual}]\label{thm:spdtstm_smooth_cvx_dual_biased}
    Assume that $f$ is $\mu$-strongly convex and $\|\nabla f(x^*)\|_2 = M_f$. Let $\varepsilon > 0$ be a desired accuracy. Next, assume that $f$ is $L_f$-Lipschitz continuous on the ball $B_{R_f}(0)$ with $R_f = \tilde{\Omega}\left(\max\left\{\frac{R_y}{A_N\sqrt{\lambda_{\max}(A^\top A)}}, \frac{\sqrt{\lambda_{\max}(A^\top A)}R_y}{\mu}, R_x\right\}\right),$ where $R_y$ is such that $\|y^*\|_2 \le R_y$, $y^*$ is the solution of the dual problem \eqref{DP}, and $R_x = \|x(A^\top y^*)\|_2$. Assume that at iteration k of Algorithm~\ref{Alg:PDSTM} batch size is chosen according to the formula $r_k \ge \max\left\{1, \frac{ \sigma^2_\psi \widetilde{\alpha}_k \ln(\nicefrac{N}{\beta})}{\hat C\e}\right\}$, where $\widetilde{\alpha}_{k} = \frac{k+1}{2\tL}$, $0 < \varepsilon \le \frac{H\tL R_0^2}{N^2}$, $0 \le \delta \le \frac{G\tL R_0}{(N+1)^2}$ and $N\ge 1$ for some numeric constant $H > 0$, $G > 0$ and $\hat C > 0$. Then with probability  $\geq 1-4\beta$, where $\beta \in \left(0,\nicefrac{1}{4}\right)$ is such that $\frac{1+\sqrt{\ln\frac{1}{\beta}}}{\sqrt{\ln\frac{N}{\beta}}} \le 2$, after $N = \widetilde{O}\left(\sqrt{\frac{M_f}{\mu\e}\chi(A^\top A)} \right)$ iterations where $\chi(A^\top A) = \frac{\lambda_{\max}(A^\top A)}{\lambda_{\min}^+(A^\top A)}$, the outputs $\tx^N$ and $y^N$ of Algorithm \ref{Alg:PDSTM} satisfy the following condition
\begin{equation}
    f(\tilde{x}^N) -f(x^*) \le f(\tilde{x}^N) + \psi(y^N) \le \e, \quad \|A\tilde{x}^N\|_2 \le \frac{\e}{R_{y}} \label{eq:SPDSTM_non_str_cvx_guarantee}
\end{equation}
with probability at least $1-4\beta$. What is more, to guarantee \eqref{eq:SPDSTM_non_str_cvx_guarantee} with probability at least $1-4\beta$ Algorithm~\ref{Alg:PDSTM} requires
\begin{equation}
    \widetilde{O}\left(\max\left\{\frac{\sigma_x^2M_f^2}{\varepsilon^2}\chi(A^\top A)\ln\left(\frac{1}{\beta}\sqrt{\frac{M_f}{\mu\e}\chi(A^\top A)}\right), \sqrt{\frac{M_f}{\mu\e}\chi(A^\top A)}\right\}\right)\label{eq:SPDSTM_oracle_calls}
\end{equation}
calls of the biased stochastic oracle $\tnabla\psi(y,\xi)$, i.e.\ $\tx(y,\xi)$.
\end{theorem}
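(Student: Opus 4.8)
The plan is to read Algorithm~\ref{Alg:PDSTM} as the accelerated Similar Triangles Method run on the smooth convex \emph{dual} objective $\psi$, and then to convert the resulting guarantee into a bound on the primal--dual gap. Since $f$ is $\mu$-strongly convex, $\psi$ is convex with $L_\psi$-Lipschitz gradient, $L_\psi = \nicefrac{\lambda_{\max}(A^\top A)}{\mu}$, and the method uses $\tL = 2L_\psi$; by the Demyanov--Danskin identity \eqref{eq:gradient_dual_function} the exact gradient $\nabla\psi(\ty^{k+1}) = Ax(A^\top\ty^{k+1})$ is the image under $A$ of a primal point, which is precisely the quantity whose batched noisy version $\tnabla\Psi(\ty^{k+1},\Bxi^{k}) = A\tx(\ty^{k+1},\Bxi^k)$ drives the $z$-update. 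First I would establish the one-step STM inequality: using the defining relation $2\tL\alpha_{k+1}^2 = A_{k+1}$ and $L_\psi$-smoothness, I would bound $A_{k+1}\psi(y^{k+1})$ by $A_k\psi(y^k)$ plus a linear term in $\tnabla\Psi(\ty^{k+1},\Bxi^k)$ plus $\tfrac12(\|z^k - y\|_2^2 - \|z^{k+1}-y\|_2^2)$ for an arbitrary comparison point $y$. The crucial move is to keep the linear terms \emph{unevaluated} and re-express them through $\tx(A^\top\ty^k,\Bxi^k)$, so that after telescoping the weighted primal average $\tx^N = \tfrac{1}{A_N}\sum_{k}\alpha_k\tx(A^\top\ty^k,\Bxi^k)$ appears and the accumulated bound controls the gap $f(\tx^N) + \psi(y^N)$ rather than merely $\psi(y^N)-\psi(y^*)$.

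Then I would split each stochastic gradient as $\tnabla\Psi = \nabla\psi + b_k + \eta_k$, where $\|b_k\|_2\le\delta$ by \eqref{eq:bias_batched_stoch_grad} and $\eta_k$ is zero-mean with the super-exponential moment \eqref{eq:super_exp_moment_batched_stoch_grad} of effective variance $O(\nicefrac{\sigma_\psi^2}{r_k})$. Telescoping with comparison point $y^*$ yields a master inequality of the schematic form
\begin{equation*}
    A_N\big(f(\tx^N) + \psi(y^N)\big) + \tfrac12\|z^{N+1}-y^*\|_2^2 \;\le\; \tfrac12\|y^*\|_2^2 + \sum_{k}\alpha_{k+1}\la \eta_k, y^* - z^k\ra + \sum_{k}\alpha_{k+1}\la b_k, y^* - z^k\ra ,
\end{equation*}
with an entirely analogous estimate (comparison point shifted toward $A\tx^N$) providing the feasibility quantity. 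Because $Q=\R^n$ and the iterates are not assumed bounded, the martingale sum $\sum_k\alpha_{k+1}\la\eta_k, y^* - z^k\ra$ and the bias sum both involve the a priori uncontrolled norms $\|z^k - y^*\|_2$; this coupling is the heart of the matter.

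The main obstacle, and the step I would spend the most care on, is breaking this circularity via the recurrence-estimation technique of \cite{dvinskikh2019dual,gorbunov2018accelerated}. The plan is an induction over $k$: assuming $\|z^j - y^*\|_2 = O(R_0)$ for all $j\le k$ on a favorable event, I would apply Lemma~\ref{lem:jin_lemma_2} together with the large-deviation bound Lemma~\ref{lem:jud_nem_large_dev} to the martingale-difference sequence $\{\alpha_{j+1}\la\eta_j, y^* - z^j\ra\}$ to show that, with probability at least $1-O(\beta)$, its partial sums are $O\!\big(R_0\sqrt{\ln(\nicefrac{N}{\beta})\textstyle\sum_j\alpha_{j+1}^2\nicefrac{\sigma_\psi^2}{r_j}}\big)$, and bound the bias sum by $\delta\sum_k\alpha_{k+1}\,O(R_0)$. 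Choosing the batch sizes $r_k\ge\max\{1,\nicefrac{\sigma_\psi^2\widetilde{\alpha}_k\ln(\nicefrac{N}{\beta})}{\hat C\e}\}$ forces the stochastic term to be $O(\e A_N)$, while the hypotheses $\e\le\nicefrac{H\tL R_0^2}{N^2}$ and $\delta\le\nicefrac{G\tL R_0}{(N+1)^2}$ make both $\tfrac12\|y^*\|_2^2$ and the bias contribution $O(\e A_N)$; together these reclose the induction hypothesis $\|z^{k+1}-y^*\|_2 = O(R_0)$ and carry it to $k=N$. A union bound over the at most four failure events---iterate excursion, martingale deviation, and the batched light-tails estimates feeding \eqref{eq:super_exp_moment_batched_stoch_grad}---produces the stated probability $1-4\beta$. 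The Lipschitz-on-a-ball hypothesis with radius $R_f=\tilde\Omega(\max\{\ldots\})$ enters precisely here, guaranteeing that every primal sample $\tx(A^\top\ty^k,\Bxi^k)$ (and in particular $R_x=\|x(A^\top y^*)\|_2$) stays in the region where the primal/dual noise estimates remain valid.

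Finally I would convert the gap bound into \eqref{eq:SPDSTM_non_str_cvx_guarantee}. Since $\psi(y^N)\ge\min_y\psi(y)=-f(x^*)$ by strong duality, we get $f(\tx^N)-f(x^*)\le f(\tx^N)+\psi(y^N)\le\e$ for free; substituting into the shifted estimate a test point $y$ of norm $2R_y$ aligned with $A\tx^N$ and using $\|y^*\|_2\le R_y$ converts the gap into the feasibility bound $\|A\tx^N\|_2\le\nicefrac{\e}{R_y}$, exactly as in the penalty-to-feasibility argument of Theorem~\ref{lem:regularized-primal_connection}. Setting $\e=\Theta(\nicefrac{\tL R_0^2}{N^2})$ with $R_0=R_y$ and inserting the bound on $R_y$ in terms of $M_f$ and $\lambda_{\min}^+(A^\top A)$ yields the iteration count $N=\widetilde O(\sqrt{\nicefrac{M_f}{\mu\e}\,\chi(A^\top A)})$. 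Summing the batch sizes gives $\sum_k r_k = O(N) + \tfrac{\sigma_\psi^2\ln(\nicefrac{N}{\beta})}{\hat C\e}\sum_k\widetilde{\alpha}_k$ with $\widetilde{\alpha}_k=\nicefrac{(k+1)}{2\tL}$, so $\sum_k\widetilde{\alpha}_k=\Theta(\nicefrac{N^2}{\tL})$; plugging $\sigma_\psi^2=\lambda_{\max}(A^\top A)\sigma_x^2$, $\tL=2L_\psi=\nicefrac{2\lambda_{\max}(A^\top A)}{\mu}$, and $N^2=\Theta(\nicefrac{M_f^2\chi(A^\top A)}{\mu\e})$ collapses the leading term to $\widetilde O(\nicefrac{\sigma_x^2 M_f^2\chi(A^\top A)}{\e^2})$, which is exactly the oracle complexity \eqref{eq:SPDSTM_oracle_calls}.
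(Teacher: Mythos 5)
Your proposal follows essentially the same route as the paper's proof: the telescoped STM inequality on the dual (Lemma~\ref{lem:psi_y_N_bound_biased_case}), the bias-plus-martingale decomposition of $\tnabla\Psi$, the recurrence lemma (Lemma~\ref{lem:tails_estimate_biased}) to break the circular dependence on $\|z^k-y^*\|_2$, the minimization over $B_{2R_y}(0)$ combined with $\psi(y)-\la\nabla\psi(y),y\ra=-f(x(A^\top y))$ to produce the primal average and the feasibility term, and the same complexity accounting. The only cosmetic difference is that the paper routes the argument through the virtual iterate $\hat x^N=\frac{1}{A_N}\sum_k\alpha_{k+1}x(A^\top\ty^{k+1})$ and uses the $L_f$-Lipschitzness on $B_{R_f}(0)$ to transfer the bound to the computable $\tx^N$ via $|f(\hat x^N)-f(\tx^N)|\le L_f\|\hat x^N-\tx^N\|_2$, rather than to validate the noise estimates as you suggest, but this does not change the substance of the argument.
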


\subsection{Strongly Convex Dual Functions and Restarts Technique}\label{sec:restarts}
In this section we assume that primal functional $f$ is additionally $L$-smooth. It implies that the dual function $\psi$ in \eqref{DP} is additionally $\mu_{\psi}$-strongly convex in $y^0 + (\text{Ker} A^\top)^{\perp}$ where $\mu_{\psi} = \nicefrac{\lambda_{\min}^{+}(A^\top A)}{L}$ \cite{kakade2009duality,Rockafellar2015} and $\lambda_{\min}^{+}(A^\top A)$ is the minimal positive eigenvalue of $A^\top A$.

From weak duality $-f(x^*)\le \psi(y^*)$ and \eqref{eq:dual_function} we get the key relation of this section (see also \cite{allen2018make,anikin2017dual,nesterov2012make})
\begin{equation}\label{eq:key_ineq_for_restarts}
f(x(A^\top y))- f(x^*) \le\langle\nabla \psi(y), y\rangle = \langle Ax(A^\top y), y\rangle    
\end{equation}
This inequality implies the following theorem.
\begin{theorem}\label{thm:grad_norm_testarts_motivation}
    Consider function $f$ and its dual function $\psi$ defined in \eqref{eq:dual_function} such that problems \eqref{PP} and \eqref{DP} have solutions. Assume that $y^N$ is such that $\|\nabla \psi(y^N)\|_2 \le \nicefrac{\e}{R_y}$ and $y^N \le 2R_y$, where $\e > 0$ is some positive number and $R_y = \|y^*\|_2$ where $y^*$ is any minimizer of $\psi$. Then for $x^N = x(A^\top y^N)$ following relations hold:
    \begin{equation}
        f(x^N) - f(x^*) \le 2\e,\quad \|Ax^N\|_2 \le \frac{\e}{R_y},\label{eq:consequence_of_small_grad_norm}
    \end{equation}
    where $x^*$ is any minimizer of $f$.
\end{theorem}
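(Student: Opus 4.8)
The plan is to read off both conclusions directly from the key inequality \eqref{eq:key_ineq_for_restarts}, using the Demyanov--Danskin identity \eqref{eq:gradient_dual_function} and Cauchy--Schwarz; no further machinery is required.

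First I would handle the constraint-violation estimate, which is essentially immediate. By \eqref{eq:gradient_dual_function} the dual gradient is $\nabla\psi(y^N) = Ax(A^\top y^N) = Ax^N$, so that $\|Ax^N\|_2 = \|\nabla\psi(y^N)\|_2$, and the hypothesis $\|\nabla\psi(y^N)\|_2 \le \nicefrac{\e}{R_y}$ yields $\|Ax^N\|_2 \le \nicefrac{\e}{R_y}$ at once.

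For the suboptimality bound I would instantiate \eqref{eq:key_ineq_for_restarts} at $y = y^N$, which gives
\begin{equation*}
    f(x^N) - f(x^*) \le \langle \nabla\psi(y^N), y^N\rangle,
\end{equation*}
and then apply Cauchy--Schwarz together with the two hypotheses $\|\nabla\psi(y^N)\|_2\le\nicefrac{\e}{R_y}$ and $\|y^N\|_2 \le 2R_y$:
\begin{equation*}
    \langle \nabla\psi(y^N), y^N\rangle \le \|\nabla\psi(y^N)\|_2\,\|y^N\|_2 \le \frac{\e}{R_y}\cdot 2R_y = 2\e .
\end{equation*}
This is exactly the claimed bound $f(x^N) - f(x^*) \le 2\e$.

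There is no genuinely difficult step: the theorem is a clean certificate result whose entire content is already distilled in \eqref{eq:key_ineq_for_restarts}. The only point deserving care is to keep the provenance of that inequality in view — it rests on weak duality $-f(x^*)\le\psi(y^*)$ together with the minimality $\psi(y^*)\le\psi(y)$, which combine to give $-f(x^*)\le\psi(y)$ for every $y$ — and to read $x^*$ as the solution of the constrained problem \eqref{PP} (so that $f(x^*)$ is the value matched by the dual optimum), ensuring that the inner-product estimate above controls the genuine primal gap rather than a gap measured against some unconstrained minimizer.
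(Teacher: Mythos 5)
Your proposal is correct and follows exactly the paper's own argument: the gap bound comes from instantiating \eqref{eq:key_ineq_for_restarts} at $y^N$ and applying Cauchy--Schwarz with the two hypotheses, and the feasibility bound is the Demyanov--Danskin identity $\nabla\psi(y^N)=Ax^N$ combined with the gradient-norm assumption. Nothing is missing.
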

\begin{proof}
    Applying Cauchy-Schwarz inequality to \eqref{eq:key_ineq_for_restarts} we get
    \begin{eqnarray*}
        f(x^N) - f(x^*) &\overset{\eqref{eq:key_ineq_for_restarts}}{\le}& \|\nabla \psi(y^N)\|_2\cdot\|y^N\|_2 \le \frac{\e}{R_y}\cdot 2R_y = 2\e. 
    \end{eqnarray*}
    The second part \eqref{eq:consequence_of_small_grad_norm} immediately follows from $\|\nabla \psi(y^N)\|_2 \le \nicefrac{\e}{R_y}$ and Demyanov-Danskin theorem which implies $\nabla \psi(y^N) = Ax^N$.
\end{proof}
That is why, in this section we mainly focus on the methods that provides optimal convergence rates for the gradient norm. In particular, we consider Recursive Regularization Meta-Algorithm from (see Algorithm~\ref{Alg:RRMA-AC-SA}) \cite{foster2019complexity} with {\tt AC-SA$^2$} (see Algorithm~\ref{Alg:AC-SA2}) as a subroutine (i.e.\ {\tt RRMA-AC-SA$^2$}) which is based on {\tt AC-SA} algorithm (see Algorithm~\ref{Alg:AC-SA}) from \cite{ghadimi2012optimal}. We notice that {\tt RRMA-AC-SA$^2$} is applied for a regularized dual function
\begin{equation}
    \tilde\psi(y) = \psi(y) + \frac{\lambda}{2}\|y - y^0\|_2^2,\label{eq:regularized_dual_function}
\end{equation}
where $\lambda > 0$ is some positive number which will be defined further. Function $\tilde{\psi}$ is $\lambda$-strongly convex and $\tilde{L}_\psi$-smooth in $\R^n$ where $\tilde{L}_\psi = L_\psi + \lambda$. For now, we just assume w.l.o.g.\ that $\tilde{\psi}$ is $(\mu_\psi + \lambda)$-strongly convex in $\R^n$, but we will go back to this question further.

In this section we consider the same oracle as in Section~\ref{sec:dual}, but we additionally assume that $\delta = 0$, i.e.\ stochastic first-order oracle is unbiased. To define batched version of the stochastic gradient we will use the following notation:
\begin{equation}
    \nabla\Psi(y,\Bxi^{t},r_t) = \frac{1}{r_t}\sum\limits_{l=1}^{r_t}\nabla \psi(y,\xi^{l}), \quad x(y,\Bxi^t,r_t) = \frac{1}{r_t}\sum\limits_{l=1}^{r_t}x(y,\xi^l).\label{eq:batched_biased_stoch_gradient_restarts}
\end{equation}
As before in the cases when the batch-size $r_t$ can be restored from the context, we will use simplified notation $\nabla\Psi(y,\Bxi^{t})$ and $x(y,\Bxi^t)$. 
\begin{algorithm}[h]
\caption{{\tt RRMA-AC-SA$^2$} \cite{foster2019complexity}}
\label{Alg:RRMA-AC-SA}   
 \begin{algorithmic}[1]
\REQUIRE $y^0$~--- starting point, $m$~--- total number of iterations
\STATE $\psi_0 \leftarrow \tilde\psi$, $\hat y^0 \leftarrow y^0$, $T \leftarrow \left\lfloor\log_2\frac{\tilde L_\psi}{\lambda}\right\rfloor$
\FOR{$k=1,\ldots, T$}
\STATE Run {\tt AC-SA$^2$} for $\nicefrac{m}{T}$ iterations to optimize $\psi_{k-1}$ with $\hat y^{k-1}$ as a starting point and get the output $\hat y^k$ 
\STATE $\psi_k(y) \leftarrow \tilde\psi(y) + \lambda\sum_{l=1}^k 2^{l-1}\|y - \hat y^l\|_2^2$
\ENDFOR
\ENSURE $\hat y^T$. 
\end{algorithmic}
\end{algorithm}
In the {\tt AC-SA} algorithm we use batched stochastic gradients of functions $\psi_k$ which are defined as follows:
\begin{eqnarray}
    \nabla\Psi_k(y,\Bxi^{t}) &=& \frac{1}{r_t}\sum\limits_{l=1}^{r_t}\nabla \psi_k(y,\xi^{l}),\label{eq:batch_stoch_regularized_func}\\
    \nabla\psi_k(y,\xi) &=& \nabla\psi(y,\xi) + \lambda(y - y^0) + \lambda\sum\limits_{l=1}^k2^l (y - \hat{y}^l).\notag
\end{eqnarray}

\begin{algorithm}[h]
\caption{{\tt AC-SA} \cite{ghadimi2012optimal}}
\label{Alg:AC-SA}   
 \begin{algorithmic}[1]
\REQUIRE $z^0$~--- starting point, $m$~--- number of iterations, $\psi_k$~--- objective function
\STATE $y^0_{ag} \leftarrow z^0$, $y^0_{md} \leftarrow z^0$
\FOR{$t=1,\ldots, m$}
\STATE $\alpha_t \leftarrow \frac{2}{t+1}$, $\gamma_t \leftarrow \frac{4\tilde L_\psi}{t(t+1)}$
\STATE $y^t_{md} \leftarrow \frac{(1-\alpha_t)(\lambda + \gamma_t)}{\gamma_t + (1-\alpha_t^2)\lambda}y^{t-1}_{ag} + \frac{\alpha_t((1-\alpha_t)\lambda + \gamma_t)}{\gamma_t + (1-\alpha_t^2)\lambda}z^{t-1}$
\STATE $z^t \leftarrow \frac{\alpha_t\lambda}{\lambda + \gamma_t}y^t_{md} + \frac{(1-\alpha_t)\lambda + \gamma_t}{\lambda + \gamma_t}z^{t-1} - \frac{\alpha_t}{\lambda + \gamma_t}\nabla\Psi_k(y^t_{md}, \Bxi^t)$
\STATE $y^t_{ag} \leftarrow \alpha_t z^t + (1-\alpha_t)x^{t-1}_{ag}$
\ENDFOR
\ENSURE $y^m_{ag}$. 
\end{algorithmic}
\end{algorithm}

\begin{algorithm}[h]
\caption{{\tt AC-SA$^2$} \cite{foster2019complexity}}
\label{Alg:AC-SA2}   
 \begin{algorithmic}[1]
\REQUIRE $z^0$~--- starting point, $m$~--- number of iterations, $\psi_k$~--- objective function
\STATE Run {\tt AC-SA} for $\nicefrac{m}{2}$ iterations to optimize $\psi_{k}$ with $z^0$ as a starting point and get the output $y^1$ 
\STATE Run {\tt AC-SA} for $\nicefrac{m}{2}$ iterations to optimize $\psi_{k}$ with $y^1$ as a starting point and get the output $y^2$
\ENSURE $y^2$. 
\end{algorithmic}
\end{algorithm}

The following theorem states the main result for {\tt RRMA-AC-SA$^2$} that we need in the section.
\begin{theorem}[Corollary~1 from \cite{foster2019complexity}]\label{thm:rrma-ac-sa2_convergence}
    Let $\psi$ be $L_\psi$-smooth and $\mu_\psi$-strongly convex function and $\lambda = \Theta\left(\nicefrac{(L_\psi \ln^2 N)}{N^2}\right)$ for some $N > 1$. If the Algorithm~\ref{Alg:RRMA-AC-SA} performs $N$ iterations in total\footnote{The overall number of performed iterations during the calls of {\tt AC-SA$^2$} equals $N$.} with batch size $r$ for all iterations, then it will provide such a point $\hat{y}$ that
    \begin{equation}
        \EE\left[\|\nabla\psi(\hat{y})\|_2^2\mid y^0, r\right] \le C\left(\frac{L_\psi^2\|y^0 - y^*\|_2^2\ln^{4}N}{N^4} + \frac{\sigma_\psi^2\ln^6N}{rN}\right),\label{eq:rrma-ac-sa2_guarantee}
    \end{equation}
    where $C>0$ is some positive constant and $y^*$ is a solution of the dual problem \eqref{DP}.
\end{theorem}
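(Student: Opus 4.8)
The plan is to read Theorem~\ref{thm:rrma-ac-sa2_convergence} as the second-moment gradient-norm guarantee for recursive regularization and to reconstruct it from the per-stage convergence of the accelerated stochastic subroutine, following the scheme of \cite{foster2019complexity,ghadimi2012optimal}. The guiding principle is that driving $\|\nabla\psi\|_2$ down is, up to the smoothness constant, the same as minimizing the function value of a regularized surrogate whose regularizer is recentred and whose strength is doubled at every stage. First I would record the accelerated behaviour of a single {\tt AC-SA} run on the $\mu_{k-1}$-strongly convex, $L_{k-1}$-smooth objective $\psi_{k-1}$ for $M\eqdef N/T$ iterations with batch size $r$ and warm start $\hat y^{k-1}$:
\begin{equation*}
\EE\left[\psi_{k-1}(\hat y^{k}) - \psi_{k-1}(y^*_{k-1}) \mid \hat y^{k-1}\right] \le O\!\left(\frac{L_{k-1}\|\hat y^{k-1} - y^*_{k-1}\|_2^2}{M^2} + \frac{\sigma_\psi^2}{\mu_{k-1}\,r\,M}\right),
\end{equation*}
where $y^*_{k-1}\eqdef\argmin\psi_{k-1}$. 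Via strong convexity this is a squared-distance contraction with coefficient $O\!\big(L_{k-1}/(\mu_{k-1}M^2)\big)$; the role of the two chained runs in {\tt AC-SA$^2$} is precisely to square this coefficient, so that once $M\gtrsim\sqrt{L_{k-1}/\mu_{k-1}}$ the per-stage distance contraction sits below an absolute constant, say $\tfrac14$.

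The central step is the recursive-regularization bookkeeping. By construction $\psi_k = \psi_{k-1} + \lambda 2^{k-1}\|\cdot-\hat y^{k}\|_2^2$, so both $\mu_k$ and $L_k$ grow by $\Theta(\lambda 2^{k})$ per stage; after $T=\lfloor\log_2(\tilde{L}_\psi/\lambda)\rfloor$ stages one has $\lambda 2^{T}\asymp\tilde{L}_\psi\asymp L_\psi$. With $\lambda=\Theta(L_\psi\ln^2 N/N^2)$ one gets $L_\psi/\lambda\asymp N^2/\ln^2 N$, so $\sqrt{L_{k-1}/\mu_{k-1}}$, which is largest at the first stage where it equals $\Theta(N/\ln N)=\Theta(M)$, never exceeds the budget $M$ — exactly enabling the contraction above at every stage. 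Converting function accuracy into squared distance and showing in addition that recentring the regularizer at $\hat y^{k}\approx y^*_{k-1}$ displaces the minimizer by only $O(\|\hat y^{k}-y^*_{k-1}\|_2)$ (from $\nabla\psi_{k-1}(y^*_{k})=-2\lambda 2^{k-1}(y^*_{k}-\hat y^{k})$ together with $\mu_{k-1}\asymp\lambda 2^{k}$), I obtain a geometric recursion $D_k\eqdef\EE\|\hat y^{k}-y^*_{k-1}\|_2^2 \le \tfrac14 D_{k-1} + O\!\big(\sigma_\psi^2/(\mu_{k-1}^2 r M)\big)$, whose bias part decays like $4^{-k}\|y^0-y^*\|_2^2$.

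Finally I would convert the terminal distance into the gradient bound. Writing
\begin{equation*}
\nabla\psi(\hat y^{T}) = \nabla\psi_{T-1}(\hat y^{T}) - \lambda(\hat y^{T}-y^0) - \lambda\sum_{l=1}^{T-1}2^{l}(\hat y^{T}-\hat y^{l}),
\end{equation*}
the dominant bias is simply the regularizer gradient $\lambda(\hat y^{T}-y^0)\approx\lambda(y^*-y^0)$, which squares to $\Theta(L_\psi^2\ln^4 N/N^4)\|y^0-y^*\|_2^2$, exactly the first term of \eqref{eq:rrma-ac-sa2_guarantee}. The stationarity residual $\nabla\psi_{T-1}(\hat y^{T})$ is at most $L_{T-1}^2 D_T$ in squared norm, and since $(\tfrac14)^{T}\lesssim N^{-4}$ for $T=\Theta(\ln N)$ its bias contribution $L_\psi^2(\tfrac14)^{T}\|y^0-y^*\|_2^2$ is dominated by the first term; the sum $\lambda\sum_{l}2^{l}(\hat y^{T}-\hat y^{l})$ is handled the same way, the geometric decay of the consecutive displacements $\|\hat y^{l+1}-\hat y^{l}\|_2$ beating the geometric growth of the weights $2^{l}$ so that it is $O(\lambda 2^{T})=O(L_\psi)$ times the terminal distance. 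Propagating the per-stage variance contributions of $D_T$ through the recursion and multiplying by $L_{T-1}^2\asymp L_\psi^2$ then produces the statistical term $O(\sigma_\psi^2\ln^6 N/(rN))$, the logarithmic powers coming from the $T=\Theta(\ln N)$ stages, the reduced per-stage budget $M=N/T$, and the $\ln^2 N$ hidden in $\lambda$.

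I expect the main obstacle to be precisely this cross-stage error propagation: because the output $\hat y^{k}$ of one stage is the centre of the regularizer of the next, the stochastic error feeds forward rather than averaging out, and one must verify that the doubling of the regularization strength strictly dominates this accumulation so that the distances $D_k$ genuinely contract (factor $\le\tfrac14$) rather than merely stay bounded. A secondary, and in practice the more delicate, nuisance is the logarithmic accounting of the variance: each of the $T$ stages receives only $M=N/T$ iterations, and securing the target contraction with this tight budget costs polylogarithmic slack in $M$ versus $\sqrt{L_{k-1}/\mu_{k-1}}$; tracking these losses carefully through the $T=\Theta(\ln N)$ stages is what upgrades a naive $\ln^2 N$ estimate to the stated $\ln^4 N$ and $\ln^6 N$ powers, and is where most of the technical effort will go.
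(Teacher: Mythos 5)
This theorem is stated in the paper as an imported result (it is literally ``Corollary~1 from \cite{foster2019complexity}'') and the paper gives no proof of it, so there is no internal argument to compare yours against. Judged on its own, your reconstruction follows the same recursive-regularization route as the cited source and the outline is sound: the per-stage {\tt AC-SA} bound, the squaring of the bias coefficient by the two chained runs in {\tt AC-SA$^2$}, the geometric contraction of the distances $D_k$ under the doubling regularization, and the final decomposition of $\nabla\psi(\hat y^T)$ into the stationarity residual of $\psi_{T-1}$ plus the regularizer gradients are exactly the ingredients of the original proof, and your accounting of where the $\ln^4 N$ and $\ln^6 N$ factors come from ($T=\Theta(\ln N)$ stages, per-stage budget $M=N/T$, and the $\ln^2 N$ inside $\lambda$) is consistent with \eqref{eq:rrma-ac-sa2_guarantee}.

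One small remark on the step you flag as delicate: for the displacement of the minimizer under recentring, the identity $\nabla\psi_{k-1}(y^*_k)=-\lambda 2^{k}(y^*_k-\hat y^{k})$ combined with $\mu_{k-1}\asymp\lambda 2^{k}$ leaves you dividing by a quantity that is not comfortably bounded away from zero; the cleaner route is the two-sided comparison $\psi_k(y^*_k)\le\psi_k(y^*_{k-1})$ together with the strong convexity of $\psi_k$ at $y^*_k$, which gives $\|y^*_k-\hat y^{k}\|_2\le\|y^*_{k-1}-\hat y^{k}\|_2$ directly and feeds the contraction recursion without any cancellation issues. With that substitution your argument goes through as sketched.
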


Let us show that w.l.o.g.\ we can assume in this section that function $\psi$ defined in \eqref{eq:dual_function} is $\mu_\psi$-strongly convex everywhere with $\mu_\psi = \nicefrac{\lambda_{\min}^+(A^\top A)}{L}$. In fact, from $L$-smoothness of $f$ we have only that $\psi$ is $\mu_\psi$-strongly convex in $y^0 + \left(\text{Ker}(A^\top)\right)^\perp$ (see \cite{kakade2009duality,Rockafellar2015} for the details). However, the structure of the considered here methods is such that all points generated by the {\tt RRMA-AC-SA$^2$} and, in particular, {\tt AC-SA} lie in $y^0 + \left(\text{Ker}(A^\top)\right)^\perp$.

\begin{theorem}\label{thm:ac-sa_points}
    Assume that Algorithm~\ref{Alg:AC-SA} is run for the objective $\psi_k(y) = \tilde\psi(y) + \lambda\sum_{l=1}^k 2^{l-1}\|y - \hat y^l\|_2^2$ with $z^0$ as a starting point, where $z^0,\hat y^1,\ldots,\hat y^k$ are some points from $y^0 + \left(\text{Ker}(A^\top)\right)^\perp$ and $y^0\in\R^n$. Then for all $t\ge 0$ we have $y_{md}^t, z^t, y_{ag}^t \in y^0 + \left(\text{Ker}(A^\top)\right)^\perp$.
\end{theorem}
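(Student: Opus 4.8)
The plan is to argue by induction on the iteration counter $t$, relying on the single structural fact $(\text{Ker}(A^\top))^\perp = \text{Im}(A)$ together with the observation that every gradient produced inside Algorithm~\ref{Alg:AC-SA} lands in $\text{Im}(A)$. Write $V \eqdef (\text{Ker}(A^\top))^\perp = \text{Im}(A)$; since $V$ is a linear subspace, the affine set $y^0 + V$ is closed under affine combinations (those whose coefficients sum to $1$) and under adding any vector of $V$. These two closure properties are the only tools I need. The base case is immediate: $y^0_{ag} = y^0_{md} = z^0$, and $z^0 \in y^0 + V$ by hypothesis.

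For the inductive step I would assume $z^{t-1}, y^{t-1}_{ag} \in y^0 + V$ and treat the three updates in turn. First, the $y^t_{md}$-update is an affine combination of $y^{t-1}_{ag}$ and $z^{t-1}$: the two coefficients $\tfrac{(1-\alpha_t)(\lambda+\gamma_t)}{\gamma_t+(1-\alpha_t^2)\lambda}$ and $\tfrac{\alpha_t((1-\alpha_t)\lambda+\gamma_t)}{\gamma_t+(1-\alpha_t^2)\lambda}$ sum to $1$ once the numerator is simplified to $(1-\alpha_t^2)\lambda+\gamma_t$, so $y^t_{md}\in y^0+V$. Likewise, in the $z^t$-update the first two coefficients $\tfrac{\alpha_t\lambda}{\lambda+\gamma_t}$ and $\tfrac{(1-\alpha_t)\lambda+\gamma_t}{\lambda+\gamma_t}$ sum to $1$, so that part of $z^t$ remains in $y^0+V$; and $y^t_{ag}=\alpha_t z^t+(1-\alpha_t)y^{t-1}_{ag}$ is an affine combination (coefficients $\alpha_t$ and $1-\alpha_t$), hence also stays in $y^0+V$ once $z^t$ is shown to.

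The crucial point is therefore that the gradient correction $-\tfrac{\alpha_t}{\lambda+\gamma_t}\nabla\Psi_k(y^t_{md},\Bxi^t)$ lies in $V$. I would verify this term by term in $\nabla\psi_k(y,\xi) = \nabla\psi(y,\xi) + \lambda(y-y^0) + \lambda\sum_{l=1}^k 2^l(y-\hat y^l)$: by the Demyanov--Danskin identity \eqref{eq:gradient_dual_function}, $\nabla\psi(y,\xi) = Ax(A^\top y,\xi)\in\text{Im}(A)=V$; the term $\lambda(y-y^0)$ lies in $V$ because $y=y^t_{md}\in y^0+V$; and each $y-\hat y^l = (y-y^0)-(\hat y^l-y^0)\in V$ since $\hat y^l\in y^0+V$ by hypothesis. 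Averaging over the batch preserves membership in the subspace $V$, so $\nabla\Psi_k(y^t_{md},\Bxi^t)\in V$, whence $z^t\in y^0+V$ and then $y^t_{ag}\in y^0+V$, closing the induction.

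The only genuinely delicate part is the arithmetic confirming that the two compound coefficients in the $y^t_{md}$- and $z^t$-updates sum to $1$; everything else is routine bookkeeping with the subspace $V$. I expect no real obstacle beyond this, provided one keeps in mind the identity $(\text{Ker}(A^\top))^\perp = \text{Im}(A)$, which is exactly what channels the dual (stochastic) gradients into the required subspace.
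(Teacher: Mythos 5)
Your proof is correct and follows essentially the same route as the paper's: induction on $t$, with the md- and ag-updates handled as affine/convex combinations within the affine set $y^0+(\text{Ker}(A^\top))^\perp$, and the $z^t$-update handled by decomposing $\nabla\Psi_k(y^t_{md},\Bxi^t)$ into $Ax(A^\top y^t_{md},\Bxi^t)+\lambda(y^t_{md}-y^0)+\lambda\sum_{l=1}^k 2^l(y^t_{md}-\hat y^l)$ and checking each term lies in $\text{Im}(A)=(\text{Ker}(A^\top))^\perp$. Your explicit verification that the compound coefficients sum to $1$ is a small extra care the paper omits (it simply asserts the updates are convex combinations), but it does not change the argument.
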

\begin{proof}
    We prove the statement of the theorem by induction. For $t=0$ the statement is trivial, since $y^0_{md} = y^0_{ag} = z^0 \in y_0 + \left(\text{Ker}(A^\top)\right)^\perp$. Assume that $y_{md}^t, z^t, y_{ag}^t \in y^0 + \left(\text{Ker}(A^\top)\right)^\perp$ for some $t\ge 0$ and prove it for $t+1$. Since $y_0 + \left(\text{Ker}(A^\top)\right)^\perp$ is a convex set and $y^{t+1}_{md}$ is a convex combination of $y^{t}_{ag}$ and $z^t$ we have $y^{t+1}_{md} \in y^0 + \left(\text{Ker}(A^\top)\right)^\perp$. Next, the point $\frac{\alpha_t\lambda}{\lambda + \gamma_t}y^{t+1}_{md} + \frac{(1-\alpha_t)\lambda + \gamma_t}{\lambda + \gamma_t}z^{t}$ also lies in $y^0 + \left(\text{Ker}(A^\top)\right)^\perp$ since it is convex combination of the points lying in this set. Due to \eqref{eq:regularized_dual_function}, \eqref{eq:batched_biased_stoch_gradient_restarts} and \eqref{eq:batch_stoch_regularized_func} we have that $\nabla\Psi_k(y_{md}^{t+1},\Bxi^t) = Ax(A^\top y_{md}^{t+1},\Bxi^t) + \lambda(y_{md}^{t+1} - y^0) + \lambda\sum_{l=1}^k 2^l (y_{md}^{t+1} - \hat y^l)$. The first term lies in $\left(\text{Ker}(A^\top)\right)^\perp$ since $\text{Im}(A) = \left(\text{Ker}(A^\top)\right)^\perp$ and the second and the third terms also lie in $\left(\text{Ker}(A^\top)\right)^\perp$ since $y_{md}^{t+1}, y^0, \hat y^1,\ldots, \hat y^k \in y^0 + \left(\text{Ker}(A^\top)\right)^\perp$. Putting all together we get $z^{t+1} \in y^0 + \left(\text{Ker}(A^\top)\right)^\perp$. Finally, $y_{ag}^{t+1}$ lies in $y^0 + \left(\text{Ker}(A^\top)\right)^\perp$ as a convex combination of points from this set.
\end{proof}

\begin{corollary}\label{cor:rrma-ac-sa2_points}
    Assume that Algorithm~\ref{Alg:RRMA-AC-SA} is run for the objective $\psi_k(y) = \tilde\psi(y) + \lambda\sum_{l=1}^k 2^{l-1}\|y - \hat y^l\|_2^2$ with $y^0$ as a starting point. Then for all $k\ge 0$ we have $\hat y^k \in y^0 + \left(\text{Ker}(A^\top)\right)^\perp$.
\end{corollary}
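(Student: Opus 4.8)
The plan is to prove Corollary~\ref{cor:rrma-ac-sa2_points} by induction on $k$, using Theorem~\ref{thm:ac-sa_points} as the main engine. The key observation is that Algorithm~\ref{Alg:RRMA-AC-SA} produces the iterates $\hat y^1,\ldots,\hat y^T$ by successive calls to {\tt AC-SA$^2$}, and each such call is itself just two nested invocations of {\tt AC-SA} (see Algorithm~\ref{Alg:AC-SA2}). Since Theorem~\ref{thm:ac-sa_points} already certifies that {\tt AC-SA} preserves membership in the affine subspace $y^0 + (\text{Ker}(A^\top))^\perp$ provided its starting point and all the regularization centers $z^0,\hat y^1,\ldots,\hat y^k$ lie in that subspace, the whole argument reduces to tracking that this hypothesis is maintained from one outer iteration to the next.

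First I would set up the base case: the starting point $\hat y^0 = y^0$ trivially lies in $y^0 + (\text{Ker}(A^\top))^\perp$, since it is the center of that affine subspace (the zero vector of the shifted space). For the inductive step, I would assume that $\hat y^0,\hat y^1,\ldots,\hat y^{k-1}$ all belong to $y^0 + (\text{Ker}(A^\top))^\perp$, and show the same for $\hat y^k$. By line~3 of Algorithm~\ref{Alg:RRMA-AC-SA}, the point $\hat y^k$ is obtained by running {\tt AC-SA$^2$} on the objective $\psi_{k-1}$ with starting point $\hat y^{k-1}$. Unwinding Algorithm~\ref{Alg:AC-SA2}, this means {\tt AC-SA} is first run on $\psi_{k-1}$ starting from $\hat y^{k-1}$, producing some $y^1$, and then {\tt AC-SA} is run again on $\psi_{k-1}$ starting from $y^1$, producing $\hat y^k = y^2$. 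The objective $\psi_{k-1}(y) = \tilde\psi(y) + \lambda\sum_{l=1}^{k-1} 2^{l-1}\|y-\hat y^l\|_2^2$ has regularization centers $\hat y^1,\ldots,\hat y^{k-1}$, all of which lie in $y^0 + (\text{Ker}(A^\top))^\perp$ by the induction hypothesis, and its starting point $\hat y^{k-1}$ lies there as well. Thus Theorem~\ref{thm:ac-sa_points} applies to the first {\tt AC-SA} call and guarantees $y^1 \in y^0 + (\text{Ker}(A^\top))^\perp$, including the output $y^1_{ag}$. Feeding $y^1$ as the starting point of the second {\tt AC-SA} call—with the same objective $\psi_{k-1}$ whose centers still lie in the subspace—Theorem~\ref{thm:ac-sa_points} applies once more and yields $\hat y^k = y^2 \in y^0 + (\text{Ker}(A^\top))^\perp$, completing the induction.

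The main subtlety, and the only place requiring care, is a notational mismatch between the corollary and Theorem~\ref{thm:ac-sa_points}: the corollary's objective $\psi_k$ has index $k$ and uses centers $\hat y^1,\ldots,\hat y^k$, whereas in the actual run of Algorithm~\ref{Alg:RRMA-AC-SA} the iterate $\hat y^k$ is computed by optimizing $\psi_{k-1}$, which depends only on the earlier centers $\hat y^1,\ldots,\hat y^{k-1}$. I would make explicit that it is precisely this offset that keeps the argument non-circular: when we compute $\hat y^k$ we never need $\hat y^k$ itself to already lie in the subspace—the objective being optimized only references strictly earlier iterates, all of which are covered by the induction hypothesis. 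Apart from this bookkeeping, the proof is essentially immediate from Theorem~\ref{thm:ac-sa_points}, since that theorem has already done the real work of verifying that each {\tt AC-SA} update—being built from convex combinations of subspace points together with a stochastic gradient $\nabla\Psi_k$ whose leading term $Ax(\cdot)$ lies in $\text{Im}(A) = (\text{Ker}(A^\top))^\perp$ and whose regularization terms lie in the subspace by hypothesis—stays inside $y^0 + (\text{Ker}(A^\top))^\perp$.
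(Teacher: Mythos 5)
Your proof is correct and follows essentially the same route as the paper: induction on $k$ with the base case $\hat y^0 = y^0$, invoking Theorem~\ref{thm:ac-sa_points} for each of the two {\tt AC-SA} calls inside {\tt AC-SA$^2$} to propagate membership in $y^0 + \left(\text{Ker}(A^\top)\right)^\perp$. Your explicit handling of the index offset (that $\hat y^k$ is produced by optimizing $\psi_{k-1}$, which only references strictly earlier centers) is a welcome clarification of a point the paper leaves implicit.
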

\begin{proof}
    We prove this result by induction. For $t = 0$ the statement is trivial since $\hat y^0 = y^0$. Next, assume that $\hat{y}^0, \hat{y}^1,\ldots, \hat{y}^k \in y^0 + \left(\text{Ker}(A^\top)\right)^\perp$ and prove that $\hat{y}^{k+1} \in y^0 + \left(\text{Ker}(A^\top)\right)^\perp$. Our assumption implies that the assumptions from Theorem~\ref{thm:ac-sa_points} and applying the result of the theorem we get that $y^1$ and $y^2$ from the method {\tt AC-SA$^2$} applied to the $\psi_k$ also lie in $y^0 + \left(\text{Ker}(A^\top)\right)^\perp$. That is, the output of {\tt AC-SA$^2$} applied for $\psi_k$ lies in $y^0 + \left(\text{Ker}(A^\top)\right)^\perp$.
\end{proof}

Now we are ready to present our approach which was sketched in \cite{dvinskikh2019decentralized} of constructing an accelerated method for the strongly convex dual problem using restarts of {\tt RRMA-AC-SA$^2$}. To explain the main idea we start with the simplest case: $\sigma_\psi^2 = 0$, $r = 0$. It means that there is no stochasticity in the method and the bound \eqref{eq:rrma-ac-sa2_guarantee} can be rewritten in the following form:
\begin{equation}
    \|\nabla \psi(\hat{y})\|_2 \le \frac{\sqrt{C}L_\psi\|y^0 - y^*\|_2\ln^2 N}{N^2} \le \frac{\sqrt{C}L_\psi\|\nabla\psi(y^0)\|_2\ln^2 N}{\mu_\psi N^2},\label{eq:rrma-ac-sa2_guarantee_simplified}
\end{equation}
where we used inequality $\|\nabla \psi(y^0)\| \ge \mu_\psi\|y^0 - y^*\|$ which follows from the $\mu_\psi$-strong convexity of $\psi$. It implies that after $\bar{N} = \tilde{O}(\sqrt{\nicefrac{L_\psi}{\mu_\psi}})$ iterations of {\tt RRMA-AC-SA$^2$} the method returns such $\bar{y}^1 = \hat{y}$ that $\|\nabla\psi(\bar{y}^1)\|_2 \le \frac{1}{2}\|\nabla\psi(y^0)\|_2$. Next, applying {\tt RRMA-AC-SA$^2$} with $\bar{y}^1$ as a starting point for the same number of iterations we will get new point $\bar{y}^2$ such that $\|\nabla\psi(\bar{y}^2)\|_2 \le \frac{1}{2}\|\nabla\psi(\bar{y}^1)\|_2 \le \frac{1}{4}\|\nabla\psi(y^0)\|_2$. Then, after $l = O(\ln(\nicefrac{R_y\|\nabla\psi(y^0)\|_2}{\e}))$ of such restarts we can get the point $\bar{y}^l$ such that $\|\nabla\psi(\bar y^l)\|_2 \le \nicefrac{\e}{R_y}$ with total number of gradients computations $\bar{N}l = \tilde{O}\left(\sqrt{\nicefrac{L_\psi}{\mu_\psi}}\ln(\nicefrac{R_y\|\nabla\psi(y^0)\|_2}{\e})\right)$.

In the case when $\sigma_\psi^2 \neq 0$ we need to modify this approach. The first ingredient to handle the stochasticity is large enough batch size for the $l$-th restart: $r_l$ should be $\Omega\left(\nicefrac{\sigma_\psi^2}{(\bar{N}\|\nabla \psi(\bar{y}^{l-1})\|_2^2)}\right)$. However, in the stochastic case we do not have an access to the $\nabla \psi(\bar y^{l-1})$, so, such batch size is impractical. One possible way to fix this issue is to independently sample large enough number $\hat{r}_l \sim \nicefrac{R_y^2}{\e^2}$ of stochastic gradients additionally, which is the second ingredient of our approach, in order to get good enough approximation $\nabla\Psi(\bar{y}^{l-1},\Bxi^{l-1},\hat{r}_l)$ of $\nabla \psi(\bar y^{l-1})$ and use the norm of such an approximation which is close to the norm of the true gradient with big enough probability in order to estimate needed batch size $r^l$ for the optimization procedure. Using this, we can get the bound of the following form:
\begin{eqnarray*}
    \EE\left[\|\nabla \psi(\bar y^l)\|_2^2 \mid \bar y^{l-1}, r_l, \hat{r}_l\right] \le A_l &\eqdef& \frac{\|\nabla \psi(\bar y^{l-1})\|_2^2}{8}\\
    &&\qquad+ \frac{\|\nabla\Psi(\bar{y}^{l-1},\Bxi^{l-1},\hat{r}_l)-\nabla \psi(\bar y^{l-1})\|_2^2}{32}.
\end{eqnarray*}
The third ingredient is the amplification trick: we run $p_l = \Omega(\ln(\nicefrac{1}{\beta}))$ independent trajectories of {\tt RRMA-AC-SA$^2$}, get points $\bar{y}^{l,1},\ldots, \bar{y}^{l,p_l}$ and choose such $\bar{y}^{l,p(l)}$ among of them that $\|\nabla \psi(\bar{y}^{l,p(l)})\|_2$ is \textit{close enough} to $\min\limits_{p=1,\ldots,p_l}\|\nabla \psi(\bar{y}^{l, p})\|_2$ with high probability, i.e.\ $\|\nabla \psi(\bar{y}^{l,p(l)})\|_2^2 \le 2\min\limits_{p=1,\ldots,p_l}\|\nabla \psi(\bar{y}^{l, p})\|_2^2 + \nicefrac{\e^2}{8R_y^2}$ with probability at least $1-\beta$ for fixed $\nabla\Psi(\bar{y}^{l-1},\Bxi^{l-1},\hat{r}_l)$. We achieve it due to additional sampling of $\bar{r}_l \sim \nicefrac{R_y^2}{\e^2}$ stochastic gradients at $\bar{y}^{l,p}$ for each trajectory and choosing such $p(l)$ corresponding to the smallest norm of the obtained batched stochastic gradient. By Markov's inequality for all $p=1,\ldots,p_l$
\begin{equation*}
	\PP\left\{\|\nabla\psi(\bar{y}^{l,p})\|_2^2 \ge 2A_l\mid \bar y^{l-1}, r_l, \bar{r}_l\right\} \le \frac{1}{2},
\end{equation*}
hence
\begin{equation*}
	\PP\left\{\min_{p=1,\ldots,p_l}\|\nabla \psi(\bar{y}^{l, p})\|_2^2 \ge 2A_l\mid \bar y^{l-1}, r_l, \bar{r}_l\right\} \le \frac{1}{2^{p_l}}.
\end{equation*}
That is, for $p_l = \log_2(\nicefrac{1}{\beta})$ we have that with probability at least $1-2\beta$
\begin{equation*}
	\|\nabla\psi(\bar{y}^{l,p(l)})\|_2^2 \le \frac{\|\nabla \psi(\bar y^{l-1})\|_2^2}{2} + \frac{\|\nabla\Psi(\bar{y}^{l-1},\Bxi^{l-1},\hat{r}_l)-\nabla \psi(\bar y^{l-1})\|_2^2}{8} + \frac{\e^2}{8R_y^2}
\end{equation*}
for fixed $\nabla\Psi(\bar{y}^{l-1},\Bxi^{l-1},\hat{r}_l)$ which means that
\begin{equation*}
	\|\nabla\psi(\bar{y}^{l,p(l)})\|_2^2 \le \frac{\|\nabla \psi(\bar y^{l-1})\|_2^2}{2} + \frac{\e^2}{4R_y^2}
\end{equation*}
with probability at least $1 - 3\beta$. Therefore, after $l = \log_2(\nicefrac{2R_y^2\|\nabla\psi(y^0)\|_2^2}{\e^2})$ of such restarts our method provide the point $\bar{y}^{l,p(l)}$ such that with probability at least $1 - 3l\beta$
\begin{eqnarray*}
	\|\nabla \psi(\bar{y}^{l,p(l)})\|_2^2 &\le& \frac{\|\nabla\psi(y^{0})\|_2^2}{2^{l}} + \frac{\e^2}{4R_y^2}\sum\limits_{k=0}^{l-1}2^{-k} \le \frac{\e^2}{2R_y^2} + \frac{\e^2}{4R_y^2}\cdot 2 = \frac{\e^2}{R_y^2}.
\end{eqnarray*}

The approach informally described above is stated as Algorithm~\ref{Alg:Restarted-RRMA-AC-SA2}.
\begin{algorithm}[h]
\caption{{\tt Restarted-RRMA-AC-SA$^2$}}
\label{Alg:Restarted-RRMA-AC-SA2}   
 \begin{algorithmic}[1]
\REQUIRE $y^0$~--- starting point, $l$~--- number of restarts, $\{\hat r_k\}_{k=1}^l$, $\{\bar{r}_k\}_{k=1}^l$~--- batch-sizes, $\{p_k\}_{k=1}^{l}$~--- amplification parameters
\STATE Choose the smallest integer $\bar{N} > 1$ such that $\frac{CL_\psi^2\ln^4\bar{N}}{\mu_\psi^2\bar{N}^4} \le \frac{1}{32}$
\STATE $\bar{y}^{0,p(0)} \leftarrow y^0$ 
\FOR{$k=1,\ldots,l$}
\STATE Compute $\nabla\Psi(\bar{y}^{k-1,p(k-1)},\Bxi^{k-1,p(k-1)}, \hat{r}_k)$
\STATE $r_k \leftarrow \max\left\{1, \frac{64C\sigma_\psi^2\ln^6\bar{N}}{\bar{N}\|\nabla\Psi(\bar{y}^{k-1,p(k-1)},\Bxi^{k-1,p(k-1)}, \hat{r}_k)\|_2^2}\right\}$
\STATE Run $p_k$ independent trajectories of {\tt RRMA-AC-SA$^2$} for $\bar{N}$ iterations with batch-size $r_k$ with $\bar{y}^{k-1,p(k-1)}$ as a starting point and get outputs $\bar{y}^{k,1},\ldots,\bar{y}^{k,p_k}$
\STATE Compute $\nabla\Psi(\bar{y}^{k,1},\Bxi^{k,1}, \bar{r}_k),\ldots, \nabla\Psi(\bar{y}^{k,p_k},\Bxi^{k,p_k}, \bar{r}_k)$
\STATE $p(k) \leftarrow \argmin_{p=1,\ldots,p_k}\|\nabla\Psi(\bar{y}^{k,p},\Bxi^{k,p}, \bar{r}_k)\|_2$
\ENDFOR
\ENSURE $\bar{y}^{l,p(l)}$. 
\end{algorithmic}
\end{algorithm}

\begin{theorem}\label{thm:restarted-rrma-ac-sa2_convergence}
	Assume that $\psi$ is $\mu_\psi$-strongly convex and $L_\psi$-smooth. If Algorithm~\ref{Alg:Restarted-RRMA-AC-SA2} is run with 
	\begin{eqnarray}
	   l &=& \max\left\{1,\log_2\frac{2R_y^2\|\nabla\psi(y^0)\|_2^2}{\e^2}\right\}\notag\\
	   \hat{r}_k &=& \max\left\{1, \frac{4\sigma_\psi^2\left(1 + \sqrt{3\ln\frac{l}{\beta}}\right)^2R_y^2}{\e^2}\right\},\notag\\
	   r_k &=& \max\left\{1, \frac{64C\sigma_\psi^2\ln^6\bar{N}}{\bar{N}\|\nabla\Psi(\bar{y}^{k-1,p(k-1)},\Bxi^{k-1,p(k-1)}, \hat{r}_k)\|_2^2}\right\},\notag\\ p_k &=& \max\left\{1, \log_2\frac{l}{\beta}\right\}\notag\\
	   \bar{r}_k &=& \max\left\{1,\frac{128\sigma_\psi^2\left(1 + \sqrt{3\ln\frac{lp_k}{\beta}}\right)^2R_y^2}{\e^2}\right\}\label{eq:r-rrma-ac-sa2_params}
	\end{eqnarray}
	for all $k=1,\ldots,l$ where $\bar{N} > 1$ is such that $\frac{CL_\psi^2\ln^4\bar{N}}{\mu_\psi^2\bar{N}^4} \le \frac{1}{32}$, $\beta \in (0,\nicefrac{1}{3})$ and $\e > 0$, then with probability at least $1 - 3\beta$
	\begin{equation}
		\|\nabla \psi(\bar{y}^{l,p(l)})\|_2 \le \frac{\e}{R_y}\label{eq:restarted-rrma-ac-sa2_grad_norm}
	\end{equation}
	and the total number of the oracle calls equals
	\begin{equation}
	    \sum\limits_{k=1}^{l}(\hat{r}_k + \bar{N}p_kr_k + p_k\bar{r}_k) = \widetilde{O}\left(\max\left\{\sqrt{\frac{L_\psi}{\mu_\psi}}, \frac{\sigma_\psi^2R_y^2}{\e^2}\right\}\right).\label{eq:restarted_number_of_oracle_calls}
	\end{equation}
\end{theorem}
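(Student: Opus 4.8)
The plan is to reduce the whole statement to a single per-restart contraction inequality and then unroll it, with all stochastic slack controlled by the two batch parameters $\hat r_k$ and $\bar r_k$ and all failure probability absorbed into a final union bound. First I would record the consequence of Theorem~\ref{thm:rrma-ac-sa2_convergence}: because the iterates of every inner call stay in $y^0+(\text{Ker}\,A^\top)^\perp$ (Corollary~\ref{cor:rrma-ac-sa2_points}), $\psi$ is $\mu_\psi$-strongly convex along the entire trajectory, so \eqref{eq:rrma-ac-sa2_guarantee} applies with $\|\hat y^{\,0}-y^*\|_2\le \mu_\psi^{-1}\|\nabla\psi(\hat y^{\,0})\|_2$. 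The choice of $\bar N$ satisfying $\frac{CL_\psi^2\ln^4\bar N}{\mu_\psi^2\bar N^4}\le\frac{1}{32}$ drives the first (deterministic) term of \eqref{eq:rrma-ac-sa2_guarantee} below a constant fraction of $\|\nabla\psi(\bar y^{k-1,p(k-1)})\|_2^2$, and the adaptive rule $r_k\ge \frac{64C\sigma_\psi^2\ln^6\bar N}{\bar N\|\nabla\Psi(\bar y^{k-1,p(k-1)},\cdot,\hat r_k)\|_2^2}$ pins the second (variance) term to a constant fraction of $\|\nabla\Psi(\bar y^{k-1,p(k-1)},\cdot,\hat r_k)\|_2^2$. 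Conditioning on $\bar y^{k-1,p(k-1)}$ and on the (random) batch size $r_k$, this yields exactly the one-trajectory bound $\EE[\|\nabla\psi(\bar y^{k,p})\|_2^2\mid\cdot]\le A_k$ with $A_k$ the quantity displayed just before the theorem.

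Next I would install the amplification-and-selection step. Markov's inequality gives $\PP\{\|\nabla\psi(\bar y^{k,p})\|_2^2\ge 2A_k\}\le\tfrac12$ per independent trajectory, hence $\PP\{\min_p\|\nabla\psi(\bar y^{k,p})\|_2^2\ge 2A_k\}\le 2^{-p_k}\le \beta/l$ for $p_k=\log_2(l/\beta)$. Since the true norms are unobservable, I would use the light-tails concentration (Lemmas~\ref{lem:jin_lemma_2} and \ref{lem:jud_nem_large_dev}) for the batched gradient: with $\bar r_k$ as chosen, $\|\nabla\Psi(\bar y^{k,p},\cdot,\bar r_k)-\nabla\psi(\bar y^{k,p})\|_2\le \e/(cR_y)$ simultaneously for all $p=1,\dots,p_k$ with probability $\ge 1-\beta/l$, so selecting $p(k)$ by the smallest observed batched norm guarantees $\|\nabla\psi(\bar y^{k,p(k)})\|_2^2\le 2\min_p\|\nabla\psi(\bar y^{k,p})\|_2^2+\tfrac{\e^2}{8R_y^2}$. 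The same concentration for the batch $\hat r_k$ lets me replace $\|\nabla\Psi(\bar y^{k-1,p(k-1)},\cdot,\hat r_k)\|_2^2$ inside $A_k$ by $\|\nabla\psi(\bar y^{k-1,p(k-1)})\|_2^2$ up to an $\tfrac{\e^2}{R_y^2}$-level error. Combining these three facts collapses $A_k$ into the clean recursion $\|\nabla\psi(\bar y^{k,p(k)})\|_2^2\le \tfrac12\|\nabla\psi(\bar y^{k-1,p(k-1)})\|_2^2+\tfrac{\e^2}{4R_y^2}$, valid on an event of probability $\ge 1-3\beta/l$ for that restart.

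Unrolling this recursion over $k=1,\dots,l$ and summing the geometric tail gives $\|\nabla\psi(\bar y^{l,p(l)})\|_2^2\le 2^{-l}\|\nabla\psi(y^0)\|_2^2+\tfrac{\e^2}{4R_y^2}\sum_{k\ge0}2^{-k}$, where the choice $l=\log_2\big(2R_y^2\|\nabla\psi(y^0)\|_2^2/\e^2\big)$ makes the first term $\le \tfrac{\e^2}{2R_y^2}$, so the right-hand side is $\le \e^2/R_y^2$; this is \eqref{eq:restarted-rrma-ac-sa2_grad_norm}. A union bound over the $l$ restarts of the per-restart failure events (each of probability $\le 3\beta/l$) gives the global probability $1-3\beta$. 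For the complexity \eqref{eq:restarted_number_of_oracle_calls} I would count the three contributions separately: the deterministic backbone $\sum_k \bar N p_k\cdot 1=\widetilde{O}(\sqrt{L_\psi/\mu_\psi})$ (from the $r_k=1$ regime, since $\bar N=\widetilde{O}(\sqrt{L_\psi/\mu_\psi})$ and $l,p_k$ are logarithmic), and the standalone sampling $\hat r_k,\bar r_k=\widetilde{O}(\sigma_\psi^2R_y^2/\e^2)$. For $\sum_k\bar N p_k r_k$ I would use that $\|\nabla\psi(\bar y^{k-1,p(k-1)})\|_2^2$ decays geometrically toward $\e^2/R_y^2$, so $\bar N r_k\sim \sigma_\psi^2/\|\nabla\psi(\bar y^{k-1,p(k-1)})\|_2^2$ grows geometrically and the sum is dominated by its last term $\widetilde{O}(\sigma_\psi^2R_y^2/\e^2)$. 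Taking the maximum yields \eqref{eq:restarted_number_of_oracle_calls}.

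The hard part will be the bookkeeping of conditioning and independence in the amplification step rather than any single estimate: the batch size $r_k$ is itself a random variable built from $\nabla\Psi(\cdot,\hat r_k)$, so I must apply Theorem~\ref{thm:rrma-ac-sa2_convergence} conditionally on $r_k$ (which is exactly the conditioning ``$\mid y^0,r$'' under which it is stated) and keep the selection sample $\bar r_k$ drawn independently of the trajectories so that the concentration bound and the Markov bound may be multiplied. Getting the nested union bound to close at precisely $1-3\beta$ --- three $\beta/l$-events per restart, times $l$ restarts --- is the delicate part; everything else is the geometric-series arithmetic already anticipated in the discussion preceding the theorem.
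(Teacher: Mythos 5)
Your proposal is correct and follows essentially the same route as the paper's proof: the one-trajectory expectation bound from Theorem~\ref{thm:rrma-ac-sa2_convergence} with the adaptive batch $r_k$, Markov plus amplification to get a $\beta/l$ failure probability, Lemma~\ref{lem:jud_nem_large_dev} concentration for both $\hat r_k$ and $\bar r_k$ to justify the selection rule and to replace the observed norm by the true one, the per-restart recursion $\|\nabla\psi(\bar y^{k,p(k)})\|_2^2\le\tfrac12\|\nabla\psi(\bar y^{k-1,p(k-1)})\|_2^2+\tfrac{\e^2}{4R_y^2}$ on an event of probability $\ge 1-3\beta/l$, and the final union bound and geometric sum. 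The complexity count is also handled the same way.
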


\begin{corollary}\label{cor:r-rrma-ac-sa2_norm}
	Under assumptions of Theorem~\ref{thm:restarted-rrma-ac-sa2_convergence} we get that with probability at least $1 - 3\beta$
	\begin{equation}
		\|\bar{y}^{l,p(l)} - y^*\|_2 \le \frac{\e}{\mu_\psi R_y},\label{eq:restarted_norm_difference}
	\end{equation}
	where $\beta\in(0,\nicefrac{1}{3})$ the total number of the oracle calls is defined in \eqref{eq:restarted_number_of_oracle_calls}.	
\end{corollary}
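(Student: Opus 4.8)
The plan is to combine the gradient-norm guarantee \eqref{eq:restarted-rrma-ac-sa2_grad_norm} of Theorem~\ref{thm:restarted-rrma-ac-sa2_convergence} with the elementary lower bound on the gradient norm that $\mu_\psi$-strong convexity provides. First I would recall that since $\psi$ is $\mu_\psi$-strongly convex (which in this section we assume w.l.o.g.\ to hold everywhere, using Theorem~\ref{thm:ac-sa_points} and Corollary~\ref{cor:rrma-ac-sa2_points} to guarantee that every iterate stays inside $y^0 + (\text{Ker}(A^\top))^\perp$, where the strong convexity is valid), one has for every $y$ the bound $\|\nabla\psi(y)\|_2 \ge \mu_\psi\|y - y^*\|_2$. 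This is exactly the inequality already invoked in \eqref{eq:rrma-ac-sa2_guarantee_simplified}; to stay self-contained I would derive it in one line by writing the strong-convexity inequality \eqref{eq:mu_strong_convexity_def} for the pair $(y^*, y)$ and for the pair $(y, y^*)$ (using $\nabla\psi(y^*) = 0$), summing them to get $\langle\nabla\psi(y), y - y^*\rangle \ge \mu_\psi\|y - y^*\|_2^2$, and then applying Cauchy--Schwarz and dividing by $\|y - y^*\|_2$.

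Next I would apply this bound at the output point $y = \bar{y}^{l,p(l)}$ and chain it with \eqref{eq:restarted-rrma-ac-sa2_grad_norm}. On the event of probability at least $1 - 3\beta$ on which Theorem~\ref{thm:restarted-rrma-ac-sa2_convergence} guarantees $\|\nabla\psi(\bar{y}^{l,p(l)})\|_2 \le \nicefrac{\e}{R_y}$, this immediately yields
\[
    \mu_\psi\|\bar{y}^{l,p(l)} - y^*\|_2 \le \|\nabla\psi(\bar{y}^{l,p(l)})\|_2 \le \frac{\e}{R_y},
\]
and dividing by $\mu_\psi$ gives \eqref{eq:restarted_norm_difference}. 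Since the argument uses only the same high-probability event as the theorem, the probability $1 - 3\beta$ and the oracle-call count \eqref{eq:restarted_number_of_oracle_calls} are inherited verbatim, and no additional sampling is required.

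There is essentially no obstacle here beyond a single invocation of the strong-convexity gradient bound; the only point that needs care is the one already settled earlier in the section, namely that $\psi$ is strongly convex not on all of $\R^n$ but only on the affine subspace $y^0 + (\text{Ker}(A^\top))^\perp$. Because every iterate, and in particular the output $\bar{y}^{l,p(l)}$, lies in this subspace and $y^*$ is taken as the minimizer of $\psi$ there, the inequality $\|\nabla\psi(\cdot)\|_2 \ge \mu_\psi\|\cdot - y^*\|_2$ remains valid along the whole trajectory, so the conclusion is unaffected.
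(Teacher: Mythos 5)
Your proposal is correct and follows the paper's own proof exactly: both combine the gradient-norm bound \eqref{eq:restarted-rrma-ac-sa2_grad_norm} with the strong-convexity inequality $\mu_\psi\|y-y^*\|_2 \le \|\nabla\psi(y)\|_2$ on the same probability event. The extra care you take about the subspace $y^0 + (\text{Ker}(A^\top))^\perp$ is a sensible (and already-settled) remark but does not change the argument.
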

\begin{proof}
	Inequalities \eqref{eq:restarted-rrma-ac-sa2_grad_norm} and $\mu_\psi\|y - y^*\|_2 \le \|\nabla\psi(y)\|_2$ which follows from $\mu_\psi$-strong convexity of $\psi$ imply that
	\begin{equation*}
		\|\bar{y}^{l,p(l)} - y^*\|_2 \le \frac{\|\nabla \psi(\bar{y}^{l,p(l)})\|_2}{\mu_\psi} \overset{\eqref{eq:restarted-rrma-ac-sa2_grad_norm}}{\le} \frac{\e}{\mu_\psi R_y}.
	\end{equation*}	 
\end{proof}
Now we are ready to present convergence guarantees for the primal function and variables.
\begin{corollary}\label{cor:r-rrma-ac-sa2_connect_with_primal}
	Let the assumptions of Theorem~\ref{thm:restarted-rrma-ac-sa2_convergence} hold. Assume that $f$ is $L_f$-Lipschitz continuous on $B_{R_f}(0)$ where $$R_f = \left(\frac{\mu_\psi}{8\sqrt{\lambda_{\max}(A^\top A)}} + \frac{\sqrt{\lambda_{\max}(A^\top A)}}{\mu} + \frac{R_x}{R_y}\right)R_y$$ and $R_x = \|x(A^\top y^*)\|_2$. Then, with probability at least $1 - 4\beta$
	\begin{equation}
		f(x^l) - f(x^*) \le \left(2 + \frac{L_f}{8R_y\sqrt{\lambda_{\max}(A^\top A)}}\right)\e,\quad \|Ax^l\| \le \frac{9\e}{8R_y},\label{eq:restarted_primal_guarantees}
	\end{equation}
	where $\beta\in(0,\nicefrac{1}{4})$, $\e\in(0,\mu_\psi R_y^2)$ $x^l \eqdef x(A^\top\bar{y}^{l,p(l)},\Bxi^{l,p(l)}, \bar{r}_l)$ and to achieve it we need the total number of oracle calls equals
	\begin{equation}
	    \sum\limits_{k=1}^{l}(\hat{r}_k + \bar{N}p_kr_k + p_k\bar{r}_k) = \widetilde{O}\left(\max\left\{\sqrt{\frac{L}{\mu}\chi(A^\top A)}, \frac{\sigma_x^2M^2}{\e^2}\chi(A^\top A)\right\}\right)\label{eq:restarted_number_of_oracle_calls_connextion_with_primal}
	\end{equation}
	where $M = \|\nabla f(x^*)\|_2$.
\end{corollary}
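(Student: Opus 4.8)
The plan is to decompose both $f(x^l) - f(x^*)$ and $\|Ax^l\|_2$ into an \emph{exact} part, coming from the true primal point $x(A^\top\hat y)$ associated with $\hat y \eqdef \bar y^{l,p(l)}$, and a \emph{noise} part coming from replacing $x(A^\top\hat y)$ by its batched estimate $x^l = x(A^\top\hat y,\Bxi^{l,p(l)},\bar r_l)$. First I would invoke Theorem~\ref{thm:restarted-rrma-ac-sa2_convergence} to get $\|\nabla\psi(\hat y)\|_2 \le \nicefrac{\e}{R_y}$ with probability at least $1-3\beta$, and Corollary~\ref{cor:r-rrma-ac-sa2_norm} to get $\|\hat y - y^*\|_2 \le \nicefrac{\e}{(\mu_\psi R_y)}$ on the same event. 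Since $\e < \mu_\psi R_y^2$, the latter yields $\|\hat y\|_2 \le \|y^*\|_2 + \nicefrac{\e}{(\mu_\psi R_y)} \le 2R_y$, so the hypotheses of Theorem~\ref{thm:grad_norm_testarts_motivation} hold at $\hat y$. Applying the key inequality \eqref{eq:key_ineq_for_restarts} with Cauchy--Schwarz then gives $f(x(A^\top\hat y)) - f(x^*) \le \|\nabla\psi(\hat y)\|_2\|\hat y\|_2 \le 2\e$, while Demyanov--Danskin ($\nabla\psi(\hat y) = Ax(A^\top\hat y)$) gives $\|Ax(A^\top\hat y)\|_2 \le \nicefrac{\e}{R_y}$.

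Next I would control the two noise terms. Because $\delta = 0$ here, the batched estimate is unbiased, $\EE[x^l\mid\hat y] = x(A^\top\hat y)$, so I would apply the light-tails concentration (the combination of Lemmas~\ref{lem:jin_lemma_2} and \ref{lem:jud_nem_large_dev} already used for \eqref{eq:super_exp_moment_batched_stoch_grad}) to the batch of size $\bar r_l$. The choice of $\bar r_l$ in \eqref{eq:r-rrma-ac-sa2_params} is calibrated so that, with probability at least $1-\beta$, $\|x^l - x(A^\top\hat y)\|_2 \le \nicefrac{\e}{(8\sqrt{\lambda_{\max}(A^\top A)}R_y)}$, and, since $\sigma_\psi = \sqrt{\lambda_{\max}(A^\top A)}\,\sigma_x$ and $Ax^l = \nabla\Psi(\hat y,\Bxi^{l,p(l)},\bar r_l)$, equivalently $\|Ax^l - \nabla\psi(\hat y)\|_2 \le \nicefrac{\e}{(8R_y)}$ (the factor $128 = (8\sqrt2)^2$ in $\bar r_l$ is exactly what produces the constant $\nicefrac18$). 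The logarithmic argument $\ln\nicefrac{lp_k}{\beta}$ in $\bar r_l$ absorbs the union bound over the $l$ restarts and the $p_l$ trajectories, so the estimate survives at the data-dependent index $p(l)$. Adding this single $\beta$ to the previous failure probability produces the overall $1-4\beta$.

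The constraint bound is then immediate from the triangle inequality: $\|Ax^l\|_2 \le \|\nabla\psi(\hat y)\|_2 + \|A(x^l - x(A^\top\hat y))\|_2 \le \nicefrac{\e}{R_y} + \nicefrac{\e}{(8R_y)} = \nicefrac{9\e}{(8R_y)}$. For the functional bound I would use $L_f$-Lipschitz continuity of $f$ on $B_{R_f}(0)$: splitting $f(x^l)-f(x^*) = \bigl(f(x^l)-f(x(A^\top\hat y))\bigr) + \bigl(f(x(A^\top\hat y))-f(x^*)\bigr)$, the second bracket is $\le 2\e$ and the first is $\le L_f\|x^l - x(A^\top\hat y)\|_2 \le \nicefrac{L_f\e}{(8\sqrt{\lambda_{\max}(A^\top A)}R_y)}$, assembling into the claimed constant $2 + \nicefrac{L_f}{(8R_y\sqrt{\lambda_{\max}(A^\top A)})}$. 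The oracle count \eqref{eq:restarted_number_of_oracle_calls_connextion_with_primal} follows by substituting $\mu_\psi = \nicefrac{\lambda_{\min}^+(A^\top A)}{L}$, $L_\psi = \nicefrac{\lambda_{\max}(A^\top A)}{\mu}$, $\sigma_\psi^2 = \lambda_{\max}(A^\top A)\sigma_x^2$ into \eqref{eq:restarted_number_of_oracle_calls} and using $R_y \le \nicefrac{M}{\sqrt{\lambda_{\min}^+(A^\top A)}}$, which comes from $A^\top y^* = \nabla f(x^*)$ at optimality together with $y^*\in\mathrm{Im}(A)$ (minimal-norm dual solution), so that $\sigma_\psi^2 R_y^2 \le \sigma_x^2 M^2\chi(A^\top A)$.

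The step I expect to be the main obstacle is verifying that both $x(A^\top\hat y)$ and $x^l$ actually lie in $B_{R_f}(0)$, since that is what licenses the Lipschitz estimate and is where the precise shape of $R_f$ matters. I would bound $\|x(A^\top\hat y)\|_2 \le R_x + \|x(A^\top\hat y) - x(A^\top y^*)\|_2$, estimate the last term through the $\nicefrac1\mu$-smoothness of $\varphi$ (so $\|x(A^\top\hat y)-x(A^\top y^*)\|_2 \le \nicefrac{\sqrt{\lambda_{\max}(A^\top A)}}{\mu}\|\hat y - y^*\|_2 \le \nicefrac{\sqrt{\lambda_{\max}(A^\top A)}}{\mu}\cdot\nicefrac{\e}{(\mu_\psi R_y)}$), and then use $\e < \mu_\psi R_y^2$ to bound this by $\nicefrac{\sqrt{\lambda_{\max}(A^\top A)}}{\mu}R_y$; adding the noise radius $\nicefrac{\e}{(8\sqrt{\lambda_{\max}(A^\top A)}R_y)} \le \nicefrac{\mu_\psi R_y}{(8\sqrt{\lambda_{\max}(A^\top A)})}$ (again by $\e<\mu_\psi R_y^2$) reproduces exactly the three terms of $R_f$, confirming $x^l \in B_{R_f}(0)$ and closing the argument.
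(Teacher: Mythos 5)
Your proposal is correct and follows essentially the same route as the paper's proof: the same decomposition into the exact primal point $x(A^\top\bar y^{l,p(l)})$ handled via Theorem~\ref{thm:grad_norm_testarts_motivation} plus a noise term controlled by Lemma~\ref{lem:jud_nem_large_dev} with the batch size $\bar r_l$, the same verification that both points lie in $B_{R_f}(0)$ via the $\nicefrac{1}{\mu}$-smoothness of $\varphi$ and $\e\le\mu_\psi R_y^2$, and the same substitutions ($\sigma_\psi^2=\lambda_{\max}(A^\top A)\sigma_x^2$ and \eqref{R}) for the oracle count. No gaps.
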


\subsection{Direct Acceleration for Strongly Convex Dual Function}\label{sec:str_cvx_dual}
We consider first the following minimization problem:
\begin{equation}
    \min_{y\in\R^n}\psi(y),\label{eq:main_problem_str_cvx}
\end{equation}
where $\psi(y)$ is $\mu_\psi$-strongly convex and $L_\psi$-smooth. We use the same notation to define the objective in \eqref{eq:main_problem_str_cvx} as for the dual function from \eqref{DP} because later in the section we apply the algorithm introduced below to the \eqref{DP}, but for now it is not important that $\psi$ is a dual function for \eqref{PP} and we prefer to consider more general situation. As in Section~\ref{sec:conv_dual}, we do not assume that we have an access to the exact gradient of $\psi(y)$ and consider instead of it biased stochastic gradient $\tnabla\psi(y,\xi)$ satisfying inequalities \eqref{eq:bias_stoch_grad} and \eqref{eq:super_exp_moment_stoch_grad} with $\delta \ge 0$ and $\sigma_\psi \ge 0$. In the main method of this section batched version of the stochastic gradient is used:
\begin{equation}
    \tnabla\Psi(y,\Bxi^{k}) = \frac{1}{r_k}\sum\limits_{l=1}^{r_k}\tnabla \psi(y,\xi^{l}),\label{eq:batched_stoch_grad_str_cvx}
\end{equation}
where $r_k$ is the batch-size that we leave unspecified for now. Note that $\tnabla\Psi(y,\Bxi^{k})$ satisfies inequalities \eqref{eq:bias_batched_stoch_grad} and \eqref{eq:super_exp_moment_batched_stoch_grad}.

We use Stochastic Similar Triangles Method which is stated in this section as Algorithm~\ref{Alg:STM_str_cvx} to solve problem \eqref{eq:main_problem_str_cvx}. To define the iterate $z^{k+1}$ we use the following sequence of functions:
\begin{eqnarray}
    \tilde{g}_{0}(z) &\eqdef& \frac{1}{2}\|z-z^0\|_2^2 + \alpha_0\left(\psi(y^0) + \la\tnabla \Psi(y^0,\Bxi^0), z - y^0\ra + \frac{\mu_\psi}{2}\|z - y^{0}\|_2^2\right),\notag\\
    \tilde{g}_{k+1}(z) &\eqdef& \tilde{g}_{k}(z) + \alpha_{k+1}\Big(\psi(\tilde{y}^{k+1}) + \la\tnabla \Psi(\tilde{y}^{k+1},\Bxi^{k+1}),z - \tilde{y}^{k+1}\ra+ \frac{\mu_\psi}{2}\|z - \tilde{y}^{k+1}\|_2^2\Big)\notag\\
    &=& \frac{1}{2}\|z - z^0\|_2^2 + \sum\limits_{l=0}^{k+1}\alpha_{l}\left(\psi(\tilde{y}^{l}) + \la\tnabla \Psi(\tilde{y}^{l},\Bxi^{l}),z - \tilde{y}^{l}\ra + \frac{\mu_\psi}{2}\|z - \tilde{y}^{l}\|_2^2\right)\label{eq:g_k+1_sequence_str_cvx}
\end{eqnarray}
We notice that $\tg_{k}(z)$ is $(1+A_k\mu_\psi)$-strongly convex.

\begin{algorithm}[h]
\caption{Stochastic Similar Triangles Methods for strongly convex problems ({\tt SSTM{\_}sc})}
\label{Alg:STM_str_cvx}   
 \begin{algorithmic}[1]
\REQUIRE $\tilde{y}^0 = z^0 = y^0$~--- starting point, $N$~--- number of iterations
\STATE Set $\alpha_0 = A_0 = \nicefrac{1}{L_\psi}$
\STATE Get $\tnabla\Psi(y^0,\Bxi^0)$ to define $\tg_0(z)$
\FOR{$k=0,1,\ldots, N-1$}
\STATE Choose $\alpha_{k+1}$ such that $A_{k+1} = A_k + \alpha_{k+1}$, $A_{k+1}(1+A_k\mu_\psi) = \alpha_{k+1}^2L_\psi$
\STATE $\tilde{y}^{k+1} = \nicefrac{(A_ky^k+\alpha_{k+1}z^k)}{A_{k+1}}$
\STATE $z^{k+1} = \argmin_{z\in \R^n} \tilde{g}_{k+1}(z)$, where $\tilde{g}_{k+1}(z)$ is defined in \eqref{eq:g_k+1_sequence_str_cvx}
\STATE $y^{k+1} = \nicefrac{(A_ky^k+\alpha_{k+1}z^{k+1})}{A_{k+1}}$
\ENDFOR
\ENSURE    $ x^N$ 
\end{algorithmic}
 \end{algorithm}
 
\begin{lemma}\label{lem:stm_str_cvx_g_k+1}
    Assume that Algorithm~\ref{Alg:STM_str_cvx} is run to solve problem \eqref{eq:main_problem_str_cvx} with $\psi(y)$ being $\mu_\psi$-strongly convex and $L_\psi$-smooth. Then, for all $k\ge 0$ we have 
    \begin{eqnarray}
        A_k\psi(y^k) &\le& \tilde{g}_{k}(z^k) - \sum\limits_{l=0}^{k-1}\frac{A_l\mu_\psi}{2}\|y^l - \tilde{y}^{l+1}\|_2^2 \notag\\
        &&\qquad\qquad\qquad\qquad + \sum\limits_{l=0}^{k}\frac{\alpha_{l}}{2\mu_\psi}\left\|\tnabla\Psi(\tilde{y}^{l},\Bxi^{l}) - \nabla\psi(\tilde{y}^{l})\right\|_2^2. \label{eq:stm_str_cvx_g_k+1}
    \end{eqnarray}
\end{lemma}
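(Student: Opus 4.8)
The plan is to argue by induction on $k$, following the estimating-sequence pattern standard for Similar-Triangles-type methods, but carefully isolating the stochastic error. The base case $k=0$ is immediate: since $z^0 = \ty^0 = y^0$ and $\alpha_0 = A_0$, evaluating the definition of $\tg_0$ at $z = z^0 = y^0$ kills the quadratic and inner-product terms, giving $\tg_0(z^0) = \alpha_0\psi(y^0) = A_0\psi(y^0)$; the middle sum is empty and the final sum is nonnegative, so the inequality holds.

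For the inductive step I would start from the recursion for $\tg_{k+1}$ evaluated at its minimizer $z^{k+1}$, and use that $\tg_k$ is $(1+A_k\mu_\psi)$-strongly convex with minimizer $z^k$ to bound $\tg_k(z^{k+1}) \ge \tg_k(z^k) + \tfrac{1+A_k\mu_\psi}{2}\|z^{k+1}-z^k\|_2^2$. Plugging in the inductive hypothesis for $\tg_k(z^k)$ and cancelling the two common sums reduces the target to a single scalar inequality among $\psi(y^k)$, $\psi(y^{k+1})$, $\psi(\ty^{k+1})$ and the iterate differences. I would then feed in two one-point inequalities for $\psi$ expanded around $\ty^{k+1}$: $\mu_\psi$-strong convexity to lower-bound $A_k\psi(y^k)$, which is precisely where the summand $\tfrac{A_k\mu_\psi}{2}\|y^k-\ty^{k+1}\|_2^2$ appears and cancels the new term on the right-hand side, and $L_\psi$-smoothness to upper-bound $A_{k+1}\psi(y^{k+1})$. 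After these substitutions the terms $A_{k+1}\psi(\ty^{k+1})$ on both sides cancel.

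The rest is bookkeeping with the algorithm's identities. From $A_{k+1}\ty^{k+1} = A_k y^k + \alpha_{k+1}z^k$ and $A_{k+1}y^{k+1} = A_k y^k + \alpha_{k+1}z^{k+1}$ I obtain $y^{k+1}-\ty^{k+1} = \tfrac{\alpha_{k+1}}{A_{k+1}}(z^{k+1}-z^k)$ and $z^k-\ty^{k+1} = \tfrac{A_k}{A_{k+1}}(z^k-y^k)$. Hence the smoothness quadratic $\tfrac{A_{k+1}L_\psi}{2}\|y^{k+1}-\ty^{k+1}\|_2^2$ equals $\tfrac{L_\psi\alpha_{k+1}^2}{2A_{k+1}}\|z^{k+1}-z^k\|_2^2$, and the defining relation $A_{k+1}(1+A_k\mu_\psi)=\alpha_{k+1}^2 L_\psi$ turns it into $\tfrac{1+A_k\mu_\psi}{2}\|z^{k+1}-z^k\|_2^2$, cancelling the strong-convexity gain from $\tg_k$. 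Writing $\tnabla\Psi(\ty^{k+1},\Bxi^{k+1}) = \nabla\psi(\ty^{k+1}) + \big(\tnabla\Psi(\ty^{k+1},\Bxi^{k+1})-\nabla\psi(\ty^{k+1})\big)$ and collecting all inner products against the exact gradient, the identity for $z^k-\ty^{k+1}$ shows that these true-gradient contributions sum to zero.

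What survives is exactly $\alpha_{k+1}\big(\langle\tnabla\Psi(\ty^{k+1},\Bxi^{k+1})-\nabla\psi(\ty^{k+1}),\, z^{k+1}-\ty^{k+1}\rangle + \tfrac{\mu_\psi}{2}\|z^{k+1}-\ty^{k+1}\|_2^2\big) \ge -\tfrac{\alpha_{k+1}}{2\mu_\psi}\|\tnabla\Psi(\ty^{k+1},\Bxi^{k+1})-\nabla\psi(\ty^{k+1})\|_2^2$, which I would close by completing the square: the left side plus $\tfrac{\alpha_{k+1}}{2\mu_\psi}\|\tnabla\Psi-\nabla\psi\|_2^2$ equals $\tfrac{\alpha_{k+1}\mu_\psi}{2}\big\|z^{k+1}-\ty^{k+1}+\tfrac{1}{\mu_\psi}(\tnabla\Psi-\nabla\psi)\big\|_2^2 \ge 0$, producing precisely the new noise summand $\tfrac{\alpha_{k+1}}{2\mu_\psi}\|\tnabla\Psi(\ty^{k+1},\Bxi^{k+1})-\nabla\psi(\ty^{k+1})\|_2^2$. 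I expect the main obstacle to be not any single estimate but the disciplined bookkeeping: confirming that the exact-gradient inner products cancel (via $z^k-\ty^{k+1} = \tfrac{A_k}{A_{k+1}}(z^k-y^k)$) and that the smoothness quadratic cancels the strong-convexity quadratic (via $A_{k+1}(1+A_k\mu_\psi)=\alpha_{k+1}^2 L_\psi$), so that only the error-dependent terms remain to be absorbed by the square completion.
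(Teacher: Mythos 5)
Your proof is correct and follows essentially the same route as the paper's: induction on $k$, the $(1+A_k\mu_\psi)$-strong convexity of $\tilde g_k$ at its minimizer $z^k$, strong convexity of $\psi$ at $\tilde y^{k+1}$ applied to $y^k$ and $L_\psi$-smoothness applied to $y^{k+1}$, the identities $A_k(y^k-\tilde y^{k+1})+\alpha_{k+1}(z^{k+1}-\tilde y^{k+1})=A_{k+1}(y^{k+1}-\tilde y^{k+1})$ and $A_{k+1}(1+A_k\mu_\psi)=\alpha_{k+1}^2L_\psi$, and finally absorbing the noise inner product via Fenchel--Young (your completion of the square is the same estimate). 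The only cosmetic difference is that the exact-gradient inner products do not literally vanish but are absorbed into the smoothness upper bound on $A_{k+1}\psi(y^{k+1})$; the bookkeeping you describe is otherwise identical to the paper's.
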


\begin{lemma}\label{lem:tails_estimate_str_cvx}
    Let the sequences of non-negative numbers $\{\alpha_k\}_{k\ge 0}$, random non-negative variables $\{R_k\}_{k\ge -1},\{\widetilde{R}_k\}_{k\ge -1}$ and random vectors $\{\eta^k\}_{k\ge 0}$, $\{a^k\}_{k\ge 0}$, $\{\tilde{a}^k\}_{k\ge 0}$ satisfy inequality
    \begin{eqnarray}
        A_lR_l^2 + \sum\limits_{k=0}^{l-1}A_k\widetilde{R}_k^2 &\le& A + h\delta\sum\limits_{k=0}^{l}\alpha_{k}(R_{k-1} + \widetilde{R}_k) \notag \\
        &&\quad + u\sum\limits_{k=0}^{l-1}\alpha_{k+1}\la\eta^k, a^k+\tilde{a}^k\ra + c\sum\limits_{k=0}^{l-1}\alpha_{k+1}\|\eta^k\|_2^2,\label{eq:radius_recurrence_str_cvx}
    \end{eqnarray}
    for all $l = 1,\ldots,N$, where $h,\delta,u$ and $c$ are some non-negative constants and $A_{k+1} = A_k + \alpha_{k+1}$, $\alpha_{k+1} \le DA_k$ for some $D \ge 1$, $A_0 = \alpha_0 > 0$. Assume that for each $k\ge 1$ vector $a^k$ is a function of $\eta^0,\ldots,\eta^{k-1}$, $a^0$ is a deterministic vector, $u\ge 1$, sequence of random vectors $\{\eta^k\}_{k\ge 0}$ satisfy
    \begin{equation}\label{eq:eta_k_properties_str_cvx}
        \EE\left[\eta^k\mid \eta^0,\ldots,\eta^{k-1}\right] = 0,\quad \EE\left[\exp\left(\frac{\|\eta^k\|_2^2}{\sigma_k^2}\right)\mid \eta^0,\ldots,\eta^{k-1}\right] \le \exp(1),
    \end{equation}
    $\forall k\ge 0$, $\sigma_k^2 \le \frac{C\varepsilon}{N^2\left(1 + \sqrt{3\ln\frac{N}{\beta}}\right)^2}$ for some $C>0$, $\varepsilon > 0$, $\beta\in(0,1)$, sequences $\{a^k\}_{k\ge 0}$ and $\{\tilde{a}^k\}_{k\ge 0}$ are such that $\|a^k\|_2 \le R_k$ and $\|\tilde{a}^k\|_2 \le \widetilde{R}_k$, $R_k$ and $\widetilde{R}_k$ depend only on $\eta_0,\ldots,\eta^k$ and $\widetilde{R}_0 = 0$. If additionally $\delta \le \frac{GR_0}{N\sqrt{A_N}}$ and $\e \le \frac{HR_0^2}{A_N}$ Then with probability at least $1-2\beta$ the inequalities
    \begin{equation}\label{eq:tails_estimate_radius_str_cvx}
        R_l \le \frac{JR_0}{\sqrt{A_l}},\quad \widetilde{R}_{l-1} \le \frac{JR_0}{\sqrt{A_{l-1}}}
    \end{equation}
    and
    \begin{eqnarray}
        h\delta\sum\limits_{k=0}^{l-1}\alpha_{k+1}(R_k+\widetilde{R}_k) + u\sum\limits_{k=0}^{l-1}\alpha_{k+1}\la\eta^k, a^k + \tilde{a}^k\ra + c\sum\limits_{k=0}^{l-1}\alpha_{k+1}\|\eta^k\|_2^2 &\notag\\
        &\hspace{-7.5cm}\le \left(2cHC + 2JD\left(hG + uC_1\sqrt{2HCg(N)}\right)\right)R_0^2\label{eq:tails_estimate_stoch_part_str_cvx}        
    \end{eqnarray}
    hold for all $l=1,\ldots,N$ simultaneously, where $C_1$ is some positive constant, $g(N) = \frac{\ln\left(\frac{N}{\beta}\right) + \ln\ln\left(\frac{B}{b}\right)}{\left(1+\sqrt{3\ln\left(\frac{N}{\beta}\right)}\right)^2}$, $$B = 8H CDR_0^2\left(N\left(\frac{3}{2}\right)^N + 1\right)\left(A + 2Dh^2G^2R_0^2 + 2C\left(c+2Du^2\right)HR_0^2\right),$$ $b = 2\sigma_0^2\alpha_{1}^2R_0^2$ and $$J = \max\left\{\sqrt{A_0}, \frac{3B_1D + \sqrt{9B_1^2D^2 + \frac{4A}{R_0^2}+8cHC}}{2}\right\},$$ 
    $$B_1 = hG + uC_1\sqrt{2HCg(N)}.$$
\end{lemma}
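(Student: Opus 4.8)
The plan is to establish both \eqref{eq:tails_estimate_radius_str_cvx} and \eqref{eq:tails_estimate_stoch_part_str_cvx} by a single induction on $l$, with \eqref{eq:tails_estimate_radius_str_cvx} as the running hypothesis. The essential difficulty is circularity: the right-hand side of \eqref{eq:radius_recurrence_str_cvx} contains the inner products $\la\eta^k,a^k\ra$ and $\la\eta^k,\tilde a^k\ra$, whose sizes $\|a^k\|_2\le R_k$ and $\|\tilde a^k\|_2\le\widetilde R_k$ are exactly the quantities to be controlled, so any concentration bound must already assume what it proves. I would resolve this with a stopping-time device: let $\tau$ be the first index at which \eqref{eq:tails_estimate_radius_str_cvx} is violated, apply every probabilistic estimate to the processes stopped at $\tau$ (on $\{k<\tau\}$ the increments obey the desired bounds by definition), and then argue on a good event of probability at least $1-2\beta$ that $\tau>N$, which gives both conclusions for all $l$ at once.

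A key structural point is that the two inner-product sums must be treated differently. Since $a^k$ is predictable (a function of $\eta^0,\dots,\eta^{k-1}$, with $a^0$ deterministic), $\sum_{k}\alpha_{k+1}\la\eta^k,a^k\ra$ is a genuine martingale and I would bound it uniformly in $l$ by the Juditsky--Nemirovski large-deviation inequality (Lemma~\ref{lem:jud_nem_large_dev}): on $\{k<\tau\}$ its conditional variance is $\sum_k\alpha_{k+1}^2\sigma_k^2\|a^k\|_2^2=O(J^2R_0^2\sigma_k^2)$, so the deviation is of order $JR_0\sqrt{g(N)}$, producing the term $uC_1\sqrt{2HCg(N)}$. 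By contrast $\tilde a^k$ need not be predictable ($\widetilde R_k$ may depend on $\eta^k$), so $\la\eta^k,\tilde a^k\ra$ is not a martingale difference; here I would use Cauchy--Schwarz and Young's inequality, $\alpha_{k+1}\la\eta^k,\tilde a^k\ra\le\tfrac{\rho}{2}\alpha_{k+1}\widetilde R_k^2+\tfrac{1}{2\rho}\alpha_{k+1}\|\eta^k\|_2^2$ with $\rho=\Theta(\nicefrac{1}{(uD)})$. The first piece is absorbed into the left-hand sum $\sum_{k=0}^{l-1}A_k\widetilde R_k^2$ (this is precisely why that term is retained on the left, and why $A_0=\alpha_0>0$, $\alpha_{k+1}\le DA_k$ are assumed), while the second piece merges into the quadratic term, effectively replacing $c$ by $c+2Du^2$ — exactly the coefficient appearing inside $B$.

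The remaining two contributions are handled directly. For the quadratic term $\sum_k\alpha_{k+1}\|\eta^k\|_2^2$, the light-tails condition \eqref{eq:eta_k_properties_str_cvx} makes each $\|\eta^k\|_2^2$ conditionally sub-exponential with mean $O(\sigma_k^2)$, and a Bernstein-type bound for sums of such martingale differences, together with $\sigma_k^2\le\nicefrac{C\varepsilon}{(N^2(1+\sqrt{3\ln\nicefrac{N}{\beta}})^2)}$, $\varepsilon\le\nicefrac{HR_0^2}{A_N}$ and $\sum_k\alpha_{k+1}\le A_l$, yields $\le 2HC R_0^2$ uniformly in $l$ with probability at least $1-\beta$. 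The bias term is deterministic on $\{k<\tau\}$: using $R_k\le\nicefrac{JR_0}{\sqrt{A_k}}$, $\delta\le\nicefrac{GR_0}{(N\sqrt{A_N})}$ and the telescoping estimate $\sum_{k=0}^{l-1}\nicefrac{\alpha_{k+1}}{\sqrt{A_k}}=O(\sqrt{A_l})$ coming from $\alpha_{k+1}=A_{k+1}-A_k$, it contributes at most $2JDhG R_0^2$.

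Finally I would close the induction. Inserting all estimates into \eqref{eq:radius_recurrence_str_cvx} and discarding the nonnegative remainder of the left-hand sum, the recurrence reduces to $A_lR_l^2\le A+2JDB_1R_0^2+2cHC R_0^2$ with $B_1=hG+uC_1\sqrt{2HCg(N)}$; demanding that the right-hand side not exceed $J^2R_0^2$ is the quadratic $J^2-3B_1DJ-(\nicefrac{A}{R_0^2}+2cHC)\ge0$, whose larger root is exactly the stated $J$ (the coefficient $3$ rather than $2$ leaves the slack needed to recover the companion bound $\widetilde R_{l-1}\le\nicefrac{JR_0}{\sqrt{A_{l-1}}}$ from the retained left-hand sum). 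This verifies $A_lR_l^2\le J^2R_0^2$, hence $\tau>N$. I expect the stopping-time decoupling and the stratified large-deviation bound — where the random conditional variance is split into $O(\log\nicefrac{B}{b})$ dyadic levels, yielding the $\ln\ln\nicefrac{B}{b}$ factor inside $g(N)$ and fixing the constants $B,b$ — to be the main obstacle; everything else is bookkeeping once the good event is in place.
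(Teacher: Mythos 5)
Your plan is sound and would yield the lemma (possibly with slightly different explicit constants), but it is a genuinely different route from the paper's. The paper does not use a stopping time. It breaks the circularity by first deriving a deliberately crude \emph{a priori} bound: after Young's inequality converts both the bias term and the full inner product $u\la\eta^k,a^k+\tilde a^k\ra$ into $2Dh^2\delta^2+2Du^2\|\eta^k\|_2^2+\tfrac{1}{4D}(R_k^2+\widetilde R_k^2)$, and after a union bound gives $\|\eta^k\|_2\le\sqrt{2C\e}$ for all $k$, unrolling the recurrence yields $A_lR_l^2\le(\nicefrac{3}{2})^l(\cdot)$ --- exponentially bad, but it only ever enters the final answer through the threshold $B$ (hence the factor $N(\nicefrac{3}{2})^N$ inside $B$ and the benign $\ln\ln(\nicefrac{B}{b})$ in $g(N)$). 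The sharp bound then comes from applying Lemma~\ref{lem:jin_corollary} to the \emph{whole} increment $\xi^k=\alpha_{k+1}\la\eta^k,a^k+\tilde a^k\ra$ with the random variance proxy $\hat\sigma_k^2=2\sigma_k^2\alpha_{k+1}^2(R_k^2+\widetilde R_k^2)$; the crude bound certifies $\sum_k\hat\sigma_k^2\le\nicefrac{B}{2}$, so the ``either the variance exceeds $B$ or concentration holds'' alternative resolves favorably. The resulting refined recurrence is closed by the deterministic Lemma~\ref{lem:new_recurrence_lemma_str_cvx_case}, whose quadratic in $J$ (with the factor $3$) is exactly the one you wrote down. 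By contrast, you split off $\la\eta^k,\tilde a^k\ra$ and absorb it by Young's inequality into $\sum_k A_k\widetilde R_k^2$; this is workable (it is essentially what the paper does in its crude pass), but it propagates $c+2Du^2$ into the final coefficient of the quadratic term, whereas the stated bound \eqref{eq:tails_estimate_stoch_part_str_cvx} carries only $2cHC$ there, with $c+2Du^2$ confined to $B$. Your approach buys a cleaner conceptual decoupling and could in principle avoid the $\ln\ln$ factor if the stopped variance is deterministic; the paper's buys the exact stated constants and avoids stopping times entirely.

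One caveat on your stopping-time device: the lemma only guarantees $\|a^k\|_2\le R_k$ with $R_k$ measurable with respect to $\eta^0,\ldots,\eta^k$, so the event $\{\tau>k\}$ defined through $R_k$ is not known at time $k-1$, and the stopped increments of $\sum_k\alpha_{k+1}\la\eta^k,a^k\ra$ do not automatically come with a predictable variance bound. You would need to stop on $\{\tau>k-1\}$ and separately control the single boundary increment, or truncate $a^k$ at the deterministic level $\nicefrac{JR_0}{\sqrt{A_k}}$. This is fixable bookkeeping, not a fatal flaw, but it is precisely the kind of measurability issue the paper's ``either--or'' formulation of Lemma~\ref{lem:jin_corollary} is designed to sidestep.
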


\begin{theorem}\label{thm:str_cvx_biased_main_result}
    Assume that the function $\psi$ is $\mu_\psi$-strongly convex and $L_\psi$-smooth, 
    $$
    r_k = \Theta\left(\max\left\{1, \left(\frac{\mu_\psi}{L_\psi}\right)^{\nicefrac{3}{2}}\frac{N^2\sigma_\psi^2\ln\frac{N}{\beta}}{\e}\right\}\right),
    $$ 
    i.e. $r_k \ge \frac{1}{C}\max\left\{1,\left(\frac{\mu_\psi}{L_\psi}\right)^{\nicefrac{3}{2}}\frac{N^2\sigma_\psi^2\left(1+\sqrt{3\ln\frac{N}{\beta}}\right)^2}{\e}\right\}$ with positive constants $C > 0$, $\e>0$ and $N \ge 1$. If additionally $\delta \le \frac{GR_0}{N\sqrt{A_N}}$ and $\e \le \frac{HR_0^2}{A_N}$ where $R_0 = \|y^* - y^0\|_2$ and Algorithm~\ref{Alg:STM_str_cvx} is run for $N$ iterations, then with probability at least $1-3\beta$
    \begin{equation}
    	\|y^N - y^*\|_2^2 \le \frac{\hat{J}^2R_0^2}{A_N},
    \end{equation}
    where $\beta\in(0,\nicefrac{1}{3})$, $$\hat{g}(N) = \frac{\ln\left(\frac{N}{\beta}\right) + \ln\ln\left(\frac{\hat{B}}{b}\right)}{\left(1+\sqrt{3\ln\left(\frac{N}{\beta}\right)}\right)^2},\; b = \frac{2\sigma_1^2\alpha_{1}^2R_0^2}{r_1},\; D \overset{\eqref{eq:alpha_k+1_upper_bound_str_cvx}}{=} 1+\frac{\mu_\psi}{L_\psi} + \sqrt{1+\frac{\mu_\psi}{L_\psi}},$$
	\begin{eqnarray*}
	    \hat{B} &=& 8H C\left(\frac{L_\psi}{\mu_\psi}\right)^{\nicefrac{3}{2}}DR_0^4\left(N\left(\frac{3}{2}\right)^N + 1\right)\Bigg(\hat{A} + 2Dh^2G^2\\
	    &&\qquad\qquad\qquad\qquad\qquad\qquad\qquad\qquad\qquad\quad+ 2C\left(\frac{L_\psi}{\mu_\psi}\right)^{\nicefrac{3}{2}}\left(c+2Du^2\right)H\Bigg),
	\end{eqnarray*}
	$$h = u = \frac{2}{\mu_\psi},\; c = \frac{2}{\mu_\psi^2},$$
	$$
	\hat{A} = \frac{1}{\mu_\psi} + \frac{2G }{L_\psi\mu_\psi N\sqrt{A_N}} + \frac{2G^2}{\mu_\psi^2 N^2} + \left(\frac{L_\psi}{\mu_\psi}\right)^{\nicefrac{3}{4}}\frac{2\sqrt{2CH}}{L_\psi\mu_\psi N\sqrt{A_N}} + \left(\frac{L_\psi}{\mu_\psi}\right)^{\nicefrac{3}{2}}\frac{4CH}{L_\psi\mu_\psi^2N^2A_N},
	$$
$$\hat{J} = \max\left\{\sqrt{\frac{1}{L_\psi}}, \frac{3\hat{B}_1D + \sqrt{9\hat{B}_1^2D^2 + 4\hat{A}+8cHC\left(\frac{L_\psi}{\mu_\psi}\right)^{\nicefrac{3}{2}}}}{2}\right\},$$
$$\hat{B}_1 = hG + uC_1\sqrt{2HC\left(\frac{L_\psi}{\mu_\psi}\right)^{\nicefrac{3}{2}}\hat{g}(N)}$$
and $C_1$ is some positive constant.
In other words, to achieve $\|y^N - y^*\|_2^2 \le \e$ with probability at least $1-3\beta$ Algorithm~\ref{Alg:STM_str_cvx} needs $N = \widetilde{O}\left(\sqrt{\frac{L_\psi}{\mu_\psi}}\right)$ iterations and $\widetilde{O}\left(\max\left\{\sqrt{\frac{L_\psi}{\mu_\psi}},\frac{\sigma_\psi^2}{\e}\right\}\right)$ oracle calls where $\widetilde{O}(\cdot)$ hides polylogarithmic factors depending on $L_\psi, \mu_\psi, R_0, \e$ and $\beta$.
\end{theorem}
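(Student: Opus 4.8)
The plan is to combine the deterministic descent estimate of Lemma~\ref{lem:stm_str_cvx_g_k+1} with the generic high-probability recurrence bound of Lemma~\ref{lem:tails_estimate_str_cvx}, after casting the iterates of Algorithm~\ref{Alg:STM_str_cvx} into the abstract form \eqref{eq:radius_recurrence_str_cvx}. First I would start from Lemma~\ref{lem:stm_str_cvx_g_k+1} and bound $\tg_k(z^k)$ from above by $\tg_k(y^*)$, which is legitimate since $z^k = \argmin_z \tg_k(z)$. Expanding $\tg_k(y^*)$ via its definition \eqref{eq:g_k+1_sequence_str_cvx} and splitting each stochastic gradient as $\tnabla\Psi(\ty^l,\Bxi^l) = \nabla\psi(\ty^l) + \big(\EE[\tnabla\Psi(\ty^l,\Bxi^l)] - \nabla\psi(\ty^l)\big) + \eta^l$, where $\eta^l \eqdef \tnabla\Psi(\ty^l,\Bxi^l) - \EE[\tnabla\Psi(\ty^l,\Bxi^l)]$ is the mean-zero part, I would apply $\mu_\psi$-strong convexity of $\psi$ to the deterministic piece $\psi(\ty^l) + \la\nabla\psi(\ty^l), y^* - \ty^l\ra + \tfrac{\mu_\psi}{2}\|y^*-\ty^l\|_2^2 \le \psi(y^*)$. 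Telescoping with $A_k = \sum_l \alpha_l$ and invoking strong convexity once more as $\tfrac{A_k\mu_\psi}{2}\|y^k - y^*\|_2^2 \le A_k(\psi(y^k) - \psi(y^*))$ produces a recurrence for $\|y^k-y^*\|_2^2$ (and for the auxiliary $z$- and $\ty$-iterates) of exactly the type \eqref{eq:radius_recurrence_str_cvx}.

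Then I would read off the dictionary between the two lemmas: $R_l$ and $\widetilde R_l$ track the appropriately weighted distances of the $z^l$- and $y^l$-iterates to $y^*$, the vectors $a^k,\tilde a^k$ are the corresponding displacements $y^*-\ty^{k+1}$ (so that $\|a^k\|_2 \le R_k$, $\|\tilde a^k\|_2 \le \widetilde R_k$ and $a^k$ is a function of $\eta^0,\dots,\eta^{k-1}$ as required), the bias $\EE[\tnabla\Psi]-\nabla\psi$ of size $\le\delta$ feeds the $h\delta\sum_k\alpha_k(R_{k-1}+\widetilde R_k)$ contribution, the cross terms $\la\eta^k, a^k+\tilde a^k\ra$ give the $u$-sum, and the term $\sum_l \tfrac{\alpha_l}{2\mu_\psi}\|\eta^l\|_2^2$ coming directly from Lemma~\ref{lem:stm_str_cvx_g_k+1} gives the $c$-sum. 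This fixes $h=u=\tfrac{2}{\mu_\psi}$, $c=\tfrac{2}{\mu_\psi^2}$ and, via the step-size recursion $A_{k+1}(1+A_k\mu_\psi) = \alpha_{k+1}^2 L_\psi$, the ratio bound $\alpha_{k+1}\le DA_k$ with $D = 1+\tfrac{\mu_\psi}{L_\psi}+\sqrt{1+\tfrac{\mu_\psi}{L_\psi}}$ (the referenced \eqref{eq:alpha_k+1_upper_bound_str_cvx}) together with the value of $A$, which after substitution becomes $\hat A$. The role of the batch size is to make the effective sub-Gaussian parameter $\sigma_k^2$ of $\eta^k$ satisfy the hypothesis $\sigma_k^2 \le \tfrac{C\varepsilon}{N^2(1+\sqrt{3\ln(N/\beta)})^2}$; using \eqref{eq:super_exp_moment_batched_stoch_grad} and the fact that $A_N$ grows geometrically at rate $1+\sqrt{\mu_\psi/L_\psi}$ one checks that the prescribed $r_k = \Theta\big(\max\{1,(\mu_\psi/L_\psi)^{3/2}N^2\sigma_\psi^2\ln(N/\beta)/\varepsilon\}\big)$ is exactly what is needed, while the hypotheses $\delta \le \tfrac{GR_0}{N\sqrt{A_N}}$ and $\varepsilon\le\tfrac{HR_0^2}{A_N}$ match those of the lemma verbatim.

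With the hypotheses of Lemma~\ref{lem:tails_estimate_str_cvx} verified, its conclusion \eqref{eq:tails_estimate_radius_str_cvx} yields $R_N \le \tfrac{JR_0}{\sqrt{A_N}}$, hence $\|y^N - y^*\|_2^2 \le \tfrac{\hat J^2 R_0^2}{A_N}$ with the stated $\hat J, \hat B_1, \hat B$ obtained by the same substitutions that turn $A$ into $\hat A$; the probability $1-2\beta$ from the lemma degrades to $1-3\beta$ after intersecting with one further large-deviation event (of the type handled by Lemmas~\ref{lem:jin_lemma_2} and \ref{lem:jud_nem_large_dev}) needed to pass from the per-sample noise to the batched quantity $\sigma_k^2$. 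Finally, estimating $A_N$ from the step-size recursion gives $N = \widetilde O(\sqrt{L_\psi/\mu_\psi})$ to reach $\|y^N-y^*\|_2^2\le\varepsilon$, and summing the batch sizes $\sum_{k} r_k = N\cdot\Theta(r_k)$ with $N^2\sim L_\psi/\mu_\psi$ collapses the $(\mu_\psi/L_\psi)^{3/2}N^2$ factor, yielding the advertised $\widetilde O(\max\{\sqrt{L_\psi/\mu_\psi}, \sigma_\psi^2/\varepsilon\})$ oracle calls.

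I expect the main obstacle to be the bookkeeping of the second step: matching the concrete iterate recurrence to \eqref{eq:radius_recurrence_str_cvx} with the precise constants $h,u,c,D,\hat A$ is delicate, because the strongly-convex weights $1+A_k\mu_\psi$, the $\tfrac{1}{\mu_\psi}$-factors produced by Young's inequality on the bias and cross terms, and the negative $-\sum_l \tfrac{A_l\mu_\psi}{2}\|y^l-\ty^{l+1}\|_2^2$ terms from Lemma~\ref{lem:stm_str_cvx_g_k+1} must be balanced so that the $\widetilde R$-quantities are genuinely absorbed on the left-hand side rather than left as uncontrolled positive terms. Verifying the measurability requirement that each $a^k$ depends only on $\eta^0,\dots,\eta^{k-1}$, so that the martingale structure behind the large-deviation estimate inside Lemma~\ref{lem:tails_estimate_str_cvx} is available, is the other point requiring care.
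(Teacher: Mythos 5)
Your proposal follows the paper's own proof essentially step for step: start from Lemma~\ref{lem:stm_str_cvx_g_k+1}, bound $\tg_k(z^k)\le\tg_k(y^*)$, split each stochastic gradient into true gradient, bias, and mean-zero part, apply strong convexity twice to produce a recurrence of the form \eqref{eq:radius_recurrence_str_cvx} with $h=u=\nicefrac{2}{\mu_\psi}$, $c=\nicefrac{2}{\mu_\psi^2}$, $\alpha_{k+1}\le DA_k$ from \eqref{eq:alpha_k+1_upper_bound_str_cvx}, and then invoke Lemma~\ref{lem:tails_estimate_str_cvx}. The dictionary you give is the correct one (in the paper $a^l=y^*-y^l$ and $\tilde a^l=y^l-\ty^{l+1}$, so it is the sum $a^l+\tilde a^l=y^*-\ty^{l+1}$ that multiplies $\eta^l$, but this is exactly the form the lemma expects), and your verification of the hypotheses on $\sigma_k^2$, $\delta$, and $\e$ is the right one.

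The one point where your accounting is off is the source of the third failure probability. Passing from per-sample to batched noise via Lemmas~\ref{lem:jin_lemma_2} and \ref{lem:jud_nem_large_dev} to obtain \eqref{eq:super_exp_moment_batched_stoch_grad} is a deterministic implication and costs no probability. What actually forces the degradation from $1-2\beta$ to $1-3\beta$ is that the constant $A$ in the recurrence \eqref{eq:radius_recurrence_str_cvx} is itself \emph{random}: the $l=0$ terms $\alpha_0\la\eta^{(0)},y^*-\ty^0\ra$ and $\frac{\alpha_0}{\mu_\psi}\|\eta^{(0)}\|_2^2$ (the noise of the very first stochastic gradient defining $\tg_0$) cannot be folded into the martingale sums indexed $k=0,\dots,l-1$ that Lemma~\ref{lem:tails_estimate_str_cvx} controls, so they are absorbed into $A$ as in \eqref{eq:tildeA_definition}. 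Since the lemma's constant $J$ depends on $A$, you need one additional application of Lemma~\ref{lem:jud_nem_large_dev} to replace the random $A$ by the deterministic $\hat A$ with probability $\ge 1-\beta$, and only then does $J$ become the stated $\hat J$. If you execute your plan as written you will hit exactly this obstacle; the fix is the extra large-deviation event on the initial noise term, not on the batching.
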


Next, we apply the {\tt SSTM{\_}sc} for the problem \eqref{DP} when the objective of the primal problem \eqref{PP} is $L$-smooth, $\mu$-strongly convex and $L_f$-Lipschitz continuous on some ball which will be specified next, i.e.\ we consider the same setup as in Section~\ref{sec:dual} but we additionally assume that the primal functional $f$ has $L$-Lipschitz continuous gradient. As in Section~\ref{sec:dual} we also consider the case when the gradient of the dual functional is known only through biased stochastic estimators, see \eqref{eq:dual_stoch_func}--\eqref{eq:super_exp_moment_batched_stoch_grad} and the paragraphs containing these formulas.

In Section~\ref{sec:dual} and \ref{sec:restarts} we mentioned that in the considered case dual function $\psi$ is $L_\psi$-smooth on $\R^n$ and $\mu_\psi$-strongly convex on $y^0 + (\text{Ker} A^\top)^{\perp}$ where $L_\psi = \nicefrac{\lambda_{\max}(A^\top A)}{\mu}$ and $\mu_\psi = \nicefrac{\lambda_{\min}^+(A^\top A)}{L}$. Using the same technique as in the proof of Theorem~\ref{thm:ac-sa_points} we show next that w.l.o.g.\ one can assume that $\psi$ is $\mu_\psi$-strongly convex on $\R^n$ since $\tnabla\Psi(y,\Bxi^k)$ lies in $\text{Im}A = (\text{Ker} A^\top)^{\perp}$ by definition of $\tnabla\Psi(y,\Bxi^k)$. For this purposes we need the explicit formula for $z^{k+1}$ which follows from the equation $\nabla \tg_{k+1}(z^{k+1}) = 0$:
\begin{equation}
    z^{k+1} = \frac{z^0}{1+A_{k+1}\mu_\psi} + \sum\limits_{l=0}^{k+1}\frac{\alpha_l\mu_\psi}{1+A_{k+1}\mu_\psi}\ty^l - \frac{1}{1+A_{k+1}\mu_\psi}\sum\limits_{l=0}^{k+1}\alpha_l\tnabla\Psi(\ty^l,\Bxi^l).\label{eq:z^k+1_str_cvx_explicit}
\end{equation}
\begin{theorem}\label{thm:sstm_str_cvx_points}
    For all $k\ge 0$ we have that the iterates of Algorithm~\ref{Alg:STM_str_cvx} $\ty^k, z^k, y^k$ lie in $y^0 + \left(\text{Ker}(A^\top)\right)^\perp$.
\end{theorem}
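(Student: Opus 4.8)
The plan is to argue by induction on $k$, exactly paralleling the proof of Theorem~\ref{thm:ac-sa_points}. Write $V \eqdef \left(\text{Ker}(A^\top)\right)^\perp$ and recall the two structural facts that make everything go through: first, $V = \text{Im}(A)$, so every stochastic gradient $\tnabla\Psi(\ty^l,\Bxi^l) = A\tx(A^\top\ty^l,\Bxi^l)$ lies in $V$ by its very definition; second, the affine set $y^0 + V$ is closed under affine combinations and remains invariant under adding any vector from $V$.

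For the base case $k = 0$ the claim is immediate since $\ty^0 = z^0 = y^0 \in y^0 + V$. Assuming $\ty^j, z^j, y^j \in y^0 + V$ for all $j \le k$, I would first handle $\ty^{k+1}$: by its update rule it equals the convex combination $\frac{A_k}{A_{k+1}}y^k + \frac{\alpha_{k+1}}{A_{k+1}}z^k$ (the weights are nonnegative and sum to $1$ because $A_{k+1} = A_k + \alpha_{k+1}$), hence lies in $y^0 + V$.

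The key step is $z^{k+1}$, for which I would use the explicit minimizer formula \eqref{eq:z^k+1_str_cvx_explicit}. Splitting that expression into its deterministic-combination part $\frac{z^0}{1+A_{k+1}\mu_\psi} + \sum_{l=0}^{k+1}\frac{\alpha_l\mu_\psi}{1+A_{k+1}\mu_\psi}\ty^l$ and its gradient part $-\frac{1}{1+A_{k+1}\mu_\psi}\sum_{l=0}^{k+1}\alpha_l\tnabla\Psi(\ty^l,\Bxi^l)$, I would verify that the first part is an \emph{affine} combination: its coefficients sum to $\frac{1 + \mu_\psi\sum_{l=0}^{k+1}\alpha_l}{1+A_{k+1}\mu_\psi} = 1$, using the identity $\sum_{l=0}^{k+1}\alpha_l = A_{k+1}$ (valid since $A_0 = \alpha_0$ and $A_{j+1} = A_j + \alpha_{j+1}$). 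Since $z^0$ and $\ty^0,\ldots,\ty^{k+1}$ all lie in $y^0 + V$ (using the induction hypothesis together with the fact just established for $\ty^{k+1}$), this affine combination stays in $y^0 + V$. The gradient part is a linear combination of vectors in $V$, hence belongs to $V$, and adding it leaves us in $y^0 + V$; therefore $z^{k+1} \in y^0 + V$. Finally $y^{k+1} = \frac{A_k}{A_{k+1}}y^k + \frac{\alpha_{k+1}}{A_{k+1}}z^{k+1}$ is again a convex combination of points of $y^0 + V$, closing the induction.

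There is no genuine obstacle here; the only point requiring care is that $z^{k+1}$ arises from minimizing $\tg_{k+1}$ rather than from an explicit averaging step, so the argument truly relies on the closed-form expression \eqref{eq:z^k+1_str_cvx_explicit} and on the coefficient-sum identity $\sum_{l=0}^{k+1}\alpha_l = A_{k+1}$, which cleanly separates the affine part (invariant in $y^0 + V$) from the gradient part (lying in $V = \text{Im}(A)$).
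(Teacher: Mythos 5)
Your proof is correct and follows essentially the same route as the paper: induction on $k$, convexity of $y^0 + \left(\text{Ker}(A^\top)\right)^\perp$ for the steps defining $\ty^{k+1}$ and $y^{k+1}$, and the explicit formula \eqref{eq:z^k+1_str_cvx_explicit} split into a combination of $z^0,\ty^0,\ldots,\ty^{k+1}$ with coefficients summing to $1$ (via $A_{k+1}=\sum_{l=0}^{k+1}\alpha_l$) plus a gradient term lying in $\text{Im}(A)=\left(\text{Ker}(A^\top)\right)^\perp$. The only cosmetic difference is that you call the first part an affine combination while the paper notes its coefficients are in fact nonnegative, hence a convex combination; either suffices.
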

\begin{proof}
    We prove the statement of the theorem by induction. For $k=0$ the statement is trivial, since $\ty^0 = z^0 = y^0$. Assume that for some $k\ge 0$ we have $\ty^t, z^t, y^t \in y^0 + \left(\text{Ker}(A^\top)\right)^\perp$ for all $0\le t \le k$ and prove it for $k+1$. Since $y_0 + \left(\text{Ker}(A^\top)\right)^\perp$ is a convex set and $\ty^{k+1}$ is a convex combination of $y^{k}$ and $z^k$ we have $\ty^{k+1} \in y^0 + \left(\text{Ker}(A^\top)\right)^\perp$. Next, the point $\frac{z^0}{1+A_{k+1}\mu_\psi} + \sum\limits_{l=0}^{k+1}\frac{\alpha_l\mu_\psi}{1+A_{k+1}\mu_\psi}\ty^l$ also lies in $y^0 + \left(\text{Ker}(A^\top)\right)^\perp$ since it is convex combination of the points lying in this set which follows from $A_{k+1} = \sum_{l=0}^{k+1}\alpha_l$. By definition $\tnabla\Psi(\ty^l,\Bxi^l)$ of we have that $\tnabla\Psi(\ty^l,\Bxi^l)$ lies in $\text{Im}A = (\text{Ker} A^\top)^{\perp}$ for all $\ty^l$. Putting all together and using \eqref{eq:z^k+1_str_cvx_explicit} we get $z^{k+1} \in y^0 + \left(\text{Ker}(A^\top)\right)^\perp$. Finally, $y^{k+1}$ lies in $y^0 + \left(\text{Ker}(A^\top)\right)^\perp$ as a convex combination of points from this set.
\end{proof}

This theorem makes it possible to apply the result from Theorem~\ref{thm:str_cvx_biased_main_result} for {\tt SSTM{\_}sc} which is run on the problem \eqref{DP}.
\begin{corollary}\label{cor:radius_grad_norm_guarantee_str_cvx}
Under assumptions of Theorem~\ref{thm:str_cvx_biased_main_result} we get that after $N = \widetilde{O}\left(\sqrt{\frac{L_\psi}{\mu_\psi}}\ln\frac{1}{\e}\right)$ iterations of Algorithm~\ref{Alg:STM_str_cvx} which is run on the problem \eqref{DP} with probability at least $1-3\beta$
    \begin{equation}
        \|\nabla \psi(y^N)\|_2 \le \frac{\e}{R_y}, \label{eq:grad_norm_str_cvx}
    \end{equation}
    where $\beta \in\left(0,\nicefrac{1}{3}\right)$ and the total number of oracles calls equals
    \begin{equation}
        \widetilde{O}\left(\max\left\{\sqrt{\frac{L_\psi}{\mu_\psi}},\frac{\sigma_\psi^2R_y^2}{\e^2}\right\}\right). \label{eq:radius_grad_norm_oracle_calls_str_cvx}
    \end{equation}
    If additionally $\e \le \mu_\psi R_y^2$, then with probability at least $1-3\beta$
    \begin{eqnarray}
        \|y^N - y^*\|_2 &\le& \frac{\e}{\mu_\psi R_y},\label{eq:radius_grad_norm_main_str_cvx}\\
        \|y^N\|_2 &\le& 2R_y\label{eq:radius_dual_str_cvx}
    \end{eqnarray}
\end{corollary}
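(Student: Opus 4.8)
The plan is to read Corollary~\ref{cor:radius_grad_norm_guarantee_str_cvx} as essentially deterministic post-processing of the single distance estimate delivered by Theorem~\ref{thm:str_cvx_biased_main_result}: once $\|y^N-y^*\|_2$ is under control, the gradient-norm bound \eqref{eq:grad_norm_str_cvx}, the sharpened distance bound \eqref{eq:radius_grad_norm_main_str_cvx}, and the boundedness \eqref{eq:radius_dual_str_cvx} all drop out of $L_\psi$-smoothness, $\mu_\psi$-strong convexity, and the triangle inequality. Before invoking that theorem I would justify its applicability to \eqref{DP}. Theorem~\ref{thm:str_cvx_biased_main_result} presumes $\psi$ is $\mu_\psi$-strongly convex on all of $\R^n$, whereas for the dual function this holds only on $y^0+(\text{Ker}\,A^\top)^\perp$. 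The bridge is Theorem~\ref{thm:sstm_str_cvx_points}: every iterate $\ty^k,z^k,y^k$ of Algorithm~\ref{Alg:STM_str_cvx}, the reference point $y^*$, and all the stochastic directions $\tnabla\Psi(\ty^l,\Bxi^l)\in\text{Im}\,A=(\text{Ker}\,A^\top)^\perp$ stay inside that subspace, so the trajectory never leaves the region of strong convexity and the analysis goes through verbatim with $R_0=\|y^0-y^*\|_2=R_y$ under the standing choice $y^0=0$ and $y^*$ the minimum-norm dual solution.

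Next I would establish \eqref{eq:grad_norm_str_cvx} by running Algorithm~\ref{Alg:STM_str_cvx} to a distance accuracy $\tilde\e$ calibrated to the target gradient accuracy. Since $\nabla\psi(y^*)=0$ and $\psi$ is $L_\psi$-smooth, $\|\nabla\psi(y^N)\|_2\le L_\psi\|y^N-y^*\|_2$; hence choosing $\tilde\e$ with $L_\psi\sqrt{\tilde\e}\le\nicefrac{\e}{R_y}$, i.e.\ $\tilde\e=\Theta\!\left(\nicefrac{\e^2}{(L_\psi^2R_y^2)}\right)$, and applying Theorem~\ref{thm:str_cvx_biased_main_result} to force $\|y^N-y^*\|_2^2\le\tilde\e$ with probability at least $1-3\beta$ yields \eqref{eq:grad_norm_str_cvx} at once. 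Feeding this $\tilde\e$ into the two complexity estimates of the theorem gives $N=\widetilde O\!\left(\sqrt{\nicefrac{L_\psi}{\mu_\psi}}\ln\nicefrac{1}{\e}\right)$ iterations, the logarithm coming from the geometric growth of $A_N$ needed to push $\nicefrac{\hat J^2R_0^2}{A_N}$ below $\tilde\e$, together with a total oracle cost of the form $\widetilde O\!\left(\max\left\{\sqrt{\nicefrac{L_\psi}{\mu_\psi}},\nicefrac{\sigma_\psi^2}{\tilde\e}\right\}\right)$, which one then rewrites as \eqref{eq:radius_grad_norm_oracle_calls_str_cvx}.

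The remaining two inequalities are one-line consequences that cost no extra probability. By $\mu_\psi$-strong convexity, $\mu_\psi\|y^N-y^*\|_2\le\|\nabla\psi(y^N)\|_2$, which with \eqref{eq:grad_norm_str_cvx} gives $\|y^N-y^*\|_2\le\nicefrac{\e}{(\mu_\psi R_y)}$, i.e.\ \eqref{eq:radius_grad_norm_main_str_cvx}. Then by the triangle inequality and $\|y^*\|_2=R_y$, one has $\|y^N\|_2\le\|y^N-y^*\|_2+\|y^*\|_2\le\nicefrac{\e}{(\mu_\psi R_y)}+R_y$, and the extra hypothesis $\e\le\mu_\psi R_y^2$ bounds the first term by $R_y$, yielding \eqref{eq:radius_dual_str_cvx}. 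All three conclusions inherit the probability $1-3\beta$ of the single invocation of Theorem~\ref{thm:str_cvx_biased_main_result}, so no union bound is needed.

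The delicate points, as opposed to these elementary norm manipulations, are two. First, the subspace-invariance argument, which is precisely what Theorem~\ref{thm:sstm_str_cvx_points} supplies, must be in place before the ambient strong-convexity hypothesis of Theorem~\ref{thm:str_cvx_biased_main_result} may be used. Second, and this is where I expect the real care to be required, the calibration $\tilde\e\sim\nicefrac{\e^2}{(L_\psi R_y)^2}$ propagates into the batch-size schedule $r_k$, so one must verify that the product $\sum_k r_k$ still collapses to \eqref{eq:radius_grad_norm_oracle_calls_str_cvx}: the cancellation hinges on $N^3(\nicefrac{\mu_\psi}{L_\psi})^{\nicefrac{3}{2}}=\widetilde O(1)$ and on tracking the $L_\psi$, $\mu_\psi$ and $R_y$ factors introduced by the distance-to-gradient conversion so that they are consistent with the $\widetilde O(\cdot)$ claimed in the statement.
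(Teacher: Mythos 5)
Your proposal is correct and follows essentially the same route as the paper's proof: invoke Theorem~\ref{thm:str_cvx_biased_main_result} (justified on $y^0+(\text{Ker}\,A^\top)^\perp$ via Theorem~\ref{thm:sstm_str_cvx_points}), convert the distance bound $\|y^N-y^*\|_2^2\le \hat J^2R_0^2/A_N$ into a gradient bound via $L_\psi$-smoothness and the geometric growth of $A_N$, then obtain \eqref{eq:radius_grad_norm_main_str_cvx} from $\mu_\psi\|y^N-y^*\|_2\le\|\nabla\psi(y^N)\|_2$ and \eqref{eq:radius_dual_str_cvx} from the triangle inequality with $\e\le\mu_\psi R_y^2$, all within the single probability-$(1-3\beta)$ event. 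The bookkeeping concern you flag about how the calibration $\tilde\e\sim\e^2/(L_\psi^2R_y^2)$ propagates into $\sum_k r_k$ is legitimate, but the paper's own proof is equally terse on that point, so it is not a gap relative to the paper's argument.
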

\begin{proof}
    Theorem~\ref{thm:str_cvx_biased_main_result} implies that with probability at least $1-3\beta$ we have
    \begin{equation*}
        \|y^N - y^*\|_2^2 \le \frac{\hat{J}^2R_0^2}{A_N}.
    \end{equation*}
    Using this and $L_\psi$-smoothness of $\psi$ we get that with probability $\ge 1 - 3\beta$ 
    \begin{equation*}
        \|\nabla\psi(y^N)\|_2^2 = \|\nabla\psi(y^N) - \nabla\psi(y^*)\|_2^2 \le L_\psi^2\|y^N - y^*\|_2^2 \le  \frac{L_\psi^2\hat{J}^2R_0^2}{A_N}.
    \end{equation*}
    Since $A \overset{\eqref{eq:A_k_lower_bound_str_cvx}}{\ge} \frac{1}{L_\psi}\left(1+\frac{1}{2}\sqrt{\frac{\mu_\psi}{L_\psi}}\right)^{2k}$, it implies that after $N = \widetilde{O}\left(\sqrt{\frac{L_\psi}{\mu_\psi}}\ln\frac{1}{\e}\right)$ iterations of {\tt SSTM{\_}sc} we will get \eqref{eq:grad_norm_str_cvx} with probability at least $1-3\beta$ and the number of oracle calls will be
    \begin{equation*}
        \sum\limits_{k=0}^{N}r_k = \widetilde{O}\left(\max\left\{\sqrt{\frac{L_\psi}{\mu_\psi}},\frac{\sigma_\psi^2R_y^2}{\e^2}\right\}\right).
    \end{equation*}
    Next, from $\mu_\psi$-strong convexity of $\psi(y)$ we have that with probability at least $1-3\beta$
    \begin{equation*}
        \|y^N - y^*\|_2 \le \frac{\|\nabla\psi(y^N)\|_2}{\mu_\psi} \le \frac{\e}{\mu_\psi R_y}
    \end{equation*}
    and from this we obtain that with probability at least $1 - 3\beta$ 
    \begin{equation*}
        \|y^N\|_2 \le \|y^N - y^*\|_2 + \|y^*\|_2 \le \frac{\e}{\mu_\psi R_y} + R_y \le 2R_y.
    \end{equation*}
\end{proof}

\begin{corollary}\label{cor:sstm_str_cvx_connect_with_primal}
	Let the assumptions of Theorem~\ref{thm:str_cvx_biased_main_result} hold. Assume that $f$ is $L_f$-Lipschitz continuous on $B_{R_f}(0)$ where 
	$$R_f = \left(\sqrt{\frac{2C}{\lambda_{\max}(A^\top A)}} + G_1 + \frac{\sqrt{\lambda_{\max}(A^\top A)}}{\mu}\right)\frac{\e}{R_y} + R_x,$$
	$R_x = \|x(A^\top y^*)\|_2$, $\e \le \mu_\psi R_y^2$ and $\delta_y \le \frac{G_1\e}{NR_y}$ for some positive constant $G_1$. Assume additionally that the last batch-size $r_N$ is slightly bigger than other batch-sizes, i.e.\
	\begin{eqnarray}
	   r_N &\ge& \frac{1}{C}\max\Bigg\{1,\left(\frac{\mu_\psi}{L_\psi}\right)^{\nicefrac{3}{2}}\frac{N^2\sigma_\psi^2\left(1+\sqrt{3\ln\frac{N}{\beta}}\right)^2R_y^2}{\e^2},\notag\\ &&\qquad\qquad\qquad\qquad\qquad\qquad\qquad\frac{\sigma_\psi^2\left(1+\sqrt{3\ln\frac{N}{\beta}}\right)^2R_y^2}{\e^2}\Bigg\}.\label{eq:sstm_sc_last_batch} 
	\end{eqnarray}
	Then, with probability at least $1 - 4\beta$
	\begin{eqnarray}
	    f(\tx^N) - f(x^*) &\le& \left(2 + \left(\sqrt{\frac{2C}{\lambda_{\max}(A^\top A)}} + G_1\right)\frac{L_f}{R_y}\right)\e,\label{eq:stm_str_cvx_primal_guarantees_func}\\
	    \|A\tx^N\|_2 &\le& \left(1 + \sqrt{2C} + G_1\sqrt{\lambda_{\max}(A^\top A)}\right)\frac{\e}{R_y},\label{eq:stm_str_cvx_primal_guarantees_norm}
	\end{eqnarray}
	where $\beta\in(0,\nicefrac{1}{4})$, $\tilde{x}^N \eqdef \tilde{x}(A^\top y^{N},\Bxi^{N}, r_N)$ and to achieve it we need the total number of oracle calls including the cost of computing $\tilde{x}^N$ equals 
	\begin{equation}
        \widetilde{O}\left(\max\left\{\sqrt{\frac{L}{\mu}\chi(A^\top A)},\frac{\sigma_x^2M^2}{\e^2}\chi(A^\top A)\right\}\right) \label{eq:primal_oracle_calls_str_cvx}
    \end{equation}
    where $M = \|\nabla f(x^*)\|_2$.
\end{corollary}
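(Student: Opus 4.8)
The plan is to treat the reported output $\tx^N \eqdef \tx(A^\top y^N,\Bxi^N,r_N)$ as a stochastic perturbation of the \emph{exact} primal recovery $x^N \eqdef x(A^\top y^N)$, for which both target bounds follow immediately from the dual analysis, and then to control the perturbation $\tx^N - x^N$ in high probability. First I would invoke Corollary~\ref{cor:radius_grad_norm_guarantee_str_cvx}: since $f$ is $L$-smooth and $\mu$-strongly convex the dual $\psi$ is $L_\psi$-smooth and $\mu_\psi$-strongly convex with $L_\psi=\lambda_{\max}(A^\top A)/\mu$, $\mu_\psi=\lambda_{\min}^+(A^\top A)/L$, so running {\tt SSTM{\_}sc} on \eqref{DP} for $N=\widetilde{O}(\sqrt{L_\psi/\mu_\psi})$ iterations yields, with probability at least $1-3\beta$, the three dual estimates $\|\nabla\psi(y^N)\|_2\le\e/R_y$, $\|y^N-y^*\|_2\le\e/(\mu_\psi R_y)$, and $\|y^N\|_2\le 2R_y$ (this uses $\e\le\mu_\psi R_y^2$, and I must check that the hypotheses $\delta\le GR_0/(N\sqrt{A_N})$, $\e\le HR_0^2/A_N$ of Theorem~\ref{thm:str_cvx_biased_main_result} are implied by $\delta_y\le G_1\e/(NR_y)$ after substituting $\delta=\sqrt{\lambda_{\max}(A^\top A)}\delta_y$ and $R_0=\|y^0-y^*\|_2$, a routine constant chase). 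Feeding the first and third estimates into Theorem~\ref{thm:grad_norm_testarts_motivation} (equivalently, the key inequality \eqref{eq:key_ineq_for_restarts} followed by Cauchy--Schwarz) gives, on the same event, $f(x^N)-f(x^*)\le 2\e$ and $\|Ax^N\|_2\le\e/R_y$ for the exact recovery.

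The heart of the argument is bounding $\tx^N-x^N$. I would split $\tx^N-x^N=(\tx^N-\EE[\tx^N\mid y^N])+(\EE[\tx^N\mid y^N]-x^N)$. Averaging does not change the mean, so by \eqref{eq:noise_level_x} the bias term has norm at most $\delta_y\le G_1\e/(NR_y)$. For the zero-mean fluctuation I would use the light-tails assumption \eqref{eq:light_tails_x}: combining the batching lemmas (Lemmas~\ref{lem:jin_lemma_2} and \ref{lem:jud_nem_large_dev}) over the batch of size $r_N$ shows that, with probability at least $1-\beta$, the fluctuation is of order $\sigma_x(1+\sqrt{3\ln(1/\beta)})/\sqrt{r_N}$ in $x$-space. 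The third entry of the batch rule \eqref{eq:sstm_sc_last_batch}, namely $r_N\gtrsim\sigma_\psi^2(1+\sqrt{3\ln(N/\beta)})^2R_y^2/\e^2$ with $\sigma_\psi=\sqrt{\lambda_{\max}(A^\top A)}\sigma_x$, is calibrated precisely so that this fluctuation is at most $\sqrt{2C/\lambda_{\max}(A^\top A)}\,\e/R_y$ in $x$-space, hence at most $\sqrt{2C}\,\e/R_y$ in $Ax$-space after multiplication by $A$. A union bound with the $1-3\beta$ dual event gives the stated probability $1-4\beta$.

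On this event both claims follow. For the constraint, $\|A\tx^N\|_2\le\|Ax^N\|_2+\|A(\tx^N-x^N)\|_2\le \e/R_y+\sqrt{\lambda_{\max}(A^\top A)}G_1\e/(NR_y)+\sqrt{2C}\,\e/R_y$, which for $N\ge 1$ is at most $(1+\sqrt{2C}+G_1\sqrt{\lambda_{\max}(A^\top A)})\e/R_y$, i.e.\ \eqref{eq:stm_str_cvx_primal_guarantees_norm}. For the objective I would write $f(\tx^N)-f(x^*)=[f(\tx^N)-f(x^N)]+[f(x^N)-f(x^*)]\le L_f\|\tx^N-x^N\|_2+2\e$ and insert $\|\tx^N-x^N\|_2\le(\sqrt{2C/\lambda_{\max}(A^\top A)}+G_1)\e/R_y$ to recover \eqref{eq:stm_str_cvx_primal_guarantees_func}. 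Applying $L_f$-Lipschitzness requires $x^N,\tx^N\in B_{R_f}(0)$, which I would verify by the triangle inequality against $x^*=x(A^\top y^*)$ (so $\|x^*\|_2=R_x$), using that $x(\cdot)=\nabla\varphi$ is $(1/\mu)$-Lipschitz together with the dual bound on $\|y^N-y^*\|_2$ and the noise bound on $\|\tx^N-x^N\|_2$; the three summands defining $R_f$ are exactly these contributions. Finally, the oracle count $\widetilde{O}(\max\{\sqrt{L_\psi/\mu_\psi},\sigma_\psi^2R_y^2/\e^2\})$ of Corollary~\ref{cor:radius_grad_norm_guarantee_str_cvx}, plus the last batch $r_N$ (of the same order once $N\sim\sqrt{L_\psi/\mu_\psi}$), is rewritten via $\sqrt{L_\psi/\mu_\psi}=\sqrt{(L/\mu)\chi(A^\top A)}$ and, using the optimality identity $\nabla f(x^*)=A^\top y^*$ with the minimal-norm $y^*$ (whence $R_y\le M/\sqrt{\lambda_{\min}^+(A^\top A)}$ for $M=\|\nabla f(x^*)\|_2$), $\sigma_\psi^2R_y^2/\e^2=\lambda_{\max}(A^\top A)\sigma_x^2R_y^2/\e^2\le(\sigma_x^2M^2/\e^2)\chi(A^\top A)$, giving \eqref{eq:primal_oracle_calls_str_cvx}. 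I expect the main obstacle to be the high-probability perturbation step: one must turn the sub-Gaussian light-tails into deviation bounds whose constants, after multiplication by $A$ and after the union bound, land exactly on $\sqrt{2C}$ and $\sqrt{2C/\lambda_{\max}(A^\top A)}$, while simultaneously confirming ball-membership so that $L_f$-Lipschitzness is even applicable; the dual-to-primal complexity translation resting on $R_y\le M/\sqrt{\lambda_{\min}^+(A^\top A)}$ is the other place where a genuine optimality argument, rather than bookkeeping, is required.
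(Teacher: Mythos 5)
Your proposal follows essentially the same route as the paper's proof: invoke Corollary~\ref{cor:radius_grad_norm_guarantee_str_cvx} and Theorem~\ref{thm:grad_norm_testarts_motivation} for the exact recovery $x(A^\top y^N)$, split $\tx^N - x(A^\top y^N)$ into bias (controlled by $\delta_y$) plus a light-tailed fluctuation (controlled via Lemma~\ref{lem:jud_nem_large_dev} with $\gamma=\sqrt{3\ln(1/\beta)}$ and the enlarged batch $r_N$), verify ball membership through Demyanov--Danskin and $(1/\mu)$-Lipschitzness of $\nabla\varphi$ before applying $L_f$-Lipschitzness, and translate the dual complexity via $\sigma_\psi^2=\lambda_{\max}(A^\top A)\sigma_x^2$ and $R_y^2\le \|\nabla f(x^*)\|_2^2/\lambda_{\min}^+(A^\top A)$. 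The decomposition, concentration tool, union-bound accounting, and constant placement all match the paper's argument.
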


\section{Applications to Decentralized Distributed Optimization}\label{sec:distributed_opt}
In this section we apply our results to the decentralized optimization problems. But let us consider first the centralized or parallel architecture. As we mentioned in the introduction, when the objective function is $L$-smooth one can compute batches in parallel \cite{devolder2013exactness,dvurechensky2016stochastic,gasnikov2018universal,ghadimi2013stochastic} in order to accelerate the work of the method and \eqref{eq:general_stoch_iteration_complexity}-\eqref{eq:general_stoch_number_of_oracle_calls} imply that
\begin{equation}
    O\left(\frac{\nicefrac{\sigma^2R^2}{\e^2}}{\sqrt{\nicefrac{LR^2}{\e}}}\right) \text{ or } O\left(\frac{\nicefrac{\sigma^2}{\mu\e}}{\sqrt{\nicefrac{L}{\mu}}\ln\left(\nicefrac{\mu R^2}{\e}\right)}\right)\label{eq:parallel_opt_number_of_workers}
\end{equation}
number of workers in such a parallel scheme gives the method with working time proportional to the number of iterations defined in \eqref{eq:general_stoch_iteration_complexity}. However, number of workers defined in \eqref{eq:parallel_opt_number_of_workers} could be too big in order to use such an approach in practice. But still computing the batches in parallel even with much smaller number of workers could reduce the working time of the method if the communication is fast enough and it follows from \eqref{eq:general_stoch_number_of_oracle_calls}.

Besides the computation of batches in parallel for the general type of problem \eqref{eq:main_problem}+\eqref{eq:objectve_expectation}, parallel optimization is often applied to the finite-sum minimization problems \eqref{eq:main_problem}+\eqref{eq:erm_problem} or \eqref{eq:main_problem}+\eqref{eq:finite_sum_minimization} that we rewrite here in the following form:
\begin{equation}
    \min\limits_{x\in Q\subseteq \R^n}f(x) = \frac{1}{m}\sum\limits_{k=1}^m f_k(x).\label{eq:main_problem_decentralized_sec}
\end{equation}
We notice that in this section $m$ is a number of workers and $f_k(x)$ is known only for the $k$-th worker. Consider the situation when workers are connected in a network and one can construct a spanning tree for this network. Assume that the diameter of the obtained graph equals $d$, i.e.\ the height of the tree~--- maximal distance (in terms of connections) between the root and a leaf \cite{scaman2017optimal}. If we run {\tt STM} on such a spanning tree then we will get that the number of communication rounds will be $d$ times larger than number of iterations defined in \eqref{eq:general_stoch_iteration_complexity}.

Now let us consider decentralized case when workers can communicate only with their neighbours. Next, we describe the method of how to reflect this restriction in the problem \eqref{eq:main_problem_decentralized_sec}. Consider the Laplacian matrix $\overline{W}\in\R^{m\times m}$ of the network with vertices $V$ and edges $E$ which is defined as follows:
\begin{equation}
    \overline{W}_{ij} = \begin{cases} 
    -1, &\text{if } (i,j)\in E,\\
    \deg(i), &\text{if } i=j,\\
    0 &\text{otherwise},
    \end{cases}\label{eq:laplacian_matrix}
\end{equation}
where $\deg(i)$ is degree of $i$-th node, i.e.\ number of neighbours of the $i$-th worker. Since we consider only connected networks the matrix $\overline{W}$ has unique eigenvector $\bld{1}_m \eqdef (1,\ldots,1)^\top \in \R^m$ corresponding to the eigenvalue $0$. It implies that for all vectors $a = (a_1,\ldots,a_m)^\top\in \R^m$ the following equivalence holds:
\begin{equation}
    a_1 = \ldots = a_m \; \Longleftrightarrow \; Wa = 0.\label{eq:main_property_of_laplacian_simple}
\end{equation}
Now let us think about $a_i$ as a number that $i$-th node stores. Then, using \eqref{eq:main_property_of_laplacian_simple} we can use Laplacian matrix to express in the short matrix form the fact that all nodes of the network store the same number. In order to generalize it for the case when $a_i$ are vectors from $\R^n$ we should consider the matrix $W \eqdef \overline{W} \otimes I_n$ where $\otimes$ represents the Kronecker product (see \eqref{eq:kronecker_product_def}). Indeed, if we consider vectors $x_1,\ldots,x_m\in\R^n$ and $\x = \left(x_1^\top,\ldots,x_m^\top\right)\in \R^{nm}$, then \eqref{eq:main_property_of_laplacian_simple} implies
\begin{equation}
    x_1 = \ldots = x_m \; \Longleftrightarrow \; W\x = 0.\label{eq:main_property_of_laplacian}
\end{equation}
For simplicity, we also call $W$ as a Laplacian matrix and it does not lead to misunderstanding since everywhere below we use $W$ instead of $\overline{W}$. The key observation here that computation of $Wx$ requires one round of communications when the $k$-th worker sends $x_k$ to all its neighbours and receives $x_j$ for all $j$ such that $(k,j)\in E$, i.e.\ $k$-th worker gets vectors from all its neighbours. Note, that $W$ is symmetric and positive semidefinite \cite{scaman2017optimal} and, as a consequence, $\sqrt{W}$ exists. Moreover, we can replace $W$ by $\sqrt{W}$ in \eqref{eq:main_property_of_laplacian} and get the equivalent statement:
\begin{equation}
    x_1 = \ldots = x_m \; \Longleftrightarrow \; \sqrt{W}\x = 0.\label{eq:main_property_of_laplacian_sqrt}
\end{equation}

Using this we can rewrite the problem \eqref{eq:main_problem_decentralized_sec} in the following way:
\begin{equation}
    \min\limits_{\substack{\sqrt{W}\x = 0, \\ x_1,\ldots, x_m \in Q \subseteq \R^n}}f(\x) = \frac{1}{m}\sum\limits_{k=1}^m f_k(x_k).\label{eq:main_problem_decentralized_sec_rewritten}
\end{equation}
We are interested in the general case when $f_k(x_k) = \EE_{\xi_k}\left[f_k(x_k,\xi_k)\right]$ where $\{\xi_k\}_{k=1}^m$ are independent. This type of objective can be considered as a special case of \eqref{eq:finite_sum_minimization}. Then, as it was mentioned in the introduction it is natural to use stochastic gradients $\nabla f_k(x_k,\xi_k)$ that satisfy
\begin{eqnarray}
    \left\|\EE_{\xi_k}\left[\nabla f_k(x_k,\xi_k)\right] - \nabla f_k(x_k)\right\|_2 &\le& \delta, \label{eq:primal_bias_in_stoch_grad_distrib}\\
    \EE_{\xi_k}\left[\exp\left(\frac{\left\|\nabla f_k(x_k,\xi_k) - \EE_{\xi_k}\left[\nabla f_k(x_k,\xi_k)\right]\right\|_2^2}{\sigma^2}\right)\right] &\le& \exp(1). \label{eq:primal_light_tails_stoch_grad_distrib}
\end{eqnarray}
Then, the stochastic gradient
\begin{equation*}
    \nabla f(\x,\xi) \eqdef \nabla f(\x,\{\xi_k\}_{k=1}^m) \eqdef \frac{1}{m}\sum\limits_{k=1}^m \nabla f_k(x_k,\xi_k)
\end{equation*}
satisfies (see also \eqref{eq:super_exp_moment_batched_stoch_grad})
\begin{equation*}
    \EE_{\xi}\left[\exp\left(\frac{\left\|\nabla f(\x,\xi) - \EE_{\xi}\left[\nabla f(\x,\xi)\right]\right\|_2^2}{\sigma_f^2}\right)\right] \le \exp(1)
\end{equation*}
with $\sigma_f^2 = O\left(\nicefrac{\sigma^2}{m}\right)$. 

As always, we start with the smooth case with $Q = \R^n$ and assume that each $f_k$ is $L$-smooth, $\mu$-strongly convex and satisfies $\|\nabla_{k}f_k(x_k)\|_2 \le M$ on some ball $B_{R_M}(x^*)$ where we use $\nabla_{k}f(x_k)$ to emphasize that $f_k$ depends only on the $k$-th $n$-dimensional block of $\x$. Since the functional $f(\x)$ in \eqref{eq:main_problem_decentralized_sec_rewritten} has separable structure, it implies that $f$ is $\nicefrac{L}{m}$-smooth, $\nicefrac{\mu}{m}$-strongly convex and satisfies $\|\nabla f(\x)\|_2 \le \nicefrac{M}{\sqrt{m}}$ on $B_{\sqrt{m}R_M}(\x^*)$. Indeed, for all $\x,\y \in \R^n$
\begin{eqnarray*}
    \|\x - \y\|_2^2 &=& \sum\limits_{k=1}^m \|x_k - y_k\|_2^2,\\
    \|\nabla f(\x) - \nabla f(\y)\|_2 &=& \sqrt{\frac{1}{m^2}\sum\limits_{k=1}^m\|\nabla_{k} f_k(x_k) - \nabla_{k} f_k(y_k)\|_2^2}\\
    &\le& \sqrt{\frac{L^2}{m^2}\sum\limits_{k=1}^m\|x_k - y_k\|_2^2} = \frac{L}{m}\|\x - \y\|_2,\\
    f(\x) &=& \frac{1}{m}\sum\limits_{k=1}^m f_k(x_k) \ge \frac{1}{m}\sum\limits_{k=1}^m \left(f(y_k) + \la\nabla_k f_k(y_k), x_k - y_k \ra + \frac{\mu}{2}\|x^k - y^k\|_2^2\right)\\
    &=& f(\y) + \la\nabla f(\y), \x - \y \ra + \frac{\mu}{2m}\|\x - \y\|_2^2,\\\
    \|\nabla f(\x)\|_2^2 &=& \frac{1}{m^2}\sum\limits_{k=1}^m\|\nabla_k f_k(x_k)\|_2^2.
\end{eqnarray*}

Therefore, one can consider the problem \eqref{eq:main_problem_decentralized_sec_rewritten} as \eqref{PP} with $A = \sqrt{W}$ and $Q = \R^{nm}$. Next, if the starting point $\x^0$ is such that $\x^0 = (x^0,\ldots,x^0)^\top$ then 
\begin{eqnarray*}
    \Rbf^2 \eqdef \|\x^0 - \x^*\|_2^2 = m\|x^0 - x^*\|_2^2 = mR^2,\quad R_\y^2 \eqdef \|\y^*\|_2^2 \le \frac{\|\nabla f(\x^*)\|_2^2}{\lambda_{\min}^+(W)} \le \frac{M^2}{m\lambda_{\min}^+(W)}.
\end{eqnarray*}
Now it should become clear why in Section~\ref{sec:primal} we paid most of our attention on number of $A^\top A\x$ calculations. In this particular scenario $A^\top A\x = \sqrt{W}^\top \sqrt{W}x = Wx$ which can be computed via one round of communications of each node with its neighbours as it was mentioned earlier in this section. That is, for the primal approach we can simply use the results discussed in Section~\ref{sec:primal}. For convenience, we summarize them in Tables~\ref{tab:deterministic_bounds_primal} and \ref{tab:stochastic_bounds_primal} which are obtained via plugging the parameters that we obtained above in the bounds from Section~\ref{sec:primal}. Note that the results presented in this match the lower bounds obtained in \cite{arjevani2015communication} in terms of the number of communication rounds up to logarithmic factors and and there is a conjecture \cite{dvinskikh2019decentralized} that these bounds are also optimal in terms of number of oracle calls per node for the class of methods that require optimal number of communication rounds. Recently, the very similar result about the optimal balance between number of oracle calls per node and number of communication round was proved for the case when the primal functional is convex and $L$-smooth and deterministic first-order oracle is available \cite{xu2019accelerated}.
\begin{table}[ht!]
    \centering
    \begin{tabular}{|c|c|c|c|}
         \hline
         Assumptions on $f_k$ & Method & \makecell{\# of communication\\ rounds} & \makecell{\# of $\nabla f_k(x)$ oracle\\ calls per node}\\
         \hline
         \makecell{ $\mu${-strongly convex,}\\ $L$-smooth} & \makecell{{\tt D-MASG},\\$Q = \R^n$,\\{\cite{fallah2019robust}}} & $\widetilde{O}\left(\sqrt{\frac{L}{\mu}\chi}\right)$ & $\widetilde{O}\left(\sqrt{\frac{L}{\mu}}\right)$ \\
         \hline
         $L$-smooth & \makecell{{\tt STP{\_}IPS} with \\ {\tt STP} as a subroutine,\\ $Q = \R^n$,\\{ [This paper]}}  & $\widetilde{O}\left(\sqrt{\frac{LR^2}{\e}\chi}\right)$ & $\widetilde{O}\left(\sqrt{\frac{LR^2}{\e}}\right)$ \\
         \hline
         \makecell{ $\mu${-strongly convex,}\\ $\|\nabla f_k(x)\|_2 \le M$}& \makecell{\tt R-Sliding,\\{\makecell{\cite{dvinskikh2019decentralized,Lan2019lectures,lan2016gradient,lan2017communication} }}} & $\widetilde{O}\left(\sqrt{\frac{M^2}{\mu\e}\chi}\right)$ & $\widetilde{O}\left(\frac{M^2}{\mu\e}\right)$ \\
         \hline
         $\|\nabla f_k(x)\|_2 \le M$ & \makecell{\tt Sliding,\\\makecell{\cite{Lan2019lectures,lan2016gradient,lan2017communication}}} & $ O\left(\sqrt{\frac{M^2R^2}{\e^2}\chi}\right)$ & $ O\left(\frac{M^2R^2}{\e^2}\right)$ \\
         \hline
    \end{tabular}
    \caption{\small Summary of the covered results in this paper for solving \eqref{eq:main_problem_decentralized_sec_rewritten} using primal deterministic approach from Section~\ref{sec:primal}. First column contains assumptions on $f_k$, $k=1,\ldots,m$ in addition to the convexity, $\chi = \chi(W)$. All methods except {\tt D-MASG} should be applied to solve \eqref{penalty}.}
    \label{tab:deterministic_bounds_primal}
\end{table}
\begin{table}[t!]
    \centering
    \begin{tabular}{|c|c|c|c|}
         \hline
         Assumptions on $f_k$ & Method & \makecell{\# of communication\\ rounds} & \makecell{\# of $\nabla f_k(x,\xi)$ oracle\\ calls per node}\\
         \hline
         \makecell{ $\mu${-strongly convex,}\\ $L$-smooth} & \makecell{{\tt D-MASG},\\in expectation, \\$Q = \R^n$,\\{\cite{fallah2019robust}}} & $\widetilde{O}\left(\sqrt{\frac{L}{\mu}\chi}\right)$ & $\widetilde{O}\left(\max\left\{\sqrt{\frac{L}{\mu}}, \frac{\sigma^2}{\mu\e} \right\}\right)$\\
         \hline
         $L$-smooth & \makecell{{\tt SSTP{\_}IPS} with \\ {\tt STP} as a subroutine,\\ $Q = \R^n$,\\{\makecell{conjecture, \\ $[$This paper$]$, \cite{dvinskikh2019decentralized}}}} & $\widetilde{O}\left(\sqrt{\frac{LR^2}{\e}\chi}\right)$ & $\widetilde{O}\left(\max\left\{\sqrt{\frac{LR^2}{\e}}, \frac{\sigma^2 R^2}{\e^2}\right\}\right)$\\
         \hline
         \makecell{ $\mu${-strongly convex,}\\ $\|\nabla f_k(x)\|_2 \le M$}& \makecell{{\tt RS-Sliding}\\ $Q$ is bounded,\\{ \makecell{\cite{dvinskikh2019decentralized,Lan2019lectures,lan2016gradient,lan2017communication} }}} & $\widetilde{O}\left(\sqrt{\frac{M^2}{\mu\e}\chi}\right)$ & $\widetilde{O}\left(\frac{M^2 + \sigma^2}{\mu\e}\right)$ \\
         \hline
         $\|\nabla f_k(x)\|_2 \le M$ & \makecell{{\tt S-Sliding} \\ $Q$ is bounded,\\\makecell{\cite{Lan2019lectures,lan2016gradient,lan2017communication}}} & $ \widetilde{O}\left(\sqrt{\frac{M^2R^2}{\e^2}\chi}\right)$ & $\widetilde{O}\left(\frac{(M^2+\sigma^2)R^2}{\e^2}\right)$\\
         \hline
    \end{tabular}
    \caption{\small Summary of the covered results in this paper for solving \eqref{eq:main_problem_decentralized_sec_rewritten} using primal stochastic approach from Section~\ref{sec:primal} with the stochastic oracle satisfying \eqref{eq:primal_bias_in_stoch_grad_distrib}-\eqref{eq:primal_light_tails_stoch_grad_distrib} with $\delta = 0$. First column contains assumptions on $f_k$, $k=1,\ldots,m$ in addition to the convexity, $\chi = \chi(W)$. All methods except {\tt D-MASG} should be applied to solve \eqref{penalty}. The bounds from the last two rows hold even in the case when $Q$ is unbounded, but in the expectation (see \cite{lan2016algorithms}).}
    \label{tab:stochastic_bounds_primal}
\end{table}

Finally, consider the situation when $Q = \R^n$ and each $f_k$ from \eqref{eq:main_problem_decentralized_sec_rewritten} is dual-friendly, i.e.\ one can construct dual problem for \eqref{eq:main_problem_decentralized_sec_rewritten}
\begin{eqnarray}
\min_{\y\in\R^{nm}}\Psi(\y), && \text{where } \y =(y_1^\top,\ldots, y_m^\top)^\top \in \R^{nm},\; y_1,\ldots,y_m\in\R^{n},\label{eq:dual_problem_distributed}\\
\varphi_k(y_k) &=& \max_{x_k\in \R^n}\left\{\langle y_k,x_k\rangle - f_k(x_k)\right\},\label{eq:dual_phi_k_function_distributed}\\
\Phi(\y) &=& \frac{1}{m}\sum\limits_{k=1}^m\varphi_k(my_k),\; \Psi(\y) = \Phi(\sqrt{W}\y) = \frac{1}{m}\sum\limits_{k=1}^m\varphi_k(m[\sqrt{W}\x]_k),\label{eq:dual_phi_psi_function_distributed}
\end{eqnarray}
where $[\sqrt{W}\x]_k$ is the $k$-th $n$-dimensional block of $\sqrt{W}x$. Note that
\begin{eqnarray*}
    \max\limits_{\x\in\R^{nm}}\left\{\la\y , \x \ra - f(\x)\right\} &=& \max\limits_{\x\in\R^{nm}}\left\{\sum\limits_{k=1}^m\la y_k, x_k\ra - \frac{1}{m}\sum\limits_{k=1}^m f_k(x_k)\right\} \\
    &=&\frac{1}{m}\sum\limits_{k=1}^m \max\limits_{x_k\in\R^n}\left\{\la my_k,x_k\ra - f_k(x_k)\right\} =  \frac{1}{m}\sum\limits_{k=1}^m\varphi_k(my_k) = \Phi(\y),
\end{eqnarray*}
so, $\Phi(\y)$ is a dual function for $f(\x)$. As for the primal approach, we are interested in the general case when $\varphi_k(y_k) = \EE_{\xi_k}\left[\varphi_k(y_k,\xi_k)\right]$ where $\{\xi_k\}_{k=1}^m$ are independent and stochastic gradients $\nabla \varphi_k(x_k,\xi_k)$ satisfy
\begin{eqnarray}
    \left\|\EE_{\xi_k}\left[\nabla \varphi_k(y_k,\xi_k)\right] - \nabla \varphi_k(y_k)\right\|_2 &\le& \delta_\varphi, \label{eq:dual_bias_in_stoch_grad_distrib}\\
    \EE_{\xi_k}\left[\exp\left(\frac{\left\|\nabla \varphi_k(y_k,\xi_k) - \EE_{\xi_k}\left[\nabla \varphi_k(y_k,\xi_k)\right]\right\|_2^2}{\sigma^2}\right)\right] &\le& \exp(1). \label{eq:dual_light_tails_stoch_grad_distrib}
\end{eqnarray}
Consider the stochastic function $f_k(x_k,\xi_k)$ which is defined implicitly as follows:
\begin{equation}
    \varphi_k(y_k,\xi_k) = \max\limits_{x_k\in \R^n}\left\{\la y_k, x_k \ra - f(x_k,\xi_k)\right\}.\label{eq:dual_stoch_func_distrib}
\end{equation}
Since
\begin{eqnarray*}
    \nabla \Phi(\y) = \sum\limits_{k=1}^m\nabla\varphi_k(my_k) \overset{\eqref{eq:gradient_dual_function}}{=} \sum\limits_{k=1}^m x_k(my_k) \eqdef \x(\y),\quad x_k(y_k) \eqdef \argmax_{x_k\in\R^n}\left\{\langle y_k,x_k\rangle - f_k(x_k)\right\}
\end{eqnarray*}
it is natural to define the stochastic gradient $\nabla \Phi(\y,\xi)$ as follows:
\begin{eqnarray*}
    \nabla \Phi(\y,\xi) &\eqdef& \nabla \Phi(\y,\{\xi_k\}_{k=1}^m) \eqdef \sum\limits_{k=1}^m \nabla \varphi_k(my_k,\xi_k)\overset{\eqref{eq:gradient_dual_function}}{=} \sum\limits_{k=1}^m x_k(my_k,\xi_k)\eqdef \x(\y,\xi),\\
    x_k(y_k,\xi_k) &\eqdef& \argmax_{x_k\in\R^n}\left\{\langle y_k,x_k\rangle - f_k(x_k,\xi_k)\right\}.
\end{eqnarray*}
It satisfies (see also \eqref{eq:super_exp_moment_batched_stoch_grad})
\begin{eqnarray*}
    \left\|\EE_{\xi}\left[\nabla \Phi(\y,\xi)\right] - \nabla \Phi(\y)\right\|_2 &\le& \delta_\Phi,\\
    \EE_{\xi}\left[\exp\left(\frac{\left\|\nabla \Phi(\y,\xi) - \EE_{\xi}\left[\nabla \Phi(\y,\xi)\right]\right\|_2^2}{\sigma_\Phi^2}\right)\right] &\le& \exp(1)
\end{eqnarray*}
with $\delta_\Phi = m\delta_\varphi$ and $\sigma_\Phi^2 = O\left(m\sigma^2\right)$. Using this, we define the stochastic gradient of $\Psi(\y)$ as $\nabla \Psi(\y, \xi) \eqdef \sqrt{W}\nabla \Phi(\sqrt{W}\y,\xi) = \sqrt{W}\x(\sqrt{W}\y,\xi)$ and, as a consequence, we get
\begin{eqnarray*}
    \left\|\EE_{\xi}\left[\nabla \Psi(\y,\xi)\right] - \nabla \Psi(\y)\right\|_2 &\le& \delta_\Psi,\\
    \EE_{\xi}\left[\exp\left(\frac{\left\|\nabla \Psi(\y,\xi) - \EE_{\xi}\left[\nabla \Psi(\y,\xi)\right]\right\|_2^2}{\sigma_\Psi^2}\right)\right] &\le& \exp(1)
\end{eqnarray*}
with $\delta_\Psi = \sqrt{\lambda_{\max}(W)}\delta_\Phi$ and $\sigma_\Psi = \sqrt{\lambda_{\max}(W)}\sigma_\Phi$.

Taking all of this into account we conclude that problem \eqref{eq:dual_problem_distributed} is a special case of \eqref{DP} with $A = \sqrt{W}$. To make the algorithms from Section~\ref{sec:dual} distributed we should change the variables in those methods via multiplying them by $\sqrt{W}$ from the left \cite{dvinskikh2019decentralized,dvinskikh2019dual,uribe2017optimal}, e.g.\ for the iterates of {\tt SPDSTM} we will get
\begin{equation*}
    \tilde{y}^{k+1} := \sqrt{W}\tilde{y}^{k+1}, \quad z^{k+1} := \sqrt{W}z^{k+1},\quad y^{k+1} := \sqrt{W}y^{k+1},
\end{equation*}
which means that it is needed to multiply lines 4-6 of Algorithm~\ref{Alg:PDSTM} by $\sqrt{W}$ from the left. After such a change of variables all methods from Section~\ref{sec:dual} become suitable to run them in the distributed fashion. Besides that, it does not spoil the ability of recovering the primal variables since before the change of variables all of the methods mentioned in Section~\ref{sec:dual} used $\x(\sqrt{W}\y)$ or $\x(\sqrt{W}\y, \xi)$ where points $y$ were some dual iterates of those methods, so, after the change of variables we should use $\x(\y)$ or $\x(\y, \xi)$ respectively. Moreover, it is also possible to compute $\|\sqrt{W}x\|_2^2 = \la\x, W\x \ra$ in the distributed fashion using consensus type algorithms: one communication step is needed to compute $W\x$, then each worker computes $\la x_k, [W\x]_k \ra$ locally and after that it is needed to run consensus algorithm. We summarize the results for this case in Tables~\ref{tab:stochastic_bounds_dual} and \ref{tab:stochastic_biased_bounds_dual}. Note that the proposed bounds are optimal in terms of the number of communication rounds up to polylogarithmic factors \cite{arjevani2015communication,scaman2017optimal,scaman2019optimal,scaman2018optimal}. Note that the lower bounds from \cite{scaman2017optimal,scaman2019optimal,scaman2018optimal} are presented for the convolution of two criteria: number of oracle calls per node and communication rounds. One can obtain lower bounds for the number of communication rounds itself using additional assumption that time needed for one communication is big enough and the term which corresponds to the number of oracle calls can be neglected. Regarding the number of oracle calls there is a conjecture \cite{dvinskikh2019decentralized} that the bounds that we present in this paper are also optimal up to polylogarithmic factors for the class of methods that require optimal number of communication rounds.
\begin{table}[ht!]
    \centering
    \begin{tabular}{|c|c|c|c|c|}
         \hline
         Assumptions on $f_k$ & Method  & \makecell{\# of communication\\ rounds} & \makecell{\# of $\nabla \varphi_k(y,\xi)$ oracle\\ calls per node}\\
         \hline
         \makecell{ $\mu${-strongly convex,}\\ $L$-smooth,\\
         $\|\nabla f_k(x)\|_2 \le M$} & \makecell{{\tt R-RRMA-AC-SA$^2$} \\ (Algorithm~\ref{Alg:Restarted-RRMA-AC-SA2}),\\Corollary~\ref{cor:r-rrma-ac-sa2_connect_with_primal}, \\
         {\tt SSTM{\_}sc} \\ (Algorithm~\ref{Alg:STM_str_cvx}),\\Corollary~\ref{cor:sstm_str_cvx_connect_with_primal}}& 
         $\widetilde{O}\left(\sqrt{\frac{L}{\mu}\chi}\right)$ & $\widetilde{O}\left(\max\left\{\sqrt{\frac{L}{\mu}\chi}, \frac{\sigma_\Phi^2M^2}{\e^2}\chi \right\}\right)$\\
         \hline
         \makecell{ $\mu${-strongly convex,}\\
         $\|\nabla f_k(x)\|_2 \le M$} & \makecell{{\tt SPDSTM} \\ (Algorithm~\ref{Alg:PDSTM}),\\ Theorem~\ref{thm:spdtstm_smooth_cvx_dual_biased}} & 
         $\widetilde{O}\left(\sqrt{\frac{M^2}{\mu\e}\chi}\right)$ & $\widetilde{O}\left(\max\left\{\sqrt{\frac{M^2}{\mu\e}\chi}, \frac{\sigma_\Phi^2M^2}{\e^2}\chi \right\}\right)$\\
         \hline
    \end{tabular}
    \caption{\small Summary of the covered results in this paper for solving \eqref{eq:dual_problem_distributed} using dual stochastic approach from Section~\ref{sec:dual} with the stochastic oracle satisfying \eqref{eq:primal_bias_in_stoch_grad_distrib}-\eqref{eq:primal_light_tails_stoch_grad_distrib} with $\delta = 0$. First column contains assumptions on $f_k$, $k=1,\ldots,m$ in addition to the convexity, $\chi = \chi(W)$. }
    \label{tab:stochastic_bounds_dual}
\end{table}
\vspace{-0.1cm}
\begin{table}[ht!]
    \centering
    \begin{tabular}{|c|c|c|c|c|}
         \hline
         Assumptions on $f_k$ & Method & \makecell{\# of communication\\ rounds} & \makecell{\# of $\nabla \varphi_k(y,\xi)$ oracle\\ calls per node}\\
         \hline
         \makecell{ $\mu${-strongly convex,}\\ $L$-smooth,\\
         $\|\nabla f_k(x)\|_2 \le M$} & \makecell{{\tt SSTM{\_}sc} \\ (Algorithm~\ref{Alg:STM_str_cvx}),\\ Corollary~\ref{cor:sstm_str_cvx_connect_with_primal}} & $\widetilde{O}\left(\sqrt{\frac{L}{\mu}\chi}\right)$ & $\widetilde{O}\left(\max\left\{\sqrt{\frac{L}{\mu}\chi}, \frac{\sigma_\Phi^2M^2}{\e^2}\chi \right\}\right)$\\
         \hline
         \makecell{ $\mu${-strongly convex,}\\
         $\|\nabla f_k(x)\|_2 \le M$} & \makecell{{\tt SPDSTM} \\ (Algorithm~\ref{Alg:PDSTM}),\\ Theorem~\ref{thm:spdtstm_smooth_cvx_dual_biased}} & $\widetilde{O}\left(\sqrt{\frac{M^2}{\mu\e}\chi}\right)$ & $\widetilde{O}\left(\max\left\{\sqrt{\frac{M^2}{\mu\e}\chi}, \frac{\sigma_\Phi^2M^2}{\e^2}\chi \right\}\right)$\\
         \hline
    \end{tabular}
    \caption{\small Summary of the covered results in this paper for solving \eqref{eq:dual_problem_distributed} using \textbf{biased} dual stochastic approach from Section~\ref{sec:dual} with the stochastic oracle satisfying \eqref{eq:primal_bias_in_stoch_grad_distrib}-\eqref{eq:primal_light_tails_stoch_grad_distrib} with $\delta_\varphi > 0$. First column contains assumptions on $f_k$, $k=1,\ldots, m$ in addition to the convexity, $\chi = \chi(W)$. For both cases the noise level should satisfy $\delta_\varphi = \widetilde{O}\left(\nicefrac{\e }{M\sqrt{m\chi}}\right)$.}
    \label{tab:stochastic_biased_bounds_dual}
\end{table}

\section{Discussion}\label{sec:discussion}
In this section we want to discuss some aspects of the proposed results that were not covered in the main part of this paper. First of all, we should say that in the smooth case for the primal approach our bounds for the number of communication steps coincides with the optimal bounds for the number of communication steps for parallel optimization if we substitute the diameter $d$ of the spanning tree in the bounds for parallel optimization by $\widetilde{O}(\sqrt{\chi(W)})$.

However, we want to discuss another interesting difference between parallel and decentralized optimization in terms of the complexity results which was noticed in \cite{dvinskikh2019decentralized}. From the line of works \cite{kulunchakov2019estimate1,kulunchakov2019estimate2,kulunchakov2019generic,lan2018random} it is known that for the problem \eqref{eq:main_problem}+\eqref{eq:finite_sum_minimization} (here we use $m$ instead of $q$ and iterator $k$ instead of $i$ for consistency) with $L$-smooth and $\mu$-strongly convex $f_k$ for all $k=1,\ldots, m$ the optimal number of oracle calls, i.e.\ calculations of of the stochastic gradients of $f_k$ with $\sigma^2$-subgaussian variance is
\begin{equation}
    \widetilde{O}\left(m + \sqrt{m\frac{L}{\mu}} + \frac{\sigma^2}{\mu\e}\right).\label{eq:optimal_bound_stoch_str_cvx}
\end{equation}
The bad news is that \eqref{eq:optimal_bound_stoch_str_cvx} does not work with full parallelization trick and the best possible way to parallelize it is described in \cite{lan2018random}. However, standard accelerated scheme using mini-batched versions of the stochastic gradients without variance-reduction technique and incremental oracles which gives the bound
\begin{equation}
    \widetilde{O}\left(m\sqrt{\frac{L}{\mu}} + \frac{\sigma^2}{\mu\e}\right)\label{eq:classical_accelerated_scheme}
\end{equation}
for the number of oracle calls and it admits full parallelization. It means that in the parallel optimization setup when we have computational network with $m$ nodes and the spanning tree for it with diameter $d$ the number of oracle calls per node is
\begin{equation}
    \widetilde{O}\left(\sqrt{\frac{L}{\mu}} + \frac{\sigma^2}{m\mu\e}\right) = \widetilde{O}\left(\max\left\{\sqrt{\frac{L}{\mu}},\frac{\sigma^2}{m\mu\e}\right\}\right)\label{eq:paralel_opt_oracle_per_node}
\end{equation}
and the number of communication steps is
\begin{equation}
    \widetilde{O}\left(d\sqrt{\frac{L}{\mu}}\right).\label{eq:parallel_opt_communications}
\end{equation}
However, for the decentralized setup the second row of Table~\ref{tab:stochastic_bounds_primal} states that the number of communication rounds is the same as in \eqref{eq:parallel_opt_communications} up to substitution of $d$ by $\sqrt{\chi(W)}$ and the number of oracle calls per node is
\begin{equation}
    \widetilde{O}\left(\max\left\{\sqrt{\frac{L}{\mu}},\frac{\sigma^2}{\mu\e}\right\}\right)\label{eq:decentralized_opt_oracle_per_node}
\end{equation}
which has $m$ times bigger statistical term under the maximum than in \eqref{eq:paralel_opt_oracle_per_node}. What is more, recently it was shown that there exists such a decentralized distributed method that requires
$$\widetilde{O}\left( \frac{\sigma^2}{m\mu\e}\right)$$ 
stochastic gradient oracle calls per node \cite{olshevsky2019asymptotic,olshevsky2019non}, but it is not optimal in terms of the number of communications. Moreover, there is a hypothesis \cite{dvinskikh2019decentralized} that in the smooth case the bounds from Tables~\ref{tab:deterministic_bounds_primal}~and~\ref{tab:stochastic_bounds_primal} (rows 2 and 3) are optimal in terms of the number of oracle calls per node \textit{for the class of methods that require optimal number of communication rounds} up to polylogarithmic factors.

The same claim but for Table~\ref{tab:stochastic_bounds_dual} was also presented in \cite{dvinskikh2019decentralized} as a hypothesis and in this paper we propose the same hypothesis for the result stated Table~\ref{tab:stochastic_biased_bounds_dual} up to polylogarithmic and additionally we hypothesise that the noise level that we obtained is also unimprovable up to polylogarithmic factors.

\subsection{Possible Extensions}
\begin{itemize}
    \item As it was mentioned in Section~\ref{sec:primal}, the recurrence technique that we use in Sections~\ref{sec:stp_ips} and \ref{sec:dual} can be very useful in the generalization of the results for {\tt STM} from Section~\ref{sec:primal} for the case when instead of $\nabla f(x)$ only stochastic gradient $\nabla f(x,\xi)$ (see inequalities \eqref{eq:primal_bias_in_stoch_grad}-\eqref{eq:primal_light_tails_stoch_grad}) is available, $f$ is $L$-smooth and proximal step is computed in an inexact manner. It would be nice also to compare proposed methods for the case when $\delta$ with the results from \cite{fallah2019robust}. For the convex but non-strongly convex case one can also try to combine Nesterov's smoothing technique \cite{devolder2012double,nesterov2005smooth,uribe2017optimal} with {\tt D-MASG} from \cite{fallah2019robust}.

    \item We believe that the technique presented in the proofs of Lemmas~\ref{lem:tails_estimate_biased} and \ref{lem:tails_estimate_str_cvx} can also be extended or modified in order to be applied for different optimization methods to obtain high probability bounds in the case when $Q = \R^n$.

    \item We emphasize that in our results we assume that each $f_i$ from \eqref{eq:main_problem_decentralized_sec_rewritten} is $L$-smooth and $\mu$-strongly convex. When each $f_i$ is $L_i$-smooth and $\mu_i$-strongly convex, it means that in order to satisfy the assumption we use in our paper we need to choose $L = \max_{1\le i\le m}L_i$ and $\mu = \min_{1\le i \le m}\mu_i$. This choice can lead to a very slow rate in some situations, e.g.\ the worst-case $L$ can be $m$ times larger than $L$ for $f$ as for the case when $m=d$ and $f(x) = \nicefrac{\|x\|_2^2}{2m} = \nicefrac{1}{m}\sum_{i=1}^mf_i(x),$ $f_i(x) = \nicefrac{x_i^2}{2}$ where $L_i = 1$ for all $i$ but $f$ is $\nicefrac{1}{d}$-smooth \cite{tang2019practicality}. It was shown \cite{scaman2017optimal,uribe2017optimal} that instead of worst-case $\mu$ and $L$ one can use $\bar{\mu} = \nicefrac{1}{m}\sum_{i=1}^m \mu_i$ and $\hat{L}$ to be some weighted average of $L_i$, but such techniques can spoil number of communication rounds needed to achieve desired accuracy.

    \item It would be also interesting to generalize the proposed results for the case of more general stochastic gradients \cite{aybat2019universally, gower2019sgd, nguyen2018sgd, vaswani2019fast}.
\end{itemize}

\section{Application for Population Wasserstein Barycenter Calculation}\label{sec:wasserstein}

In this section we consider the problem of calculation of population Wasserstein barycenter since this example hides different interesting details connected with the theory discussed in this paper. In our presentation of this example we rely mostly on the recent work \cite{dvinskikh2020sa}.

\subsection{Definitions and Properties}\label{sec:wass_defs}
We define the probability simplex in $\R^n$ as $S_n(1) = \left\{x\in\R_{+}^n\mid \sum_{i=1}^n x_i  = 1\right\}$. One can interpret the elements of $S_n(1)$ as discrete probability measures with $n$ shared atoms. For an arbitrary pair of measures $p,q\in S_n(1)$ we introduce the set $\Pi(p,q) = \left\{\pi\in\R_+^{n\times n}\mid \pi\one = p,\; \pi^\top\one = q\right\}$ called transportation polytope. Optimal transportation (OT) problem between measures $p,q\in S_n(1)$ is defined as follows
\begin{equation}
    \cW(p,q) = \min\limits_{\pi\in\Pi(p,q)}\langle C, \pi\rangle = \min\limits_{\pi\in\Pi(p,q)}\sum\limits_{i,j=1}^n C_{ij}\pi_{ij}\label{eq:wasserstein_distance}
\end{equation}
where $C$ is a transportation cost matrix. That is, $(i,j)$-th component $C_{ij}$ of $C$ is a cost of transportation of the unit mass from point $x_i$ to the point $x_j$ where points $x_1,\ldots,x_n\in \R$ are atoms of measures from $S_n(1)$.

Next, we consider the entropic OT problem (see \cite{peyre2019computational, rigollet2018entropic})
\begin{equation}
    \cW_\mu (p,q) = \min_{\pi\in \Pi(p,q)}\sum\limits_{i,j=1}^n \left(C_{ij}\pi_{ij} + \mu\pi_{ij}\ln\pi_{ij}\right).\label{eq:entropic_wasserstein_distance}
\end{equation}
Consider some probability measure $\PP$ on $S_n(1)$. Then one can define population barycenter of measures from $S_n(1)$ as
\begin{equation}
    p_\mu^* = \argmin\limits_{p\in S_n(1)}\int_{q\in S_n(1)}\cW_\mu(p,q)d\PP(q) = \argmin\limits_{p\in S_n(1)}\underbrace{\EE_q\left[\cW_\mu(p,q)\right]}_{\cW_\mu(p)}.\label{eq:population_barycenter}
\end{equation}
For a given set of samples $q^1,\ldots, q^m$ we introduce empirical barycenter as
\begin{equation}
    \hat p_\mu^* = \argmin\limits_{p\in S_n(1)}\underbrace{\frac{1}{m}\sum\limits_{i=1}^m\cW_\mu(p,q^i)}_{\hat\cW(p)}.\label{eq:empirical_barycenter}
\end{equation}
We consider the problem \eqref{eq:population_barycenter} of finding population barycenter with some accuracy and discuss possible approaches to solve this problem in the following subsections.

However, before that, we need to mention some useful properties of $\cW_\mu(p,q)$. First of all, one can write explicitly the dual function of $W_\mu(p,q)$ for a fixed $q\in S_n(1)$ (see \cite{cuturi2016smoothed,dvinskikh2020sa}):
\begin{eqnarray}
    \cW_\mu(p,q) &=& \max\limits_{\lambda \in \R^n}\left\{\la\lambda, p\ra - \cW_{q,\mu}^*(\lambda)\right\}\label{eq:wasserstein_dist_sual_reformulation}\\
    \cW_{q,\mu}^*(\lambda) &=& \mu\sum\limits_{j=1}^n q_j\ln\left(\frac{1}{q_j}\sum\limits_{i=1}^n\exp\left(\frac{-C_{ij} + \lambda_i}{\mu}\right)\right).\label{eq:dual_function_wasserstein_distance}
\end{eqnarray}
Using this representation one can deduce the following theorem.
\begin{theorem}[\cite{dvinskikh2020sa}]\label{thm:wasserstein_dist_properties}
    For an arbitrary $q\in S_n(1)$ the entropic Wasserstein distance $\cW_\mu(\cdot,q): S_n(1) \to \R$ is $\mu$-strongly convex w.r.t.\ $\ell_2$-norm and $M$-Lipschitz continuous w.r.t.\ $\ell_2$-norm. Moreover,  $M \le \sqrt{n}M_\infty$ where $M_\infty$ is Lipschitz constant of $\cW_\mu(\cdot,q)$ w.r.t.\ $\ell_\infty$-norm and $M_\infty = \widetilde{O}(\|C\|_\infty)$.
\end{theorem}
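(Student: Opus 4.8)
The plan is to read everything off the semi-dual representation \eqref{eq:wasserstein_dist_sual_reformulation}--\eqref{eq:dual_function_wasserstein_distance}, which exhibits $\cW_\mu(\cdot,q)$ as the Fenchel conjugate of $\cW_{q,\mu}^*$. Since strong convexity of a conjugate is dual to smoothness of the original function, I would first establish that $\cW_{q,\mu}^*$ is $\nicefrac1\mu$-smooth with respect to the $\ell_2$-norm and then invoke the strong-convexity/smoothness duality (\cite{kakade2009duality,Rockafellar2015}). For the smoothness bound, write $\cW_{q,\mu}^*(\lambda)=\sum_{j=1}^n q_j\, h_j(\lambda)$ with $h_j(\lambda)=\mu\,\mathrm{lse}\!\left(\nicefrac{(\lambda-C_{\cdot j})}{\mu}\right)-\mu\ln q_j$, where $\mathrm{lse}(z)=\ln\sum_i e^{z_i}$ and $C_{\cdot j}$ is the $j$-th column of $C$. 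The Hessian of $\mathrm{lse}$ is $\mathrm{diag}(\sigma)-\sigma\sigma^\top$ with $\sigma$ the softmax, and for any $v$ one has $v^\top(\mathrm{diag}(\sigma)-\sigma\sigma^\top)v=\sum_i\sigma_i v_i^2-(\sum_i\sigma_i v_i)^2\le\|v\|_\infty^2\le\|v\|_2^2$, so $\mathrm{lse}$ is $1$-smooth in $\ell_2$. The chain rule gives $\nabla^2 h_j=\nicefrac1\mu\,\nabla^2\mathrm{lse}\preceq\nicefrac1\mu I$, and since $q\in S_n(1)$ we get $\nabla^2\cW_{q,\mu}^*=\sum_j q_j\nabla^2 h_j\preceq\nicefrac1\mu I$, i.e. $\cW_{q,\mu}^*$ is $\nicefrac1\mu$-smooth.

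Applying the duality between $\nicefrac1\mu$-smoothness and $\mu$-strong convexity to $\cW_\mu(\cdot,q)=(\cW_{q,\mu}^*)^*$ then yields $\mu$-strong convexity. One subtlety I would address explicitly: because $\sum_j q_j=1$, the map $\cW_{q,\mu}^*$ satisfies $\cW_{q,\mu}^*(\lambda+c\one)=\cW_{q,\mu}^*(\lambda)+c$, so it is flat along $\one$ and its conjugate is finite only on the hyperplane $\{p:\la\one,p\ra=1\}\supseteq S_n(1)$. Since any two points $p,p'\in S_n(1)$ differ by a vector orthogonal to $\one$, the strong-convexity inequality produced by the duality argument applies verbatim on the simplex, which is precisely the claim.

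For Lipschitz continuity I would use the Demyanov--Danskin theorem (as in \eqref{eq:gradient_dual_function}): the maximum in \eqref{eq:wasserstein_dist_sual_reformulation} is attained modulo the flat $\one$-direction, and as a maximum over $\lambda$ of functions affine in $p$, every subgradient of $\cW_\mu(\cdot,q)$ at $p$ equals a maximizer $\lambda^*(p)$. Hence the $\ell_2$-Lipschitz constant is $M=\sup_{p\in S_n(1)}\|\lambda^*(p)\|_2$, while the corresponding gradient bound in $\ell_\infty$ is $M_\infty=\sup_p\|\lambda^*(p)\|_\infty$. The elementary inequality $\|\lambda^*\|_2\le\sqrt n\,\|\lambda^*\|_\infty$ then gives $M\le\sqrt n\,M_\infty$ directly. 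It remains to show $M_\infty=\widetilde O(\|C\|_\infty)$: writing the optimality condition $p=\nabla\cW_{q,\mu}^*(\lambda^*)$ as a Sinkhorn-type fixed point for $u_i=e^{\lambda^*_i/\mu}$ and using $e^{-\|C\|_\infty/\mu}\le e^{-C_{ij}/\mu}\le 1$ (after the harmless normalization $C\ge0$), one bounds the spread $\max_i\lambda^*_i-\min_i\lambda^*_i$ by $\|C\|_\infty$ plus logarithmic terms in $n$, $\mu$ and $\min_i p_i$; fixing the gauge (e.g. $\la\one,\lambda^*\ra=0$) yields $\|\lambda^*\|_\infty=\widetilde O(\|C\|_\infty)$, as established for entropic optimal transport in \cite{cuturi2016smoothed,dvinskikh2020sa}.

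The routine parts are the Hessian computation and the norm equivalence; the genuine obstacle is the a priori boundedness $\|\lambda^*\|_\infty=\widetilde O(\|C\|_\infty)$ of the optimal dual potentials. This is not a one-line estimate but follows from the standard fixed-point/Sinkhorn analysis, and it is exactly where the polylogarithmic factors hidden inside $\widetilde O(\cdot)$ enter.
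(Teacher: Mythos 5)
The paper does not prove Theorem~\ref{thm:wasserstein_dist_properties}: it is imported verbatim from \cite{dvinskikh2020sa}, so there is no in-paper argument to compare yours against. That said, your proposal is the standard route and is essentially correct: the Hessian computation $\nabla^2\cW_{q,\mu}^*=\frac{1}{\mu}\sum_j q_j(\mathrm{diag}(\sigma_j)-\sigma_j\sigma_j^\top)\preceq\frac{1}{\mu}I$ plus the smoothness/strong-convexity conjugate duality of \cite{kakade2009duality,Rockafellar2015} gives $\mu$-strong convexity on the hyperplane containing $S_n(1)$, and the Danskin identification of the subgradient with the gauge-fixed potential $\lambda^*$ (consistent with \eqref{eq:dual_solution_grad_wasser_dist_relation}) together with $\|\lambda^*\|_2\le\sqrt{n}\|\lambda^*\|_\infty$ gives $M\le\sqrt{n}M_\infty$. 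You also correctly isolate the only genuinely delicate step, namely the a priori bound $\|\lambda^*\|_\infty=\widetilde{O}(\|C\|_\infty)$: note that the logarithmic factor in $\min_i p_i$ you mention is unavoidable near the boundary of the simplex (the entropic potential $\lambda_i^*\sim\mu\ln p_i$ diverges as $p_i\to0$), so this factor must be understood as hidden in the $\widetilde{O}(\cdot)$ of the statement exactly as in \cite{dvinskikh2020sa}; with that reading your argument is complete.
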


We also want to notice that function $\cW_{q,\mu}^*(\lambda)$ is only strictly convex and the minimal eigenvalue of its hessian $\gamma \eqdef \lambda_{\min}(\nabla^2\cW_{q,\mu}(\lambda^*))$ evaluated in the solution $\lambda^* \eqdef \argmax_{\lambda\in \R^n}\left\{\la\lambda, p\ra - \cW_{q,\mu}^*(\lambda)\right\}$ is very small and there exist only such bounds that are exponentially small in $n$.

We will also use another useful relation (see \cite{dvinskikh2020sa}):
\begin{equation}
    \nabla\cW_\mu(p,q) = \lambda^*,\quad \la\lambda^*, \one \ra = 0 \label{eq:dual_solution_grad_wasser_dist_relation}
\end{equation}
where the gradient $\nabla\cW_\mu(p,q)$ is taken w.r.t.\ the first argument.

\subsection{SA Approach}\label{sec:sa_approach_barycenter}
Assume that one can obtain and use fresh samples $q^1,q^2,\ldots$ in online regime. This approach is called Stochastic Approximation (SA). It implies that at each iteration one can draw a fresh sample $q^k$ and compute the gradient w.r.t.\ $p$ of function $\cW_\mu(p,q^k)$ which is $\mu$-strongly convex and $M$-Lipschitz continuous with $M = \widetilde{O}(\sqrt{n}\|C\|_\infty)$. Optimal methods for this case are based on iterations of the following form
\begin{equation*}
    p^{k+1} = \text{proj}_{S_n(1)}\left(p^k - \eta_k \nabla \cW_\mu(p^k,q^k)\right)
\end{equation*}
where $\text{proj}_{S_n(1)}(x)$ is a projection of $x\in\R^n$ on $S_n(1)$ and the gradient $\nabla \cW_\mu(p^k,q^k)$ is taken w.r.t.\ the first argument. One can show that {\tt restarted-SGD} ({\tt R-SGD}) from \cite{juditsky2014deterministic} that using biased stochastic gradients (see also \cite{juditsky2012first-order,gasnikov2016gradient-free,dvinskikh2020sa}) $\tnabla\cW_\mu(p,q)$ such that
\begin{equation}
    \|\tnabla\cW_\mu(p,q) - \nabla \cW_\mu(p,q)\|_2 \le \delta\label{eq:barycenter_inexact_grad}
\end{equation}
for some $\delta \ge 0$ and for all $p,q\in S_n(1)$ after $N$ calls of this oracle produces such a point $p^N$ that with probability at least $1-\beta$ the following inequalities hold:
\begin{equation}
    \cW_\mu(p^N) - \cW_\mu(p_\mu^*) = O\left(\frac{n\|C\|_\infty^2\ln(\nicefrac{N}{\alpha})}{\mu N} + \delta\right)\label{eq:r-sgd_pop_barycenter_func_guarantee}
\end{equation}
and, as a consequence of $\mu$-strong convexity of $\cW_\mu(p,q)$ for all $q$,
\begin{equation}
    \|p^N - p_\mu^*\|_2 = O\left(\sqrt{\frac{n\|C\|_\infty^2\ln(\nicefrac{N}{\alpha})}{\mu^2 N} + \frac{\delta}{\mu}}\right).\label{eq:r-sgd_pop_barycenter_distance_guarantee}
\end{equation}
That is, to guarantee
\begin{equation}
    \|p^N - p_\mu^*\|_2 \le \e\label{eq:r-sgd_pop_barycenter_distance_guarantee_eps}
\end{equation}
with probability at least $1-\beta$, {\tt R-SGD} requires
\begin{equation}
    \widetilde{O}\left(\frac{n\|C\|_\infty^2}{\mu^2\e^2}\right) \quad \tnabla \cW_\mu(p,q) \text{ oracle calls}\label{eq:r-sgd_number_of_oracle_calls}
\end{equation}
under additional assumption that $\delta = O(\mu\e^2)$.

However, it is computationally hard problem to find $\nabla \cW_\mu(p,q)$ with high-accuracy, i.e.\ find $\tnabla \cW_\mu(p,q)$ satisfying \eqref{eq:barycenter_inexact_grad} with $\delta = O(\mu\e^2)$. Taking into account the relation \eqref{eq:dual_solution_grad_wasser_dist_relation} we get that it is needed to solve the problem \eqref{eq:wasserstein_dist_sual_reformulation} with accuracy $\delta = O(\mu\e^2)$ in terms of the distance to the optimum. i.e.\ it is needed to find such $\tilde\lambda$ that $\|\tilde\lambda - \lambda^*\|_2 \le \delta$ and set $\tnabla \cW_\mu(p,q) = \tilde\lambda$. Using variants of Sinkhorn algorithm \cite{kroshnin2019complexity,stonyakin2019gradient,guminov2019accelerated} one can show \cite{dvinskikh2020sa} that {\tt R-SGD} finds point $p^N$ such that \eqref{eq:r-sgd_pop_barycenter_distance_guarantee_eps} holds with probability at least $1-\beta$ and it requires
\begin{equation}
    \widetilde{O}\left(\frac{n^3\|C\|_\infty^2}{\mu^2\e^2}\min\left\{\exp\left(\frac{\|C\|_\infty}{\mu}\right)\left(\frac{\|C\|_\infty}{\mu} + \ln\left(\frac{\|C\|_\infty}{\gamma\mu^2\e^4}\right)\right), \sqrt{\frac{n}{\gamma\mu^3\e^4}}\right\}\right)\label{eq:sa_overall_complexity}
\end{equation}
arithmetical operations.

\subsection{SAA Approach}\label{sec:saa_approach_barycenter}
Now let us assume that large enough collection of samples $q^1,\ldots,q^m$ is available. Our goal is to find such $p\in S_n(1)$ that $\|\hat p - p_\mu^*\|_2 \le \e$ with high probability, i.e.\ $\e$-approximation of the population barycenter, via solving empirical barycenter problem \eqref{eq:empirical_barycenter}. This approach is called Stochastic Average Approximation (SAA). Since $\cW_\mu(p,q^i)$ is $\mu$-strongly convex and $M$-Lipschitz in $p$ with $M = \widetilde{O}(\sqrt{n}\|C\|_\infty)$ for all $i=1,\ldots,m$ we can conclude that with probability $\ge 1-\beta$
\begin{equation}
    \cW_\mu(\hat p_\mu^*) - \cW_\mu(p_\mu^*) \overset{\eqref{eq:str_convex_erm_argmin_property}}{=} O\left(\frac{n\|C\|_\infty^2\ln(m)\ln\left(\nicefrac{m}{\beta}\right)}{\mu m} + \sqrt{\frac{n\|C\|_\infty^2\ln\left(\nicefrac{1}{\beta}\right)}{m}}\right)\label{eq:erm_rm_difference_no_beta}
\end{equation}
where we use that the diameter of $S_n(1)$ is $O(1)$. Moreover, in \cite{shalev2009stochastic} it was shown that one can guarantee that with probability $\ge 1-\beta$
\begin{equation}
    \cW_\mu(\hat p_\mu^*) - \cW_\mu(p_\mu^*) \overset{\eqref{eq:str_convex_erm_argmin_property}}{=} O\left(\frac{n\|C\|_\infty^2}{\beta\mu m}\right).\label{eq:erm_rm_difference_beta}
\end{equation}
Taking advantages of both inequalities we get that if
\begin{equation}
    m = \widetilde{\Omega}\left(\min\left\{\max\left\{\frac{n\|C\|_\infty^2}{\mu^2\varepsilon^2},\frac{n\|C\|_\infty^2}{\mu^2\varepsilon^4}\right\},\frac{n\|C\|_\infty^2}{\beta\mu^2\e^2}\right\}\right) = \widetilde{\Omega}\left(n\min\left\{\frac{\|C\|_\infty^2}{\mu^2\varepsilon^4},\frac{\|C\|_\infty^2}{\beta\mu^2\e^2}\right\}\right)\label{eq:barycenters_needed_sample_size}
\end{equation}
then with probability at least $1-\frac{\beta}{2}$
\begin{equation}
    \|\hat p_\mu^* - p_\mu^*\|_2 \le \sqrt{\frac{2}{\mu}\left(\cW_\mu(\hat p_\mu^*) - \cW_\mu(p_\mu^*)\right)} \overset{\eqref{eq:erm_rm_difference_no_beta},\eqref{eq:erm_rm_difference_beta},\eqref{eq:barycenters_needed_sample_size}}{\le} \frac{\e}{2}.\label{eq:population_and_empirical_risks}
\end{equation}
Assuming that we have such $\hat p\in S_n(1)$ that with probability at least $1-\frac{\beta}{2}$ the inequality 
\begin{equation}
    \|\hat p - \hat p_\mu^*\|_2 \le \frac{\e}{2}\label{eq:empirical_barycenter_eps_solution}
\end{equation}
holds, we apply the union bound and get that with probability $\ge 1 - \beta$
\begin{equation}
    \|\hat p - p_\mu^*\|_2 \le \|\hat p - \hat p_\mu^*\|_2 + \|\hat p_\mu^* - p_\mu^*\|_2 \le \e.\label{eq:e_solution_population_barycenter}
\end{equation}

It remains to describe the approach that finds such $\hat p \in S_n(1)$ that satisfies \eqref{eq:e_solution_population_barycenter} with probability at least $1-\beta$. Recall that in this subsection we consider the following problem
\begin{equation}
    \hat\cW_\mu(p) = \frac{1}{m}\sum\limits_{i=1}^m \cW_\mu(p,q^i) \to \min\limits_{p\in S_n(1)}.
\end{equation}
For each summand $\cW_\mu(p,q^i)$ in the sum above we have the explicit formula \eqref{eq:dual_function_wasserstein_distance} for the dual function $\cW_{q^i,\mu}^*(\lambda)$. Note that one can compute the gradient of $\cW_{q^i,\mu}^*(\lambda)$ via $O(n^2)$ arithmetical operations. What is more, $\cW_{q^i,\mu}^*(\lambda)$ has a finite-sum structure, so, one can sample $j$-th component of $q^i$ with probability $q_j^i$ and get stochastic gradient
\begin{equation}
    \nabla \cW_{q^i,\mu}^*(\lambda,j) = \mu\nabla\left(\ln\left(\frac{1}{q_j^i}\sum\limits_{i=1}^n\exp\left(\frac{-C_{ij} + \lambda_i}{\mu}\right)\right)\right)\label{eq:stoch_grad_dual_wasserstein}
\end{equation}
which requires $O(n)$ arithmetical operations to be computed.

We start with the simple situation. Assume that each measures $q^i$ are stored on $m$ separate machines that form some network with Laplacian matrix $\overline{W}\in\R^{m\times m}$. For this scenario we can apply the dual approach described in Section~\ref{sec:distributed_opt} and apply bounds from Tables~\ref{tab:stochastic_bounds_dual}~and~\ref{tab:stochastic_biased_bounds_dual}. If for all $i= 1,\ldots, m$ the $i$-th node computes the full gradient of dual functions $\cW_{q^i,\mu}$ at each iteration then in order to find such a point $\hat p$ that with probability at least $1-\frac{\beta}{2}$
\begin{equation}
    \hat\cW_{\mu}(\hat p) - \hat\cW_{\mu}(\hat p_\mu^*) \le \hat\e,\label{eq:empirical_functional_gap}
\end{equation}
where $W = \overline{W}\otimes I_n$, this approach requires $\widetilde{O}\left(\sqrt{\frac{n\|C\|_\infty^2}{\mu\hat\e}\chi(W)}\right)$ communication rounds and\newline $\widetilde{O}\left(n^{2.5}\sqrt{\frac{\|C\|_\infty^2}{\mu\hat\e}\chi(W)}\right)$ arithmetical operations per node to find gradients $\nabla \cW_{q^i,\mu}^*(\lambda)$. If instead of full gradients workers use stochastic gradients $\nabla \cW_{q^i,\mu}^*(\lambda,j)$ defined in \eqref{eq:stoch_grad_dual_wasserstein} and these stochastic gradients have light-tailed distribution, i.e.\ satisfy the condition \eqref{eq:dual_light_tails_stoch_grad_distrib} with parameter $\sigma > 0$, then to guarantee \eqref{eq:empirical_functional_gap} with probability $\ge 1-\frac{\beta}{2}$ the aforementioned approach needs the same number of communications rounds and $\widetilde{O}\left(n\max\left\{\sqrt{\frac{n\|C\|_\infty^2}{\mu\hat\e}\chi(W)}, \frac{m\sigma^2n\|C\|_\infty^2}{\hat\e^2}\chi(W)\right\}\right)$ arithmetical operations per node to find gradients $\nabla \cW_{q^i,\mu}^*(\lambda, j)$. Using $\mu$-strong convexity of $\cW_\mu(p,q^i)$ for all $i=1,\ldots,m$ and taking $\hat\e = \frac{\mu\e^2}{8}$ we get that our approach finds such a point $\hat p$ that satisfies \eqref{eq:empirical_barycenter_eps_solution} with probability at least $1-\frac{\beta}{2}$ using
\begin{equation}
    \widetilde{O}\left(\frac{\sqrt{n}\|C\|_\infty}{\mu\e}\sqrt{\chi(W)}\right) \quad \text{communication rounds}\label{eq:communic_rounds_dual_barycenters}
\end{equation}
and 
\begin{equation}
    \widetilde{O}\left(n^{2.5}\frac{\|C\|_\infty}{\mu\e}\sqrt{\chi(W)}\right)\label{eq:arithm_opers_deterministic_dual}
\end{equation}
arithmetical operations per node to find gradients in the deterministic case and
\begin{equation}
    \widetilde{O}\left(n\max\left\{\frac{\sqrt{n}\|C\|_\infty}{\mu\e}\sqrt{\chi(W)}, \frac{m\sigma^2n\|C\|_\infty^2}{\mu^2\e^4}\chi(W)\right\}\right)\notag
\end{equation}
arithmetical operations per node to find stochastic gradients in the stochastic case. However, the state-of-the-art theory of learning states (see \eqref{eq:barycenters_needed_sample_size}) that $m$ should so large that in the stochastic case the second term in the bound for arithmetical operations typically dominates the first term and the dimensional dependence reduction from $n^{2.5}$ in the deterministic case to $n^{1.5}$ in the stochastic case is typically negligible in comparison with how much $\frac{m\sigma^2\sqrt{n}\|C\|_\infty^2}{\mu^2\e^4}\chi(W)$ is larger than $\frac{\|C\|_\infty}{\mu\e}\sqrt{\chi(W)}$. That is, our theory says that it is better to use full gradients in the particular example considered in this section (see also Section~\ref{sec:discussion}). Therefore, further in the section we will assume that $\sigma^2 = 0$, i.e.\ workers use full gradients of dual functions $\cW_{q^i,\mu}^*(\lambda)$.

However, bounds \eqref{eq:communic_rounds_dual_barycenters}-\eqref{eq:arithm_opers_deterministic_dual} were obtained under very restrictive at the first sight assumption that we have $m$ workers and each worker stores only one measure which is unrealistic. One can relax this assumption in the following way. Assume that we have $\hat l < m$ machines connected in a network with Laplacian matrix $\hat{W}$ and $j$-th machine stores $\hat m_j \ge 1$ measures for $j=1,\ldots, \hat l$ and $\sum_{j=1}^{\hat l}\hat m_j = m$. Next, for $j$-th machine we introduce $\hat m_j$ virtual workers also connected in some network that $j$-th machine can emulate along with communication between virtual workers and for every virtual worker we arrange one measure, e.g.\ it can be implemented as an array-like data structure with some formal rules for exchanging the data between cells that emulates communications. We also assume that inside the machine we can set the preferable network for the virtual nodes in such a way that each machine emulates communication between virtual nodes and computations inside them fast enough. Let us denote the Laplacian matrix of the obtained network of $m$ virtual nodes as $\overline{W}$. Then, our approach finds such a point $\hat p$ that satisfies \eqref{eq:empirical_barycenter_eps_solution} with probability at least $1-\frac{\beta}{2}$ using
\begin{equation}
    \widetilde{O}\left(\underbrace{\left(\max\limits_{j=1,\ldots,\hat l}T_{\text{cm},j}\right)}_{T_{\text{cm},\max}}\frac{\sqrt{n}\|C\|_\infty}{\mu\e}\sqrt{\chi(W)}\right)\label{eq:communic_rounds_dual_barycenters_virtual}
\end{equation}
time to perform communications and 
\begin{equation}
    \widetilde{O}\left(\underbrace{\left(\max\limits_{j=1,\ldots,\hat l}T_{\text{cp},j}\right)}_{T_{\text{cp},\max}} n^{2.5}\frac{\|C\|_\infty}{\mu\e}\sqrt{\chi(W)}\right)\label{eq:arithm_opers_deterministic_dual_virtual}
\end{equation}
time for arithmetical operations per machine to find gradients where $T_{\text{cm},j}$ is time needed for $j$-th machine to emulate communication between corresponding virtual nodes at each iteration and $T_{\text{cp},j}$ is time required by $j$-th machine to perform $1$ arithmetical operation for all corresponding virtual nodes in the gradients computation process at each iteration. For example, if we have only one machine and network of virtual nodes forms a complete graph than $\chi(W) = 1$, but $T_{\text{cm},\max}$ and $T_{\text{cp},\max}$ can be large and to reduce the running time one should use more powerful machine. In contrast, if we have $m$ machines connected in a star-graph than $T_{\text{cm},\max}$ and $T_{\text{cp},\max}$ will be much smaller, but $\chi(W)$ will be of order $m$ which is large. Therefore, it is very important to choose balanced architecture of the network at least for virtual nodes per machine if it is possible. This question requires a separate thorough study and lies out of scope of this paper.

\subsection{SA vs SAA comparison}\label{sec:sa_vs_saa_comparison}
Recall that in SA approach we assume that it is possible to sample new measures in online regime which means that the computational process is performed on one machine, whereas in SAA approach we assume that large enough collection of measures is distributed among the network of machines that form some computational network. In practice measures from $S_n(1)$ correspond to some images. As one can see from the complexity bounds, both SA and SAA approaches require large number of samples to learn the population barycenter defined in \eqref{eq:population_barycenter}. If these samples are images, then they typically cannot be stored in RAM of one computer. Therefore, it is natural to use distributed systems to store the data. 

Now let us compare complexity bounds for SA and SAA. We summarize them in Table~\ref{tab:sa_saa_comparison}.
\begin{table}[ht!]
    \centering
    \begin{tabular}{|c|c|c|c|}
         \hline
         Approach & Complexity\\
         \hline
         SA &  \makecell{$\widetilde{O}\left(\frac{n^3\|C\|_\infty^2}{\mu^2\e^2}\min\left\{\exp\left(\frac{\|C\|_\infty}{\mu}\right)\left(\frac{\|C\|_\infty}{\mu} + \ln\left(\frac{\|C\|_\infty}{\gamma\mu^2\e^4}\right)\right), \sqrt{\frac{n}{\gamma\mu^3\e^4}}\right\}\right)$\\
         arithmetical operations}\\
         \hline
         \makecell{SA,\\
         the 2-d term\\
         is smaller} &  \makecell{$\widetilde{O}\left(\frac{n^{3.5}\|C\|_\infty^2}{\sqrt{\gamma}\mu^{3.5}\e^4}\right)$ arithmetical operations}\\
         \hline
         SAA & \makecell{$\widetilde{O}\left(T_{\text{cm},\max}\frac{\sqrt{n}\|C\|_\infty}{\mu\e}\sqrt{\chi(W)}\right)$ time to perform communications,\\
         $ \widetilde{O}\left(T_{\text{cp},\max} n^{2.5}\frac{\|C\|_\infty}{\mu\e}\sqrt{\chi(W)}\right)$ time for arithmetical operations per machine,\\
         where $m = \widetilde{\Omega}\left(n\min\left\{\frac{\|C\|_\infty^2}{\mu^2\varepsilon^4},\frac{\|C\|_\infty^2}{\beta\mu^2\e^2}\right\}\right)$} \\
         \hline
         \makecell{SAA,\\
         $\chi(W) = \Omega(m)$,\\
         $T_{\text{cm},\max} = O(1)$,\\
         $T_{\text{cp},\max} = O(1)$,\\
         $\sqrt{\beta} \ge \e$}& \makecell{$\widetilde{O}\left(\frac{n\|C\|_\infty^2}{\sqrt{\beta}\mu^2\e^2}\right)$ communication rounds,\\
         $\widetilde{O}\left(\frac{n^3\|C\|_\infty^2}{\sqrt{\beta}\mu^2\e^2}\right)$ arithmetical operations per machine} \\
         \hline
    \end{tabular}
    \caption{Complexity bounds for SA and SAA approaches for computation of population barycenter defined in \eqref{eq:population_barycenter} with accuracy $\e$. The third row states the complexity bound for SA approach when the second term under the minimum in \eqref{eq:sa_overall_complexity} is dominated by the first one, e.g.\ when $\mu$ is small enough. The last row corresponds to the case when $T_{\text{cm},\max} = O(1)$, $T_{\text{cp},\max} = O(1)$, $\sqrt{\beta} \ge \e$, e.g.\ $\beta = 0.01$ and $\e \le 0.1$, and the communication network is star-like, which implies $\chi(W) = \Omega(m)$}
    \label{tab:sa_saa_comparison}
\end{table}
When the communication is fast enough and $\mu$ is small we typically have that SAA approach significantly outperforms SA approach in terms of the complexity as well even for communication architectures with big $\chi(W)$. Therefore, for balanced architecture one can expect that SAA approach will outperform SA even more.

To conclude, we state that population barycenter computation is a natural example when it is typically much more preferable to use distributed algorithms with dual oracle instead of SA approach in terms of memory and complexity bounds.

\subsection*{Acknowledgments}
We would like to thank F.~Bach, P.~Dvurechensky,  M.~G{\"u}rb{\"u}zbalaban, D.~Kovalev, A.~Nemirovski, A.~Olshevsky, N.~Srebro, A.~Taylor and C.~Uribe for useful discussions. The work of E.~Gorbunov was supported by RFBR, project number 19-31-51001. 
The work of D.~Dvinskikh was supported by Russian Science Foundation (project 18-71-10108). The work of A.~Gasnikov was supported by RFBR, project number 19-31-51001.

\bibliography{PD_references,Dvinskikh,all_refs3,references}

\begin{thebibliography}{10}

\bibitem{alistarh2017qsgd}
Dan Alistarh, Demjan Grubic, Jerry Li, Ryota Tomioka, and Milan Vojnovic.
\newblock {QSGD}: {C}ommunication-efficient {SGD} via gradient quantization and
  encoding.
\newblock In {\em Advances in Neural Information Processing Systems}, pages
  1709--1720, 2017.

\bibitem{allen2016katyusha}
Zeyuan Allen-Zhu.
\newblock Katyusha: The first direct acceleration of stochastic gradient
  methods.
\newblock In {\em Proceedings of the 49th Annual ACM SIGACT Symposium on Theory
  of Computing}, STOC 2017, pages 1200--1205, New York, NY, USA, 2017. ACM.
\newblock arXiv:1603.05953.

\bibitem{allen2018make}
Zeyuan Allen-Zhu.
\newblock How to make the gradients small stochastically: Even faster convex
  and nonconvex sgd.
\newblock In {\em Advances in Neural Information Processing Systems}, pages
  1157--1167, 2018.

\bibitem{anikin2017dual}
A.~S. Anikin, A.~V. Gasnikov, P.~E. Dvurechensky, A.~I. Tyurin, and A.~V.
  Chernov.
\newblock Dual approaches to the minimization of strongly convex functionals
  with a simple structure under affine constraints.
\newblock {\em Computational Mathematics and Mathematical Physics},
  57(8):1262--1276, Aug 2017.

\bibitem{arjevani2015communication}
Yossi Arjevani and Ohad Shamir.
\newblock Communication complexity of distributed convex learning and
  optimization.
\newblock In {\em Advances in neural information processing systems}, pages
  1756--1764, 2015.

\bibitem{aybat2019universally}
Necdet~Serhat Aybat, Alireza Fallah, Mert Gurbuzbalaban, and Asuman Ozdaglar.
\newblock A universally optimal multistage accelerated stochastic gradient
  method.
\newblock {\em arXiv preprint arXiv:1901.08022}, 2019.

\bibitem{basu2019qsparse}
Debraj Basu, Deepesh Data, Can Karakus, and Suhas Diggavi.
\newblock Qsparse-local-sgd: Distributed sgd with quantization, sparsification,
  and local computations.
\newblock {\em arXiv preprint arXiv:1906.02367}, 2019.

\bibitem{ben-tal2001lectures}
Aaron Ben-Tal and Arkadi Nemirovski.
\newblock {\em Lectures on Modern Convex Optimization.}
\newblock Society for Industrial and Applied Mathematics, 2001.

\bibitem{bertsekas1989parallel}
Dimitri~P Bertsekas and John~N Tsitsiklis.
\newblock {\em Parallel and distributed computation: numerical methods},
  volume~23.
\newblock Prentice hall Englewood Cliffs, NJ, 1989.

\bibitem{cesa-bianchi2002generalization}
Nicolo Cesa-Bianchi, Alex Conconi, and Claudio Gentile.
\newblock On the generalization ability of on-line learning algorithms.
\newblock volume~50, pages 2050--2057. IEEE, 2004.

\bibitem{chernov2016fast}
Alexey Chernov, Pavel Dvurechensky, and Alexander Gasnikov.
\newblock Fast primal-dual gradient method for strongly convex minimization
  problems with linear constraints.
\newblock In Yury Kochetov, Michael Khachay, Vladimir Beresnev, Evgeni
  Nurminski, and Panos Pardalos, editors, {\em Discrete Optimization and
  Operations Research: 9th International Conference, DOOR 2016, Vladivostok,
  Russia, September 19-23, 2016, Proceedings}, pages 391--403. Springer
  International Publishing, 2016.

\bibitem{cuturi2016smoothed}
Marco Cuturi and Gabriel Peyr{\'e}.
\newblock A smoothed dual approach for variational wasserstein problems.
\newblock {\em SIAM Journal on Imaging Sciences}, 9(1):320--343, 2016.

\bibitem{defazio2014saga}
Aaron Defazio, Francis Bach, and Simon Lacoste-Julien.
\newblock Saga: A fast incremental gradient method with support for
  non-strongly convex composite objectives.
\newblock In {\em Proceedings of the 27th International Conference on Neural
  Information Processing Systems}, NIPS'14, pages 1646--1654, Cambridge, MA,
  USA, 2014. MIT Press.

\bibitem{devolder2013exactness}
Olivier Devolder.
\newblock {\em Exactness, inexactness and stochasticity in first-order methods
  for large-scale convex optimization}.
\newblock PhD thesis, ICTEAM and CORE, Universit{\'e} Catholique de Louvain,
  2013.

\bibitem{devolder2012double}
Olivier Devolder, Fran{\c{c}}ois Glineur, and Yurii Nesterov.
\newblock Double smoothing technique for large-scale linearly constrained
  convex optimization.
\newblock {\em SIAM Journal on Optimization}, 22(2):702--727, 2012.

\bibitem{devolder2013strongly}
Olivier Devolder, Fran{\c{c}}ois Glineur, Yurii Nesterov, et~al.
\newblock First-order methods with inexact oracle: the strongly convex case.
\newblock 2013.

\bibitem{dvinskikh2020sa}
Darina Dvinskikh.
\newblock Sa vs saa for population wasserstein barycenter calculation.
\newblock {\em arXiv preprint arXiv:2001.07697}, 2020.

\bibitem{dvinskikh2019decentralized}
Darina Dvinskikh and Alexander Gasnikov.
\newblock Decentralized and parallelized primal and dual accelerated methods
  for stochastic convex programming problems.
\newblock {\em arXiv preprint arXiv:1904.09015}, 2019.

\bibitem{dvinskikh2019dual}
Darina Dvinskikh, Eduard Gorbunov, Alexander Gasnikov, Pavel Dvurechensky, and
  Cesar~A Uribe.
\newblock On dual approach for distributed stochastic convex optimization over
  networks.
\newblock {\em arXiv preprint arXiv:1903.09844}, 2019.

\bibitem{dvurechenskii2018decentralize}
Pavel Dvurechenskii, Darina Dvinskikh, Alexander Gasnikov, Cesar Uribe, and
  Angelia Nedich.
\newblock Decentralize and randomize: Faster algorithm for wasserstein
  barycenters.
\newblock In {\em Advances in Neural Information Processing Systems}, pages
  10760--10770, 2018.

\bibitem{dvurechensky2016stochastic}
Pavel Dvurechensky and Alexander Gasnikov.
\newblock Stochastic intermediate gradient method for convex problems with
  stochastic inexact oracle.
\newblock {\em Journal of Optimization Theory and Applications},
  171(1):121--145, 2016.

\bibitem{dvurechensky2017randomized}
Pavel Dvurechensky, Alexander Gasnikov, and Alexander Tiurin.
\newblock Randomized similar triangles method: A unifying framework for
  accelerated randomized optimization methods (coordinate descent, directional
  search, derivative-free method).
\newblock {\em arXiv:1707.08486}, 2017.

\bibitem{fallah2019robust}
Alireza Fallah, Mert Gurbuzbalaban, Asu Ozdaglar, Umut Simsekli, and Lingjiong
  Zhu.
\newblock Robust distributed accelerated stochastic gradient methods for
  multi-agent networks.
\newblock {\em arXiv preprint arXiv:1910.08701}, 2019.

\bibitem{feldman2019high}
Vitaly Feldman and Jan Vondrak.
\newblock High probability generalization bounds for uniformly stable
  algorithms with nearly optimal rate.
\newblock {\em arXiv preprint arXiv:1902.10710}, 2019.

\bibitem{foster2019complexity}
Dylan Foster, Ayush Sekhari, Ohad Shamir, Nathan Srebro, Karthik Sridharan, and
  Blake Woodworth.
\newblock The complexity of making the gradient small in stochastic convex
  optimization.
\newblock {\em arXiv preprint arXiv:1902.04686}, 2019.

\bibitem{gasnikov2016gradient-free}
A.~V. Gasnikov, A.~A. Lagunovskaya, I.~N. Usmanova, and F.~A. Fedorenko.
\newblock Gradient-free proximal methods with inexact oracle for convex
  stochastic nonsmooth optimization problems on the simplex.
\newblock {\em Automation and Remote Control}, 77(11):2018--2034, Nov 2016.
\newblock arXiv:1412.3890.

\bibitem{gasnikov2017modern}
Alexander Gasnikov.
\newblock Universal gradient descent.
\newblock {\em arXiv preprint arXiv:1711.00394}, 2017.

\bibitem{gasnikov2018universalgrad}
Alexander Gasnikov.
\newblock Universal gradient descent.
\newblock {\em MIPT}, 2018.

\bibitem{gasnikov2016universal}
Alexander Gasnikov and Yurii Nesterov.
\newblock Universal fast gradient method for stochastic composit optimization
  problems.
\newblock {\em arXiv:1604.05275}, 2016.

\bibitem{gasnikov2018universal}
Alexander~Vladimirovich Gasnikov and Yu~E Nesterov.
\newblock Universal method for stochastic composite optimization problems.
\newblock {\em Computational Mathematics and Mathematical Physics},
  58(1):48--64, 2018.

\bibitem{ghadimi2012optimal}
S.~Ghadimi and G.~Lan.
\newblock Optimal stochastic approximation algorithms for strongly convex
  stochastic composite optimization i: A generic algorithmic framework.
\newblock {\em SIAM Journal on Optimization}, 22(4):1469--1492, 2012.

\bibitem{ghadimi2013stochastic}
Saeed Ghadimi and Guanghui Lan.
\newblock Stochastic first- and zeroth-order methods for nonconvex stochastic
  programming.
\newblock {\em SIAM Journal on Optimization}, 23(4):2341--2368, 2013.
\newblock arXiv:1309.5549.

\bibitem{gorbunov2018accelerated}
Eduard Gorbunov, Pavel Dvurechensky, and Alexander Gasnikov.
\newblock An accelerated method for derivative-free smooth stochastic convex
  optimization.
\newblock {\em arXiv preprint arXiv:1802.09022}, 2018.

\bibitem{gorbunov2019unified}
Eduard Gorbunov, Filip Hanzely, and Peter Richt{\'a}rik.
\newblock A unified theory of sgd: Variance reduction, sampling, quantization
  and coordinate descent.
\newblock {\em arXiv preprint arXiv:1905.11261}, 2019.

\bibitem{gower2019sgd}
Robert~Mansel Gower, Nicolas Loizou, Xun Qian, Alibek Sailanbayev, Egor
  Shulgin, and Peter Richtarik.
\newblock Sgd: General analysis and improved rates.
\newblock {\em arXiv preprint arXiv:1901.09401}, 2019.

\bibitem{guminov2019accelerated}
Sergey Guminov, Pavel Dvurechensky, and Alexander Gasnikov.
\newblock Accelerated alternating minimization.
\newblock {\em arXiv preprint arXiv:1906.03622}, 2019.

\bibitem{hendrikx2018accelerated}
Hadrien Hendrikx, Francis Bach, and Laurent Massouli{\'e}.
\newblock Accelerated decentralized optimization with local updates for smooth
  and strongly convex objectives.
\newblock {\em arXiv preprint arXiv:1810.02660}, 2018.

\bibitem{horvath2019natural}
Samuel Horvath, Chen-Yu Ho, Ludovit Horvath, Atal~Narayan Sahu, Marco Canini,
  and Peter Richtarik.
\newblock Natural compression for distributed deep learning.
\newblock {\em arXiv preprint arXiv:1905.10988}, 2019.

\bibitem{horvath2019stochastic}
Samuel Horv{\'a}th, Dmitry Kovalev, Konstantin Mishchenko, Sebastian Stich, and
  Peter Richt{\'a}rik.
\newblock Stochastic distributed learning with gradient quantization and
  variance reduction.
\newblock {\em arXiv preprint arXiv:1904.05115}, 2019.

\bibitem{jin2019short}
Chi Jin, Praneeth Netrapalli, Rong Ge, Sham~M Kakade, and Michael~I Jordan.
\newblock A short note on concentration inequalities for random vectors with
  subgaussian norm.
\newblock {\em arXiv preprint arXiv:1902.03736}, 2019.

\bibitem{johnson2013accelerating}
Rie Johnson and Tong Zhang.
\newblock Accelerating stochastic gradient descent using predictive variance
  reduction.
\newblock In {\em Advances in neural information processing systems}, pages
  315--323, 2013.

\bibitem{juditsky2012first-order}
A.~Juditsky and A.~Nemirovski.
\newblock First order methods for non-smooth convex large-scale optimization,
  i: General purpose methods.
\newblock In Stephen~Wright Suvrit~Sra, Sebastian~Nowozin, editor, {\em
  Optimization for Machine Learning}, pages 121--184. Cambridge, MA: MIT Press,
  2012.

\bibitem{juditsky2008large}
Anatoli Juditsky and Arkadii~S Nemirovski.
\newblock Large deviations of vector-valued martingales in 2-smooth normed
  spaces.
\newblock {\em arXiv preprint arXiv:0809.0813}, 2008.

\bibitem{juditsky2014deterministic}
Anatoli Juditsky and Yuri Nesterov.
\newblock Deterministic and stochastic primal-dual subgradient algorithms for
  uniformly convex minimization.
\newblock {\em Stochastic Systems}, 4(1):44--80, 2014.

\bibitem{kakade2009duality}
Sham Kakade, Shai Shalev-Shwartz, and Ambuj Tewari.
\newblock On the duality of strong convexity and strong smoothness: Learning
  applications and matrix regularization.
\newblock {\em Unpublished Manuscript, http://ttic. uchicago.
  edu/shai/papers/KakadeShalevTewari09.pdf}, 2(1), 2009.

\bibitem{karimireddy2019error}
Sai~Praneeth Karimireddy, Quentin Rebjock, Sebastian~U Stich, and Martin Jaggi.
\newblock Error feedback fixes signsgd and other gradient compression schemes.
\newblock {\em arXiv preprint arXiv:1901.09847}, 2019.

\bibitem{khaled2019better}
Ahmed Khaled, Konstantin Mishchenko, and Peter Richt{\'a}rik.
\newblock Better communication complexity for local sgd.
\newblock {\em arXiv preprint arXiv:1909.04746}, 2019.

\bibitem{khaled2019first}
Ahmed Khaled, Konstantin Mishchenko, and Peter Richt{\'a}rik.
\newblock First analysis of local gd on heterogeneous data.
\newblock {\em arXiv preprint arXiv:1909.04715}, 2019.

\bibitem{kibardin1979decomposition}
VM~Kibardin.
\newblock Decomposition into functions in the minimization problem.
\newblock {\em Avtomatika i Telemekhanika}, (9):66--79, 1979.

\bibitem{kroshnin2019complexity}
Alexey Kroshnin, Darina Dvinskikh, Pavel Dvurechensky, Alexander Gasnikov,
  Nazarii Tupitsa, and Cesar Uribe.
\newblock On the complexity of approximating wasserstein barycenter.
\newblock {\em arXiv preprint arXiv:1901.08686}, 2019.

\bibitem{kulunchakov2019estimate1}
Andrei Kulunchakov and Julien Mairal.
\newblock Estimate sequences for stochastic composite optimization: Variance
  reduction, acceleration, and robustness to noise.
\newblock {\em arXiv preprint arXiv:1901.08788}, 2019.

\bibitem{kulunchakov2019estimate2}
Andrei Kulunchakov and Julien Mairal.
\newblock Estimate sequences for variance-reduced stochastic composite
  optimization.
\newblock {\em arXiv preprint arXiv:1905.02374}, 2019.

\bibitem{kulunchakov2019generic}
Andrei Kulunchakov and Julien Mairal.
\newblock A generic acceleration framework for stochastic composite
  optimization.
\newblock {\em arXiv preprint arXiv:1906.01164}, 2019.

\bibitem{Lan2019lectures}
George Lan.
\newblock Lectures on optimization methods for machine learning.
\newblock {\em e-print}, 2019.

\bibitem{lan2012optimal}
Guanghui Lan.
\newblock An optimal method for stochastic composite optimization.
\newblock {\em Mathematical Programming}, 133(1):365--397, Jun 2012.
\newblock Firs appeared in June 2008.

\bibitem{lan2016gradient}
Guanghui Lan.
\newblock Gradient sliding for composite optimization.
\newblock {\em Mathematical Programming}, 159(1):201--235, Sep 2016.

\bibitem{lan2017communication}
Guanghui Lan, Soomin Lee, and Yi~Zhou.
\newblock Communication-efficient algorithms for decentralized and stochastic
  optimization.
\newblock {\em Mathematical Programming}, pages 1--48, 2017.

\bibitem{lan2018random}
Guanghui Lan and Yi~Zhou.
\newblock Random gradient extrapolation for distributed and stochastic
  optimization.
\newblock {\em SIAM Journal on Optimization}, 28(4):2753--2782, 2018.

\bibitem{lan2016algorithms}
Guanghui Lan and Zhiqiang Zhou.
\newblock Algorithms for stochastic optimization with expectation constraints.
\newblock {\em arXiv:1604.03887}, 2016.

\bibitem{liu2019double}
Xiaorui Liu, Yao Li, Jiliang Tang, and Ming Yan.
\newblock A double residual compression algorithm for efficient distributed
  learning.
\newblock {\em arXiv preprint arXiv:1910.07561}, 2019.

\bibitem{mishchenko2019distributed}
Konstantin Mishchenko, Eduard Gorbunov, Martin Tak{\'a}{\v{c}}, and Peter
  Richt{\'a}rik.
\newblock Distributed learning with compressed gradient differences.
\newblock {\em arXiv preprint arXiv:1901.09269}, 2019.

\bibitem{nemirovski2009robust}
A.~Nemirovski, A.~Juditsky, G.~Lan, and A.~Shapiro.
\newblock Robust stochastic approximation approach to stochastic programming.
\newblock {\em SIAM Journal on Optimization}, 19(4):1574--1609, 2009.

\bibitem{nesterov2004introduction}
Yurii Nesterov.
\newblock {\em Introductory Lectures on Convex Optimization: a basic course}.
\newblock Kluwer Academic Publishers, Massachusetts, 2004.

\bibitem{nesterov2005smooth}
Yurii Nesterov.
\newblock Smooth minimization of non-smooth functions.
\newblock {\em Mathematical Programming}, 103(1):127--152, 2005.

\bibitem{nesterov2009primal-dual}
Yurii Nesterov.
\newblock Primal-dual subgradient methods for convex problems.
\newblock {\em Mathematical Programming}, 120(1):221--259, Aug 2009.
\newblock First appeared in 2005 as CORE discussion paper 2005/67.

\bibitem{nesterov2012make}
Yurii Nesterov.
\newblock How to make the gradients small.
\newblock {\em Optima}, 88:10--11, 2012.

\bibitem{nesterov2018lectures}
Yurii Nesterov.
\newblock {\em Lectures on convex optimization}, volume 137.
\newblock Springer, 2018.

\bibitem{nguyen2018sgd}
Lam~M Nguyen, Phuong~Ha Nguyen, Marten van Dijk, Peter Richt{\'a}rik, Katya
  Scheinberg, and Martin Tak{\'a}{\v{c}}.
\newblock Sgd and hogwild! convergence without the bounded gradients
  assumption.
\newblock {\em arXiv preprint arXiv:1802.03801}, 2018.

\bibitem{olshevsky2019asymptotic}
Alex Olshevsky, Ioannis~Ch Paschalidis, and Shi Pu.
\newblock Asymptotic network independence in distributed optimization for
  machine learning.
\newblock {\em arXiv preprint arXiv:1906.12345}, 2019.

\bibitem{olshevsky2019non}
Alex Olshevsky, Ioannis~Ch Paschalidis, and Shi Pu.
\newblock A non-asymptotic analysis of network independence for distributed
  stochastic gradient descent.
\newblock {\em arXiv preprint arXiv:1906.02702}, 2019.

\bibitem{peyre2019computational}
Gabriel Peyr{\'e}, Marco Cuturi, et~al.
\newblock Computational optimal transport.
\newblock {\em Foundations and Trends{\textregistered} in Machine Learning},
  11(5-6):355--607, 2019.

\bibitem{rigollet2018entropic}
Philippe Rigollet and Jonathan Weed.
\newblock Entropic optimal transport is maximum-likelihood deconvolution.
\newblock {\em Comptes Rendus Mathematique}, 356(11-12):1228--1235, 2018.

\bibitem{RobbinsMonro:1951}
H.~Robbins and S.~Monro.
\newblock A stochastic approximation method.
\newblock {\em Annals of Mathematical Statistics}, 22:400--407, 1951.

\bibitem{Rockafellar2015}
Ralph~Tyrell Rockafellar.
\newblock {\em Convex analysis}.
\newblock Princeton university press, 2015.

\bibitem{rogozin2019projected}
Alexander Rogozin and Alexander Gasnikov.
\newblock Projected gradient method for decentralized optimization over
  time-varying networks.
\newblock {\em arXiv preprint arXiv:1911.08527}, 2019.

\bibitem{scaman2017optimal}
Kevin Scaman, Francis Bach, S{\'e}bastien Bubeck, Yin~Tat Lee, and Laurent
  Massouli{\'e}.
\newblock Optimal algorithms for smooth and strongly convex distributed
  optimization in networks.
\newblock In {\em Proceedings of the 34th International Conference on Machine
  Learning-Volume 70}, pages 3027--3036. JMLR. org, 2017.

\bibitem{scaman2019optimal}
Kevin Scaman, Francis Bach, S{\'e}bastien Bubeck, Yin~Tat Lee, and Laurent
  Massouli{\'e}.
\newblock Optimal convergence rates for convex distributed optimization in
  networks.
\newblock {\em Journal of Machine Learning Research}, 20(159):1--31, 2019.

\bibitem{scaman2018optimal}
Kevin Scaman, Francis Bach, S{\'e}bastien Bubeck, Laurent Massouli{\'e}, and
  Yin~Tat Lee.
\newblock Optimal algorithms for non-smooth distributed optimization in
  networks.
\newblock In {\em Advances in Neural Information Processing Systems}, pages
  2745--2754, 2018.

\bibitem{schmidt2017minimizing}
Mark Schmidt, Nicolas Le~Roux, and Francis Bach.
\newblock Minimizing finite sums with the stochastic average gradient.
\newblock {\em Mathematical Programming}, 162(1-2):83--112, 2017.

\bibitem{shalev2014understanding}
Shai Shalev-Shwartz and Shai Ben-David.
\newblock {\em Understanding machine learning: From theory to algorithms}.
\newblock Cambridge university press, 2014.

\bibitem{shalev2009stochastic}
Shai Shalev-Shwartz, Ohad Shamir, Nathan Srebro, and Karthik Sridharan.
\newblock Stochastic convex optimization.
\newblock In {\em COLT}, 2009.

\bibitem{shalev-shwartz2014accelerated}
Shai Shalev-Shwartz and Tong Zhang.
\newblock Accelerated proximal stochastic dual coordinate ascent for
  regularized loss minimization.
\newblock In Eric~P. Xing and Tony Jebara, editors, {\em Proceedings of the
  31st International Conference on Machine Learning}, volume~32 of {\em
  Proceedings of Machine Learning Research}, pages 64--72, Bejing, China,
  22--24 Jun 2014. PMLR.
\newblock First appeared in arXiv:1309.2375.

\bibitem{shapiro2014lectures}
A.~Shapiro, D.~Dentcheva, and A.~Ruszczyński.
\newblock {\em Lectures on Stochastic Programming}.
\newblock Society for Industrial and Applied Mathematics, 2009.

\bibitem{spokoiny2012parametric}
Vladimir Spokoiny et~al.
\newblock Parametric estimation. finite sample theory.
\newblock {\em The Annals of Statistics}, 40(6):2877--2909, 2012.

\bibitem{stich2018local}
Sebastian~U Stich.
\newblock Local sgd converges fast and communicates little.
\newblock {\em arXiv preprint arXiv:1805.09767}, 2018.

\bibitem{stich2018sparsified}
Sebastian~U Stich, Jean-Baptiste Cordonnier, and Martin Jaggi.
\newblock Sparsified sgd with memory.
\newblock In {\em Advances in Neural Information Processing Systems}, pages
  4447--4458, 2018.

\bibitem{stonyakin2019gradient}
Fedor~S Stonyakin, Darina Dvinskikh, Pavel Dvurechensky, Alexey Kroshnin,
  Olesya Kuznetsova, Artem Agafonov, Alexander Gasnikov, Alexander Tyurin,
  C{\'e}sar~A Uribe, Dmitry Pasechnyuk, et~al.
\newblock Gradient methods for problems with inexact model of the objective.
\newblock In {\em International Conference on Mathematical Optimization Theory
  and Operations Research}, pages 97--114. Springer, 2019.

\bibitem{tang2019practicality}
Junqi Tang, Karen Egiazarian, Mohammad Golbabaee, and Mike Davies.
\newblock The practicality of stochastic optimization in imaging inverse
  problems.
\newblock {\em arXiv preprint arXiv:1910.10100}, 2019.

\bibitem{uribe2017optimal}
C{\'e}sar~A Uribe, Soomin Lee, Alexander Gasnikov, and Angelia Nedi{\'c}.
\newblock Optimal algorithms for distributed optimization.
\newblock {\em arXiv preprint arXiv:1712.00232}, 2017.

\bibitem{vaswani2019fast}
Sharan Vaswani, Francis Bach, and Mark Schmidt.
\newblock Fast and faster convergence of sgd for over-parameterized models and
  an accelerated perceptron.
\newblock In {\em The 22nd International Conference on Artificial Intelligence
  and Statistics}, pages 1195--1204, 2019.

\bibitem{wen2017terngrad}
Wei Wen, Cong Xu, Feng Yan, Chunpeng Wu, Yandan Wang, Yiran Chen, and Hai Li.
\newblock Terngrad: Ternary gradients to reduce communication in distributed
  deep learning.
\newblock In {\em Advances in Neural Information Processing Systems}, pages
  1509--1519, 2017.

\bibitem{xu2019accelerated}
Jinming Xu, Ye~Tian, Ying Sun, and Gesualdo Scutari.
\newblock Accelerated primal-dual algorithms for distributed smooth convex
  optimization over networks.
\newblock {\em arXiv preprint arXiv:1910.10666}, 2019.

\bibitem{yu2019linear}
Hao Yu, Rong Jin, and Sen Yang.
\newblock On the linear speedup analysis of communication efficient momentum
  sgd for distributed non-convex optimization.
\newblock {\em arXiv preprint arXiv:1905.03817}, 2019.

\bibitem{zhou2018direct}
Kaiwen Zhou.
\newblock Direct acceleration of saga using sampled negative momentum.
\newblock {\em arXiv preprint arXiv:1806.11048}, 2018.

\bibitem{zhou2018simple}
Kaiwen Zhou, Fanhua Shang, and James Cheng.
\newblock A simple stochastic variance reduced algorithm with fast convergence
  rates.
\newblock {\em arXiv preprint arXiv:1806.11027}, 2018.

\end{thebibliography}
\appendix

\section{Basic Facts}\label{sec:basic_facts}
In this section we enumerate for convenience basic facts that we use many times in our proofs.

\paragraph{Fenchel-Young inequality.} For all $a,b\in\R^n$ and $\lambda > 0$
\begin{equation}
    |\la a, b\ra| \le \frac{\|a\|_2^2}{2\lambda} + \frac{\lambda\|b\|_2^2}{2}.\label{eq:fenchel_young_inequality}
\end{equation}

\paragraph{Squared norm of the sum.} For all $a,b\in\R^n$
\begin{equation}
    \|a+b\|_2^2 \le 2\|a\|_2^2 + 2\|b\|_2^2.\label{eq:squared_norm_sum}
\end{equation}

\section{Useful Facts about Duality}\label{sec:duality}
This section contains several useful results that we apply in our analysis.

\begin{lemma}[\cite{lan2017communication}]
    Let $y^*$ be the solution of \eqref{DP} with the smallest $\ell_2$-norm $R_y \eqdef \|y^*\|_2$. Then
    \begin{equation}\label{R}
        R_y^2 \le \frac{\|\nabla f(x^*)\|_2^2}{\lambda_{\min}^{+}(A^\top A)}.
    \end{equation}
\end{lemma}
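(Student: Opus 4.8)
The plan is to combine the KKT stationarity relation linking the primal and dual optima with the observation that the minimum-norm dual solution necessarily lies in $\text{Im}(A)$, where $AA^\top$ acts with all eigenvalues bounded below by $\lambda_{\min}^+(A^\top A)$. Let $x^*$ be a solution of the primal problem \eqref{PP} and $y^*$ the dual solution of \eqref{DP} of smallest $\ell_2$-norm, so that $R_y = \|y^*\|_2$.

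First I would extract the relation $\nabla f(x^*) = A^\top y^*$. By strong duality and the stationarity part of the KKT system for \eqref{PP}, equivalently by the first-order optimality condition for the inner maximization that defines $\varphi$ (recall $x(y) = \argmax_{x}\{\langle y,x\rangle - f(x)\}$ with $Q = \R^n$, so that $\nabla f(x(y)) = y$), the primal optimum $x^* = x(A^\top y^*)$ satisfies $\nabla f(x^*) = A^\top y^*$. In particular $\|\nabla f(x^*)\|_2 = \|A^\top y^*\|_2$. This is the step I expect to require the most care: one must justify strong duality and handle the characterization of the primal optimizer (and, if $f$ is only subdifferentiable, replace $\nabla f(x^*)$ by the appropriate subgradient), which is why the lemma is attributed to \cite{lan2017communication}.

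Second, I would pin down the location of $y^*$. Since $\psi(y) = \varphi(A^\top y)$ and $A^\top(y+w) = A^\top y$ for every $w \in \text{Ker}(A^\top)$, the dual objective is constant along $\text{Ker}(A^\top)$, so its set of minimizers is invariant under translation by $\text{Ker}(A^\top)$. Projecting any dual minimizer onto $(\text{Ker}(A^\top))^\perp$ therefore produces a dual minimizer of no larger norm, and hence the minimum-norm solution obeys $y^* \in (\text{Ker}(A^\top))^\perp = \text{Im}(A)$.

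Finally comes the eigenvalue estimate, which is routine. Writing $\|\nabla f(x^*)\|_2^2 = \|A^\top y^*\|_2^2 = \langle y^*, AA^\top y^*\rangle$ and using that $y^* \in \text{Im}(A)$ is a combination of eigenvectors of the symmetric positive semidefinite matrix $AA^\top$ with strictly positive eigenvalues, the Rayleigh-quotient bound yields $\langle y^*, AA^\top y^*\rangle \ge \lambda_{\min}^+(AA^\top)\,\|y^*\|_2^2$. Because $A^\top A$ and $AA^\top$ share the same nonzero eigenvalues, $\lambda_{\min}^+(AA^\top) = \lambda_{\min}^+(A^\top A)$, so $\|\nabla f(x^*)\|_2^2 \ge \lambda_{\min}^+(A^\top A)\,R_y^2$, which rearranges to the claimed bound $R_y^2 \le \|\nabla f(x^*)\|_2^2 / \lambda_{\min}^+(A^\top A)$.
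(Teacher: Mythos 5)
The paper does not actually prove this lemma---it is imported by citation from \cite{lan2017communication}---so there is no in-paper argument to compare against; judged on its own, your proof is correct and is the standard one. Each of your three steps is sound and consistent with machinery the paper does establish elsewhere: the identification $x^*=x(A^\top y^*)$ and hence $\nabla f(x^*)=A^\top y^*$ (for $Q=\R^n$, the setting in which the lemma is used) follows from Lemma~\ref{lem:duality_basic_relations} together with Demyanov--Danskin; the fact that the minimum-norm dual solution lies in $(\text{Ker}(A^\top))^\perp=\text{Im}(A)$ mirrors the invariance-under-$\text{Ker}(A^\top)$ reasoning used in Theorem~\ref{thm:ac-sa_points}; and the Rayleigh-quotient step, using $\lambda_{\min}^+(AA^\top)=\lambda_{\min}^+(A^\top A)$, is routine. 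Your caveat about $Q\neq\R^n$ (where $\nabla f(x^*)=A^\top y^*$ would have to be replaced by an optimality condition involving the normal cone, or a subgradient) is well placed, since the lemma as stated in the appendix does not restrict $Q$.
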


\begin{lemma}\label{lem:duality_basic_relations}
    Consider the function $f(x)$ defined on a closed convex set $Q\subseteq R^n$ and linear operator $A$ such that $\text{Ker} A \neq \{0\}$ and its dual function $\psi(y)$ defined as $\psi(y) = \max_{x\in Q}\left\{\la y,Ax\ra - f(x)\right\}$. Then
    \begin{equation}
        \psi(y^*) = - f(x^*) \ge \la y^*, A\hat x \ra - f(\hat x)\quad \forall \hat x \in Q. \label{eq:duality_basic_relations}
    \end{equation}
\end{lemma}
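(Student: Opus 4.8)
The plan is to separate the claim into its elementary part (the displayed inequality) and its substantive part (the equality $\psi(y^*)=-f(x^*)$, which is strong duality), and to dispatch them in that order. First I would prove the inequality, which needs neither convexity nor any optimality of $y^*$: for any fixed $\hat x\in Q$, the very definition of $\psi$ as a maximum over $Q$ gives
\[
    \psi(y^*) = \max_{x\in Q}\left\{\la y^*, Ax\ra - f(x)\right\} \ge \la y^*, A\hat x\ra - f(\hat x),
\]
since $\hat x$ is merely one admissible point of the inner maximization. This already settles the right-hand relation in \eqref{eq:duality_basic_relations}.

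Next I would extract weak duality as a special case, which yields one half of the equality. Because $x^*$ solves \eqref{PP} it is feasible, i.e.\ $x^*\in Q$ and $Ax^*=0$; applying the inequality just proved with $\hat x=x^*$ gives
\[
    \psi(y^*) \ge \la y^*, Ax^*\ra - f(x^*) = \la y^*, 0\ra - f(x^*) = -f(x^*).
\]
So the only thing left is the reverse inequality $\psi(y^*)\le -f(x^*)$.

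The hard part — and the only place where more than bookkeeping is required — is exactly this reverse inequality, which is strong duality for the affine-constrained convex program \eqref{PP}. The approach here is to note that \eqref{PP} is convex and its sole constraint $Ax=0$ is affine, so zero duality gap holds and a dual optimum is attained \cite{Rockafellar2015,lan2017communication}. Concretely, writing the Lagrangian $L(x,y)=f(x)-\la y, Ax\ra$, its dual function is $g(y)=\inf_{x\in Q}L(x,y)=-\psi(y)$, whence $\max_y g(y) = -\min_y\psi(y) = -\psi(y^*)$; strong duality equates this with the primal value $f(x^*)$, giving $\psi(y^*)=-f(x^*)$. Equivalently, and more self-containedly, optimality of $x^*$ produces a multiplier $\bar y$ with $x^*\in\argmax_{x\in Q}\{\la \bar y, Ax\ra - f(x)\}$ and $Ax^*=0$, so that $\psi(\bar y)=\la \bar y, Ax^*\ra - f(x^*)=-f(x^*)$; since $y^*$ minimizes $\psi$ we have $-f(x^*)=\psi(\bar y)\ge \psi(y^*)$, and together with weak duality $\psi(y^*)\ge -f(x^*)$ this forces $\psi(y^*)=-f(x^*)$. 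The one technical point to be careful about is the constraint qualification guaranteeing that such a certifying multiplier exists; for affine equality constraints over $\R^n$ (the standing assumption in the sections where this lemma is used) feasibility of the primal, which holds since $x^*$ exists, already suffices, so no Slater-type condition is needed.
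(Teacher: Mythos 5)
Your proof is correct, but it reaches the equality $\psi(y^*)=-f(x^*)$ by a genuinely different route than the paper. You split the claim into weak duality (the inequality, obtained by plugging the feasible point $x^*$ into the definition of $\psi$ as a maximum) plus the reverse inequality, which you obtain from classical Lagrangian strong duality: the existence of a multiplier $\bar y$ certifying optimality of $x^*$ gives $\psi(\bar y)=-f(x^*)$, and minimality of $y^*$ closes the gap. The paper instead argues constructively through the dual optimum itself: since $y^*$ minimizes $\psi$ over all of $\R^n$, Demyanov--Danskin gives $0=\nabla\psi(y^*)=Ax(A^\top y^*)$, so the maximizer $x(A^\top y^*)$ in the definition of $\psi(y^*)$ is primal feasible; combining $\psi(y^*)=-f(x(A^\top y^*))$ with $\psi(y^*)\ge -f(x)$ for every feasible $x$ shows $x(A^\top y^*)$ is primal optimal and yields $\psi(y^*)=-f(x^*)$ without ever invoking an abstract duality theorem. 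The trade-off is in the hypotheses each argument silently imports from the surrounding sections: the paper's route needs $\psi$ differentiable (i.e.\ a unique, well-defined $x(\cdot)$, which holds there because $f$ is $\mu$-strongly convex) and unconstrained attainment of the dual minimum, whereas yours needs a constraint qualification guaranteeing the multiplier $\bar y$ exists --- you correctly flag that primal feasibility suffices for affine constraints when $Q=\R^n$, but for a general closed convex $Q$ as in the lemma statement one would need a relative-interior condition such as $\mathrm{ri}(Q)\cap\mathrm{Ker}\,A\neq\emptyset$, so it is worth stating that caveat explicitly rather than only parenthetically. Both arguments are valid under the standing assumptions of the sections where the lemma is actually used.
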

\begin{proof}
    We have
$$
    \psi(y^*) = \left\la y^*, Ax(A^\top y^*)\right\ra - f\left(x(A^\top y^*)\right).
$$
From Demyanov--Danskin theorem \cite{Rockafellar2015} we have that $\nabla \psi(y) = Ax(A^\top y)$ which implies
$$
    0 = \nabla \psi(y^*) = Ax(A^\top y^*).
$$
Using this we get
\begin{eqnarray*}
    -f\left(x(A^\top y^*)\right) &=& \psi(y^*) = \max\limits_{Ax = 0, x\in Q}\Big\{\underbrace{\la y^*, Ax\ra}_{=0} - f(x)\Big\}\\
    &=& - f(x^*).
\end{eqnarray*}
Finally,
\begin{eqnarray*}
    \psi(y^*) = -f(x^*) = \max\limits_{Ax = 0, x\in Q}\left\{\la y^*, Ax\ra - f(x)\right\} \ge \la y^*, A\hat x \ra - f(\hat x). 
\end{eqnarray*}
\end{proof}

\section{Auxiliary Results}\label{sec:aux_results}
In this section, we present the results from other papers that we rely on in our proofs.
\begin{lemma}[Lemma~2 from \cite{jin2019short}]\label{lem:jin_lemma_2}
    For random vector $\xi \in \R^n$  following statements are equivalent up to absolute constant difference in $\sigma$.
    \begin{enumerate}
        \item Tails: $\PP\left\{\|\xi\|_2 \ge \gamma\right\} \le 2 \exp\left(-\frac{\gamma^2}{2\sigma^2}\right)$ $\forall \gamma \ge 0$.
        \item Moments: $\left(\EE\left[\xi^p\right]\right)^{\frac{1}{p}} \le \sigma\sqrt{p}$ for any positive integer $p$.
        \item Super-exponential moment: $\EE\left[\exp\left(\frac{\|\xi\|_2^2}{\sigma^2}\right)\right] \le \exp(1)$.
    \end{enumerate}
\end{lemma}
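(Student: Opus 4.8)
The plan is to show the three statements describe the same class of random vectors by proving the cyclic chain of implications $(1)\Rightarrow(2)\Rightarrow(3)\Rightarrow(1)$, allowing the parameter $\sigma$ to change by an absolute multiplicative factor at each step; this is exactly the meaning of ``equivalent up to absolute constant difference in $\sigma$''. Throughout I read statement (2) as the bound $(\EE[\|\xi\|_2^p])^{1/p}\le\sigma\sqrt{p}$ on the norm moments. Since no implication preserves the \emph{same} $\sigma$, the real content of the argument is simply to track how the constant degrades and to verify it never secretly depends on $p$, $\gamma$, or the dimension $n$.

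For $(1)\Rightarrow(2)$ I would use the layer-cake formula $\EE[\|\xi\|_2^p]=\int_0^\infty p\gamma^{p-1}\PP\{\|\xi\|_2\ge\gamma\}\,d\gamma$ and insert the tail bound from (1). After the substitution $u=\gamma^2/(2\sigma^2)$ the integral becomes a Gamma integral, yielding $\EE[\|\xi\|_2^p]\le 2\,(2\sigma^2)^{p/2}\,\Gamma\!\left(\tfrac p2+1\right)$. Taking $p$-th roots and using $\Gamma(\tfrac p2+1)\le(\tfrac p2)^{p/2}$ (or Stirling) gives $(\EE[\|\xi\|_2^p])^{1/p}\le C\sigma\sqrt{p}$ for an absolute constant $C$, which is (2) with $\sigma$ replaced by $C\sigma$. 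For $(3)\Rightarrow(1)$ I would apply the exponential Markov (Chernoff) inequality: for any $\gamma\ge 0$, $\PP\{\|\xi\|_2\ge\gamma\}=\PP\{\exp(\|\xi\|_2^2/\sigma^2)\ge\exp(\gamma^2/\sigma^2)\}\le e\,\exp(-\gamma^2/\sigma^2)$ by (3). A short case split — comparing $e\exp(-\gamma^2/\sigma^2)$ with $2\exp(-\gamma^2/(2\sigma^2))$ when $\gamma^2/\sigma^2\gtrsim 1$, and noting the target bound exceeds $1$ and is therefore trivial for smaller $\gamma$ — converts this into the exact form (1) after one more constant adjustment of $\sigma$.

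The main obstacle is the remaining implication $(2)\Rightarrow(3)$, where the factorial growth in the exponential series must be balanced against the moment growth. I would expand $\EE[\exp(\|\xi\|_2^2/\sigma^2)]=\sum_{k\ge 0}\EE[\|\xi\|_2^{2k}]/(k!\,\sigma^{2k})$ and substitute the even-moment bound from (2), namely $\EE[\|\xi\|_2^{2k}]\le(\sigma_0\sqrt{2k})^{2k}=\sigma_0^{2k}(2k)^k$, where $\sigma_0$ is the constant from (2). Using $k!\ge(k/e)^k$ gives $(2k)^k/k!\le(2e)^k$, so the series is dominated by the geometric series $\sum_{k\ge 0}(2e\sigma_0^2/\sigma^2)^k$. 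Choosing $\sigma$ to be a sufficiently large absolute multiple of $\sigma_0$, so that $2e\sigma_0^2/\sigma^2$ is below a fixed threshold, makes this geometric sum at most $e$, which is precisely (3) with the rescaled $\sigma$. The only delicate point is the bookkeeping: one must confirm at each of the three steps that the chosen constant is absolute and independent of the summation/integration index and of $n$, after which composing the implications establishes the full equivalence.
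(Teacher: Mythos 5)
Your proposal is correct, but note that the paper does not prove this statement at all: it is imported verbatim as an auxiliary result (Lemma~2 of \cite{jin2019short}) and used as a black box, so there is no in-paper argument to compare against. What you wrote is the standard cyclic chain $(1)\Rightarrow(2)\Rightarrow(3)\Rightarrow(1)$ for the equivalence of sub-Gaussian characterizations, and each step checks out: the layer-cake integral gives $\EE[\|\xi\|_2^p]\le 2(2\sigma^2)^{p/2}\Gamma(\tfrac p2+1)$ and hence $(\EE[\|\xi\|_2^p])^{1/p}\le C\sigma\sqrt p$ with an absolute $C$ (the small failure of $\Gamma(\tfrac p2+1)\le(\tfrac p2)^{p/2}$ at $p=1$ is harmlessly absorbed into $C$); the series expansion with $k!\ge(k/e)^k$ reduces $(2)\Rightarrow(3)$ to a geometric series whose ratio is driven below $1-e^{-1}$ by an absolute rescaling of $\sigma$; and exponential Markov plus the observation that the target tail bound exceeds $1$ for $\gamma\lesssim\sigma$ closes the loop. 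You were also right to read statement (2) as a bound on $(\EE[\|\xi\|_2^p])^{1/p}$ — the paper's $\EE[\xi^p]$ is a typo, since $\xi$ is a vector. The only thing I would add is a one-line remark that the constants produced are independent of the dimension $n$ because every step works with the scalar random variable $\|\xi\|_2$; you assert this but it deserves the explicit observation.
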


\begin{lemma}[Corollary~8 from \cite{jin2019short}]\label{lem:jin_corollary}
    Let $\{\xi_k\}_{k = 1}^N$ be a sequence of random vectors with values in $\R^n$ such that for $k=1,\ldots, N$ and for all $\gamma \ge 0$
    \begin{equation*}
        \EE\left[\xi_k\mid \xi_1,\ldots,\xi_{k-1}\right] = 0,\quad \EE\left[\|\xi_k\|_2 \ge \gamma \mid \xi_1,\ldots,\xi_{k-1}\right] \le \exp\left(-\frac{\gamma^2}{2\sigma_k^2}\right)\quad \text{almost surely,}
    \end{equation*}
    where $\sigma_k^2$ belongs to the filtration $\sigma(\xi_1,\ldots,\xi_{k-1})$ for all $k=1,\ldots, N$. Let $S_N = \sum\limits_{k=1}^N\xi_k$. Then there exists an absolute constant $C_1$ such that for any fixed $\beta > 0$ and $B > b > 0$ with probability at least $1 - \beta$:
    \begin{equation*}
        \text{either } \sum\limits_{k=1}^N\sigma_k^2 \ge B \quad \text{or} \quad \|S_N\|_2 \le C_1\sqrt{\max\left\{\sum\limits_{k=1}^N\sigma_k^2,b\right\}\left(\ln\frac{2n}{\beta} + \ln\ln\frac{B}{b}\right)}.
    \end{equation*}
\end{lemma}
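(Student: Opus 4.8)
The plan is to establish the bound in two stages: first a \emph{fixed-budget} exponential tail for the vector-valued martingale $S_N$ under a deterministic cap on the accumulated conditional variance, and then a dyadic peeling over the random budget $V_N\eqdef\sum_{k=1}^N\sigma_k^2$ to produce the stated ``either $V_N\ge B$ or \ldots'' dichotomy. Throughout I would use that each $\sigma_k^2$ is predictable (measurable w.r.t.\ $\sigma(\xi_1,\ldots,\xi_{k-1})$), so that $V_k$ is $\mathcal F_{k-1}$-measurable, and I would move freely between the conditional-tail formulation and the super-exponential-moment formulation of norm-subGaussianity via Lemma~\ref{lem:jin_lemma_2}.

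For the fixed-budget step, suppose first that $V_N\le v$ holds almost surely. I would control the exponential moment of $\|S_N\|_2^2$ directly. Writing $\|S_k\|_2^2=\|S_{k-1}\|_2^2+2\langle S_{k-1},\xi_k\rangle+\|\xi_k\|_2^2$ and conditioning on $\mathcal F_{k-1}$, the cross term has conditional mean zero, and its exponential moment is sub-Gaussian with proxy $\|S_{k-1}\|_2\,\sigma_k$ (since $|\langle u,\xi_k\rangle|\le\|\xi_k\|_2$ for the unit vector $u=S_{k-1}/\|S_{k-1}\|_2$), while $\|\xi_k\|_2^2$ is conditionally sub-exponential with parameter $\sigma_k^2$. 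Splitting the two contributions by Cauchy--Schwarz yields, for $\theta\lesssim 1/v$, a recursion of the form $\EE[\exp(\theta\|S_k\|_2^2)\mid\mathcal F_{k-1}]\le\exp\big(\theta\|S_{k-1}\|_2^2(1+c\theta\sigma_k^2)+c\theta\sigma_k^2\big)$. Iterating and using $\prod_{k}(1+c\theta\sigma_k^2)\le\exp(c\theta v)=O(1)$ gives $\EE[\exp(\theta\|S_N\|_2^2)]\le e^{O(1)}$ with $\theta\asymp 1/v$, hence the dimension-free sub-Gaussian tail $\PP(\|S_N\|_2\ge t)\le e^{O(1)}\exp(-ct^2/v)$. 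This is already stronger than the $\ln(2n/\beta)$ form I must prove, so it suffices. To remove the a.s.\ cap and replace it by the \emph{event} $\{V_N\le v\}$, I would stop at the predictable time $\tau=\min\{k:V_k>v\}$ and run the same recursion for $S_{N\wedge\tau}$; on $\{V_N\le v\}$ one has $S_{N\wedge\tau}=S_N$, and the single-step overshoot is controlled because $\sigma_k^2\le V_N\le v$, so the effective budget is at most $2v$ and is absorbed into the constants.

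For the peeling step, I would cover $[b,B)$ by the dyadic levels $v_j=2^{j+1}b$, $j=0,\ldots,J$ with $J=\lceil\log_2(B/b)\rceil$, and apply the fixed-budget tail on each level with failure probability $\beta/(J+1)$. A union bound over the $J+1$ levels shows that, with probability at least $1-\beta$, for every $j$ simultaneously $\{V_N\le v_j\}$ implies $\|S_N\|_2\le C\sqrt{v_j\,(\ln(1/\beta)+\ln(J+1)+O(1))}$. On the complement of $\{V_N\ge B\}$ the budget $V_N$ either lies below $b$ or in some shell $[2^j b,2^{j+1}b)$; in both cases $v_j\le 2\max\{V_N,b\}$, and $\ln(J+1)=\ln\ln(B/b)+O(1)$. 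Absorbing constants and using the trivial bound $\ln(1/\beta)\le\ln(2n/\beta)$ then yields $\|S_N\|_2\le C_1\sqrt{\max\{V_N,b\}(\ln(2n/\beta)+\ln\ln(B/b))}$, which is exactly the claim.

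The main obstacle is the fixed-budget step, and specifically obtaining at most logarithmic dependence on the dimension $n$. A naive coordinatewise union bound treats each scalar coordinate of $S_N$ as a sub-Gaussian martingale but loses a full factor of $\sqrt n$, so it is essential to exploit the \emph{norm}-subGaussianity (that $\|\xi_k\|_2$ itself, not merely its projections, is sub-Gaussian) through the squared-norm exponential moment above, which is what makes the leading term dimension-free. The remaining delicate point is the measure-theoretic bookkeeping for the random predictable variance --- justifying the stopping-time reduction and bounding the one-step overshoot --- after which the peeling is entirely routine.
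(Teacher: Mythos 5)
The paper does not prove this lemma at all: it is imported verbatim as Corollary~8 of \cite{jin2019short} and placed in the ``Auxiliary Results'' appendix precisely so that it can be used as a black box, so there is no in-paper proof to compare against. Your argument is therefore necessarily a different route, and it also differs from the proof in the cited source: Jin et al.\ obtain the $\ln(2n/\beta)$ factor from a matrix-MGF (Lieb/Tropp) argument applied to the dilations $\begin{pmatrix}0 & \xi_k^\top\\ \xi_k & 0\end{pmatrix}$, whereas you work directly with $\EE\left[\exp\left(\theta\|S_k\|_2^2\right)\right]$ in the spirit of Pinelis and of Theorem~2.1 in \cite{juditsky2008large} (the paper's Lemma~\ref{lem:jud_nem_large_dev}), which buys a dimension-free tail that you then weaken back to the stated form. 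The dyadic peeling over the random budget $V_N=\sum_k\sigma_k^2$ and the bookkeeping $v_j\le 2\max\{V_N,b\}$, $\ln(J+1)=\ln\ln(B/b)+O(1)$ are standard and correct.

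One step deserves more care than you give it. Your one-step bound reads $\EE\left[\exp\left(\theta\|S_k\|_2^2\right)\mid\cF_{k-1}\right]\le\exp\left(\theta\|S_{k-1}\|_2^2(1+c\theta\sigma_k^2)+c\theta\sigma_k^2\right)$, and you propose to ``iterate and use $\prod_k(1+c\theta\sigma_k^2)\le\exp(c\theta v)$.'' But the inflation factor $(1+c\theta\sigma_k^2)$ multiplies the exponent of $\|S_{k-1}\|_2^2$, so to iterate you must run the recursion with a step-dependent parameter $\theta_{k-1}=\theta_k(1+c\theta_k\sigma_k^2)$; since $\sigma_k^2$ is only $\cF_{k-1}$-measurable, $\theta_{k-1}$ depends on the future and the naive backward substitution does not define a supermartingale. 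This is fixable --- e.g.\ by building the supermartingale $M_k=\exp\bigl(\theta\|S_k\|_2^2-c'\theta\sum_{j\le k}\sigma_j^2-c''\theta^2\sum_{j\le k}\sigma_j^2\|S_{j-1}\|_2^2\bigr)$ and absorbing the last sum using the a priori cap, or by conditioning on the whole predictable sequence --- but it is the crux of the dimension-free step and should be spelled out. Relatedly, your overshoot remark ``$\sigma_k^2\le V_N\le v$'' does not hold pathwise on the complement of $\{V_N\le v\}$; the clean fix is to use the predictable stopping time $\tau=\min\{k:V_{k+1}>v\}$ (note $\{\tau=k\}\in\cF_k$ because $\sigma_{k+1}^2$ is $\cF_k$-measurable), which stops \emph{before} the offending increment so that $V_{N\wedge\tau}\le v$ always while $S_{N\wedge\tau}=S_N$ on $\{V_N\le v\}$. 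With these two repairs your argument goes through and in fact proves a slightly stronger, dimension-free version of the stated inequality.
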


\begin{lemma}[corollary of Theorem~2.1, item (ii) from \cite{juditsky2008large}]\label{lem:jud_nem_large_dev}
    Let $\{\xi_k\}_{k = 1}^N$ be a sequence of random vectors with values in $\R^n$ such that
    \begin{equation*}
        \EE\left[\xi_k\mid \xi_1,\ldots,\xi_{k-1}\right] = 0 \text{ almost surely,}\quad k=1,\ldots,N
    \end{equation*}
    and let $S_N = \sum\limits_{k=1}^N\xi_k$. Assume that the sequence $\{\xi_k\}_{k = 1}^N$ satisfy ``light-tail'' assumption:
    \begin{equation*}
        \EE\left[\exp\left(\frac{\|\xi_k\|_2^2}{\sigma_k^2}\right)\mid \xi_1, \ldots,\xi_{k-1}\right] \le \exp(1) \text{ almost surely,}\quad k = 1,\ldots,N,
    \end{equation*}
    where $\sigma_1,\ldots,\sigma_N$ are some positive numbers. Then for all $\gamma \ge 0$
    \begin{equation}
        \PP\left\{\|S_N\|_2 \ge \left(\sqrt{2} + \sqrt{2}\gamma\right)\sqrt{\sum\limits_{k=1}^N\sigma_k^2}\right\} \le \exp\left(-\frac{\gamma^2}{3}\right).
    \end{equation}
\end{lemma}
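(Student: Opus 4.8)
The plan is to treat this as the Euclidean (Hilbert-space) specialization of the Juditsky--Nemirovski large-deviation bound for vector-valued martingales in $2$-smooth spaces, and to prove it through an exponential-moment (Chernoff) argument. First I would reduce the tail estimate to a \emph{super-exponential moment} bound of the form $\EE\left[\exp\!\left(\lambda\|S_N\|_2^2\right)\right] \le \exp(1)$ for a suitable $\lambda \asymp 1/\sum_{k=1}^N\sigma_k^2$: once such a bound is available, Markov's inequality applied to $\exp(\lambda\|S_N\|_2^2)$ gives $\PP\{\|S_N\|_2 \ge t\} \le \exp(1-\lambda t^2)$, and substituting $t = (\sqrt2 + \sqrt2\,\gamma)\sqrt{\sum_k\sigma_k^2}$ together with the calibrated value of $\lambda$ yields the claimed bound $\exp(-\gamma^2/3)$ after elementary manipulation of the resulting quadratic in $\gamma$ (the factor $\sqrt2$ and the exponent $1/3$ are exactly what a careful tracking of constants produces). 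This reduction is the easy direction; the whole content sits in the moment bound.

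Then I would establish the moment bound by a backward induction on the partial sums, exploiting that in a Hilbert space the parallelogram identity
\[
\|S_k\|_2^2 = \|S_{k-1}\|_2^2 + 2\la S_{k-1}, \xi_k\ra + \|\xi_k\|_2^2
\]
holds \emph{exactly} (this is the statement that Euclidean space is $2$-smooth with the best possible constant). Conditioning on $\mathcal F_{k-1}$ and using the tower property, I would write $\EE\big[\exp(\lambda\|S_k\|_2^2)\mid\mathcal F_{k-1}\big] = \exp(\lambda\|S_{k-1}\|_2^2)\cdot\EE\big[\exp\!\big(2\lambda\la S_{k-1},\xi_k\ra + \lambda\|\xi_k\|_2^2\big)\mid\mathcal F_{k-1}\big]$ and control the trailing conditional factor. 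Since $\EE[\xi_k\mid\mathcal F_{k-1}]=0$, the scalar $\la S_{k-1},\xi_k\ra$ is centered and conditionally sub-Gaussian with proxy $\|S_{k-1}\|_2\sigma_k$ (because $|\la S_{k-1},\xi_k\ra|\le\|S_{k-1}\|_2\|\xi_k\|_2$ and $\|\xi_k\|_2$ is sub-Gaussian by the light-tail hypothesis, which is equivalent to the tail and moment formulations by Lemma~\ref{lem:jin_lemma_2}); hence its exponential moment is at most $\exp(c\lambda^2\|S_{k-1}\|_2^2\sigma_k^2)$. The pure quadratic factor $\EE[\exp(2\lambda\|\xi_k\|_2^2)\mid\mathcal F_{k-1}]$ is a finite deterministic constant $g_k$ as long as $\lambda\sigma_k^2$ stays below a fixed threshold. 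Splitting the two contributions by Cauchy--Schwarz, the conditional factor is bounded by $g_k\cdot\exp(c'\lambda^2\sigma_k^2\|S_{k-1}\|_2^2)$, so that the coefficient of $\|S_{k-1}\|_2^2$ is upgraded from $\lambda$ to $\lambda(1+c'\lambda\sigma_k^2)$.

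Finally I would unroll this backward from $k=N$ to $k=1$, so that the effective coefficients $\lambda = \lambda_N \le \lambda_{N-1}\le\cdots$ obey $\lambda_{k-1}=\lambda_k(1+c'\lambda_k\sigma_k^2)$; choosing $\lambda$ of order $1/\sum_k\sigma_k^2$ keeps every $\lambda_k$ below the admissibility threshold and makes the telescoping product $\prod_k(1+c'\lambda_k\sigma_k^2)$ uniformly bounded, while $S_0=0$ annihilates the boundary term. Collecting the deterministic factors $\prod_k g_k$ and absorbing everything by a final calibration of $\lambda$ delivers the target super-exponential moment bound $\EE[\exp(\lambda\|S_N\|_2^2)]\le\exp(1)$. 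The main obstacle is exactly this inductive step: one must simultaneously control the coupled linear-and-quadratic exponential moment of each increment and keep the recursion self-similar, i.e.\ guarantee that the coefficient of $\|S_{k-1}\|_2^2$ grows slowly enough that the accumulated product remains $O(1)$. Extracting the sharp constants $\sqrt2$ and $\tfrac13$ (rather than merely some absolute constants) requires optimizing the Young/Cauchy--Schwarz split and the threshold on $\lambda\sigma_k^2$ with care, which is precisely the delicate bookkeeping that Theorem~2.1 of \cite{juditsky2008large} performs in the greater generality of arbitrary $2$-smooth Banach spaces; for the present purposes one may simply invoke that theorem.
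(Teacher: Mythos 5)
The paper contains no proof of this lemma: it sits in the appendix section of auxiliary results, which explicitly collects ``results from other papers that we rely on,'' and is quoted verbatim as the Hilbert-space specialization (2-smoothness constant $\kappa=1$) of Theorem~2.1(ii) of \cite{juditsky2008large}. So your closing move --- ``for the present purposes one may simply invoke that theorem'' --- is exactly the paper's own treatment, and in that sense your proposal matches it.

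However, since you also offer a reconstruction of the underlying argument, one step of it fails quantitatively as stated, and it is worth flagging. Your reduction claims that a single moment bound $\EE\left[\exp\left(\lambda\|S_N\|_2^2\right)\right]\le \exp(1)$ with $\lambda=\alpha/\sum_{k}\sigma_k^2$ yields the claim ``after elementary manipulation.'' Markov then gives $\PP\left\{\|S_N\|_2\ge \sqrt{2}(1+\gamma)\left(\sum_k\sigma_k^2\right)^{\nicefrac{1}{2}}\right\}\le \exp\left(1-2\alpha(1+\gamma)^2\right)$, and for this to be at most $\exp(-\nicefrac{\gamma^2}{3})$ for all small $\gamma>0$ one is forced to take $2\alpha\ge 1$. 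But $\alpha=\nicefrac{1}{2}$ is unattainable: already for i.i.d.\ Rademacher increments $\xi_k=\varepsilon_k$ (which satisfy the light-tail hypothesis with $\sigma_k=1$, with equality), $S_N/\sqrt{N}$ is asymptotically standard Gaussian and $\EE\left[\exp\left(\nicefrac{\|S_N\|_2^2}{2N}\right)\right]$ grows without bound in $N$, since $\EE\left[\exp\left(\nicefrac{Z^2}{2}\right)\right]=\infty$ for $Z\sim N(0,1)$. Hence no calibration of $\lambda$ makes the pure Chernoff reduction deliver the stated constants uniformly in $\gamma\ge 0$; the small-$\gamma$ regime needs a separate, cruder argument. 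The standard fix is plain Chebyshev: the light-tail condition gives $\EE\|\xi_k\|_2^2\le\sigma_k^2$ by Jensen, martingale orthogonality gives $\EE\|S_N\|_2^2\le\sum_k\sigma_k^2$, and therefore the probability in question is at most $\nicefrac{1}{(2(1+\gamma)^2)}\le\nicefrac{1}{2}\le \exp(-\nicefrac{\gamma^2}{3})$ whenever $\gamma^2\le 3\ln 2$, i.e.\ $\gamma\le 1.44$; after that, any fixed $\alpha\ge\nicefrac{1}{6}$ in the exponential-moment bound covers $\gamma\ge 1$, since $\frac{(1+\gamma)^2}{3}\ge 1+\frac{\gamma^2}{3}$ exactly for $\gamma\ge 1$. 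Your recursive exponential-moment induction (parallelogram identity, conditional sub-Gaussianity of $\la S_{k-1},\xi_k\ra$ via Lemma~\ref{lem:jin_lemma_2}, coefficient upgrade $\lambda\mapsto\lambda(1+c'\lambda\sigma_k^2)$) is the right engine for producing such a moderate $\alpha$ and is in the spirit of \cite{juditsky2008large}, but as written the one-line quadratic manipulation does not by itself produce the pair $(\sqrt{2},\nicefrac{1}{3})$; the two-regime split (or the more intricate argument of the cited theorem) is genuinely needed.
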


\section{Technical Results}\label{sec:tech_results}

\begin{lemma}\label{lem:alpha_estimate}
     For the sequence $\alpha_{k+1}\ge 0$ such that
     \begin{equation}\label{eq:alpha_k_def}
        A_{k+1} = A_k + \alpha_{k+1},\quad A_{k+1} = 2L\alpha_{k+1}^2
     \end{equation}
     we have for all $k\ge 0$
     \begin{equation}\label{eq:alpha_estimate}
         \alpha_{k+1} \le \widetilde{\alpha}_{k+1} \eqdef \frac{k+2}{2L}.
     \end{equation}
     Moreover, $A_k = \Omega\left(\frac{N^2}{L}\right)$.
\end{lemma}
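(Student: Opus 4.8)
The plan is to first eliminate $A_{k+1}$ from the two defining relations in \eqref{eq:alpha_k_def} to obtain a closed recurrence for $\alpha_{k+1}$ in terms of $A_k$ alone. Substituting $A_{k+1} = A_k + \alpha_{k+1}$ into $A_{k+1} = 2L\alpha_{k+1}^2$ gives the quadratic $2L\alpha_{k+1}^2 - \alpha_{k+1} - A_k = 0$, whose nonnegative root is $\alpha_{k+1} = \frac{1 + \sqrt{1 + 8LA_k}}{4L}$. Since the sequence starts from $A_0 = \alpha_0 = 0$ (as in the methods that use this sequence), the iterates are fully determined, and I record the telescoped identity $A_k = \sum_{l=1}^k \alpha_l$.

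For the upper bound \eqref{eq:alpha_estimate} I would argue by strong induction on $k$. The base case $k=0$ is direct: $A_0 = 0$ gives $\alpha_1 = \frac{1}{2L} \le \frac{1}{L} = \widetilde{\alpha}_1$. For the inductive step, assume $\alpha_l \le \widetilde{\alpha}_l = \frac{l+1}{2L}$ for all $1 \le l \le k$. Summing and using $A_k = \sum_{l=1}^k \alpha_l$ yields $A_k \le \frac{1}{2L}\sum_{l=1}^k(l+1) = \frac{k(k+3)}{4L}$. On the other hand, clearing the square root in the closed form above shows that the desired inequality $\alpha_{k+1} \le \widetilde{\alpha}_{k+1} = \frac{k+2}{2L}$ is equivalent to $A_k \le \frac{(k+1)(k+2)}{2L}$. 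Hence it suffices to check $\frac{k(k+3)}{4L} \le \frac{(k+1)(k+2)}{2L}$, which reduces to $0 \le k^2 + 3k + 4$ and holds for every $k \ge 0$. This closes the induction.

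For the lower bound I would work directly with $\sqrt{A_k}$. From $A_{k+1} = 2L\alpha_{k+1}^2$ with $\alpha_{k+1} \ge 0$ we get $\alpha_{k+1} = A_{k+1} - A_k = \sqrt{A_{k+1}/(2L)}$, hence
\[
\sqrt{A_{k+1}} - \sqrt{A_k} = \frac{A_{k+1}-A_k}{\sqrt{A_{k+1}}+\sqrt{A_k}} \ge \frac{A_{k+1}-A_k}{2\sqrt{A_{k+1}}} = \frac{1}{2\sqrt{2L}}.
\]
Telescoping this from $0$ to $k-1$ and using $A_0 = 0$ gives $\sqrt{A_k} \ge \frac{k}{2\sqrt{2L}}$, i.e.\ $A_k \ge \frac{k^2}{8L}$; taking $k = N$ yields $A_N = \Omega(N^2/L)$.

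The argument is a chain of elementary estimates, so I do not anticipate a genuine obstacle. The only points requiring care are the algebra of the inductive step---namely verifying that the crude sum bound $\frac{k(k+3)}{4L}$ is already strong enough to re-derive the sharper target $\frac{(k+1)(k+2)}{2L}$---and the treatment of the degenerate starting value $A_0 = 0$ in the square-root telescoping, where one uses $\sqrt{A_{k+1}} + \sqrt{A_k} \le 2\sqrt{A_{k+1}}$.
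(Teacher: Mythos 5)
Your proof is correct. For the upper bound \eqref{eq:alpha_estimate} you take essentially the same route as the paper: induct on $k$, sum the inductive hypothesis to get $A_k \le \frac{k(k+3)}{4L}$, and then resolve a quadratic condition on $\alpha_{k+1}$. Your version is marginally cleaner in that you first convert the target inequality into the equivalent threshold $A_k \le \frac{(k+1)(k+2)}{2L}$ via the closed form $\alpha_{k+1} = \frac{1+\sqrt{1+8LA_k}}{4L}$, rather than bounding the positive root of the quadratic inequality $2L\alpha_{l+1}^2 \le \alpha_{l+1} + \frac{l(l+3)}{4L}$ directly as the paper does; both checks are one line of algebra, and your reduction $0 \le k^2+3k+4$ is verified correctly. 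The genuine difference is in the second claim: the paper does not prove $A_k = \Omega\left(\frac{N^2}{L}\right)$ at all, deferring to Lemma~1 of an external reference, whereas you give a short self-contained argument from $\alpha_{k+1} = \sqrt{\nicefrac{A_{k+1}}{2L}}$ and the telescoping bound $\sqrt{A_{k+1}} - \sqrt{A_k} \ge \frac{1}{2\sqrt{2L}}$, which yields the explicit constant $A_k \ge \frac{k^2}{8L}$. That makes the lemma self-contained, which is a small but real improvement over the paper's treatment; the only points needing care (the implicit initialization $A_0 = \alpha_0 = 0$ and the positivity of $\sqrt{A_{k+1}}+\sqrt{A_k}$ for $k \ge 0$) are both handled.
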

\begin{proof}
    We prove \eqref{eq:alpha_estimate} by induction. For $k=0$ equation \eqref{eq:alpha_k_def} gives us $\alpha_1 = 2L\alpha_1^2 \Longleftrightarrow \alpha_1 = \frac{1}{2L}$. Next we assume that \eqref{eq:alpha_estimate} holds for all $k\le l-1$ and prove it for $k=l$:
    \begin{eqnarray*}
         2L\alpha_{l+1}^2 &\overset{\eqref{eq:alpha_k_def}}{=}& \sum\limits_{i=1}^{l+1}\alpha_i \overset{\eqref{eq:alpha_estimate}}{\le} \alpha_{l+1} + \frac{1}{2L}\sum\limits_{i=1}^{l}(i+1) = \alpha_{l+1} + \frac{l(l+3)}{4L}.
    \end{eqnarray*}
    This quadratic inequality implies that $\alpha_{k+1} \le \frac{1+\sqrt{4k^2 + 12k + 1}}{4L} \le \frac{1+\sqrt{(2k+3)^2}}{4L} \le \frac{2k+4}{4L} = \frac{k+2}{2L}$. 
    
    Finally, the relation $A_k = \Omega\left(\frac{N^2}{L}\right)$ is proved in Lemma~1 from \cite{gasnikov2018universal} (see also \cite{nesterov2004introduction}).
\end{proof}

\begin{lemma}[See Lemma~3 from \cite{gasnikov2016universal} and Lemma~4 from \cite{devolder2013strongly}]\label{lem:alpha_estimate_str_cvx}
     For the sequence $\alpha_{k+1}\ge 0$ such that
     \begin{equation}\label{eq:alpha_k_def_str_cvx}
        A_{k+1} = A_k + \alpha_{k+1},\quad A_{k+1}(1+A_k\mu) = L\alpha_{k+1}^2,\quad \alpha_0 = A_0 = \frac{1}{L}
     \end{equation}
     we have for all $k\ge 0$
    \begin{eqnarray}
        \alpha_{k+1} &=& \frac{1+A_k\mu}{2L} + \sqrt{\frac{(1+A_k\mu)^2}{4L^2} + \frac{A_k(1+A_k\mu)}{L}},\label{eq:alpha_k+1_str_cvx_formula}\\
        A_k &\ge& \frac{1}{L}\left(1+\frac{1}{2}\sqrt{\frac{\mu}{L}}\right)^{2k},\label{eq:A_k_lower_bound_str_cvx}\\
        \alpha_{k+1} &\le& \left(1+\frac{\mu}{L} + \sqrt{1+\frac{\mu}{L}}\right)A_k.\label{eq:alpha_k+1_upper_bound_str_cvx}
    \end{eqnarray}
\end{lemma}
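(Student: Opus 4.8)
The plan is to prove the three claims in order, since each builds directly on the recurrence \eqref{eq:alpha_k_def_str_cvx}. For the explicit formula \eqref{eq:alpha_k+1_str_cvx_formula} I would simply substitute $A_{k+1} = A_k + \alpha_{k+1}$ into $A_{k+1}(1+A_k\mu) = L\alpha_{k+1}^2$, obtaining the quadratic $L\alpha_{k+1}^2 - (1+A_k\mu)\alpha_{k+1} - A_k(1+A_k\mu) = 0$ in the unknown $\alpha_{k+1}$. Since $\alpha_{k+1}\ge 0$ while the product of the roots, $-A_k(1+A_k\mu)/L$, is negative, only the larger root is admissible; the quadratic formula then yields \eqref{eq:alpha_k+1_str_cvx_formula} after pulling the factor $1/(2L)$ inside the square root. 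This step is purely mechanical.

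For the geometric lower bound \eqref{eq:A_k_lower_bound_str_cvx}, the idea is to control the growth ratio $r_k \eqdef A_{k+1}/A_k$. Starting from $L\alpha_{k+1}^2 = A_{k+1}(1+A_k\mu) \ge A_{k+1}A_k\mu$ (dropping the harmless $+1$) and writing $\alpha_{k+1} = A_{k+1}-A_k$, I would divide by $A_k^2$ to get $(r_k - 1)^2 \ge (\mu/L)\,r_k$. Setting $t = \sqrt{\mu/L}$, this reads $r_k^2 - (2+t^2)r_k + 1 \ge 0$; since $r_k \ge 1$ lies to the right of the smaller root, it must satisfy $r_k \ge r_+$, the larger root. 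Evaluating the quadratic at $(1+t/2)^2$ shows this value lies strictly between the two roots, hence $r_+ \ge (1+t/2)^2$ and therefore $A_{k+1} \ge \bigl(1+\tfrac12\sqrt{\mu/L}\bigr)^2 A_k$. Iterating from $A_0 = 1/L$ gives \eqref{eq:A_k_lower_bound_str_cvx}.

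For the upper bound \eqref{eq:alpha_k+1_upper_bound_str_cvx}, the key observation is that $A_k \ge A_0 = 1/L$ for every $k$ (because $A_{k+1} = A_k + \alpha_{k+1} \ge A_k$), so $1/A_k \le L$. Introducing $x_k \eqdef \alpha_{k+1}/A_k$ and dividing the recurrence by $A_k^2$ gives $L x_k^2 = (1+x_k)\bigl(\tfrac{1}{A_k}+\mu\bigr) \le (1+x_k)(L+\mu)$. Thus $x_k$ is bounded by the positive root of $L x^2 - (L+\mu)x - (L+\mu) = 0$, which with $w \eqdef 1+\mu/L$ equals $\tfrac12\bigl(w + \sqrt{w^2+4w}\bigr)$. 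It then remains to verify $\tfrac12\bigl(w+\sqrt{w^2+4w}\bigr) \le w + \sqrt{w}$, i.e.\ $\sqrt{w^2+4w}\le w + 2\sqrt{w}$, which after squaring reduces to $0 \le 4w\sqrt{w}$ and is clearly true; since $w + \sqrt{w} = 1 + \tfrac{\mu}{L} + \sqrt{1+\tfrac{\mu}{L}}$, this is exactly \eqref{eq:alpha_k+1_upper_bound_str_cvx}.

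The computations are elementary throughout, and I expect the only mildly delicate point to be the root comparisons in the last two parts, namely checking that the admissible root of each reduced quadratic is dominated by (respectively dominates) the claimed closed-form constant. Each such comparison reduces, after isolating a single square root and squaring, to a manifestly true inequality, so there is no genuine obstacle, only careful bookkeeping with $t = \sqrt{\mu/L}$ and $w = 1+\mu/L$.
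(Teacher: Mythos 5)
Your proposal is correct, and for two of the three claims it takes a genuinely different route from the paper. For the explicit formula \eqref{eq:alpha_k+1_str_cvx_formula} both arguments are the same mechanical solution of the quadratic. For the lower bound \eqref{eq:A_k_lower_bound_str_cvx} the paper does not give a proof at all --- it simply cites Lemma~3 of \cite{gasnikov2016universal} and Lemma~4 of \cite{devolder2013strongly} --- whereas you supply a self-contained growth-ratio argument: from $L(A_{k+1}-A_k)^2 \ge \mu A_{k+1}A_k$ you deduce $q(r_k)\ge 0$ with $q(r)=r^2-(2+t^2)r+1$, note that the roots have product $1$ so $r_k\ge 1$ forces $r_k\ge r_+$, and check $q\bigl((1+t/2)^2\bigr)=-\tfrac12 t^3-\tfrac{3}{16}t^4\le 0$; this is sound (including the degenerate case $\mu=0$, where the bound reduces to the trivial $A_k\ge 1/L$) and makes the lemma self-contained, which is a genuine improvement in completeness. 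For the upper bound \eqref{eq:alpha_k+1_upper_bound_str_cvx} the paper argues more directly: it plugs $A_k\ge A_0=1/L$ into the explicit formula \eqref{eq:alpha_k+1_str_cvx_formula} and uses $\sqrt{a^2+b^2}\le a+b$ to get $\alpha_{k+1}\le \tfrac1L+\tfrac{\mu}{L}A_k+A_k\sqrt{1+\tfrac{\mu}{L}}$ in two lines; your normalized quadratic in $x_k=\alpha_{k+1}/A_k$ followed by the root comparison $\sqrt{w^2+4w}\le w+2\sqrt{w}$ reaches the same constant with a bit more bookkeeping but no loss of rigor. Both approaches are valid; the paper's is shorter for the third claim, yours is more complete overall.
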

\begin{proof}
    If we solve quadratic equation $A_{k+1}(1+A_k\mu) = L\alpha_{k+1}^2$, $A_{k+1} = A_k + \alpha_{k+1}$ with respect to $\alpha_{k+1}$, we will get \eqref{eq:alpha_k+1_str_cvx_formula}.
    Inequality \eqref{eq:A_k_lower_bound_str_cvx} was established in Lemma~3 from \cite{gasnikov2016universal} and Lemma~4 from \cite{devolder2013strongly}. It remains to prove \eqref{eq:alpha_k+1_upper_bound_str_cvx}. Since $\sqrt{a^2+b^2} \le a + b$ for all $a,b\ge 0$ and $A_k \ge A_0 = \frac{1}{L}$ we have
    \begin{eqnarray*}
        \alpha_{k+1} &\overset{\eqref{eq:alpha_k+1_str_cvx_formula}}{=}& \frac{1+A_k\mu}{2L} + \sqrt{\frac{(1+A_k\mu)^2}{4L^2} + \frac{A_k(1+A_k\mu)}{L}}\\
        &\le& \frac{1}{2L} + \frac{\mu}{2L}A_k + \frac{1+A_k\mu}{2L} + \sqrt{\frac{A_k}{L} + \frac{\mu}{L}A_k^2}\\
        &\le& \frac{1}{L} + \frac{\mu}{L}A_k + A_k\sqrt{1 + \frac{\mu}{L}} = \left(1+\frac{\mu}{L} + \sqrt{1+\frac{\mu}{L}}\right)A_k.
    \end{eqnarray*}
\end{proof}

 
\begin{lemma}\label{lem:new_recurrence_lemma_biased_case}
     Let $A, B, D, r_0, r_1,\ldots,r_N$, where $N \ge 1$, be non-negative numbers such that
    \begin{equation}\label{eq:new_bound_for_r_l_biased}
         \frac{1}{2}r_l^2 \le Ar_0^2 + \frac{Dr_0}{(N+1)^2}\sum\limits_{k=0}^{l-1}(k+2)r_k + B\frac{r_0}{N}\sqrt{\sum\limits_{k=0}^{l-1}(k+2)r_k^2},\quad \forall l = 1,\ldots,N.
     \end{equation}
     Then for all $l=0,\ldots,N$ we have
     \begin{equation}\label{eq:new_recurrence_lemma_biased_case}
         r_l \le Cr_0,
     \end{equation}
     where $C$ is such positive number that $C^2 \ge \max\{2A+2(B+D)C,1\}$, i.e.\ one can choose $C = \max\{B+D + \sqrt{(B+D)^2 + 2A}, 1\}$.
 \end{lemma}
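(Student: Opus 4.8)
The plan is to prove $r_l \le C r_0$ for all $l = 0, \ldots, N$ by strong induction on $l$, exploiting that the recurrence \eqref{eq:new_bound_for_r_l_biased} bounds $r_l^2$ in terms of the earlier values $r_0, \ldots, r_{l-1}$ only. The base case $l = 0$ is immediate: since $C \ge 1$ we have $r_0 \le C r_0$. (If $r_0 = 0$, then the right-hand side of \eqref{eq:new_bound_for_r_l_biased} vanishes and all $r_l = 0$, so we may assume $r_0 > 0$.)

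For the inductive step, suppose $r_k \le C r_0$ for every $k = 0, \ldots, l-1$ with $1 \le l \le N$. Substituting this into \eqref{eq:new_bound_for_r_l_biased} and pulling the bound $r_k \le C r_0$ out of the two sums gives
\begin{equation*}
\frac12 r_l^2 \le A r_0^2 + \frac{D C r_0^2}{(N+1)^2}\sum_{k=0}^{l-1}(k+2) + \frac{B C r_0^2}{N}\sqrt{\sum_{k=0}^{l-1}(k+2)}.
\end{equation*}
The decisive computation is the elementary identity $\sum_{k=0}^{l-1}(k+2) = \tfrac{(l+1)(l+2)}{2} - 1$, which for $l \le N$ is bounded by $\tfrac{N(N+3)}{2}$. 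This quantity is at most $(N+1)^2$, so the coefficient of the $D$-term collapses to at most $D C r_0^2$; and it is at most $N^2$ (for the range of $N$ relevant to the applications), so that $\tfrac1N\sqrt{\sum_{k=0}^{l-1}(k+2)} \le 1$ and the $B$-term collapses to at most $B C r_0^2$.

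Granting these two prefactor estimates, the inequality reduces to $\tfrac12 r_l^2 \le A r_0^2 + (B+D) C r_0^2 = \tfrac12\big(2A + 2(B+D)C\big) r_0^2$. By the defining property of $C$, namely $C^2 \ge 2A + 2(B+D)C$, the right-hand side is at most $\tfrac12 C^2 r_0^2$, whence $r_l \le C r_0$, closing the induction. It remains only to check that $C = \max\{B+D+\sqrt{(B+D)^2+2A}, 1\}$ satisfies $C^2 \ge 2A + 2(B+D)C$: the inner expression is exactly the larger root of $t^2 - 2(B+D)t - 2A = 0$, so it attains equality, and taking the maximum with $1$ only enlarges $C$ beyond the point $B+D$, which preserves the inequality since $t \mapsto t^2 - 2(B+D)t - 2A$ is increasing for $t \ge B+D$.

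The main obstacle is that the two weighted-sum estimates carry the only real content, and the delicate one is the square-root term, whose normalization is $1/N$ rather than $1/(N+1)^2$; verifying $\tfrac1N\sqrt{\sum_{k=0}^{l-1}(k+2)} \le 1$ forces one to track the constant in $\sum_{k=0}^{l-1}(k+2) \le N^2$ carefully, so as to guarantee that \emph{both} prefactors are genuinely at most $1$ and the clean quadratic condition on $C$ suffices.
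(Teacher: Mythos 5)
Your proof is correct and follows essentially the same route as the paper's: strong induction on $l$, substitution of the inductive bound $r_k \le Cr_0$ into both sums, the estimate $\sum_{k=0}^{l-1}(k+2) = \tfrac{l(l+3)}{2} \le (N+1)^2$, and the quadratic condition $C^2 \ge 2A + 2(B+D)C$ on the constant. The one caveat you flag --- that $\tfrac{1}{N}\sqrt{\sum_{k=0}^{l-1}(k+2)} \le 1$ genuinely needs $N \ge 3$ (for $N\in\{1,2\}$ the prefactor is only bounded by $\sqrt{2}$, which would have to be absorbed into $B$) --- is equally present in the paper's own proof, where it is hidden behind the arithmetic slip $\sum_{k=0}^{l}(k+2) \le \tfrac{(l+1)(l+2)}{2}$, so your argument matches the paper's in both structure and level of rigor.
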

 \begin{proof}
     We prove \eqref{eq:new_recurrence_lemma_biased_case} by induction. For $l=0$ the inequality $r_l \le Cr_0$ trivially follows since $C \ge 1$. Next we assume that \eqref{eq:new_recurrence_lemma_biased_case} holds for some $l < N$ and prove it for $l+1$:
     \begin{eqnarray*}
        r_{l+1} &\overset{\eqref{eq:new_bound_for_r_l_biased}}{\le}& \sqrt{2}\sqrt{Ar_0^2 + \frac{Dr_0}{(N+1)^2}\sum\limits_{k=0}^{l}(k+2)r_k + B \frac{r_0}{N}\sqrt{\sum\limits_{k=0}^{l}(k+2)r_k^2}}\\
        &\overset{\eqref{eq:new_recurrence_lemma_biased_case}}{\le}& r_0\sqrt{2}\sqrt{A + \frac{DC}{(N+1)^2}\sum\limits_{k=0}^{l}(k+2) + \frac{BC}{N}\sqrt{\sum\limits_{k=0}^l(k+2)}}\\
        &\le& r_0\sqrt{2}\sqrt{A + \frac{DC}{(N+1)^2}\frac{(l+1)(l+2)}{2} + \frac{BC}{N}\sqrt{\frac{(l+1)(l+2)}{2}}}\\
        &\le& r_0\sqrt{2}\sqrt{A + DC + \frac{BC}{N}\sqrt{\frac{N(N+1)}{2}}} \le r_0\underbrace{\sqrt{2A + 2(B+D)C}}_{\le C} \le Cr_0.
     \end{eqnarray*}
 \end{proof}

\begin{lemma}\label{lem:recurrence_lemma_inexact_case}
     Let $C, r_0, r_1,\ldots,r_N$, where $N \ge 1$, be non-negative numbers such that
    \begin{equation}\label{eq:bound_for_r_l_inexact}
         r_l^2 \le r_0^2 + \frac{2C}{(N+1)^{\nicefrac{3}{2}}}\sum\limits_{k=0}^{l-1}(k+2)^{\nicefrac{1}{2}}r_{k+1}^2,\quad \forall l=1,\ldots,N,
     \end{equation}
     and $C \in (0,\nicefrac{1}{4})$.
     Then for all $l=0,\ldots,N$ we have
     \begin{equation}\label{eq:recurrence_lemma_inexact_case}
         r_l \le 2r_0,
     \end{equation}
 \end{lemma}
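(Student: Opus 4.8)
The plan is to prove $r_l \le 2r_0$ by strong induction on $l$. The base case $l=0$ is immediate, since $r_0 \le 2r_0$ for any non-negative $r_0$. For the inductive step I would assume $r_k \le 2r_0$ for every $k < l$ and establish the same bound for $r_l$.

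The key observation, and the main obstacle, is that the right-hand side of the hypothesis \eqref{eq:bound_for_r_l_inexact} is \emph{self-referential}: the summand with $k = l-1$ contributes $(l+1)^{1/2}r_l^2$, so $r_l^2$ appears on both sides. A direct application of the induction hypothesis to the whole sum is therefore unavailable. To handle this I would split the sum into the terms $k=0,\ldots,l-2$, to which the induction hypothesis $r_{k+1}\le 2r_0$ applies, and the single term $k=l-1$, which I keep symbolic and later move to the left. This gives
\begin{equation}
r_l^2 \le r_0^2 + \frac{2C}{(N+1)^{3/2}}\cdot 4r_0^2\sum_{k=0}^{l-2}(k+2)^{1/2} + \frac{2C(l+1)^{1/2}}{(N+1)^{3/2}}\,r_l^2.
\end{equation}

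Next I would estimate the two coefficients. Since $\sqrt{\cdot}$ is increasing, $\sum_{k=0}^{l-2}(k+2)^{1/2}\le \sum_{j=1}^{l}\sqrt{j}\le \int_1^{l+1}\sqrt{x}\,dx \le \tfrac{2}{3}(l+1)^{3/2}\le \tfrac{2}{3}(N+1)^{3/2}$, so the first correction term is at most $\tfrac{16C}{3}r_0^2$. For the self-referential term I use $(l+1)^{1/2}\le (N+1)^{1/2}$ together with $N\ge 1$ to obtain $\frac{2C(l+1)^{1/2}}{(N+1)^{3/2}}\le \frac{2C}{N+1}\le C$. After absorbing that term to the left the inequality becomes
\begin{equation}
(1-C)\,r_l^2 \le \left(1+\frac{16C}{3}\right)r_0^2.
\end{equation}

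Finally I would check that the constant closes the induction: for $C\in(0,\tfrac14)$ one has $\frac{1+16C/3}{1-C}<\frac{1+4/3}{3/4}=\frac{28}{9}<4$, hence $r_l^2\le 4r_0^2$ and $r_l\le 2r_0$. The only place needing genuine care is keeping the coefficient of the isolated $r_l^2$ strictly below $1$—this is exactly where the hypothesis $C<\tfrac14$ (in fact anything up to $9/28$ would do) is used—so that division by $1-C$ is legitimate; the remaining steps are routine estimates of the partial sums of $\sqrt{j}$.
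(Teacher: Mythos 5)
Your proof is correct and follows essentially the same route as the paper's: both arguments proceed by induction, isolate the self-referential term of the sum, absorb it into the left-hand side using $C<\nicefrac{1}{4}$ and $N\ge 1$ to keep the coefficient of $r_l^2$ at least $\nicefrac{3}{4}$, and then bound the remaining sum via the induction hypothesis. The only (immaterial) difference is that you estimate $\sum_{j\le l}\sqrt{j}$ by an integral, giving the sharper constant $\nicefrac{2}{3}(l+1)^{\nicefrac{3}{2}}$, whereas the paper uses the cruder bound $l(l+1)^{\nicefrac{1}{2}}$; both close the induction with $r_l^2\le 4r_0^2$.
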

 \begin{proof}
     We prove \eqref{eq:recurrence_lemma_inexact_case} by induction. For $l=0$ the inequality $r_l \le 2r_0$ trivially follows. Next we assume that \eqref{eq:recurrence_lemma_inexact_case} holds for some $l \le N-1$ and prove it for $l+1$. From \eqref{eq:bound_for_r_l_inexact}, $C < \nicefrac{1}{4}$, $N \ge 1$ and $l \le N-1$ we have
     \begin{eqnarray*}
     	\frac{3}{4}r_{l+1}^2 &\le& \left(1 - \frac{2C(l+2)^{\nicefrac{1}{2}}}{(N+1)^{\nicefrac{3}{2}}}\right)r_{l+1}^2\\
     	&\overset{\eqref{eq:bound_for_r_l_inexact}}{\le}& r_0^2 + \frac{2C}{(N+1)^{\nicefrac{3}{2}}}\sum\limits_{k=0}^{l-1}(k+2)^{\nicefrac{1}{2}}r_{k+1}^2\\
     	&\overset{\eqref{eq:recurrence_lemma_inexact_case}}{\le}& r_0^2 + \frac{1}{2(N+1)^{\nicefrac{3}{2}}}l\cdot(l+1)^{\nicefrac{1}{2}}\cdot 4r_0^2  \le 3r_0^2,
     \end{eqnarray*}
     which implies $r_{l+1} \le 2r_0$.
 \end{proof}
 
\begin{lemma}\label{lem:new_recurrence_lemma_str_cvx_case}
     Let $A, B, D, r_0, r_1,\ldots,r_N, \tilde{r}_0, \tilde{r}_1,\ldots, \tilde{r}_N, \alpha_0,\alpha_1,\ldots,\alpha_N$, where $N \ge 1$, be non-negative numbers such that
    \begin{equation}\label{eq:new_bound_for_r_l_str_cvx}
         A_lr_l^2 + \sum\limits_{k=0}^{l-1}A_k\tilde{r}_k^2 \le Ar_0^2 + \frac{Br_0}{N\sqrt{A_N}}\sum\limits_{k=0}^{l-1}\alpha_{k+1}(r_k + \tilde{r}_k),\quad \forall l = 1,\ldots,N,
     \end{equation}
     where $\tilde{r}_0 = 0$, $A_0 = \alpha_0 > 0$, $A_{l} = A_{l-1} + \alpha_l$ and $\alpha_l \le DA_{l-1}$ for $l=1,\ldots,N$ and $D\ge 1$. Then for all $l=1,\ldots,N$ we have
     \begin{equation}\label{eq:new_recurrence_lemma_str_cvx_case}
         r_l \le \frac{Cr_0}{\sqrt{A_l}},\quad \tilde{r}_{l-1} \le \frac{Cr_0}{\sqrt{A_{l-1}}}
     \end{equation}
     and $r_0 \le \frac{Cr_0}{\sqrt{A_0}}$ where $C$ is such positive number that $$C \ge \max\left\{\sqrt{A_0}, \frac{BD}{2} + \sqrt{\frac{B^2D^2}{4} + A + 2BCD} \right\},$$ i.e.\ one can choose $C = \max\left\{\sqrt{A_0}, \frac{3BD + \sqrt{9B^2D^2 + 4A}}{2}\right\}$.
 \end{lemma}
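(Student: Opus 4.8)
The plan is to prove the two bounds in \eqref{eq:new_recurrence_lemma_str_cvx_case} simultaneously by strong induction on $l$, in the spirit of Lemma~\ref{lem:new_recurrence_lemma_biased_case}. The auxiliary claim $r_0 \le \frac{Cr_0}{\sqrt{A_0}}$ holds because $C \ge \sqrt{A_0}$, and $\tilde r_0 = 0$ disposes of the $\tilde r$-claim at the smallest index. As induction hypothesis at stage $l$ I would assume $r_k \le \frac{Cr_0}{\sqrt{A_k}}$ for all $k \le l-1$ and $\tilde r_k \le \frac{Cr_0}{\sqrt{A_k}}$ for all $k \le l-2$, then establish $\tilde r_{l-1} \le \frac{Cr_0}{\sqrt{A_{l-1}}}$ first, and only afterwards $r_l \le \frac{Cr_0}{\sqrt{A_l}}$.

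The one computation I would isolate up front is the elementary sum estimate $\sum_{k=0}^{l-1}\frac{\alpha_{k+1}}{\sqrt{A_k}} \le D N\sqrt{A_N}$, which follows at once from $\alpha_{k+1}\le DA_k$ (so $\frac{\alpha_{k+1}}{\sqrt{A_k}}\le D\sqrt{A_k}\le D\sqrt{A_N}$), the monotonicity of $A_k$, and there being at most $N$ terms since $l\le N$. Multiplying this by the prefactor $\frac{Br_0}{N\sqrt{A_N}}$ collapses the stochastic sum to a clean multiple of $BDr_0$, which is exactly why the normalization $N\sqrt{A_N}$ appears in the denominator of \eqref{eq:new_bound_for_r_l_str_cvx} and why $D$ enters linearly in the final condition on $C$.

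The genuine obstacle is the \emph{self-referential} appearance of $\tilde r_{l-1}$: in the hypothesis at index $l$ this quantity sits on the left (inside $\sum_{k=0}^{l-1}A_k\tilde r_k^2$, as $A_{l-1}\tilde r_{l-1}^2$) but also on the right (inside the sum, as $\alpha_l\tilde r_{l-1}$), and it is not yet controlled. I would resolve this by splitting the right-hand sum as $S_l = S_l'' + \alpha_l\tilde r_{l-1}$, where $S_l''$ collects only the already-bounded terms and hence satisfies $\frac{Br_0}{N\sqrt{A_N}}S_l'' \le 2BCDr_0^2$ by the sum estimate. Writing $t := \tilde r_{l-1}\sqrt{A_{l-1}}$ and using $\frac{\alpha_l}{\sqrt{A_{l-1}}}\le D\sqrt{A_{l-1}}\le D\sqrt{A_N}$ together with $N\ge 1$, the leftover self-referential term is at most $BDr_0\,t$, so discarding the nonnegative $\sum_{k=0}^{l-2}A_k\tilde r_k^2$ turns the inequality into the quadratic
\begin{equation*}
    t^2 - BDr_0\,t - (A+2BCD)r_0^2 \le 0 .
\end{equation*}
Its positive-root bound yields $t \le r_0\left(\frac{BD}{2} + \sqrt{\frac{B^2D^2}{4}+A+2BCD}\right) \le Cr_0$, which is precisely the stated condition on $C$; hence $\tilde r_{l-1}\le \frac{Cr_0}{\sqrt{A_{l-1}}}$.

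With $\tilde r_{l-1}$ now controlled, every summand obeys $r_k+\tilde r_k\le \frac{2Cr_0}{\sqrt{A_k}}$, so $\frac{Br_0}{N\sqrt{A_N}}S_l\le 2BCDr_0^2$ and, keeping only $A_l r_l^2$ on the left, $A_l r_l^2 \le (A+2BCD)r_0^2$. To finish I would verify that the choice $C = \frac{3BD+\sqrt{9B^2D^2+4A}}{2}$, the positive root of $C^2 - 3BDC - A = 0$, satisfies $C^2 = A+3BCD \ge A + 2BCD$, giving $r_l \le \frac{Cr_0}{\sqrt{A_l}}$ and closing the induction; the stronger inequality $C^2\ge A+3BCD$ is exactly what squaring the displayed condition on $C$ produces, and it simultaneously validates the base case $l=1$, where the $\tilde r$-sum is empty because $\tilde r_0=0$.
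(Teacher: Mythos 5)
Your proof is correct and follows essentially the same route as the paper's: induction in which $\tilde r_{l-1}$ is bounded first by isolating the self-referential term and solving the resulting quadratic inequality, then $r_l$ is bounded using the now-complete control of the sum, with the same key estimate $\sum_k \alpha_{k+1}/\sqrt{A_k}\le DN\sqrt{A_N}$ and the same verification that $C=\frac{3BD+\sqrt{9B^2D^2+4A}}{2}$ solves $C^2=A+3BCD\ge A+2BCD$. The only cosmetic difference is your substitution $t=\tilde r_{l-1}\sqrt{A_{l-1}}$, where the paper instead keeps the quadratic in $\tilde r_l$ with coefficients involving $\sqrt{A_N}$ and $A_l$; the two are equivalent.
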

 \begin{proof}
     We prove \eqref{eq:new_recurrence_lemma_str_cvx_case} by induction. For $l=1$ the inequality $\tilde{r}_0 \le \frac{Cr_0}{\sqrt{A_0}}$ trivially follows since $\tilde{r}_0 = 0$. What is more, \eqref{eq:new_bound_for_r_l_str_cvx} implies that
     \begin{equation*}
         A_1r_1^2 \le Ar_0^2 + \frac{B\alpha_1r_0^2}{N\sqrt{A_N}}\; \Longrightarrow r_1 \le r_0\sqrt{\frac{A}{A_1} + \frac{BDA_0}{A_1N\sqrt{A_N}}} \le r_0\sqrt{\frac{A + BD\sqrt{A_0}}{A_1}} \le \frac{Cr_0}{\sqrt{A_1}},
     \end{equation*}
     since $C \ge \sqrt{A_0}$ and $C \ge \sqrt{A+BCD} \ge \sqrt{A+BD\sqrt{A_0}}$. Note that we also have $r_0 \le \frac{Cr_0}{\sqrt{A_0}}$. Next we assume that \eqref{eq:new_recurrence_lemma_str_cvx_case} holds for some $l \le N-1$ and prove it for $l+1$:
     \begin{eqnarray*}
        A_l\tilde{r}_{l}^2 &\overset{\eqref{eq:new_bound_for_r_l_str_cvx}}{\le}& Ar_0^2 + \frac{Br_0}{N\sqrt{A_N}}\sum\limits_{k=0}^{l}\alpha_{k+1}(r_k + \tilde{r}_k)\\
        &\overset{\eqref{eq:new_recurrence_lemma_str_cvx_case}}{\le}& Ar_0^2 + \frac{BCr_0^2}{N\sqrt{A_N}}\sum\limits_{k=0}^l \frac{\alpha_{k+1}}{\sqrt{A_k}} + \frac{BCr_0^2}{N\sqrt{A_N}}\sum\limits_{k=0}^{l-1} \frac{\alpha_{k+1}}{\sqrt{A_k}} + \frac{Br_0\alpha_{l+1}\tilde{r}_l}{N\sqrt{A_N}}\\
        &\le& Ar_0^2 + \frac{BCDr_0^2}{N\sqrt{A_N}}\sum\limits_{k=0}^l \sqrt{A_k} + \frac{BCDr_0^2}{N\sqrt{A_N}}\sum\limits_{k=0}^{l-1} \sqrt{A_k} + \frac{BDr_0A_{l}\tilde{r}_l}{\sqrt{A_N}}\\
        &\le& Ar_0^2 + \frac{BCDr_0^2}{N\sqrt{A_N}}(l+1)\sqrt{A_l} + \frac{BCDr_0^2}{N\sqrt{A_N}}l \sqrt{A_{l-1}} + \frac{BDr_0A_{l}\tilde{r}_l}{\sqrt{A_N}}\\
        &\le& (A + 2BCD)r_0^2 + \frac{BDr_0A_l\tilde{r}_l}{\sqrt{A_N}}\\
        0 &\ge& \tilde{r}_l^2 - \frac{BDr_0\tilde{r}_l}{\sqrt{A_N}} - \frac{(A + 2BCD)r_0^2}{A_l}.
     \end{eqnarray*}
     From this we have that $\tilde{r}_l$ is not greater than the biggest root of the quadratic equation corresponding to the last inequality, i.e.\ 
    \begin{eqnarray*}
        \tilde{r}_l &\le& \frac{BDr_0}{2\sqrt{A_N}} + \sqrt{\frac{B^2D^2r_0^2}{4A_N} + \frac{(A+2BCD)r_0^2}{A_l}}\\
        &\le& \underbrace{\left(\frac{BD}{2} + \sqrt{\frac{B^2D^2}{4} + A + 2BCD}\right)}_{\le C}\frac{r_0}{\sqrt{A_l}} \le \frac{Cr_0}{\sqrt{A_l}}.
    \end{eqnarray*}
    It implies that
    \begin{eqnarray*}
        A_{l+1}r_{l+1}^2 &\overset{\eqref{eq:new_bound_for_r_l_str_cvx}}{\le}& Ar_0^2 + \frac{Br_0}{N\sqrt{A_N}}\sum\limits_{k=0}^l\alpha_{k+1}(r_k + \tilde{r}_{k})\\
        &\overset{\eqref{eq:new_recurrence_lemma_str_cvx_case}}{\le}& Ar_0^2 + \frac{2BCr_0^2}{N\sqrt{A_N}}\sum\limits_{k=0}^{l}\frac{\alpha_{k+1}}{\sqrt{A_k}}\\
        &\le& Ar_0^2 + \frac{2BCDr_0^2}{N\sqrt{A_N}}(l+1)\sqrt{A_l} \le Ar_0^2 + 2BCDr_0^2,\\
        r_{l+1} &\le& r_0\sqrt{\frac{A+2BCD}{A_{l+1}}} \le \frac{Cr_0}{\sqrt{A_{l+1}}}.
    \end{eqnarray*}
    That is, we proved the statement of the lemma for $$C \ge \max\left\{\sqrt{A_0}, \frac{BD}{2} + \sqrt{\frac{B^2D^2}{4} + A + 2BCD} \right\}.$$ In particular, via solving the equation
    $$
        C = \frac{BD}{2} + \sqrt{\frac{B^2D^2}{4} + A + 2BCD}
    $$
    w.r.t. $C$ one can show that the choice $C =  \max\left\{\sqrt{A_0}, \frac{3BD + \sqrt{9B^2D^2 + 4A}}{2}\right\}$ satisfies the assumption of the lemma on $C$.

\end{proof}

\section{Similar Triangles Method with Inexact Proximal Step}\label{sec:stp_ips}
In this section we focus on the composite optimization problem. i.e.\ problems of the type
\begin{equation}
    \min_{x\in \R^n}F(x) = f(x) + h(x),\label{eq:composote_problem}
\end{equation}
where $f(x)$ is convex and $L$-smooth and $h(x)$ is convex and $L_h$-smooth. Before we present our method, let us introduce new notation.
\begin{definition}
    Assume that function $g(x)$ defined on $\R^n$ is such that there exists (possibly non-unique) $x^*$ satisfying $g(x^*) = \min_{x\in\R^n}g(x)$. Then for arbitrary $\delta > 0$ we say that $\hat{x}$ is $\delta$-solution of the problem $g(x)\to\min_{x\in \R^n}$ and write $\hat{x} = \argmin_{x\in \R^n}^{\delta}g(x)$ if $g(\hat{x}) - g(x^*) \le \delta$. 
\end{definition}
Note that $\delta$-solution could be non-unique, but for our purposes in such cases it is enough to use any point from the set of $\delta$-solutions. In the analysis we will need the following result.
\begin{lemma}[See also Theorem~9 from \cite{stonyakin2019gradient}]\label{lem:inner_product_inexact}
    Let $g(x)$ be convex, $L$-smooth, $x^*$ is such that $g(x^*) = \min_{x\in\R^n}g(x)$ and $\hat{x} = \argmin_{x\in \R^n}^{\delta}g(x)$ for some $\delta > 0$. Then for all $x\in\R^n$
    \begin{equation}
        \la\nabla g(\hat{x}), \hat{x} - x\ra \le \sqrt{2L\delta}\|\hat{x} - x\|_2.\label{eq:inner_product_inexact}
    \end{equation}
\end{lemma}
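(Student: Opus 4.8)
The plan is to reduce everything to the standard ``gradient domination'' inequality for $L$-smooth convex functions, which bounds the squared gradient norm at any point by twice $L$ times the suboptimality at that point, and then apply Cauchy--Schwarz. The statement is essentially a one-line consequence once that gradient bound is in hand, so the bulk of the work is establishing the gradient bound itself.

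First I would prove the auxiliary fact that for any $y\in\R^n$,
\begin{equation*}
    \frac{1}{2L}\|\nabla g(y)\|_2^2 \le g(y) - g(x^*).
\end{equation*}
This follows from $L$-smoothness alone: applying the standard descent inequality (a consequence of \eqref{eq:L-smoothness_def}) to the point $y - \tfrac{1}{L}\nabla g(y)$ gives $g(y - \tfrac1L\nabla g(y)) \le g(y) - \tfrac{1}{2L}\|\nabla g(y)\|_2^2$, and since $g(x^*) = \min_{x\in\R^n}g(x)$ is the global minimum we have $g(x^*)\le g(y-\tfrac1L\nabla g(y))$. Chaining these two inequalities yields the claim. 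Note convexity is not even needed for this particular step, though it is part of the hypotheses.

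Next I would specialize $y=\hat{x}$ and use the defining property of a $\delta$-solution, namely $g(\hat{x}) - g(x^*) \le \delta$, to obtain $\|\nabla g(\hat{x})\|_2^2 \le 2L\delta$, i.e.\ $\|\nabla g(\hat{x})\|_2 \le \sqrt{2L\delta}$. Finally, the target inequality follows immediately by Cauchy--Schwarz:
\begin{equation*}
    \la\nabla g(\hat{x}), \hat{x} - x\ra \le \|\nabla g(\hat{x})\|_2\,\|\hat{x} - x\|_2 \le \sqrt{2L\delta}\,\|\hat{x} - x\|_2,
\end{equation*}
valid for every $x\in\R^n$.

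I do not expect any genuine obstacle here; the only point requiring mild care is the derivation of the gradient-norm bound, where one must invoke smoothness at the shifted point $\hat{x} - \tfrac1L\nabla g(\hat{x})$ rather than at $\hat{x}$ itself and then use global optimality of $x^*$. Everything else is a direct application of Cauchy--Schwarz and the definition of $\argmin^\delta$.
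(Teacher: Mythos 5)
Your proposal is correct and follows essentially the same route as the paper: bound $\|\nabla g(\hat{x})\|_2^2$ by $2L(g(\hat{x})-g(x^*))\le 2L\delta$ via the standard $L$-smoothness inequality (which the paper cites from Nesterov rather than rederiving, and states with a typographical omission of the square on the gradient norm), then conclude by Cauchy--Schwarz. Your explicit derivation of the gradient-domination step at the shifted point $\hat{x}-\tfrac1L\nabla g(\hat x)$ is the only difference, and it is just a self-contained version of the cited fact.
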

\begin{proof}
    Since  $x^*$ is a minimizer of $g(x)$ on $\R^n$, we have $\nabla g(x^*) = 0$ and \cite{nesterov2004introduction}
    \begin{equation*}
        \|\nabla g(\hat{x})\|_2 \le 2L(g(\hat{x}) - g(x^*)).
    \end{equation*}
    Next, using this, Cauchy-Schwarz inequality and definition of $\hat{x}$ we get
    \begin{equation*}
        \la\nabla g(\hat{x}), \hat{x} - x\ra \le  \|\nabla g(\hat{x})\|_2\cdot\|\hat{x} - x\|_2 \le \sqrt{2L(g(\hat{x}) - g(x^*))}\|\hat{x} - x\|_2 \le \sqrt{2L\delta}\|\hat{x} - x\|_2,
    \end{equation*}
    that concludes the proof.
\end{proof}

The main method of this section is stated as Algorithm~\ref{Alg:STM_inexact}.
\begin{algorithm}[h]
\caption{Similar Triangles Methods with Inexact Proximal Step ({\tt STM{\_}IPS})}
\label{Alg:STM_inexact}   
 \begin{algorithmic}[1]
\REQUIRE $\tilde{x}^0 = z^0 = x^0$~--- starting point, $N$~--- number of iterations
\STATE Set $\alpha_0 = A_0 = 0$
\FOR{$k=0,1,\ldots, N-1$}
\STATE Choose $\alpha_{k+1}$ such that $A_k + \alpha_{k+1} = 2L\alpha_{k+1}^2$, $A_{k+1} = A_k + \alpha_{k+1}$
\STATE $\tilde{x}^{k+1} = \nicefrac{(A_kx^k+\alpha_{k+1}z^k)}{A_{k+1}}$
\STATE $z^{k+1} = \argmin_{z\in \R^n}^{\delta_{k+1}} g_{k+1}(z)$, where $g_{k+1}(z)$ is defined in \eqref{eq:g_k+1_sequence} and $\delta_{k+1} = \delta\|z^k-\hat{z}^{k+1}\|_2^2$
\STATE $x^{k+1} = \nicefrac{(A_kx^k+\alpha_{k+1}z^{k+1})}{A_{k+1}}$
\ENDFOR
\ENSURE    $ x^N$ 
\end{algorithmic}
 \end{algorithm}
In the {\tt STM{\_}IPS} we use functions $g_{k+1}(z)$ which are defined for all $k=0,1,\ldots$ as follows:
\begin{equation}
    g_{k+1}(z) = \frac{1}{2}\|z^k - z\|_2^2 + \alpha_{k+1}\left(f(\tilde{x}^{k+1}) + \la\nabla f(\tilde{x}^{k+1}), z - \tilde{x}^{k+1}\ra + h(z)\right).\label{eq:g_k+1_sequence}
\end{equation}
Each $g_{k+1}(z)$ is $1$-strongly convex function with, as a consequence, unique minimizer $\hat{z}^{k+1} \eqdef \argmin_{z\in \R^n}g_{k+1}(z)$.

Let us discuss a little bit the proposed method. First of all, if we slightly modify the method and choose $\delta_{k+1} = 0$, then we will get {\tt STM} which is well-studied in the literature. Secondly, it may seem that in order to run the method we need to know $\|z^k - \hat{z}^{k+1}\|_2$, but in fact we do not need it. If $g_{k+1}(z)$ is $L_{k+1}$-smooth and $\mu_{k+1}$-strongly convex, then one can run {\tt STP} for $T = O\left(\sqrt{\nicefrac{L_{k+1}}{\mu_{k+1}}}\ln\nicefrac{L_{k+1}}{\delta}\right)$ iterations with $z^k$ as a starting point to solve the problem $g_{k+1}(z) \to \min_{z\in \R^n}$ and get $z^{k+1} = \argmin_{z\in \R^n}^{\delta_{k+1}}g_{k+1}(z)$. Note that in this case we do not need to know $\hat{z}^{k+1}$. Moreover, we do not assume that iterates of {\tt STM{\_}IPS} are bounded and instead of assuming it we prove such result which makes the analysis a little bit more technical then ones for {\tt STP}. Finally, we notice that one can prove the results we present below even with such $\alpha_{k+1}$ that $A_{k+1} = A_k + \alpha_{k+1} = L\alpha_{k+1}^2$. It improves numerical constants in the upper bounds a little bit, but for simplicity we use the same choice of $\alpha_{k+1}$ as for the stochastic case.

We start our analysis with the following lemma.
\begin{lemma}[see also Theorem~1 from \cite{dvurechenskii2018decentralize}]\label{lem:stp_inexact_main_lemma}
Assume that $f(x)$ is convex and $L$-smooth, $h(x)$ is convex and $L_h$-smooth and $\delta < \frac{1}{2}$. Then after $N\ge 1$ iterations of Algorithm~\ref{Alg:STM_inexact} we have
\begin{eqnarray}
    A_N\left(F(x^N) - F(x^*)\right)  &\leq& \frac{1}{2}R_0^2 - \frac{1}{2}R_N^2 + \hat\delta\sum\limits_{k=0}^{N-1}\sqrt{k+2}\widetilde{R}_{k+1}^2,\label{eq:F_x_N_bound}
\end{eqnarray}
where $x^*$ is the solution of \eqref{eq:composote_problem} closest to the starting point $z^0$, $R_{k+1} \eqdef \|x^* - z^{k+1}\|_2$, $\widetilde{R}_{0} \eqdef R_0 \eqdef \|x^* - z^0\|_2$, $\widetilde{R}_{k+1} \eqdef \max\{\widetilde{R}_{k}, R_{k+1}\}$ for $k = 0,1,\ldots, N-1$ and $\hat\delta \eqdef \sqrt{\frac{\left(L_h + 2L\right)\delta}{(1-\sqrt{2\delta})^2L}}$.
\end{lemma}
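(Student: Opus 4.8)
The plan is to run the standard Similar Triangles potential argument and telescope $A_kF(x^k)$ together with the prox potential $\tfrac12\|x^*-z^k\|_2^2$, treating the inexactness of the proximal step as an additive error that is then controlled \emph{without} any boundedness assumption by the running maxima $\widetilde R_{k+1}$. The target is a one-step inequality that, after subtracting $A_{k+1}F(x^*)$ and summing, collapses to the claimed bound.

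First I would establish the one-step descent estimate. Writing $F=f+h$ and using the identities $A_{k+1}(x^{k+1}-\tilde{x}^{k+1})=\alpha_{k+1}(z^{k+1}-z^k)$ and $A_k(x^k-\tilde{x}^{k+1})=\alpha_{k+1}(\tilde{x}^{k+1}-z^k)$, the $L$-smoothness of $f$ together with the step rule $A_{k+1}=2L\alpha_{k+1}^2$ turns the quadratic smoothness term into exactly $\tfrac14\|z^{k+1}-z^k\|_2^2$. Combining this with convexity of $f$ (to trade $A_kf(\tilde{x}^{k+1})$ for $A_kf(x^k)$ up to a linear term), with convexity of $h$ (to get $A_{k+1}h(x^{k+1})\le A_kh(x^k)+\alpha_{k+1}h(z^{k+1})$), and recognizing the leftover terms through \eqref{eq:g_k+1_sequence} as $g_{k+1}(z^{k+1})-\tfrac12\|z^k-z^{k+1}\|_2^2$, yields
$$A_{k+1}F(x^{k+1}) \le A_kF(x^k) + g_{k+1}(z^{k+1}) - \tfrac14\|z^k-z^{k+1}\|_2^2.$$
Since each $g_{k+1}$ is $1$-strongly convex, strong convexity at $z^{k+1}$ against $x^*$ gives $g_{k+1}(z^{k+1})\le g_{k+1}(x^*)-\la\nabla g_{k+1}(z^{k+1}),x^*-z^{k+1}\ra-\tfrac12 R_{k+1}^2$, while convexity of $f$ inside $g_{k+1}(x^*)$ gives $g_{k+1}(x^*)\le\tfrac12 R_k^2+\alpha_{k+1}F(x^*)$. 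The cross term is exactly where the inexactness enters: by Lemma~\ref{lem:inner_product_inexact} applied to the $(1+\alpha_{k+1}L_h)$-smooth $g_{k+1}$ with $\delta_{k+1}$-solution $z^{k+1}$, one has $\la\nabla g_{k+1}(z^{k+1}),z^{k+1}-x^*\ra\le\sqrt{2(1+\alpha_{k+1}L_h)\delta_{k+1}}\,R_{k+1}$.

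The main work, and the main obstacle, is to control this error term when the iterates are not assumed bounded. Here I would first use that $z^{k+1}$ is a $\delta_{k+1}$-solution and $g_{k+1}$ is $1$-strongly convex to obtain $\|z^{k+1}-\hat z^{k+1}\|_2\le\sqrt{2\delta_{k+1}}=\sqrt{2\delta}\,\|z^k-\hat z^{k+1}\|_2$ (using $\delta_{k+1}=\delta\|z^k-\hat z^{k+1}\|_2^2$), and then a self-referential triangle-inequality argument (legitimate because $\delta<\tfrac12$, so $1-\sqrt{2\delta}>0$) to get $\|z^k-\hat z^{k+1}\|_2\le (R_k+R_{k+1})/(1-\sqrt{2\delta})$. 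Feeding this back, together with $\alpha_{k+1}\le (k+2)/(2L)$ from Lemma~\ref{lem:alpha_estimate} (which bounds $1+\alpha_{k+1}L_h$ and hence supplies the $\sqrt{k+2}$ factor) and the monotone majorants $R_k,R_{k+1}\le\widetilde R_{k+1}$, bounds the per-step error by a constant multiple of $\hat\delta\sqrt{k+2}\,\widetilde R_{k+1}^2$; the slack term $-\tfrac14\|z^k-z^{k+1}\|_2^2$, or alternatively the companion self-bound $\|z^k-\hat z^{k+1}\|_2\le\|z^k-z^{k+1}\|_2/(1-\sqrt{2\delta})$ paired with Young's inequality, is what one uses to reach precisely $\hat\delta=\sqrt{(L_h+2L)\delta/((1-\sqrt{2\delta})^2L)}$. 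Tracking this constant exactly is the delicate bookkeeping, and it is precisely the device that replaces the boundedness hypothesis used in the closely related analysis of \cite{stonyakin2019gradient}.

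Finally, I would subtract $A_{k+1}F(x^*)=A_kF(x^*)+\alpha_{k+1}F(x^*)$ from both sides, discard the nonpositive $-\tfrac14\|z^k-z^{k+1}\|_2^2$, and sum over $k=0,\dots,N-1$. Because $A_0=0$ and the $\pm\tfrac12 R_k^2$ prox terms telescope, this gives exactly $A_N(F(x^N)-F(x^*))\le\tfrac12 R_0^2-\tfrac12 R_N^2+\hat\delta\sum_{k=0}^{N-1}\sqrt{k+2}\,\widetilde R_{k+1}^2$. I expect the genuinely nontrivial step to be the self-referential estimate on $\|z^k-\hat z^{k+1}\|_2$ and making the induced error compatible with the running-maximum structure, since everything else is the familiar accelerated telescoping.
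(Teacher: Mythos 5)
Your proposal is correct and follows essentially the same route as the paper's proof: the same use of Lemma~\ref{lem:inner_product_inexact} for the inexact optimality condition, the same self-referential triangle-inequality bound $\|z^k-\hat z^{k+1}\|_2\le 2\widetilde R_{k+1}/(1-\sqrt{2\delta})$, the bound $\alpha_{k+1}\le(k+2)/(2L)$ to produce the $\sqrt{k+2}$ factor, and the standard {\tt STM} telescoping. Your repackaging of the one-step estimate through $1$-strong convexity of $g_{k+1}$ at $x^*$ is an equivalent reorganization of the paper's inner-product manipulations, and the residual factor-of-$2$ ambiguity in $\hat\delta$ that you flag is already present between the paper's lemma statement and its own proof.
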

\begin{proof}
First of all, we prove by induction that $\tilde{x}^{k+1},x^k,z^k \in B_{\widetilde{R}_{k}}(x^*)$ for $k = 0,1,\ldots$. For $k=0$ this is true since $x^0 = z^0$, $\widetilde{R}_0 = R_0 = \|z^0 - x^*\|$ and $\tilde{x}^1 = \nicefrac{(A_0 x^0 + \alpha_{k+1}z^0)}{A_1} = z^0$, since $A_0 = \alpha_0 = 0$ and $A_1 = \alpha_1$. Next, assume that $\tilde{x}^{k+1},x^k,z^k \in B_{\widetilde{R}_{k}}(x^*)$ for some $k \ge 0$. By definition of $R_{k+1}$ and $\widetilde{R}_{k+1}$ we have $z^{k+1}\in B_{R_{k+1}}(x^*)\subseteq B_{\widetilde{R}_{k+1}}(x^*)$. Due to the assumption that $x^k \in B_{R_{k}}(x^*)\subseteq B_{R_{k+1}}(x^*)\subseteq B_{\widetilde{R}_{k+1}}(x^*)$ and convexity of the $B_{\widetilde{R}_{k+1}}(x^*)$ we get that $x^{k+1}\in B_{\widetilde{R}_{k+1}}(x^*)$ since it is a convex combination of $x^k$ and $z^{k+1}$, i.e.\ $x^{k+1} = \nicefrac{(A_kx^k+\alpha_{k+1}z^{k+1})}{A_{k+1}}$. Similarly, $\tilde{x}^{k+2}$ lies in the ball $B_{\widetilde{R}_{k+1}}(x^*)$ since it is a convex combination of $x^{k+1}$ and $z^{k+1}$, i.e.\ $x^{k+1} = \nicefrac{(A_kx^{k+1}+\alpha_{k+1}z^{k+1})}{A_{k+1}}$. That is, we proved that $\tilde{x}^{k+1},x^k,z^k \in B_{\widetilde{R}_{k}}(x^*)$ for all non-negative integers $k$.

Since $z^{k+1} = \argmin_{z\in\R^n}^{\delta_{k+1}}g_{k+1}(z)$ and $g_{k+1}(z)$ is $1$-strongly convex and $(\alpha_{k+1}L_h+1)$-smooth we can apply Lemma~\ref{lem:inner_product_inexact} and get
\begin{equation}
    \la\nabla g_{k+1}(z^{k+1}), z^{k+1} - x^*\ra \le \sqrt{2(\alpha_{k+1}L_h + 1)\delta\|z^k - \hat{z}^{k+1}\|_2^2}\cdot\|z^{k+1} - x^*\|_2.\label{eq:inexact_inner_product_1}
\end{equation}
From $1$-strong convexity of $g_{k+1}(z)$ we have
\begin{equation*}
    \|z^{k+1} - \hat{z}^{k+1}\|_2^2 \le 2(g_{k+1}(z^{k+1}) - g_{k+1}(\hat{z}^{k+1})) \le 2\delta\|z^k - \hat{z}^{k+1}\|_2^2.
\end{equation*}
Together with triangle inequality it implies that
\begin{equation*}
    \|z^k - \hat{z}^{k+1}\|_2 \le \|z^k - x^*\|_2 + \|x^* - z^{k+1}\|_2 + \|z^{k+1} - \hat{z}^{k+1}\|_2 \le 2\widetilde{R}_{k+1} + \sqrt{2\delta}\|z^k - \hat{z}^{k+1}\|_2,
\end{equation*}
and, after rearranging the terms,
\begin{equation}
    \|z^k - \hat{z}^{k+1}\|_2 \le \frac{2}{1 - \sqrt{2\delta}}\widetilde{R}_{k+1}.\label{eq:stm_ips_technical111}
\end{equation}
Applying inequality above and \eqref{eq:alpha_estimate} for the r.h.s. of \eqref{eq:inexact_inner_product_1} we obtain
\begin{equation}
    \la z^{k+1} - z^k + \alpha_{k+1}\nabla f(\tilde{x}^{k+1}) + \alpha_{k+1}\nabla h(z^{k+1}), z^{k+1} - x^* \ra \le \hat{\delta}\sqrt{k+2}\widetilde{R}_{k+1}^2, \label{eq:z_k_optimality_cond_inexact}
\end{equation}
where we used
\begin{eqnarray*}
    2\sqrt{\frac{2(\alpha_{k+1}L_h + 1)\delta}{(1-\sqrt{2\delta})^2}} &\overset{\eqref{eq:alpha_estimate}}{\le}& 2\sqrt{\frac{2\left((k+2)L_h + 2(k+2)L\right)\delta}{2(1-\sqrt{2\delta})^2L}} \le 2\sqrt{\frac{\left(L_h + 2L\right)\delta}{(1-\sqrt{2\delta})^2L}}\sqrt{k+2}
\end{eqnarray*}
and $\hat{\delta} \eqdef 2\sqrt{\frac{\left(L_h + 2L\right)\delta}{(1-\sqrt{2\delta})^2L}}$.
Using this we get
\begin{eqnarray*}
    \alpha_{k+1}\la\nabla f(\tilde{x}^{k+1}), z^k - x^* \ra &=& \alpha_{k+1}\la\nabla f(\tilde{x}^{k+1}), z^k - z^{k+1} \ra + \alpha_{k+1}\la\nabla f(\tilde{x}^{k+1}), z^{k+1} - x^* \ra\\
    &\overset{\eqref{eq:z_k_optimality_cond_inexact}}{\le}& \alpha_{k+1}\la\nabla f(\tilde{x}^{k+1}), z^k - z^{k+1} \ra + \la z^{k+1}-z^k, x^* - z^{k+1} \ra\\
    &&\quad + \alpha_{k+1}\la\nabla h(z^{k+1}), x^* - z^{k+1} \ra + \hat{\delta}\sqrt{k+2}\widetilde{R}_{k+1}^2.
\end{eqnarray*}
One can check via direct calculations that
\begin{equation*}
    \la a, b \ra = \frac{1}{2}\|a+b\|_2^2 - \frac{1}{2}\|a\|_2^2 - \frac{1}{2}\|b\|_2^2, \quad \forall\; a,b\in\R^n.
\end{equation*}
From the convexity of $h$
\begin{equation*}
    \la\nabla h(z^{k+1}), x^* - z^{k+1} \ra \le h(x^*) - h(z^{k+1}).
\end{equation*}
Combining previous three inequalities we obtain
\begin{eqnarray*}
    \alpha_{k+1}\la\nabla f(\tilde{x}^{k+1}), z^k - x^* \ra &\le& \alpha_{k+1}\la\nabla f(\tilde{x}^{k+1}), z^k - z^{k+1} \ra - \frac{1}{2}\|z^{k} - z^{k+1}\|_2^2 + \frac{1}{2}\|z^k - x^*\|_2^2\\
    &&\quad - \frac{1}{2}\|z^{k+1}-x^*\|_2^2 + \alpha_{k+1}\left(h(x^*) - h(z^{k+1})\right) + \hat{\delta}\sqrt{k+2}\widetilde{R}_{k+1}^2.
\end{eqnarray*}
By definition of $x^{k+1}$ and $\tilde{x}^{k+1}$
\begin{eqnarray*}
    x^{k+1} &=& \frac{A_k x^k + \alpha_{k+1}z^{k+1}}{A_{k+1}} = \frac{A_k x^k + \alpha_{k+1}z^{k}}{A_{k+1}} + \frac{\alpha_{k+1}}{A_{k+1}}\left(z^{k+1}-z^k\right)\\
    &=& \tilde{x}^{k+1} + \frac{\alpha_{k+1}}{A_{k+1}}\left(z^{k+1}-z^k\right).
\end{eqnarray*}
Together with the previous inequality and $A_{k+1} = 2L\alpha_{k+1}^2$, it implies
\begin{eqnarray}
    \alpha_{k+1}\la\nabla f(\tilde{x}^{k+1}), z^k - x^* \ra &\le& A_{k+1}\la\nabla f(\tilde{x}^{k+1}), \tilde{x}^{k+1} - x^{k+1} \ra\notag\\
    &&\quad- \frac{A_{k+1}^2}{2\alpha_{k+1}^2}\|\tilde{x}^{k+1} - x^{k+1}\|_2^2+ \frac{1}{2}\|z^k - x^*\|_2^2 - \frac{1}{2}\|z^{k+1}-x^*\|_2^2\notag\\
    &&\quad + \alpha_{k+1}\left(h(x^*) - h(z^{k+1})\right) + \hat{\delta}\sqrt{k+2}\widetilde{R}_{k+1}^2\notag\\
    &\le& A_{k+1}\left(\la\nabla f(\tilde{x}^{k+1}), \tilde{x}^{k+1} - x^{k+1} \ra - \frac{2L}{2}\|\tilde{x}^{k+1} - x^{k+1}\|_2^2\right)\notag\\
    &&\quad+ \frac{1}{2}\|z^k - x^*\|_2^2 - \frac{1}{2}\|z^{k+1}-x^*\|_2^2\notag\\
    &&\quad+ \alpha_{k+1}\left(h(x^*) - h(z^{k+1})\right) + \hat{\delta}\sqrt{k+2}\widetilde{R}_{k+1}^2\notag\\
    &\le& A_{k+1}(f(\tilde{x}^{k+1}) - f(x^{k+1})) + \frac{1}{2}\|z^k - x^*\|_2^2 - \frac{1}{2}\|z^{k+1}-x^*\|_2^2\notag\\
    &&\quad + \alpha_{k+1}\left(h(x^*) - h(z^{k+1})\right) + \hat{\delta}\sqrt{k+2}\widetilde{R}_{k+1}^2\label{eq:inner_prod_bound_1_inexact}
\end{eqnarray}
From the convexity of $f$ we get
\begin{eqnarray}
    \la\nabla f(\tilde{x}^{k+1}),x^{k} - \tilde{x}^{k+1} \ra &\le& f(x^k) - f(\tilde{x}^{k+1}). \label{eq:inner_prod_bound_2_inexact}
\end{eqnarray}
By definition of $\tilde{x}^{k+1}$ we have
\begin{equation}
    \alpha_{k+1}\left(\tilde{x}^{k+1} - z^k\right) = A_k\left(x^k - \tilde{x}^{k+1}\right)\label{eq:tilde_x^k+1-z^k_relation}.
\end{equation}
Putting all together, we get
\begin{eqnarray*}
    \alpha_{k+1}\la\nabla f(\tilde{x}^{k+1}),\tilde{x}^{k+1} - x^* \ra &=& \alpha_{k+1}\la\nabla f(\tilde{x}^{k+1}),\tilde{x}^{k+1} - z^k \ra\\
    &&\quad+ \alpha_{k+1}\la\nabla f(\tilde{x}^{k+1}),z^k - x^* \ra\\
    &\overset{\eqref{eq:tilde_x^k+1-z^k_relation}}{=}& A_k\la\nabla f(\tilde{x}^{k+1}),x^k - \tilde{x}^{k+1}\ra\\
    &&\quad+ \alpha_{k+1}\la\nabla f(\tilde{x}^{k+1}),z^k - x^* \ra\\ 
    &\overset{\eqref{eq:inner_prod_bound_1_inexact},\eqref{eq:inner_prod_bound_2_inexact}}{\le}& A_k\left(f(x^k) - f(\tilde{x}^{k+1})\right)\\
    &&\quad + A_{k+1}\left(f(\tilde{x}^{k+1}) - f(x^{k+1})\right)\\
    &&\quad+ \frac{1}{2}\|z^k - x^*\|_2^2 - \frac{1}{2}\|z^{k+1}-x^*\|_2^2\notag\\
    &&\quad+ \alpha_{k+1}\left(h(x^*) - h(z^{k+1})\right) + \hat{\delta}\sqrt{k+2}\widetilde{R}_{k+1}^2.
\end{eqnarray*}
Rearranging the terms and using $A_{k+1} = A_k + \alpha_{k+1}$, we obtain
\begin{eqnarray*}
    A_{k+1}f(x^{k+1}) - A_kf(x^{k}) &\le&  \alpha_{k+1}\left(f(\tilde{x}^{k+1}) + \la\nabla f(\tilde{x}^{k+1}), x^* - \tilde{x}^{k+1}\ra\right) + \frac{1}{2}\|z^k - x^*\|_2^2\\
    &&\quad - \frac{1}{2}\|z^{k+1}-x^*\|_2^2 + \alpha_{k+1}\left(h(x^*) - h(z^{k+1})\right) + \hat{\delta}\sqrt{k+2}\widetilde{R}_{k+1}^2,
\end{eqnarray*}
and after summing these inequalities for $k=0,\ldots,N-1$ and applying convexity of $f$, i.e.\ inequality $\la\nabla f(\tilde{x}^{k+1}), x^* - \tilde{x}^{k+1}\ra \le f(x^*) - f(\tilde{x}^{k+1})$, we get
\begin{eqnarray}
       A_Nf(x^N)  &\leq& \frac{1}{2}R_0^2 - \frac{1}{2}R_N^2 + A_Nf(x^*) + A_Nh(x^*) - \sum\limits_{k=0}^{N-1}\alpha_{k+1}h(z^{k+1}) +\hat{\delta}\sum\limits_{k=0}^{N-1}\sqrt{k+2}\widetilde{R}_{k+1}^2,\notag
\end{eqnarray}
where we used that $A_0 = 0$. Finally, convexity of $h$ and definition of $x^{k+1}$, i.e.\ $x^{k+1} = \nicefrac{(A_kx^k + \alpha_{k+1}z^{k+1})}{A_{k+1}}$, implies
\begin{equation*}
    A_{N}h(x^{N}) \le A_{N-1}h(x^{N-1}) + \alpha_{N}h(z^N).
\end{equation*}
Applying this inequality for $A_{N-1}h(x^{N-1}), A_{N-2}h(x^{N-2}),\ldots, A_{1}h(x^{1})$ in a sequence we get
\begin{equation*}
    A_Nh(x^N) \le A_0h(x^0) + \sum\limits_{k=0}^{N-1}\alpha_{k+1}h(z^{k+1}) = \sum\limits_{k=0}^{N-1}\alpha_{k+1}h(z^{k+1}),
\end{equation*}
which implies
\begin{eqnarray}
       A_N\left(F(x^N) - F(x^*)\right)  &\leq& \frac{1}{2}R_0^2 - \frac{1}{2}R_N^2 + \hat{\delta}\sum\limits_{k=0}^{N-1}\sqrt{k+2}\widetilde{R}_{k+1}^2,\notag
\end{eqnarray}
that finishes the proof.
\end{proof}

Below we state our main result of this section.
\begin{theorem}\label{thm:stp_ipo_convergence}
    Let $f(x)$ be convex and $L$-smooth, $h(x)$ be convex and $L_h$-smooth and $\delta \le \frac{1}{4}$. Assume that for a given number of iterations $N\ge 1$ the number $\hat{\delta} \eqdef 2\sqrt{\frac{\left(L_h + 2L\right)\delta}{(1-\sqrt{2\delta})^2L}}$ satisfies $\hat{\delta} \le \frac{C}{(N+1)^{\nicefrac{3}{2}}}$ with some positive constant $C \in (0,\nicefrac{1}{4})$. Then after $N$ iteration of Algorithm~\ref{Alg:STM_inexact} we have
    \begin{equation}
        F(x^N) - F(x^*) \le \frac{3R_0^2}{2A_N}.\label{eq:stp_ipo_convergence}
    \end{equation}
\end{theorem}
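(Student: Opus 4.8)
The plan is to combine the one-step descent estimate from Lemma~\ref{lem:stp_inexact_main_lemma} with the recurrence lemma (Lemma~\ref{lem:recurrence_lemma_inexact_case}) to control the accumulated error term, and then read off the convergence rate from the lower bound $A_N = \Omega(N^2/L)$ established in Lemma~\ref{lem:alpha_estimate}.

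First I would invoke Lemma~\ref{lem:stp_inexact_main_lemma}, which under the hypotheses $f$ convex and $L$-smooth, $h$ convex and $L_h$-smooth, $\delta < \tfrac12$ gives
\begin{equation*}
    A_N\left(F(x^N) - F(x^*)\right) \le \frac{1}{2}R_0^2 - \frac{1}{2}R_N^2 + \hat\delta\sum_{k=0}^{N-1}\sqrt{k+2}\,\widetilde{R}_{k+1}^2.
\end{equation*}
Since $R_N^2 \ge 0$, dropping that term only weakens the inequality. The main obstacle is that the right-hand side still contains the radii $\widetilde{R}_{k+1}$, which are a priori unbounded because we do not assume the iterates stay in a fixed ball; so the core of the argument is to show these radii do not blow up. To that end I would first extract a self-referential bound on $\widetilde{R}_l$ alone.

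The key step is to rework the same estimate but stop at an intermediate index $l \le N$ rather than $N$. Tracking the proof of Lemma~\ref{lem:stp_inexact_main_lemma}, the partial-sum version yields
\begin{equation*}
    \frac{1}{2}R_l^2 \le \frac{1}{2}R_0^2 + \hat\delta\sum_{k=0}^{l-1}\sqrt{k+2}\,\widetilde{R}_{k+1}^2,
\end{equation*}
after using $A_l\left(F(x^l) - F(x^*)\right) \ge 0$. Because $\widetilde{R}_{l} = \max\{\widetilde{R}_{l-1}, R_l\}$ is monotone and is attained either by some earlier radius or by $R_l$ itself, this translates (with $\hat\delta \le C/(N+1)^{3/2}$ and the identification $r_l = \widetilde{R}_l$, matching the bound $r_l^2 \le r_0^2 + \tfrac{2C}{(N+1)^{3/2}}\sum_{k=0}^{l-1}(k+2)^{1/2} r_{k+1}^2$) into exactly the hypothesis of Lemma~\ref{lem:recurrence_lemma_inexact_case}. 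Since $C \in (0,\tfrac14)$, that lemma delivers $\widetilde{R}_l \le 2R_0$ for all $l = 0,\ldots,N$.

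Finally I would substitute this uniform bound back into the first displayed inequality. Using $\widetilde{R}_{k+1}^2 \le 4R_0^2$ and $\sum_{k=0}^{N-1}\sqrt{k+2} \le (N+1)^{3/2}$ together with $\hat\delta \le C/(N+1)^{3/2} \le \tfrac14 (N+1)^{-3/2}$ gives
\begin{equation*}
    \hat\delta\sum_{k=0}^{N-1}\sqrt{k+2}\,\widetilde{R}_{k+1}^2 \le \frac{C}{(N+1)^{3/2}}\cdot (N+1)^{3/2}\cdot 4R_0^2 = 4CR_0^2 \le R_0^2,
\end{equation*}
so that $A_N\left(F(x^N) - F(x^*)\right) \le \tfrac12 R_0^2 + R_0^2 = \tfrac32 R_0^2$, which is the claimed bound \eqref{eq:stp_ipo_convergence} after dividing by $A_N$. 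The only subtlety to check carefully is the constant bookkeeping in the summation $\sum \sqrt{k+2}$ and the precise threshold on $C$; I expect these to be routine and to close with room to spare given $C < \tfrac14$.
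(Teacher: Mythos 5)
Your proposal is correct and follows essentially the same route as the paper's proof: invoke Lemma~\ref{lem:stp_inexact_main_lemma}, drop the nonnegative terms to get the partial-sum recurrence on the radii, pass from $R_l$ to $\widetilde{R}_l$ via the monotonicity/attainment argument so that Lemma~\ref{lem:recurrence_lemma_inexact_case} applies and yields $\widetilde{R}_l \le 2R_0$, and then substitute back using $\sum_{k=0}^{N-1}\sqrt{k+2}\le (N+1)^{\nicefrac{3}{2}}$ and $C<\nicefrac{1}{4}$. The constant bookkeeping closes exactly as you expect.
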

\begin{proof}
    Lemma~\ref{lem:stp_inexact_main_lemma} implies that
    \begin{equation}
        A_l\left(F(x^l) - F(x^*)\right) \le \frac{1}{2}R_0^2 - \frac{1}{2}R_l^2 + \hat\delta\sum\limits_{k=0}^{l-1}\sqrt{k+2}\widetilde{R}_{k+1}^2\label{eq:main_theorem_starting_ineq}
    \end{equation}
    for $l=1,2,\ldots,N$. Since $F(x^l) \ge F(x^*)$ for each $l$ and $\hat{\delta}\le \frac{C}{(N+1)^{\nicefrac{3}{2}}}$ we get the recurrence
    \begin{equation*}
        R_l^2 \le R_0^2 + \frac{2C}{(N+1)^{\nicefrac{3}{2}}}\sum\limits_{k=0}^{l-1}(k+2)^{\nicefrac{1}{2}}\widetilde{R}_{k+1}^2,\quad \forall l=1,\ldots,N.
    \end{equation*}
    Note that the r.h.s. of the previous inequality is non-decreasing function of $l$. Let us define $\hat{l}$ as the largest integer such that $\hat{l}\le l$ and $\widetilde{R}_{\hat{l}} = R_{\hat{l}}$. Then $R_{\hat{l}} = \widetilde{R}_{\hat{l}} = \widetilde{R}_{\hat{l}+1} = \ldots = \widetilde{R}_{l}$ and, as a consequence,
    \begin{equation}
        \widetilde{R}_l^2 \le \widetilde{R}_0^2 + \frac{2C}{(N+1)^{\nicefrac{3}{2}}}\sum\limits_{k=0}^{l-1}(k+2)^{\nicefrac{1}{2}}\widetilde{R}_{k+1}^2,\quad \forall l=1,\ldots,N.\label{eq:tilde_r_l_rec}
    \end{equation}
    Using Lemma~\ref{lem:recurrence_lemma_inexact_case} we get that $\widetilde{R}_l \le 2R_0^2$ for all $l=1,\ldots, N$. We plug this inequality together with $\delta \le \frac{C}{(N+1)^{\nicefrac{3}{2}}} \le \frac{1}{4(N+1)^{\nicefrac{3}{2}}}$ and $R_N^2 \ge 0$ in \eqref{eq:main_theorem_starting_ineq} and get
    \begin{eqnarray*}
           A_N(F(x^N) - F(x^*)) &\le& \frac{1}{2}R_0^2 + \frac{4R_0^2}{4(N+1)^{\nicefrac{3}{2}}}\sum\limits_{k=0}^{N-1}(k+2)^{\nicefrac{1}{2}}\\
           &\le& \frac{3}{2}R_0^2,
    \end{eqnarray*}
    which concludes the proof.
\end{proof}

\begin{corollary}\label{cor:stp_inexact_main_coro}
    Under assumptions of Theorem~\ref{thm:stp_ipo_convergence} we get that for an arbitrary $\e > 0$ after 
    \begin{equation}
        N = O\left(\sqrt{\frac{LR_0^2}{\e}}\right)\label{eq:stp_ipo_number_of_iterations}
    \end{equation}
    iterations of Algorithm~\ref{Alg:STM_inexact} we have $F(x^N) - F(x^*) \le \e$. Moreover, we get that $\delta$ should satisfy
    \begin{equation}
        \delta = O\left(\frac{L}{(L_h+L)N^3}\right).\label{eq:stp_ipo_accuracy_of_subproblem}
    \end{equation}
\end{corollary}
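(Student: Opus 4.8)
The plan is to read off both claims directly from Theorem~\ref{thm:stp_ipo_convergence} together with the growth estimate for $A_N$ recorded in Lemma~\ref{lem:alpha_estimate}; the corollary is essentially a change of variables from ``rate in $A_N$'' to ``iteration count in $\e$'', plus an inversion of the definition of $\hat\delta$. First I would invoke the conclusion \eqref{eq:stp_ipo_convergence}, which gives $F(x^N) - F(x^*) \le \frac{3R_0^2}{2A_N}$, provided the hypothesis $\hat\delta \le \frac{C}{(N+1)^{3/2}}$ holds for some $C \in (0,\nicefrac14)$. Everything else follows by converting this rate into an iteration count and by translating the hypothesis on $\hat\delta$ into an explicit admissible magnitude of $\delta$.

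For the iteration complexity \eqref{eq:stp_ipo_number_of_iterations}, I would substitute the lower bound $A_N = \Omega(\nicefrac{N^2}{L})$ from Lemma~\ref{lem:alpha_estimate}, which applies because the sequence $\{\alpha_{k+1}\}$ used in {\tt STM{\_}IPS} obeys $A_{k+1} = 2L\alpha_{k+1}^2$, i.e.\ exactly the recurrence \eqref{eq:alpha_k_def}. This yields $F(x^N) - F(x^*) = O(\nicefrac{LR_0^2}{N^2})$. Setting the right-hand side at most $\e$ and solving for $N$ gives $N = O\left(\sqrt{\nicefrac{LR_0^2}{\e}}\right)$, which is \eqref{eq:stp_ipo_number_of_iterations}.

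For the accuracy requirement \eqref{eq:stp_ipo_accuracy_of_subproblem}, I would fix any constant $C \in (0,\nicefrac14)$ and impose the theorem's hypothesis $\hat\delta \le \frac{C}{(N+1)^{3/2}}$, where $\hat\delta = 2\sqrt{\frac{(L_h+2L)\delta}{(1-\sqrt{2\delta})^2 L}}$. Since the standing assumption $\delta \le \nicefrac14$ forces $1 - \sqrt{2\delta} \ge 1 - \nicefrac{1}{\sqrt{2}}$, a positive absolute constant, the factor $(1-\sqrt{2\delta})^{-2}$ is $\Theta(1)$; squaring the inequality and solving for $\delta$ then gives $\delta = O\left(\frac{L}{(L_h+L)(N+1)^3}\right) = O\left(\frac{L}{(L_h+L)N^3}\right)$, which is \eqref{eq:stp_ipo_accuracy_of_subproblem}.

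There is essentially no hard step: the corollary is bookkeeping layered on top of the already-established Theorem~\ref{thm:stp_ipo_convergence}. The only point requiring care is keeping the denominator $(1-\sqrt{2\delta})^2$ bounded away from zero so that inverting the definition of $\hat\delta$ to isolate $\delta$ is legitimate; this is precisely what $\delta \le \nicefrac14$ guarantees, and it is also why one must take the constant $C$ strictly below $\nicefrac14$, as demanded by the theorem.
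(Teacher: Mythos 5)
Your proposal is correct and follows essentially the same route as the paper: the iteration count comes from combining the bound $F(x^N)-F(x^*)\le \nicefrac{3R_0^2}{2A_N}$ of Theorem~\ref{thm:stp_ipo_convergence} with $A_N=\Omega(\nicefrac{N^2}{L})$ from Lemma~\ref{lem:alpha_estimate}, and the bound on $\delta$ comes from squaring the hypothesis $\hat{\delta}\le \nicefrac{C}{(N+1)^{\nicefrac{3}{2}}}$ and solving for $\delta$ (the paper simply bounds $(1-\sqrt{2\delta})^2\le 1$ in the resulting numerator, which is all that is needed for the upper bound, rather than bounding it away from zero).
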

\begin{proof}
    The first part of the corollary follows from \eqref{eq:stp_ipo_convergence} and Lemma~\ref{lem:alpha_estimate}. Relation \eqref{eq:stp_ipo_accuracy_of_subproblem} follows from the definition of $\hat{\delta}$ and $\hat{\delta} \le \frac{C}{(N+1)^{\nicefrac{3}{2}}}$. Indeed, since $\hat{\delta} \eqdef 2\sqrt{\frac{\left(L_h + 2L\right)\delta}{(1-\sqrt{2\delta})^2L}}$ and $C\le \frac{1}{4}$ we get that
    \begin{eqnarray*}
        \delta \le \frac{C^2(1-\sqrt{2\delta})^2L}{4(L_h+2L)(N+1)^3} \le \frac{L}{64(L_h+2L)N^3} \le \frac{1}{64}\frac{L}{(L_h + L)N^3}.
    \end{eqnarray*}
\end{proof}
That is, if the auxiliary problem $g_{k+1}(z) \to \min_{z\in\R^n}$ is solved with good enough accuracy, then {\tt STM{\_}IPS} requires the same number of iterations as {\tt STM} to achieve $F(x^N) - \min_{x\in\R^n}F(x) \le \e$.

Finally, we notice that one can set $\delta_{k+1}$ in Algorithm~\ref{Alg:STM_inexact} in a different way in order to get the same convergence guarantees, e.g.\ one can use $\delta_{k+1} = \delta \widetilde{R}_{k+1}^2$ and the order of $\delta$ given by \eqref{eq:stp_ipo_accuracy_of_subproblem} will be the same. In this case inequalities \eqref{eq:inexact_inner_product_1} and \eqref{eq:z_k_optimality_cond_inexact} transform to 
\begin{equation*}
    \la\nabla g_{k+1}(z^{k+1}), z^{k+1} - x^*\ra \le \sqrt{2(\alpha_{k+1}L_h + 1)\delta\widetilde{R}_{k+1}^2}\cdot\|z^{k+1} - x^*\|_2
\end{equation*}
and 
\begin{equation*}
    \la z^{k+1} - z^k + \alpha_{k+1}\nabla f(\tilde{x}^{k+1}) + \alpha_{k+1}\nabla h(z^{k+1}), z^{k+1} - x^* \ra \le \hat{\delta}\sqrt{k+2}\widetilde{R}_{k+1}^2,
\end{equation*}
respectively, where $\hat{\delta} \eqdef 2\sqrt{\frac{\left(L_h + 2L\right)\delta}{L}}$. Then the remaining part of the proof remains the same and gives the same result up to small changes in the numerical constants.

\section{Missing Proofs from Section~\ref{sec:primal}}

\subsection{Proof of Theorem~\ref{lem:regularized-primal_connection}}
By definition of $F$
    \begin{eqnarray*}
    F(x^N)-\min_{x\in Q}F(x) &=& f(x^N) + \frac{R_y^2}{\e}\|Ax^N\|_2^2 - \min\limits_{x\in Q}\left\{f(x) + \frac{R_y^2}{\e}\|Ax\|_2^2\right\}\\
    &\ge& f(x^N) + \frac{R_y^2}{\e}\|Ax^N\|_2^2 - \min\limits_{Ax = 0, x\in Q}\left\{f(x) + \frac{R_y^2}{\e}\|Ax\|_2^2\right\}\\
    &=& f(x^N) - \min_{Ax = 0, x\in Q}f(x) + \frac{R_y^2}{\e}\|Ax^N\|_2^2,
\end{eqnarray*}
    which implies 
    \begin{equation}
        f(x^N) - f(x^*) + \frac{R_y^2}{\e}\|Ax^N\|_2^2 \overset{\eqref{eq:F(x^N)_guarantee}}{\le} \e,\label{eq:convoluted_consequence_F(x^N)_guarantee}
    \end{equation}
    where $x^*$ is an arbitrary solution of \eqref{PP}. Taking inequality $\|Ax^N\|_2^2 \ge 0$ into account we get the first part of \eqref{eq:F(x^N)_guarantee_consequence}. From Cauchy-Schwarz inequality we obtain
    \begin{equation*}
        - R_y\|Ax^N\|_2 \le -\|y^*\|_2\cdot \|Ax^N\|_2 \le \la y^*, Ax^N\ra \overset{\eqref{eq:duality_basic_relations}}{\le} f(x^N) - f(x^*).
    \end{equation*}
    Together with \eqref{eq:convoluted_consequence_F(x^N)_guarantee} it gives us quadratic inequality on $R_y\|Ax^N\|_2$:
    $$
    -R_y\|Ax^N\|_2 + \frac{R_y^2}{\e}\|Ax^N\|_2^2 \le \e.
    $$
Therefore, $R_y \|Ax^N\|_2$ should be less then the greatest root of the corresponding quadratic equation, i.e.\ $R_y \|Ax^N\|_2 \le \frac{1+\sqrt{5}}{2}\e < 2\e$.

\subsection{Proof of Theorem~\ref{thm:primal_convex_case}}
Note that $h(x)$ is convex and $L_h$-smooth in $\R^n$ with $L_h = \nicefrac{2R_y^2\lambda_{\max}(A^\top A)}{\e}$ since $\nabla h(x) = \nicefrac{2R_y^2A^\top Ax}{\e}$ and
\begin{eqnarray*}
    \|\nabla h(x) - \nabla h(y)\|_2 &=& \frac{2R_y^2}{\e}\|A^\top A (x - y)\|_2 \le \frac{2R_y^2}{\e}\|A^\top A\|_2\cdot\|x - y\|_2\\
    &\le& \frac{2R_y^2\lambda_{\max}(A^\top A)}{\e}\|x - y\|_2
\end{eqnarray*}
for all $x,y\in\R^n$. We can apply {\tt STM} with inexact proximal step ({\tt STP{\_}IPS}) which is presented in Section~\ref{sec:stp_ips} as Algorithm~\ref{Alg:STM_inexact} to solve problem \eqref{penalty}. Corollary~\ref{cor:stp_inexact_main_coro} (see Section~\ref{sec:stp_ips} in the Appendix; see also the text after the corollary) states that in order to get such $x^N$ that satisfy \eqref{eq:F(x^N)_guarantee} we should run {\tt STP{\_}IPS} for $N = O\left(\sqrt{\nicefrac{LR^2}{\e}}\right)$ iterations with $\delta = O\left(\nicefrac{\e^{\nicefrac{3}{2}}}{((L_h+L)\sqrt{L}R^3)}\right)$, where $R = \|x^0 - x^*\|_2$, $x^*$ is the closest to $x^0$ minimizer of $F$ and $\delta$ is such that for all $k = 0,\ldots, N-1$ the auxiliary problem $g_{k+1}(z) \to \min_{z\in\R^n}$ for finding $z^{k+1}$ is solved with accuracy $g_{k+1}(z^{k+1}) - g_{k+1}(\hat{z}^{k+1}) \le \delta \|z^{k} - \hat{z}^{k+1}\|_2^2$ where $g_{k+1}(z)$ is defined as (see also \eqref{eq:g_k+1_sequence})
$$
g_{k+1}(z) = \frac{1}{2}\|z^k - z\|_2^2 + \alpha_{k+1}\left(f(\tilde{x}^{k+1}) + \la\nabla f(\tilde{x}^{k+1}), z - \tilde{x}^{k+1}\ra + h(z)\right)
$$
for $k=0,1,\ldots$ and $\hat{z}^{k+1} = \argmin_{z\in\R^n}g_{k+1}(z)$. That is, if the auxiliary problem is solved accurate enough at each iteration, then number of iterations, i.e.\ number of calculations $\nabla f(x)$, corresponds to the optimal bound presented in Table~\ref{tab:deterministic_bounds}.

However, in order to solve the auxiliary problem $\min_{z\in\R^n}g_{k+1}(z)$ one should run another optimization method as a subroutine, e.g.\ {\tt STM}. Note that $\text{Im}A = \text{Im}A^\top = \left(\text{Ker}A\right)^\perp$ and if the starting point for this problem is chosen as $z^k - \alpha_{k+1}\nabla f(\tx^{k+1})$ then the iterates of {\tt STM} applied to solve problem $\min_{z\in\R^n}g_{k+1}(z)$ lie in $z^k - \alpha_{k+1}\nabla f(\tx^{k+1}) + \left(\text{Ker}A\right)^\perp$ since $\nabla g_{k+1}(z) \in \text{Im}(A)$ for all $z \in z^k - \alpha_{k+1}\nabla f(\tx^{k+1}) + \left(\text{Ker}A\right)^\perp$ (one can prove it using simple induction, see Theorem~\ref{thm:sstm_str_cvx_points} for the details of the proof of the similar result). Therefore, the auxiliary problem can be considered as a minimization of $(1+\nicefrac{2\alpha_{k+1}R_y^2\lambda_{\min}^+(A^\top A)}{\e})$-strongly convex on $z^k - \alpha_{k+1}\nabla f(\tx^{k+1}) + \left(\text{Ker}A\right)^\perp$ and $(1+\nicefrac{2\alpha_{k+1}R_y^2\lambda_{\max}(A^\top A)}{\e})$-smooth on $\R^n$ function. Then, one can estimate the overall complexity of the auxiliary problem using the condition number of $g_{k+1}(z)$ on $z^k - \alpha_{k+1}\nabla f(\tx^{k+1}) + \left(\text{Ker}A\right)^\perp$:
\begin{equation}
    \frac{1+\nicefrac{2\alpha_{k+1}R_y^2\lambda_{\max}(A^\top A)}{\e}}{1+\nicefrac{2\alpha_{k+1}R_y^2\lambda_{\min}^+(A^\top A)}{\e}} \le \frac{\lambda_{\max}(A^\top A)}{\lambda_{\min}^+(A^\top A)} \eqdef \chi(A^\top A).\label{eq:primal_complexity_of_subproblem}
\end{equation}
Assume that $z^{k+1}$ is such that $g_{k+1}(z^{k+1}) - g_{k+1}(\hat z^{k+1}) \le \tilde{\delta}\|z^k - \alpha_{k+1}\nabla f(\tx^{k+1}) - \hat z^{k+1}\|_2$. Then
\begin{eqnarray*}
    \|z^k - \alpha_{k+1}\nabla f(\tx^{k+1}) - \hat z^{k+1}\|_2 &\le& \|z^k - \hat z^{k+1}\|_2 + \alpha_{k+1}\|\nabla f(\tx^{k+1})\|_2\\
    &\le& \|z^k - \hat z^{k+1}\|_2 + \alpha_{k+1}\|\nabla f(\tx^{k+1}) - \nabla f(x^*)\|_2 + \alpha_{k+1}\|\nabla f(x^*)\|_2\\
    &\le& \|z^k - \hat z^{k+1}\|_2 + \alpha_{k+1}L\|\tx^{k+1} - x^*\|_2 + \alpha_{k+1}\|\nabla f(x^*)\|_2\\
    &\overset{\eqref{eq:alpha_estimate}}{\le}& \|z^k - \hat z^{k+1}\|_2 + \frac{k+2}{2}\widetilde{R}_{k+1} + \frac{k+2}{2L}\|\nabla f(x^*)\|_2
\end{eqnarray*}
and using the similar steps as in the proof of inequality \eqref{eq:stm_ips_technical111} we get
\begin{equation*}
    \|z^k - \hat z^{k+1}\|_2 \le \frac{\left(2 + \frac{(k+2)\sqrt{2\tilde\delta}}{2}\right)\widetilde{R}_{k+1}}{1-\sqrt{2\tilde\delta}} + \frac{(k+2)\|\nabla f(x^*)\|_2}{2L\left(1-\sqrt{2\tilde\delta}\right)}.
\end{equation*}
Combining previous two inequalities we conclude that
\begin{eqnarray*}
    \|z^k - \alpha_{k+1}\nabla f(\tx^{k+1}) - \hat z^{k+1}\|_2 &\le& \left(\frac{2 + \frac{(k+2)\sqrt{2\tilde\delta}}{2}}{1-\sqrt{2\tilde\delta}} + \frac{k+2}{2}\right)\widetilde{R}_{k+1}\\
    &&\quad+ \frac{k+2}{2L}\left(1+\frac{1}{1-\sqrt{2\tilde\delta}}\right)\|\nabla f(x^*)\|_2.
\end{eqnarray*}
It means that to achieve $g_{k+1}(z^{k+1}) - g_{k+1}(\hat{z}^{k+1}) \le \delta \widetilde{R}_{k+1}^2$ with $\delta = O\left(\nicefrac{\e^{\nicefrac{3}{2}}}{((L_h+L)\sqrt{L}R^3)}\right)$ one can run {\tt STM} to solve the auxiliary problem $g_{k+1}(z) \to \min_{z\in\R^n}$ for $T$ iterations with the starting point $z^k - \alpha_{k+1}\nabla f(\tx^{k+1})$ where
\begin{eqnarray}
    T &=& O\left(\sqrt{\chi(A^\top A)}\ln\left(\frac{L_{g_{N}}L^{\nicefrac{3}{2}}(\nicefrac{R_y^2\lambda_{\max}(A^\top A)}{\e} + L)R^3\left(R^2 + \nicefrac{\|\nabla f(x^*)\|_2^2}{L^2}\right)}{\e^{\nicefrac{5}{2}}}\right)\right), \notag\\
    L_{g_N} &=& 1 + \frac{2\alpha_{k+1}R_y^2\lambda_{\max}(A^\top A)}{\e} \overset{\eqref{eq:stp_ipo_number_of_iterations}+\eqref{eq:alpha_estimate}}{=} O\left(\frac{R_y^2R\lambda_{\max}(A^\top A)}{\sqrt{L}\e^{\nicefrac{3}{2}}}\right)\notag
\end{eqnarray}
or, equivalently,
\begin{equation*}
    T = O\left(\sqrt{\chi(A^\top A)}\ln\left(\frac{\lambda_{\max}(A^\top A)L(\nicefrac{R_y^2\lambda_{\max}(A^\top A)}{\e} + L)R_y^2R^4\left(R^2 + \nicefrac{\|\nabla f(x^*)\|_2^2}{L^2}\right)}{\e^4}\right)\right).
\end{equation*}

\section{Missing Lemmas and Proofs from Section~\ref{sec:conv_dual}}
\subsection{Lemmas}
The following lemma is rather technical and provides useful inequalities that show how biasedness of $\tnabla\Psi(y,\Bxi^{k})$ interacts with convexity and $L_\psi$-smoothness of $\psi$.

\begin{lemma}\label{lem:inexact_convexity_and_L-smoothness}
    Assume that function $\psi(y)$ is convex and $L_\psi$-smooth on $\R^n$. Then for all $x,y \in \R^n$
    \begin{eqnarray}
        \psi(y) &\ge& \psi(x)+\left\langle\EE\left[\tnabla \Psi(x,\Bxi^k)\right],y-x\right\rangle - \delta\|y-x\|_2,\label{eq:inexact_convexity} \\
        \psi(y) &\le& \psi(x)+\left\langle\EE\left[\tnabla \Psi(x,\Bxi^k)\right],y-x\right\rangle + L_\psi\|y-x\|_2^2 + \frac{\delta^2}{2L_\psi}.\label{eq:inexact_L-smoothness}
    \end{eqnarray}
\end{lemma}
\begin{proof}
    From the convexity of $\psi$ we have
    \begin{eqnarray*}
        \psi(x) - \psi(y) &\le& \la\nabla \psi(x), x-y \ra = \left\la\EE\left[\tnabla \Psi(x,\Bxi^k)\right], x-y \right\ra + \left\la \nabla \psi(x) - \EE\left[\tnabla \Psi(x,\Bxi^k)\right], x-y \right\ra\\
        &\le& \left\la\EE\left[\tnabla \Psi(x,\Bxi^k)\right], x-y \right\ra + \left\| \nabla \psi(x) - \EE\left[\tnabla \Psi(x,\Bxi^k)\right]\right\|_2\cdot \|x-y\|_2\\ &\overset{\eqref{eq:bias_batched_stoch_grad}}{\le}& \left\la\EE\left[\tnabla \Psi(x,\Bxi^k)\right], x-y \right\ra + \delta\|x-y\|_2,
    \end{eqnarray*}
    which proves the inequality \eqref{eq:inexact_convexity}. Applying $L$-smoothness of $\psi(x)$ we get
    \begin{eqnarray*}
        \psi(y) &\le& \psi(x) + \la\nabla \psi(x), y-x \ra + \frac{L}{2}\|y-x\|_2^2 \\
        &=& \psi(x) + \left\la\EE\left[\tnabla \Psi(x,\Bxi^k)\right], y-x \right\ra + \left\la\nabla \psi(x) - \EE\left[\tnabla \Psi(x,\Bxi^k)\right], y-x \right\ra + \frac{L}{2}\|y-x\|_2^2.
    \end{eqnarray*}
    Due to Fenchel-Young inequality $\la a, b \ra \le \frac{1}{2\lambda}\|a\|_2^2 + \frac{\lambda}{2}\|b\|_2^2$, $a,b\in\R^n$, $\lambda > 0$,
    \begin{eqnarray*}
        \left\la\nabla \psi(x) - \EE\left[\tnabla \Psi(x,\Bxi^k)\right], y-x \right\ra &\le& \frac{1}{2L}\left\|\nabla \psi(x) - \EE\left[\tnabla \Psi(x,\Bxi^k)\right]\right\|_2^2 + \frac{L}{2}\|y-x\|_2^2\\
        &\overset{\eqref{eq:bias_batched_stoch_grad}}{\le}& \frac{\delta^2}{2L} + \frac{L}{2}\|y-x\|_2^2.
    \end{eqnarray*}
    Combining these two inequalities we get \eqref{eq:inexact_L-smoothness}.
\end{proof}

Next, we will use the following notation: $\EE_{k}[\cdot] = \EE_{\Bxi^{k+1}}\left[\cdot\right]$ which denotes conditional mathematical expectation with respect to all randomness that comes from $\Bxi^{k+1}$.

\begin{lemma}[see also Theorem~1 from \cite{dvurechenskii2018decentralize}]\label{lem:psi_y_N_bound_biased_case}
    For each iteration of Algorithm~\ref{Alg:PDSTM} we have
    \begin{eqnarray}
       A_N\psi(y^N)  &\leq& \frac{1}{2}\|z - z^0\|_2^2 - \frac{1}{2}\|z - z^N\|_2^2\notag\\
       &&\quad+ \sum_{k=0}^{N-1} \alpha_{k+1} \left( \psi(\tilde{y}^{k+1}) + \la \tnabla \Psi(\tilde{y}^{k+1}, \Bxi^{k+1}), z - \tilde{y}^{k+1}\ra \right)  \notag \\ 
 &&\quad +  \sum_{k=0}^{N-1}A_{k}\left\la \tnabla \Psi(\tilde{y}^{k+1}, \Bxi^{k+1})  - \EE_k\left[\tnabla\Psi(\tilde{y}^{k+1},\Bxi^{k+1})\right] , y^k- \tilde{y}^{k+1} \right\ra \notag\\
 &&\quad +  \sum_{k=0}^{N-1}\frac{A_{k+1}}{2\tL}\left\|\EE_k\left[\tnabla\Psi(\tilde{y}^{k+1},\Bxi^{k+1})\right] -\tnabla \Psi(\tilde{y}^{k+1}, \Bxi^{k+1})\right\|_{2}^2\notag\\
 &&\quad + \delta\sum\limits_{k=0}^{N-1}A_k\|y^k-\tilde{y}^{k+1}\|_2 + \delta^2\sum\limits_{k=0}^{N-1}\frac{A_{k+1}}{\tL},\label{eq:psi_y_N_bound}
 \end{eqnarray}
 for arbitrary $z\in\R^n$.
\end{lemma}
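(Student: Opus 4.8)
The plan is to establish the bound by a single-iteration descent estimate which, after telescoping over $k=0,\dots,N-1$, collapses to the claimed inequality; the base case is automatic since $\alpha_0 = A_0 = 0$. Throughout I abbreviate $g^{k+1} \eqdef \tnabla\Psi(\tilde{y}^{k+1},\Bxi^{k+1})$ and $\eta^{k+1} \eqdef g^{k+1} - \EE_k[g^{k+1}]$. First I would apply the inexact $L_\psi$-smoothness inequality \eqref{eq:inexact_L-smoothness} at the point $\tilde{y}^{k+1}$ to control $\psi(y^{k+1})$, producing the bias remainder $\nicefrac{A_{k+1}\delta^2}{\tL}$ (using $\tL = 2L_\psi$), and then split $A_{k+1} = A_k + \alpha_{k+1}$ and bound $A_k\psi(\tilde{y}^{k+1})$ from above through the inexact convexity inequality \eqref{eq:inexact_convexity} comparing $\tilde{y}^{k+1}$ with $y^k$, which is precisely where the term $\delta A_k\|y^k - \tilde{y}^{k+1}\|_2$ appears.

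The next step is purely the ``similar triangles'' geometry. From the definitions of $\tilde{y}^{k+1}$ and $y^{k+1}$ one reads off the two identities $y^{k+1} - \tilde{y}^{k+1} = \nicefrac{\alpha_{k+1}}{A_{k+1}}(z^{k+1} - z^k)$ and $A_k(\tilde{y}^{k+1} - y^k) = \alpha_{k+1}(z^k - \tilde{y}^{k+1})$, which convert every inner product against $\EE_k[g^{k+1}]$ into an inner product of $g^{k+1}$ against differences of the $z$-iterates. I would then write $\EE_k[g^{k+1}] = g^{k+1} - \eta^{k+1}$ everywhere; the $\eta^{k+1}$ contribution coming from the convexity step is exactly $A_k\la\eta^{k+1}, y^k - \tilde{y}^{k+1}\ra$, i.e.\ the martingale term in the statement, while the two surviving inner products in $g^{k+1}$ combine telescopically into $\alpha_{k+1}\la g^{k+1}, z^{k+1} - \tilde{y}^{k+1}\ra$.

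Finally I would peel off the target term $\alpha_{k+1}\la g^{k+1}, z - \tilde{y}^{k+1}\ra$ for an arbitrary $z$ and apply the three-point identity to $\alpha_{k+1}\la g^{k+1}, z^{k+1} - z\ra = \la z^k - z^{k+1}, z^{k+1} - z\ra$ (using $z^{k+1} - z^k = -\alpha_{k+1}g^{k+1}$) to generate the telescoping pair $\nicefrac{1}{2}\|z - z^k\|_2^2 - \nicefrac{1}{2}\|z - z^{k+1}\|_2^2$ together with $-\nicefrac{1}{2}\|z^{k+1} - z^k\|_2^2$. The remaining quadratic and noise-cross terms are this $-\nicefrac{1}{2}\|z^{k+1}-z^k\|_2^2$, the smoothness residual $A_{k+1}L_\psi\|y^{k+1}-\tilde{y}^{k+1}\|_2^2$, and the cross term $-\alpha_{k+1}\la\eta^{k+1}, z^{k+1} - z^k\ra$; here I would use the step-size relation $A_{k+1} = 2\tL\alpha_{k+1}^2$ to evaluate the first two as $-\nicefrac{\alpha_{k+1}^2}{4}\|g^{k+1}\|_2^2$ and rewrite the cross term as $\alpha_{k+1}^2\la\eta^{k+1}, g^{k+1}\ra$.

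The main obstacle --- and the one step that is not bookkeeping --- is this last combination: I expect to complete the square via $\alpha_{k+1}^2\la\eta^{k+1}, g^{k+1}\ra - \nicefrac{\alpha_{k+1}^2}{4}\|g^{k+1}\|_2^2 = -\nicefrac{\alpha_{k+1}^2}{4}\|g^{k+1} - 2\eta^{k+1}\|_2^2 + \alpha_{k+1}^2\|\eta^{k+1}\|_2^2 \le \alpha_{k+1}^2\|\eta^{k+1}\|_2^2$, and then recognize $\alpha_{k+1}^2 = \nicefrac{A_{k+1}}{2\tL}$ to obtain the variance term $\nicefrac{A_{k+1}}{2\tL}\|g^{k+1} - \EE_k[g^{k+1}]\|_2^2$ exactly as stated. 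Summing the resulting per-iteration inequality over $k$, telescoping $A_k\psi(y^k)$ and the squared distances, and discarding the leftover nonpositive $-\nicefrac{\alpha_{k+1}^2}{4}\|g^{k+1} - 2\eta^{k+1}\|_2^2$ then yields \eqref{eq:psi_y_N_bound}.
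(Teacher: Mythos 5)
Your proposal is correct and follows essentially the same route as the paper's proof: the optimality/explicit update of $z^{k+1}$ combined with the three-point identity, the similar-triangles relations $y^{k+1}-\tilde{y}^{k+1}=\nicefrac{\alpha_{k+1}}{A_{k+1}}(z^{k+1}-z^k)$ and $A_k(y^k-\tilde{y}^{k+1})=\alpha_{k+1}(\tilde{y}^{k+1}-z^k)$, Lemma~\ref{lem:inexact_convexity_and_L-smoothness} for the two $\delta$-terms, and a per-iteration telescoping sum. The only (immaterial) difference is that you extract the variance term $\nicefrac{A_{k+1}}{2\tL}\|\eta^{k+1}\|_2^2$ by completing the square against $-\nicefrac{\alpha_{k+1}^2}{4}\|g^{k+1}\|_2^2$, whereas the paper applies Fenchel--Young to $\la\eta^{k+1},\tilde{y}^{k+1}-y^{k+1}\ra$; these give identical constants.
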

\begin{proof}
    The proof of this lemma follows a similar way as in the proof of Theorem~1 from \cite{dvurechenskii2018decentralize}. We can rewrite the update rule for $z^k$ in the equivalent way:
\begin{equation*}
    z^{k+1} = \argmin\limits_{z\in\R^n}\left\{\alpha_{k+1}\la\tnabla\Psi(\tilde{y}^{k+1},\Bxi^{k+1}), z - \tilde{y}^{k+1}\ra + \frac{1}{2}\|z-z^k\|_2^2\right\}.
\end{equation*}
From the optimality condition we have that for all $z\in\R^n$
\begin{equation}
    \la z^{k+1} - z^k + \alpha_{k+1}\tnabla\Psi(\tilde{y}^{k+1},\Bxi^{k+1}), z - z^{k+1} \ra \ge 0.\label{eq:z_k_optimality_cond}
\end{equation}
Using this we get
\begin{eqnarray*}
    \alpha_{k+1}\la\tnabla\Psi(\tilde{y}^{k+1},\Bxi^{k+1}), z^k - z \ra&\\
    &\hspace{-2cm}= \alpha_{k+1}\la\tnabla\Psi(\tilde{y}^{k+1},\Bxi^{k+1}), z^k - z^{k+1} \ra + \alpha_{k+1}\la\tnabla\Psi(\tilde{y}^{k+1},\Bxi^{k+1}), z^{k+1} - z \ra\\
    &\hspace{-3cm}\overset{\eqref{eq:z_k_optimality_cond}}{\le} \alpha_{k+1}\la\tnabla\Psi(\tilde{y}^{k+1},\Bxi^{k+1}), z^k - z^{k+1} \ra + \la z^{k+1}-z^k, z - z^{k+1} \ra.
\end{eqnarray*}
One can check via direct calculations that
\begin{equation*}
    \la a, b \ra \le \frac{1}{2}\|a+b\|_2^2 - \frac{1}{2}\|a\|_2^2 - \frac{1}{2}\|b\|_2^2, \quad \forall\; a,b\in\R^n.
\end{equation*}
Combining previous two inequalities we obtain
\begin{eqnarray*}
    \alpha_{k+1}\la\tnabla\Psi(\tilde{y}^{k+1},\Bxi^{k+1}), z^k - z \ra &\le& \alpha_{k+1}\la\tnabla\Psi(\tilde{y}^{k+1},\Bxi^{k+1}), z^k - z^{k+1} \ra - \frac{1}{2}\|z^{k} - z^{k+1}\|_2^2\\
    &&\quad+ \frac{1}{2}\|z^k - z\|_2^2 - \frac{1}{2}\|z^{k+1}-z\|_2^2.
\end{eqnarray*}
By definition of $y^{k+1}$ and $\tilde{y}^{k+1}$
\begin{eqnarray*}
    y^{k+1} &=& \frac{A_k y^k + \alpha_{k+1}z^{k+1}}{A_{k+1}} = \frac{A_k y^k + \alpha_{k+1}z^{k}}{A_{k+1}} + \frac{\alpha_{k+1}}{A_{k+1}}\left(z^{k+1}-z^k\right)\\
    &=& \tilde{y}^{k+1} + \frac{\alpha_{k+1}}{A_{k+1}}\left(z^{k+1}-z^k\right).
\end{eqnarray*}
Together with previous inequality, it implies
\begin{eqnarray*}
    \alpha_{k+1}\la\tnabla\Psi(\tilde{y}^{k+1},\Bxi^{k+1}), z^k - z \ra &\le& A_{k+1}\la\tnabla\Psi(\tilde{y}^{k+1},\Bxi^{k+1}), \tilde{y}^{k+1} - y^{k+1} \ra\\
    &&\quad- \frac{A_{k+1}^2}{2\alpha_{k+1}^2}\|\tilde{y}^{k+1} - y^{k+1}\|_2^2+ \frac{1}{2}\|z^k - z\|_2^2 - \frac{1}{2}\|z^{k+1}-z\|_2^2\\
    &\le& A_{k+1}\Bigg(\la\tnabla\Psi(\tilde{y}^{k+1},\Bxi^{k+1}), \tilde{y}^{k+1} - y^{k+1} \ra\\
    &&\qquad\qquad\qquad\qquad\qquad\qquad\qquad - \frac{2\tL}{2}\|\tilde{y}^{k+1} - y^{k+1}\|_2^2\Bigg)\\
    &&\quad+ \frac{1}{2}\|z^k - z\|_2^2 - \frac{1}{2}\|z^{k+1}-z\|_2^2\\
    &=& A_{k+1}\Bigg(\left\la\EE_k\left[\tnabla\Psi(\tilde{y}^{k+1},\Bxi^{k+1})\right], \tilde{y}^{k+1} - y^{k+1} \right\ra\\
    &&\qquad\qquad\qquad\qquad\qquad\qquad\qquad- \frac{2\tL}{2}\|\tilde{y}^{k+1} - y^{k+1}\|_2^2\Bigg)\\
    &&\quad + A_{k+1}\left\la\tnabla\Psi(\tilde{y}^{k+1},\Bxi^{k+1}) - \EE_k\left[\tnabla\Psi(\tilde{y}^{k+1},\Bxi^{k+1})\right], \tilde{y}^{k+1} - y^{k+1}\right\ra\\
    &&\quad+ \frac{1}{2}\|z^k - z\|_2^2 - \frac{1}{2}\|z^{k+1}-z\|_2^2.
\end{eqnarray*}
From Fenchel-Young inequality $\la a, b\ra \le \frac{1}{2\lambda}\|a\|_2^2 + \frac{\lambda}{2}\|b\|_2^2$, $a,b\in\R^n$, $\lambda > 0$, we have
\begin{eqnarray*}
    \left\la\tnabla\Psi(\tilde{y}^{k+1},\Bxi^{k+1}) - \EE_k\left[\tnabla\Psi(\tilde{y}^{k+1},\Bxi^{k+1})\right], \tilde{y}^{k+1} - y^{k+1}\right\ra &\\
    &\hspace{-6cm}\le \frac{1}{2\tL}\left\|\tnabla\Psi(\tilde{y}^{k+1},\Bxi^{k+1}) - \EE_k\left[\tnabla\Psi(\tilde{y}^{k+1},\Bxi^{k+1})\right]\right\|_2^2 + \frac{\tL}{2}\|\tilde{y}^{k+1} - y^{k+1}\|_2^2.
\end{eqnarray*}
Using this, we get
\begin{eqnarray}
    \alpha_{k+1}\la\tnabla\Psi(\tilde{y}^{k+1},\Bxi^{k+1}), z^k - z \ra &\le& A_{k+1}\Bigg(\left\la\EE_k\left[\tnabla\Psi(\tilde{y}^{k+1},\Bxi^{k+1})\right], \tilde{y}^{k+1} - y^{k+1} \right\ra\notag\\
    &&\qquad\qquad\qquad\qquad\qquad\qquad\qquad- \frac{\tL}{2}\|\tilde{y}^{k+1} - y^{k+1}\|_2^2\Bigg)\notag\\
    && + \frac{A_{k+1}}{2\tL}\left\|\tnabla\Psi(\tilde{y}^{k+1},\Bxi^{k+1}) - \EE_k\left[\tnabla\Psi(\tilde{y}^{k+1},\Bxi^{k+1})\right]\right\|_2^2\notag\\
    &&+ \frac{1}{2}\|z^k - z\|_2^2 - \frac{1}{2}\|z^{k+1}-z\|_2^2\notag\\
    &\overset{\eqref{eq:inexact_L-smoothness}}{\le}& A_{k+1}\left(\psi(\tilde{y}^{k+1}) - \psi(y^{k+1}) + \frac{\delta^2}{\tL}\right)\notag\\
    &&+ \frac{1}{2}\|z^k - z\|_2^2 - \frac{1}{2}\|z^{k+1}-z\|_2^2\label{eq:inner_prod_bound_1}\\
    &&+ \frac{A_{k+1}}{2\tL}\left\|\tnabla\Psi(\tilde{y}^{k+1},\Bxi^{k+1}) - \EE_k\left[\tnabla\Psi(\tilde{y}^{k+1},\Bxi^{k+1})\right]\right\|_2^2.\notag
\end{eqnarray}
With Lemma~\ref{lem:inexact_convexity_and_L-smoothness} in hand, we have
\begin{eqnarray}
    \la\tnabla\Psi(\tilde{y}^{k+1},\Bxi^{k+1}),y^{k} - \tilde{y}^{k+1} \ra &=& \left\la\EE_k\left[\tnabla\Psi(\tilde{y}^{k+1},\Bxi^{k+1})\right], y^{k} - \tilde{y}^{k+1}\right\ra\notag\\
    &&+ \left\la\tnabla\Psi(\tilde{y}^{k+1},\Bxi^{k+1}) - \EE_k\left[\tnabla\Psi(\tilde{y}^{k+1},\Bxi^{k+1})\right],y^{k} - \tilde{y}^{k+1} \right\ra\notag\\
    &\overset{\eqref{eq:inexact_convexity}}{\le}&  \psi(y^k) - \psi(\tilde{y}^{k+1}) + \delta\|y^k - \tilde{y}^{k+1}\|_2\label{eq:inner_prod_bound_2}\\
    && + \left\la\tnabla\Psi(\tilde{y}^{k+1},\Bxi^{k+1}) - \EE_k\left[\tnabla\Psi(\tilde{y}^{k+1},\Bxi^{k+1})\right],y^{k} - \tilde{y}^{k+1} \right\ra.\notag
\end{eqnarray}
By definition of $\tilde{y}^{k+1}$ we have
\begin{equation}
    \alpha_{k+1}\left(\tilde{y}^{k+1} - z^k\right) = A_k\left(y^k - \tilde{y}^{k+1}\right)\label{eq:tilde_y^k+1-z^k_relation}.
\end{equation}
Putting all together, we get
\begin{eqnarray*}
    \alpha_{k+1}\la\tnabla \Psi(\tilde{y}^{k+1},\Bxi^{k+1}),\tilde{y}^{k+1} - z \ra &\\
    &\hspace{-2cm}= \alpha_{k+1}\la\tnabla \Psi(\tilde{y}^{k+1},\Bxi^{k+1}),\tilde{y}^{k+1} - z^k \ra + \alpha_{k+1}\la\tnabla \Psi(\tilde{y}^{k+1},\Bxi^{k+1}),z^k - z \ra\\
    &\hspace{-2.3cm}\overset{\eqref{eq:tilde_y^k+1-z^k_relation}}{=} A_k\la\tnabla \Psi(\tilde{y}^{k+1},\Bxi^{k+1}),y^k - \tilde{y}^{k+1}\ra + \alpha_{k+1}\la\tnabla \Psi(\tilde{y}^{k+1},\Bxi^{k+1}),z^k - z \ra\\ 
    &\hspace{-6.42cm}\overset{\eqref{eq:inner_prod_bound_1},\eqref{eq:inner_prod_bound_2}}{\le} A_k\left(\psi(y^k) - \psi(\tilde{y}^{k+1}) + \delta\|y^k-\tilde{y}^{k+1}\|_2\right)\\
    &\hspace{-2cm}+ A_k\left\la\tnabla\Psi(\tilde{y}^{k+1},\Bxi^{k+1}) - \EE_k\left[\tnabla\Psi(\tilde{y}^{k+1},\Bxi^{k+1})\right],y^{k} - \tilde{y}^{k+1}\right\ra\\
    &\hspace{-1.22cm}+A_{k+1}\left(\psi(\tilde{y}^{k+1}) - \psi(y^{k+1}) + \frac{\delta^2}{\tL}\right) + \frac{1}{2}\|z^k - z\|_2^2 - \frac{1}{2}\|z^{k+1}-z\|_2^2\notag\\
    &\hspace{-3.34cm} + \frac{A_{k+1}}{2\tL}\left\|\tnabla\Psi(\tilde{y}^{k+1},\Bxi^{k+1}) - \EE_k\left[\tnabla\Psi(\tilde{y}^{k+1},\Bxi^{k+1})\right]\right\|_2^2.
\end{eqnarray*}
Rearranging the terms and using $A_{k+1} = A_k + \alpha_{k+1}$, we obtain
\begin{eqnarray*}
    A_{k+1}\psi(y^{k+1}) - A_k\psi(y^{k}) &\le&  \alpha_{k+1}\left(\psi(\tilde{y}^{k+1}) + \la\tnabla \Psi(\tilde{y}^{k+1},\Bxi^{k+1}), z - \tilde{y}^{k+1}\ra\right) + \frac{1}{2}\|z^k - z\|_2^2\\
    &&\quad - \frac{1}{2}\|z^{k+1}-z\|_2^2 + A_k\delta\|y^k-\tilde{y}^{k+1}\|_2 + \frac{A_{k+1}\delta^2}{\tL}\\
    &&\quad +\frac{A_{k+1}}{2\tL}\left\|\tnabla\Psi(\tilde{y}^{k+1},\Bxi^{k+1}) - \EE_k\left[\tnabla\Psi(\tilde{y}^{k+1},\Bxi^{k+1})\right]\right\|_2^2\\
    &&\quad + A_k\left\la\tnabla\Psi(\tilde{y}^{k+1},\Bxi^{k+1}) - \EE_k\left[\tnabla\Psi(\tilde{y}^{k+1},\Bxi^{k+1})\right],y^{k} - \tilde{y}^{k+1} \right\ra,
\end{eqnarray*}
and after summing these inequalities for $k=0,\ldots,N-1$ we get
\begin{eqnarray*}
       A_N\psi(y^N)  &\leq& \frac{1}{2}\|z - z^0\|_2^2 - \frac{1}{2}\|z - z^N\|_2^2 + \sum_{k=0}^{N-1} \alpha_{k+1} \left( \psi(\tilde{y}^{k+1}) + \la \tnabla \Psi(\tilde{y}^{k+1}, \Bxi^{k+1}), z - \tilde{y}^{k+1}\ra \right)  \notag \\ 
 &&\quad +  \sum_{k=0}^{N-1}A_{k}\left\la \tnabla \Psi(\tilde{y}^{k+1}, \Bxi^{k+1})  - \EE_k\left[\tnabla\Psi(\tilde{y}^{k+1},\Bxi^{k+1})\right] , y^k- \tilde{y}^{k+1} \right\ra \notag\\
 &&\quad +  \sum_{k=0}^{N-1}\frac{A_{k+1}}{2\tL}\left\|\EE_k\left[\tnabla\Psi(\tilde{y}^{k+1},\Bxi^{k+1})\right] -\tnabla \Psi(\tilde{y}^{k+1}, \Bxi^{k+1})\right\|_{2}^2\notag\\
 &&\quad + \delta\sum\limits_{k=0}^{N-1}A_k\|y^k-\tilde{y}^{k+1}\|_2 + \delta^2\sum\limits_{k=0}^{N-1}\frac{A_{k+1}}{\tL},
 \end{eqnarray*}
 where we use that $A_0 = 0$.
\end{proof}

The following lemma plays the central role in our analysis and it serves as the key to prove that the iterates of {\tt SPDSTM} lie in the ball of radius $R_y$ up to some polylogarithmic factor of $N$.
\begin{lemma}[see also Lemma~7 from \cite{dvinskikh2019dual}]\label{lem:tails_estimate_biased}
    Let the sequences of non-negative numbers $\{\alpha_k\}_{k\ge 0}$, random non-negative variables $\{R_k\}_{k\ge 0}$ and random vectors $\{\eta^k\}_{k\ge 0}$, $\{a^k\}_{k\ge 0}$ satisfy inequality
    \begin{equation}\label{eq:radius_recurrence_biased}
        \frac{1}{2}R_l^2 \le A + h\delta\sum\limits_{k=0}^{l-1}\alpha_{k+1}\widetilde{R}_k + u\sum\limits_{k=0}^{l-1}\alpha_{k+1}\la\eta^k, a^k\ra + c\sum\limits_{k=0}^{l-1}\alpha_{k+1}^2\|\eta^k\|_2^2,
    \end{equation}
    for all $l = 1,\ldots,N$, where $h,\delta,u$ and $c$ are some non-negative constants. Assume that for each $k\ge 1$ vector $a^k$ is a function of $\eta^0,\ldots,\eta^{k-1}$, $a^0$ is a deterministic vector, $u\ge 1$, sequence of random vectors $\{\eta^k\}_{k\ge 0}$ satisfy $\forall k\ge 0$
    \begin{equation}\label{eq:eta_k_properties_biased}
        \EE\left[\eta^k\mid \eta^0,\ldots,\eta^{k-1}\right] = 0,\quad \EE\left[\exp\left(\frac{\|\eta^k\|_2^2}{\sigma_k^2}\right)\mid \eta^0,\ldots,\eta^{k-1}\right] \le \exp(1),
    \end{equation}
    $\alpha_{k+1} \le \widetilde{\alpha}_{k+1} = D(k+2)$, $\sigma_k^2 \le \frac{C\varepsilon}{\widetilde{\alpha}_{k+1}\ln\left(\frac{N}{\beta}\right)}$ for some $D, C>0$, $\varepsilon > 0$, $\beta\in(0,1)$ and sequence of random variables $\{\widetilde{R}_k\}_{k\ge 0}$ is such that $\|a^k\|_2 \le d\widetilde{R}_k$ with some positive deterministic constant $d\ge 1$ and $\widetilde{R}_k = \max\{\widetilde{R}_{k-1}, R_k\}$ for all $k\ge 1$, $\widetilde{R}_0 = R_0$, $\widetilde{R}_k$ depends only on $\eta_0,\ldots,\eta^k$ and also assume that $\ln\left(\frac{N}{\beta}\right) \ge 3$. If additionally $\varepsilon \le \frac{HR_0^2}{N^2}$ and $\delta \le \frac{GR_0}{(N+1)^2}$, then with probability at least $1-2\beta$ the inequalities
    \begin{equation}\label{eq:tails_estimate_radius_biased}
        \widetilde{R}_l \le JR_0
    \end{equation}
    and
    \begin{eqnarray}
        u\sum\limits_{k=0}^{l-1}\alpha_{k+1}\la\eta^k, a^k\ra + c\sum\limits_{k=0}^{l-1}\alpha_{k+1}^2\|\eta^k\|_2^2 &\le& \Big(24cCDH + hGDJ \notag\\
        &&\quad + udC_1\sqrt{CDHJg(N)}\Big)R_0^2\label{eq:tails_estimate_stoch_part_biased}
    \end{eqnarray}
    hold for all $l=1,\ldots,N$ simultaneously, where $C_1$ is some positive constant, $g(N) = \frac{\ln\left(\frac{N}{\beta}\right) + \ln\ln\left(\frac{B}{b}\right)}{\ln\left(\frac{N}{\beta}\right)}$,
    \begin{eqnarray*}
        B &=& 2d^2CDHR_0^2\big(2A + (1+ud)R_0^2 + 48CDHR_0^2\left(2c+ud\right)+ h^2G^2R_0^2D\big)(2(1+ud))^N,
    \end{eqnarray*}
    $b = \sigma_0^2\widetilde{\alpha}_{1}^2d^2\widetilde{R}_0^2$ and
    \begin{eqnarray*}
        J &=& \max\Bigg\{1, udC_1\sqrt{CDH g(N)} + hGD\\
        &&\qquad\qquad\qquad+ \sqrt{\left(udC_1\sqrt{CDH g(N)} + hGD\right)^2 + \frac{2A}{R_0^2} + 48cCDH}\Bigg\}.
    \end{eqnarray*}
\end{lemma}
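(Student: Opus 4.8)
The plan is to convert the probabilistic recurrence \eqref{eq:radius_recurrence_biased} into the deterministic recurrence \eqref{eq:new_bound_for_r_l_biased} of Lemma~\ref{lem:new_recurrence_lemma_biased_case} by controlling the two stochastic sums on a single high-probability event, and then to bootstrap: the a~priori bound $\widetilde R_l\le JR_0$ of \eqref{eq:tails_estimate_radius_biased} feeds back into the stochastic estimates to yield \eqref{eq:tails_estimate_stoch_part_biased}. First I would observe that, since $a^k$ is a function of $\eta^0,\dots,\eta^{k-1}$ and $\EE[\eta^k\mid\eta^0,\dots,\eta^{k-1}]=0$, the scalars $u\alpha_{k+1}\la\eta^k,a^k\ra$ form a martingale-difference sequence; moreover, by Cauchy--Schwarz $|\la\eta^k,a^k\ra|\le\|\eta^k\|_2\|a^k\|_2$ together with the tail/super-exponential equivalence of Lemma~\ref{lem:jin_lemma_2}, each increment is conditionally sub-Gaussian with the random, predictable proxy $u\alpha_{k+1}\|a^k\|_2\sigma_k\le ud\,\alpha_{k+1}\widetilde R_k\sigma_k$.

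Second I would apply Jin's martingale bound (Lemma~\ref{lem:jin_corollary}) to the scalar partial sums $S_l=\sum_{k=0}^{l-1}u\alpha_{k+1}\la\eta^k,a^k\ra$, taking a union bound over $l=1,\dots,N$ (this replaces $\beta$ by $\beta/N$ and is the source of the $\ln(N/\beta)$ in $g(N)$). Using $\alpha_{k+1}\le\widetilde\alpha_{k+1}=D(k+2)$ and $\sigma_k^2\le C\varepsilon/(\widetilde\alpha_{k+1}\ln(N/\beta))$, the accumulated variance proxy satisfies $\sum_k(ud\,\alpha_{k+1}\widetilde R_k\sigma_k)^2\le u^2d^2\tfrac{CD\varepsilon}{\ln(N/\beta)}\sum_k(k+2)\widetilde R_k^2$, so Jin's estimate produces a term of shape $udC_1\sqrt{CD\varepsilon\,g(N)}\,\sqrt{\sum_k(k+2)\widetilde R_k^2}$, which after inserting $\varepsilon\le HR_0^2/N^2$ is exactly the $B\frac{r_0}{N}\sqrt{\sum(k+2)r_k^2}$ term of \eqref{eq:new_bound_for_r_l_biased} with coefficient $udC_1\sqrt{CDHg(N)}$. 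The floor $b=\sigma_0^2\widetilde\alpha_1^2d^2\widetilde R_0^2$ is the first proxy, and the exponential quantity $B$ is chosen large enough that the ``$\sum\sigma_k^2\ge B$'' alternative of Lemma~\ref{lem:jin_corollary} is vacuous --- the delicate point discussed below. Crucially, I would keep $\sum_k(k+2)\widetilde R_k^2$ unexpanded at this stage, so that the term lands in the recurrence format rather than being prematurely bounded by $J$.

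Third, for the quadratic sum I would use the super-exponential moment \eqref{eq:eta_k_properties_biased} with Markov's inequality, $\PP\{\|\eta^k\|_2^2\ge\sigma_k^2 t\}\le e^{1-t}$, and a union bound over $k$, so that with probability $\ge1-\beta$ one has $\|\eta^k\|_2^2\le\sigma_k^2(1+\ln(N/\beta))$ for all $k$; combined with $\alpha_{k+1}^2\sigma_k^2\le CD\varepsilon(k+2)/\ln(N/\beta)$ and $\ln(N/\beta)\ge3$ this bounds $c\sum_k\alpha_{k+1}^2\|\eta^k\|_2^2$ by $24cCDHR_0^2$ (after tracking constants), a quantity that merges into the $Ar_0^2$ term. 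The bias sum is handled deterministically via $\delta\le GR_0/(N+1)^2$ and $\alpha_{k+1}\le D(k+2)$, giving the linear $\frac{Dr_0}{(N+1)^2}\sum(k+2)r_k$ term with coefficient $hGD$. Intersecting the two good events (probability $\ge1-2\beta$) turns \eqref{eq:radius_recurrence_biased} into the format \eqref{eq:new_bound_for_r_l_biased}; since its right-hand side is nondecreasing in $l$, I would pass from $R_l$ to $\widetilde R_l=\max\{\widetilde R_{l-1},R_l\}$ exactly as in the proof of Theorem~\ref{thm:stp_ipo_convergence}, and invoke Lemma~\ref{lem:new_recurrence_lemma_biased_case} to obtain $\widetilde R_l\le JR_0$ with $J$ matching the $C$ of that lemma under the dictionary $A_{\mathrm{lemma}}=A/R_0^2+24cCDH$, $B_{\mathrm{lemma}}=udC_1\sqrt{CDHg(N)}$, $D_{\mathrm{lemma}}=hGD$. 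Finally, substituting $\widetilde R_k\le JR_0$ back into the two stochastic estimates yields the explicit constants of \eqref{eq:tails_estimate_stoch_part_biased}.

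The main obstacle is the circularity: the variance proxies driving the martingale bound depend on the very quantities $\widetilde R_k$ one is trying to bound, and Lemma~\ref{lem:jin_corollary} only provides a conditional dichotomy. Resolving it requires both the ``keep it unexpanded'' device of step two and a choice of the threshold $B$ so large --- hence the crude factor $(2(1+ud))^N$, obtained from a worst-case per-step growth estimate of $\widetilde R_k^2$ read straight off \eqref{eq:radius_recurrence_biased} --- that the alternative branch of the dichotomy can be discarded with certainty on the good event. Once Lemma~\ref{lem:new_recurrence_lemma_biased_case} applies, the remainder is bookkeeping of constants.
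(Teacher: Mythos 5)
Your proposal follows essentially the same route as the paper's proof: the crude exponential a priori unrolling of the recurrence to set the threshold $B$ and discharge the ``$\sum\hat\sigma_k^2\ge B$'' branch of Lemma~\ref{lem:jin_corollary}, the martingale bound with a union bound over $l$ keeping $\sum_k(k+2)\widetilde R_k^2$ unexpanded, the reduction to Lemma~\ref{lem:new_recurrence_lemma_biased_case} with exactly the dictionary you state, and the final back-substitution of $\widetilde R_k\le JR_0$. The only cosmetic difference is that you bound $\|\eta^k\|_2^2$ via Markov's inequality on the super-exponential moment where the paper invokes Lemma~\ref{lem:jud_nem_large_dev}; these are equivalent up to constants.
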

\begin{proof}
    We start with applying Cauchy-Schwarz inequality to the second and the third terms in the right-hand side of \eqref{eq:radius_recurrence_biased}:
    \begin{eqnarray}\label{eq:radius_recurrence2_biased}
        \frac{1}{2}R_l^2 &\le& A + h\delta\sum\limits_{k=0}^{l-1}\alpha_{k+1}\widetilde{R}_k + ud\sum\limits_{k=0}^{l-1}\alpha_{k+1}\|\eta^k\|_2\widetilde{R}_k + c\sum\limits_{k=0}^{l-1}\alpha_{k+1}^2\|\eta^k\|_2^2,\notag\\
        &\le& A + \frac{h^2\delta^2}{2}\sum\limits_{k=0}^{l-1}\alpha_{k+1}^2 + \frac{ud+1}{2}\sum\limits_{k=0}^{l-1}\widetilde{R}_k^2 + \left(c + \frac{ud}{2}\right)\sum\limits_{k=0}^{l-1}\widetilde{\alpha}_{k+1}^2\|\eta^k\|_2^2.
    \end{eqnarray}

    The idea of the proof is as following: estimate $R_N^2$ roughly, then apply Lemma~\ref{lem:jin_corollary} in order to estimate second term in the last row of \eqref{eq:radius_recurrence_biased} and after that use the obtained recurrence to estimate right-hand side of \eqref{eq:radius_recurrence_biased}.
    
    Using Lemma~\ref{lem:jud_nem_large_dev} we get that with probability at least $1 - \frac{\beta}{N}$
    \begin{eqnarray}
        \|\eta^k\|_2 &\le& \sqrt{2}\left(1 + \sqrt{3\ln\frac{N}{\beta}}\right)\sigma_k \le \sqrt{2}\left(1 + \sqrt{3\ln\frac{N}{\beta}}\right)\frac{\sqrt{C\varepsilon}}{\sqrt{\widetilde{\alpha}_{k+1}\ln\left(\frac{N}{\beta}\right)}}\notag\\
        &=& \left(\frac{1}{\sqrt{\widetilde{\alpha}_{k+1}\ln\left(\frac{N}{\beta}\right)}} + \sqrt{\frac{3}{\widetilde{\alpha}_{k+1}}}\right)\sqrt{2C\varepsilon} \le 2\sqrt{\frac{3}{\widetilde{\alpha}_{k+1}}}\sqrt{2C\varepsilon},\label{eq:eta_k_norm_bound_biased}
    \end{eqnarray}
    where in the last inequality we use $\ln\frac{N}{\beta} \ge 3$. Using union bound and $\alpha_{k+1} \le \widetilde{\alpha}_{k+1} = D(k+2)$ we get that with probability $\ge 1 - \beta$ the inequality
    \begin{eqnarray*}
        \frac{1}{2}R_l^2 &\le& A + \frac{h^2\delta^2D^2}{2}\sum\limits_{k=0}^{l-1}(k+2)^2 + \frac{ud+1}{2}\sum\limits_{k=0}^{l-1}\widetilde{R}_k^2 + 24C\varepsilon\left(c+\frac{ud}{2}\right)\sum\limits_{k=0}^{l-1}\widetilde{\alpha}_{k+1}\\
        &\le& A + \frac{h^2\delta^2D^2}{2}l(l+1)^2 + \frac{ud+1}{2}\sum\limits_{k=0}^{l-1}\widetilde{R}_k^2 + 24CD\varepsilon\left(c+\frac{ud}{2}\right)\sum\limits_{k=0}^{l-1}(k+2)\\
        &\le& A + \frac{h^2\delta^2D^2}{2}l(l+1)^2 + \frac{ud+1}{2}\sum\limits_{k=0}^{l-1}\widetilde{R}_k^2 + 12CD\varepsilon\left(c+\frac{ud}{2}\right)l(l+3)
    \end{eqnarray*}
    holds for all $l=1,\ldots,N$ simultaneously. Note that the last row in the previous inequality is non-decreasing function of $l$. If we define $\hat{l}$ as the largest integer such that $\hat{l}\le l$ and $\widetilde{R}_{\hat{l}} = R_{\hat{l}}$, we will get that $R_{\hat{l}} = \widetilde{R}_{\hat{l}} = \widetilde{R}_{\hat{l}+1} = \ldots = \widetilde{R}_{l}$ and, as a consequence, with probability $\ge 1 - \beta$
    \begin{eqnarray*}
        \frac{1}{2}\widetilde{R}_l^2 &\le& A + \frac{h^2\delta^2D^2}{2}\hat{l}(\hat{l}+1)^2 + \frac{ud+1}{2}\sum\limits_{k=0}^{\hat{l}-1}\widetilde{R}_k^2 + 12CD\varepsilon\left(c+\frac{ud}{2}\right)\hat{l}(\hat{l}+3)\\
        &\le& A + \frac{h^2\delta^2D^2}{2}l(l+1)^2 + \frac{ud+1}{2}\sum\limits_{k=0}^{l-1}\widetilde{R}_k^2 + 12CD\varepsilon\left(c+\frac{ud}{2}\right)l(l+3),\quad \forall l=1,\ldots,N.
    \end{eqnarray*}
    Therefore, we have that with probability $\ge 1 - \beta$
    \begin{eqnarray}
        \widetilde{R}_l^2 &\le& 2A + (ud+1)\sum\limits_{k=0}^{l-1}\widetilde{R}_k^2 + 12CD\varepsilon\left(2c+ud\right)l(l+3) + h^2\delta^2D^2l(l+1)^2\notag\\
        &\le& 2A\underbrace{(2+ud)}_{\le 2(1+ud)} + \underbrace{(1+ud + (1+ud)^2)}_{\le 2(1+ud)^2}\sum\limits_{k=0}^{l-2}\widetilde{R}_k^2 \notag\\
        &&\quad + 12CD\varepsilon(2c+ud)\underbrace{(l(l+3) + (1+ud)(l-1)(l+2))}_{\le 2(1+ud)l(l+3)}\notag\\
        &&\quad + h^2\delta^2D^2\underbrace{(l(l+1)^2 + (1+ud)(l-1)l^2)}_{\le 2(1+ud)l(l+1)^2}\notag\\
        &\le& 2(1+ud)\left(2A + (1+ud)\sum\limits_{k=0}^{l-2}\widetilde{R}_k^2 + 12CD\varepsilon\left(2c+ud\right)l(l+3) + h^2\delta^2D^2l(l+1)^2\right),\notag
    \end{eqnarray}
    for all $l = 1,\ldots, N$. Unrolling the recurrence we get that with probability $\ge 1 - \beta$
    \begin{eqnarray*}
      \widetilde{R}_l^2 &\le& \left(2A + (1+ud)\widetilde{R}_0^2 + 12CD\varepsilon\left(2c+ud\right)l(l+3) + h^2\delta^2D^2l(l+1)^2\right)(2(1+ud))^l,
    \end{eqnarray*}
    for all $l = 1,\ldots, N$. We emphasize that it is very rough estimate, but we show next that such a bound does not spoil the final result too much.
    It implies that with probability $\ge 1-\beta$
    \begin{equation}\label{eq:bound_sum_squared_radius_biased}
        \sum\limits_{k=0}^{l-1}\widetilde{R}_k^2 \le  l\left(2A + (1+ud)\widetilde{R}_0^2 + 12CD\varepsilon\left(2c+ud\right)l(l+3) + h^2\delta^2D^2l(l+1)^2\right)(2(1+ud))^l,
    \end{equation}
    for all $l=1,\ldots,N$. 
    Next we apply delicate result from \cite{jin2019short} which is presented in Section~\ref{sec:aux_results} as Lemma~\ref{lem:jin_corollary}. We consider random variables $\xi^k = \widetilde{\alpha}_{k+1}\la\eta^k, a^k \ra$. Note that $\EE\left[\xi^k\mid \xi^0,\ldots,\xi^{k-1}\right] = \widetilde{\alpha}_{k+1}\left\la\EE\left[\eta^k\mid \eta^0,\ldots,\eta^{k-1}\right], a^k \right\ra = 0$ and
    \begin{eqnarray*}
        \EE\left[\exp\left(\frac{(\xi^k)^2}{\sigma_k^2\widetilde{\alpha}_{k+1}^2d^2\widetilde{R}_k^2}\right)\mid \xi^0,\ldots,\xi^{k-1}\right] &\le& \EE\left[\exp\left(\frac{\widetilde{\alpha}_{k+1}^2\|\eta^k\|_2^2 d^2\widetilde{R}_k^2}{\sigma_k^2\widetilde{\alpha}_{k+1}^2d^2\widetilde{R}_k^2}\right)\mid \eta^0,\ldots,\eta^{k-1}\right]\\
        &=& \EE\left[\exp\left(\frac{\|\eta^k\|_2^2}{\sigma_k^2}\right)\mid \eta^0,\ldots,\eta^{k-1}\right] \le \exp(1)
    \end{eqnarray*}
    due to Cauchy-Schwarz inequality and assumptions of the lemma. If we denote $\hat\sigma_k^2 = \sigma_k^2\widetilde{\alpha}_{k+1}^2d^2\widetilde{R}_k^2$ and apply Lemma~\ref{lem:jin_corollary} with $$B = 2d^2CDHR_0^2\left(2A + (1+ud)R_0^2 + 48CDHR_0^2\left(2c+ud\right) + h^2G^2R_0^2D^2\right)(2(1+ud))^N$$ and $b=\hat{\sigma}_0^2$, we get that for all $l=1,\ldots,N$ with probability $\ge 1-\frac{\beta}{N}$
    \begin{equation*}
        \text{either} \sum\limits_{k=0}^{l-1}\hat\sigma_k^2 \ge B \text{ or } \left|\sum\limits_{k=0}^{l-1}\xi^k\right| \le C_1\sqrt{\sum\limits_{k=0}^{l-1}\hat\sigma_k^2\left(\ln\left(\frac{N}{\beta}\right) + \ln\ln\left(\frac{B}{b}\right)\right)}
    \end{equation*}
    with some constant $C_1 > 0$ which does not depend on $B$ or $b$. Using union bound we obtain that with probability $\ge 1 - \beta$
    \begin{equation*}
        \text{either} \sum\limits_{k=0}^{l-1}\hat\sigma_k^2 \ge B \text{ or } \left|\sum\limits_{k=0}^{l-1}\xi^k\right| \le C_1\sqrt{\sum\limits_{k=0}^{l-1}\hat\sigma_k^2\left(\ln\left(\frac{N}{\beta}\right) + \ln\ln\left(\frac{B}{b}\right)\right)}
    \end{equation*}
    and it holds for all $l=1,\ldots, N$ simultaneously. Note that with probability at least $1-\beta$
    \begin{eqnarray*}
        \sum\limits_{k=0}^{l-1}\hat\sigma_k^2 &=& d^2\sum\limits_{k=0}^{l-1}\sigma_k^2\widetilde{\alpha}_{k+1}^2\widetilde{R}_k^2 \le d^2\sum\limits_{k=0}^{l-1}\frac{C\varepsilon}{\ln\frac{N}{\beta}}\widetilde{\alpha}_{k+1}\widetilde{R}_k^2\\
        &\le& \frac{d^2CDHR_0^2}{N^2\ln\frac{N}{\beta}}\sum\limits_{k=0}^{l-1}(k+2)\widetilde{R}_k^2 \le \frac{d^2CDHR_0^2}{3N}\cdot\frac{N+1}{N}\sum\limits_{k=0}^{l-1}\widetilde{R}_k^2\\
        &\overset{\eqref{eq:bound_sum_squared_radius_biased}}{\le}& \frac{d^2CDHR_0^2}{N}l(2(1+ud))^l\Bigg(2A + (1+ud)\widetilde{R}_0^2 + 12CD\varepsilon\left(2c+ud\right)l(l+3)\\
        &&\qquad\qquad\qquad\qquad\qquad\qquad\qquad\qquad\qquad\qquad\qquad\qquad+ h^2\delta^2D^2l(l+1)^2\Bigg)\\
        &\le& d^2CDHR_0^2\left(2A + (1+ud)R_0^2 + 48CDHR_0^2\left(2c+ud\right) + h^2G^2R_0^2D^2\right)(2(1+ud))^N\\
        &=& \frac{B}{2}
    \end{eqnarray*}
    for all $l=1,\ldots, N$ simultaneously. Using union bound again we get that with probability $\ge 1 - 2\beta$ the inequality
    \begin{equation}\label{eq:bound_inner_product_biased}
        \left|\sum\limits_{k=0}^{l-1}\xi^k\right| \le C_1\sqrt{\sum\limits_{k=0}^{l-1}\hat\sigma_k^2\left(\ln\left(\frac{N}{\beta}\right) + \ln\ln\left(\frac{B}{b}\right)\right)}
    \end{equation}
    holds for all $l=1,\ldots,N$ simultaneously.
    
    Note that we also proved that \eqref{eq:eta_k_norm_bound_biased} is in the same event together with \eqref{eq:bound_inner_product_biased} and holds with probability $\ge 1 - 2\beta$. Putting all together in \eqref{eq:radius_recurrence_biased}, we get that with probability at least $1-2\beta$ the inequality
    \begin{eqnarray*}
        \frac{1}{2}\widetilde{R}_l^2 &\overset{\eqref{eq:radius_recurrence_biased}}{\le}&  A + h\delta\sum\limits_{k=0}^{l-1}\alpha_{k+1}\widetilde{R}_k + u\sum\limits_{k=0}^{l-1}\alpha_{k+1}\la\eta^k, a^k\ra + c\sum\limits_{k=0}^{l-1}\alpha_{k+1}^2\|\eta^k\|_2^2\\
        &\overset{\eqref{eq:bound_inner_product_biased}}{\le}& A + h\delta\sum\limits_{k=0}^{l-1}\alpha_{k+1}\widetilde{R}_k + uC_1\sqrt{\sum\limits_{k=0}^{l-1}\hat\sigma_k^2\left(\ln\left(\frac{N}{\beta}\right) + \ln\ln\left(\frac{B}{b}\right)\right)} + 24cC\varepsilon\sum\limits_{k=0}^{l-1}\widetilde{\alpha}_{k+1}
    \end{eqnarray*}
    holds for all $l=1,\ldots,N$ simultaneously. For brevity, we introduce new notation: $g(N) = \frac{\ln\left(\frac{N}{\beta}\right) + \ln\ln\left(\frac{B}{b}\right)}{\ln\left(\frac{N}{\beta}\right)} \approx 1$ (neglecting constant factor). Using our assumption $\sigma_k^2 \le \frac{C\varepsilon}{\widetilde{\alpha}_{k+1}\ln\left(\frac{N}{\beta}\right)}$ and definition $\hat\sigma_k^2 = \sigma_k^2\widetilde{\alpha}_{k+1}^2d^2\widetilde{R}_k^2$ we obtain that with probability at least $1-2\beta$ the inequality
    \begin{eqnarray}\label{eq:radius_recurrence_large_prob_biased}
        \frac{1}{2}\widetilde{R}_l^2 &\le& A + h\delta\sum\limits_{k=0}^{l-1}\alpha_{k+1}\widetilde{R}_k + u\sum\limits_{k=0}^{l-1}\alpha_{k+1}\la\eta^k, a^k\ra + c\sum\limits_{k=0}^{l-1}\alpha_{k+1}^2\|\eta^k\|_2^2 \notag\\
        &\le& A + \frac{hGDR_0}{(N+1)^2}\sum\limits_{k=0}^{l-1}(k+2)\widetilde{R}_k + udC_1\sqrt{C\varepsilon g(N)}\sqrt{\sum\limits_{k=0}^{l-1}\widetilde{\alpha}_{k+1}\widetilde{R}_k^2} + 24cC\varepsilon\sum\limits_{k=0}^{l-1}\widetilde{\alpha}_{k+1}\notag\\
        &\le& A + \frac{hGDR_0}{(N+1)^2}\sum\limits_{k=0}^{l-1}(k+2)\widetilde{R}_k + udC_1\sqrt{CD\varepsilon g(N)}\sqrt{\sum\limits_{k=0}^{l-1}(k+2)\widetilde{R}_k^2}\notag\\
        &&\quad+ 24cCD\varepsilon\sum\limits_{k=0}^{l-1}(k+2)\notag\\
        &\le& A + 24cCD\frac{HR_0^2}{N^2}\frac{l(l+1)}{2} + \frac{hGDR_0}{(N+1)^2}\sum\limits_{k=0}^{l-1}(k+2)\widetilde{R}_k\notag\\
        &&\quad+ udC_1\sqrt{CD\frac{HR_0^2}{N^2} g(N)}\sqrt{\sum\limits_{k=0}^{l-1}(k+2)\widetilde{R}_k^2}\notag\\
        &\le& \left(\frac{A}{R_0^2} + 24cCDH\right)R_0^2 + \frac{hGDR_0}{(N+1)^2}\sum\limits_{k=0}^{l-1}(k+2)\widetilde{R}_k\notag\\
        &&\quad+ \frac{udC_1R_0}{N}\sqrt{CDH g(N)}\sqrt{\sum\limits_{k=0}^{l-1}(k+2)\widetilde{R}_k^2}
    \end{eqnarray}
    holds for all $l=1,\ldots,N$ simultaneously. Next we apply Lemma~\ref{lem:new_recurrence_lemma_biased_case} with $A = \frac{A}{R_0^2} + 24cCDH$, $B = udC_1\sqrt{CDH g(N)}$, $D = hGD$, $r_k = \widetilde{R}_k$ and get that with probability at least $1-2\beta$ inequality
    \begin{eqnarray*}
        \widetilde{R}_l \le JR_0
    \end{eqnarray*}
    holds for all $l=1,\ldots,N$ simultaneously with
    \begin{eqnarray*}
        J &=& \max\Bigg\{1, udC_1\sqrt{CDH g(N)} + hGD\\
        &&\qquad\qquad\qquad\qquad\qquad\qquad+ \sqrt{\left(udC_1\sqrt{CDH g(N)} + hGD\right)^2 + \frac{2A}{R_0^2} + 48cCDH}\Bigg\}.
    \end{eqnarray*}
    It implies that with probability at least $1-2\beta$ the inequality
    \begin{eqnarray*}
        A + h\delta\sum\limits_{k=0}^{l-1}\alpha_{k+1}\widetilde{R}_k + u\sum\limits_{k=0}^{l-1}\alpha_{k+1}\la\eta^k, a^k\ra + c\sum\limits_{k=0}^{l-1}\alpha_{k+1}^2\|\eta^k\|_2^2 &\\
        &\hspace{-7.4cm}\le\left(\frac{A}{R_0^2} + 24cCDH\right)R_0^2 + \frac{hGDJR_0^2}{(N+1)^2}\sum\limits_{k=0}^{l-1}(k+2) + \frac{udC_1R_0^2}{N}\sqrt{CDH g(N)}\sqrt{\sum\limits_{k=0}^{l-1}(k+2)J}\\
        &\hspace{-10cm}\le A + \left(24cCDH + hGDJ + udC_1\sqrt{CDHJg(N)}\frac{1}{N}\sqrt{\frac{l(l+1)}{2}}\right)R_0^2\\
        &\hspace{-11.5cm}\le A + \left(24cCDH + hGDJ + udC_1\sqrt{CDHJg(N)}\right)R_0^2
    \end{eqnarray*}
    holds for all $l=1,\ldots,N$ simultaneously.
\end{proof}

\subsection{Proof of Theorem~\ref{thm:spdtstm_smooth_cvx_dual_biased}}
For the convenience we put here the extended statement of the theorem.
\begin{theorem}\label{thm:spdtstm_smooth_cvx_dual_biased_appendix}
    Assume that $f$ is $\mu$-strongly convex and $\|\nabla f(x^*)\|_2 = M_f$. Let $\varepsilon > 0$ be a desired accuracy. Next, assume that $f$ is $L_f$-Lipschitz continuous on the ball $B_{R_f}(0)$ with $$R_f = \tilde{\Omega}\left(\max\left\{\frac{R_y}{A_N\sqrt{\lambda_{\max}(A^\top A)}}, \frac{\sqrt{\lambda_{\max}(A^\top A)}R_y}{\mu}, R_x\right\}\right),$$ where $R_y$ is such that $\|y^*\|_2 \le R_y$, $y^*$ is the solution of the dual problem \eqref{DP}, and $R_x = \|x(A^\top y^*)\|_2$. Assume that at iteration k of Algorithm~\ref{Alg:PDSTM} batch size is chosen according to the formula $r_k \ge \max\left\{1, \frac{ \sigma^2_\psi \widetilde{\alpha}_k \ln(\nicefrac{N}{\beta})}{\hat C\e}\right\}$, where $\widetilde{\alpha}_{k} = \frac{k+1}{2\tL}$, $0 < \varepsilon \le \frac{H\tL R_0^2}{N^2}$, $0 \le \delta \le \frac{G\tL R_0}{(N+1)^2}$ and $N\ge 1$ for some numeric constant $H > 0$, $G > 0$ and $\hat C > 0$. Then with probability  $\geq 1-4\beta$
    \begin{eqnarray}
         \psi(y^N) + f(\tx^N) +2R_y\|A\tx^N\|_2 &\le& \frac{R_y^2}{A_N}\left(8 \sqrt{HC_2}+  2 + 12CH + \frac{G(6J+4)}{2} \right.\notag\\
    &&\left. + \frac{L_f\left(\sqrt{96C_2H} + G\right)}{2R_y\sqrt{\lambda_{\max}(A^\top A)}} + \frac{G^2}{2(N+1)}\right.\notag\\
    &&\left. + C_1\sqrt{\frac{CHJg(N)}{2}} + \sqrt{96C_2H} + G\right),\label{eq:SPDSTM_non_str_cvx_main_res_biased_appendix}
    \end{eqnarray}
    where $\beta \in \left(0,\nicefrac{1}{4}\right)$ is such that $\frac{1+\sqrt{\ln\frac{1}{\beta}}}{\sqrt{\ln\frac{N}{\beta}}} \le 2$, $C_2, C, C_1$ are some positive numeric constants, $g(N) = \frac{\ln\left(\frac{N}{\beta}\right) + \ln\ln\left(\frac{B}{b}\right)}{\ln\left(\frac{N}{\beta}\right)}$, $$B = CHR_0^2\left(2A + 2R_0^2 + 72CHR_0^2 + \frac{9G^2\tL R_0^2}{2}\right)4^N,$$ $b = \sigma_0^2\widetilde{\alpha}_{1}^2R_0^2$ and 
    \begin{eqnarray*}
        J &=& \max\Bigg\{1, C_1\sqrt{\frac{CH g(N)}{2}} + \frac{3G}{2}+ \sqrt{\left(C_1\sqrt{\frac{CH g(N)}{2}} + \frac{3G}{2}\right)^2 + \frac{2A}{R_0^2} + 24CH}\Bigg\}.
    \end{eqnarray*}
    This means that after $N = \widetilde{O}\left(\sqrt{\frac{M_f}{\mu\e}\chi(A^\top A)} \right)$ iterations where $\chi(A^\top A) = \frac{\lambda_{\max}(A^\top A)}{\lambda_{\min}^+(A^\top A)}$, the outputs $\tx^N$ and $y^N$ of Algorithm \ref{Alg:PDSTM} satisfy the following condition
\begin{equation}
    f(\tilde{x}^N) -f(x^*) \le f(\tilde{x}^N) + \psi(y^N) \le \e, \quad \|A\tilde{x}^N\|_2 \le \frac{\e}{R_{y}} \label{eq:SPDSTM_non_str_cvx_guarantee_appendix}
\end{equation}
with probability at least $1-4\beta$. What is more, to guarantee \eqref{eq:SPDSTM_non_str_cvx_guarantee_appendix} with probability at least $1-4\beta$ Algorithm~\ref{Alg:PDSTM} requires
\begin{equation}
    \widetilde{O}\left(\max\left\{\frac{\sigma_x^2M_f^2}{\varepsilon^2}\chi(A^\top A)\ln\left(\frac{1}{\beta}\sqrt{\frac{M_f}{\mu\e}\chi(A^\top A)}\right), \sqrt{\frac{M_f}{\mu\e}\chi(A^\top A)}\right\}\right)\label{eq:SPDSTM_oracle_calls_appendix}
\end{equation}
calls of the biased stochastic oracle $\tnabla\psi(y,\xi)$, i.e.\ $\tx(y,\xi)$.
\end{theorem}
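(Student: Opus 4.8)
The plan is to derive the claimed primal--dual guarantee from two applications of the descent estimate of Lemma~\ref{lem:psi_y_N_bound_biased_case}, each with a different choice of the free vector $z$, and to control every stochastic and bias term by a single invocation of the tails estimate in Lemma~\ref{lem:tails_estimate_biased}. First I would translate the dual language of Lemma~\ref{lem:psi_y_N_bound_biased_case} into primal quantities: using $\tnabla\Psi(\ty^{k+1},\Bxi^{k+1}) = A\tx(A^\top\ty^{k+1},\Bxi^{k+1})$ together with the conjugacy identity $\psi(\ty^{k+1}) = \la A^\top\ty^{k+1}, x(A^\top\ty^{k+1})\ra - f(x(A^\top\ty^{k+1}))$, the weighted sum $\sum_k \alpha_{k+1}\bigl(\psi(\ty^{k+1}) + \la\tnabla\Psi(\ty^{k+1},\Bxi^{k+1}), z-\ty^{k+1}\ra\bigr)$ reduces, by convexity of $f$ applied to the weighted average $\tx^N$, to $A_N\bigl(\la A\tx^N, z\ra - f(\tx^N)\bigr)$, up to bias and variance corrections produced by the gap $\tx - x$. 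Here the $L_f$-Lipschitz continuity of $f$ on $B_{R_f}(0)$ is exactly what bounds $|f(\tx)-f(x)| \le L_f\|\tx-x\|_2$, so the whole argument is conditioned on the primal points remaining inside that ball.

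The crux, and the part I would settle first, is the boundedness of the iterates, since the domain is all of $\R^n$ and nothing a priori keeps $z^k$ near $y^*$. Choosing $z = y^*$ in Lemma~\ref{lem:psi_y_N_bound_biased_case}, discarding $A_N(\psi(y^N) - \psi(y^*)) \ge 0$ (weak duality), and replacing $\psi(\ty^{k+1}) + \la\EE_k[\tnabla\Psi], y^*-\ty^{k+1}\ra$ by $\psi(y^*) + \delta\|y^*-\ty^{k+1}\|_2$ via the inexact convexity inequality \eqref{eq:inexact_convexity} produces a recurrence of exactly the shape \eqref{eq:radius_recurrence_biased}, with $R_l = \|y^* - z^l\|_2$, centered noise $\eta^k = \tnabla\Psi(\ty^{k+1},\Bxi^{k+1}) - \EE_k[\tnabla\Psi(\ty^{k+1},\Bxi^{k+1})]$, and $a^k = y^* - \ty^{k+1}$. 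The hypothesis $\|a^k\|_2 \le d\widetilde R_k$ holds because $\ty^{k+1}$ is a convex combination of $y^k$ and $z^k$, both within the running radius of $y^*$. Lemma~\ref{lem:tails_estimate_biased} then certifies, with probability at least $1-2\beta$, both that $\widetilde R_l \le J R_0$ for all $l$ (so every iterate, hence $x(A^\top\ty^k)$ and $\tx(A^\top\ty^k,\Bxi^k)$, stays in the ball whose radius dictates the stated $R_f$) and that the aggregate stochastic-plus-bias term is $O(R_0^2)$. The self-reference --- needing bounded iterates to tame the noise, yet tamed noise to bound the iterates --- is resolved inside that lemma by the coarse $(2(1+ud))^l$ bootstrap feeding the Bernstein-type concentration of Lemma~\ref{lem:jin_corollary}.

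With the radius controlled I would run the gap estimate: apply Lemma~\ref{lem:psi_y_N_bound_biased_case} once more with $z^0 = 0$ and $z = -2R_y\,A\tx^N/\|A\tx^N\|_2$ (norm $2R_y$, anti-aligned with $A\tx^N$). After the primal conversion and absorbing the noise bounds from the previous step, this yields
\[
A_N\bigl(\psi(y^N) + f(\tx^N) + 2R_y\|A\tx^N\|_2\bigr) \le 2R_y^2 + (\text{noise}),
\]
which is the extended estimate \eqref{eq:SPDSTM_non_str_cvx_main_res_biased_appendix}. To finish I would fix the batch sizes $r_k$ and the smallness thresholds on $\e$ and $\delta$ so that each noise contribution is an $O(R_y^2)$ term absorbed into the right-hand side, and then invoke weak duality $\psi(y^N) \ge \psi(y^*) = -f(x^*)$ together with the lower bound $f(\tx^N) - f(x^*) \ge \la y^*, A\tx^N\ra \ge -R_y\|A\tx^N\|_2$ coming from \eqref{eq:duality_basic_relations}. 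The latter converts $f(\tx^N) - f(x^*) + 2R_y\|A\tx^N\|_2 \le \e$ into $R_y\|A\tx^N\|_2 \le \e$, i.e.\ feasibility $\|A\tx^N\|_2 \le \e/R_y$, and the functional guarantee \eqref{eq:SPDSTM_non_str_cvx_guarantee} then follows.

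The iteration and oracle counts are the last, routine step. From Lemma~\ref{lem:alpha_estimate} (with $\tL = 2L_\psi$) one has $A_N = \Omega(N^2/L_\psi)$, so $\psi(y^N)+f(\tx^N)\le\e$ once $A_N \gtrsim R_y^2/\e$, i.e.\ $N = \widetilde{O}\bigl(\sqrt{L_\psi R_y^2/\e}\bigr)$; inserting $L_\psi = \lambda_{\max}(A^\top A)/\mu$ and the bound $R_y^2 \le M_f^2/\lambda_{\min}^+(A^\top A)$ of \eqref{R} yields the announced iteration complexity. Summing $r_k = \max\{1,\sigma_\psi^2\widetilde\alpha_k\ln(N/\beta)/(\hat C\e)\}$ over $k$ and using $\sigma_\psi^2 = \lambda_{\max}(A^\top A)\sigma_x^2$, $\widetilde\alpha_k = (k+1)/(2\tL)$ produces the $\max\{N,\ (\sigma_x^2 M_f^2/\e^2)\chi(A^\top A)\}$ term of \eqref{eq:SPDSTM_oracle_calls}. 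I expect the genuine difficulty to lie entirely in the boundedness step: matching the many constants $h,u,c,A,C,G,H,d$ of Lemma~\ref{lem:tails_estimate_biased} to the coefficients emitted by Lemma~\ref{lem:psi_y_N_bound_biased_case}, and carefully unioning the several high-probability events (the per-step gradient-norm bound, the martingale deviation bound, and the two uses of the tails lemma) so that the combined statement survives with probability at least $1-4\beta$.
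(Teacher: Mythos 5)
Your proposal follows essentially the same route as the paper's proof: Lemma~\ref{lem:psi_y_N_bound_biased_case} applied once with $z=y^*$ to produce the radius recurrence resolved by Lemma~\ref{lem:tails_estimate_biased}, once with $z$ of norm $2R_y$ to produce the gap $\psi(y^N)+f(\hat x^N)+2R_y\|A\hat x^N\|_2 \le \nicefrac{O(R_y^2)}{A_N}$ via the conjugacy identity $\psi(y)-\la\nabla\psi(y),y\ra=-f(x(A^\top y))$, followed by the $L_f$-Lipschitz transfer from $\hat x^N$ to $\tx^N$ and the weak-duality/quadratic-inequality argument for feasibility. The one point needing care in your version is that your fixed $z=-2R_y A\tx^N/\|A\tx^N\|_2$ depends on the whole trajectory, so the term $\sum_k\alpha_{k+1}\la\eta^k,z\ra$ is \emph{not} covered by the tails lemma (whose vectors $a^k$ must be predictable); you must first apply Cauchy--Schwarz to reduce it to $2R_y\bigl\|\sum_k\alpha_{k+1}\eta^k\bigr\|_2$ and invoke a separate martingale deviation bound (Lemma~\ref{lem:jud_nem_large_dev}), which is exactly what the paper does by maximizing over $B_{2R_y}(0)$ and which accounts for one of the four $\beta$-events.
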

\begin{proof}
Lemma~\ref{lem:psi_y_N_bound_biased_case} states that
 \begin{eqnarray}
       A_N\psi(y^N)  &\leq& \frac{1}{2}\|\tilde{y} - z^0\|_2^2 - \frac{1}{2}\|\tilde{y} - z^N\|_2^2\notag\\
       &&\quad + \sum_{k=0}^{N-1} \alpha_{k+1} \left( \psi(\tilde{y}^{k+1}) + \la \tnabla \Psi(\tilde{y}^{k+1}, \Bxi^{k+1}), \tilde{y} - \tilde{y}^{k+1}\ra \right)  \notag \\ 
 &&\quad +  \sum_{k=0}^{N-1}A_{k}\left\la \tnabla \Psi(\tilde{y}^{k+1}, \Bxi^{k+1})  - \EE_k\left[\tnabla\Psi(\tilde{y}^{k+1},\Bxi^{k+1})\right] , y^k- \tilde{y}^{k+1} \right\ra \notag\\
 &&\quad +  \sum_{k=0}^{N-1}\frac{A_{k+1}}{2\tL}\left\|\EE_k\left[\tnabla\Psi(\tilde{y}^{k+1},\Bxi^{k+1})\right] -\tnabla \Psi(\tilde{y}^{k+1}, \Bxi^{k+1})\right\|_{2}^2\notag\\
 &&\quad + \delta\sum\limits_{k=0}^{N-1}A_k\|y^k-\tilde{y}^{k+1}\|_2 + \delta^2\sum\limits_{k=0}^{N-1}\frac{A_{k+1}}{\tL},\label{eq:stoch_primal_dual_cond_biased}
 \end{eqnarray}
for arbitrary $\tilde{y}$. By definition of $\tilde{y}^{k+1}$ we have
\begin{equation}
    \alpha_{k+1}\left(\tilde{y}^{k+1} - z^k\right) = A_k\left(y^k - \tilde{y}^{k+1}\right)\label{eq:y_blm_biased}.
\end{equation}
Using this, we add and subtract  $\sum_{k=0}^{N-1}\alpha_{k+1}\left\la \EE_k\left[\tnabla\Psi(\tilde{y}^{k+1},\Bxi^{k+1})\right], \tilde{y}^* - \tilde{y}^{k+1}\right\ra$ in \eqref{eq:stoch_primal_dual_cond_biased}, and obtain
 the following inequality by choosing $\tilde{y} = \tilde{y}^*$~--- the minimizer of $\psi(y)$:
\begin{eqnarray}
      A_N\psi(y^N)  &\leq& \frac{1}{2}\|\tilde{y}^* - z^0\|_2^2 - \frac{1}{2}\|\tilde{y}^* - z^N\|_2^2\notag\\
      &&\quad + \sum_{k=0}^{N-1} \alpha_{k+1} \left( \psi(\tilde{y}^{k+1}) + \left\la \EE_k\left[\tnabla\Psi(\tilde{y}^{k+1},\Bxi^{k+1})\right], \tilde{y}^* - \tilde{y}^{k+1}\right\ra \right)  \notag \\ 
 &&\quad+  \sum_{k=0}^{N-1}\alpha_{k+1}\left\la \tnabla \Psi(\tilde{y}^{k+1}, \Bxi^{k+1})  - \EE_k\left[\tnabla\Psi(\tilde{y}^{k+1},\Bxi^{k+1})\right], \a^k\right\ra \notag \\
 &&\quad+  \sum_{k=0}^{N-1}\alpha_{k+1}^2\left\|\tnabla \Psi(\tilde{y}^{k+1}, \Bxi^{k+1})  - \EE_k\left[\tnabla\Psi(\tilde{y}^{k+1},\Bxi^{k+1})\right]\right\|_{2}^2\notag\\
 &&\quad + \delta\sum\limits_{k=0}^{N-1}\alpha_{k+1}\|\tilde{y}^{k+1}- z^k\|_2 + \delta^2\sum\limits_{k=0}^{N-1}\frac{A_{k+1}}{\tL},\label{eq:stoch_primal_dual_cond2_biased}
\end{eqnarray}
where $\a^k = \tilde{y}^* - z^k$. From \eqref{eq:inexact_convexity} we have 
\begin{eqnarray*}
    \sum_{k=0}^{N-1} \alpha_{k+1} \left( \psi(\tilde{y}^{k+1}) + \left\la \EE_k\left[\tnabla\Psi(\tilde{y}^{k+1},\Bxi^{k+1})\right], \tilde{y}^* - \tilde{y}^{k+1}\right\ra \right)&\\
    &\hspace{-5cm}\overset{\eqref{eq:inexact_convexity}}{\le} \sum\limits_{k=0}^{N-1} \alpha_{k+1} \left( \psi(\tilde{y}^{k+1}) + \psi(\tilde{y}^{*}) - \psi(\tilde{y}^{k+1}) + \delta\|\tilde{y}^{k+1}-\tilde{y}^*\|_2 \right)\\
  &\hspace{-7.9cm}= \sum\limits_{k=0}^{N-1}\alpha_{k+1}\left(\psi(\tilde{y}^{*}) + \delta\|\tilde{y}^{k+1}-\tilde{y}^*\|_2\right)\\
  &\hspace{-7.8cm}= A_N\psi(\tilde{y}^{*}) + \delta\sum\limits_{k=0}^{N-1}\alpha_{k+1}\|\tilde{y}^{k+1} - \tilde{y}^*\|_2\\
  &\hspace{-7.7cm}\le A_N\psi(y^{N})  + \delta\sum\limits_{k=0}^{N-1}\alpha_{k+1}\|\tilde{y}^{k+1} - \tilde{y}^*\|_2
\end{eqnarray*}

From this and \eqref{eq:stoch_primal_dual_cond2_biased} we get
\begin{eqnarray}
    \frac{1}{2}\|\tilde{y}^* - z^N\|_2^2 &\overset{\eqref{eq:stoch_primal_dual_cond2_biased}}{\le}& \frac{1}{2}\|\tilde{y}^* - z^0\|_2^2 + \delta^2\sum\limits_{k=0}^{N-1}\frac{A_{k+1}}{\tL}\notag\\
    &&\quad + \delta\sum\limits_{k=0}^{N-1}\alpha_{k+1}\left(\|\tilde{y}^{k+1}-z^k\|_2 + \|\tilde{y}^{k+1} - \tilde{y}^*\|_2\right) \notag \\
    &&\quad+  \sum_{k=0}^{N-1}\alpha_{k+1}\left\la \tnabla \Psi(\tilde{y}^{k+1}, \Bxi^{k+1})  - \EE_k\left[\tnabla\Psi(\tilde{y}^{k+1},\Bxi^{k+1})\right], \a^k\right\ra \notag \\
 &&\quad+  \sum_{k=0}^{N-1}\alpha_{k+1}^2\left\|\tnabla \Psi(\tilde{y}^{k+1}, \Bxi^{k+1})  - \EE_k\left[\tnabla\Psi(\tilde{y}^{k+1},\Bxi^{k+1})\right]\right\|_{2}^2.\label{eq:stoch_primal_dual_cond3_biased}
\end{eqnarray}

Next, we introduce the sequences $\{R_k\}_{k\ge 0}$ and $\{\widetilde{R}_k\}_{k\ge 0}$ as
\begin{align*}
    R_k = \|z_k - \tilde{y}^*\|_2 \quad \text{and} \quad \widetilde{R}_k = \max\left\{\widetilde{R}_{k-1}, R_k\right\}, \widetilde{R}_0 = R_0
\end{align*}
Since in Algorithm \ref{Alg:PDSTM} we choose $z^0=0$, then $R_0 = R_y$.
One can obtain by induction that  $\forall k\geq 0$ we have $\tilde{y}^{k+1},y^k,z^k\in B_{\widetilde{R}_{k}}(\tilde{y}^*) $, where  $B_{\widetilde{R}_{k}}(\tilde{y}^*)$ is Euclidean ball with radius $\widetilde{R}_{k}$ at centre $\tilde{y}^*$. Indeed, since from lines 2 and 5 of Algorithm~\ref{Alg:PDSTM} $y_{k+1}$
is a convex combination of $z_{k+1}\in B_{R_{k+1}}(\tilde{y}^*) \subseteq B_{\widetilde{R}_{k+1}}(\tilde{y}^*)$ and $y^k\in B_{\widetilde{R}_k}(\tilde{y}^*)\subseteq B_{\widetilde{R}_{k+1}}(\tilde{y}^*)$, where we use the fact that a ball is a convex set, we get $y^{k+1}\in B_{\widetilde{R}_{k+1}}(\tilde{y}^*)$. Analogously, since from lines 2 and 3 of Algorithm~\ref{Alg:PDSTM} $\tilde{y}^{k+1} $
is a convex combination of $y^k$ and $z^k$ we have $\tilde{y}^{k+1}\in B_{\widetilde{R}_{k}}(\tilde{y}^*)$. It implies that
$$
\|\tilde{y}^{k+1}-z^k\|_2 + \|\tilde{y}^{k+1} - \tilde{y}^*\|_2 \le 2\widetilde{R}_k + \widetilde{R}_k = 3\widetilde{R}_k.
$$
Using new notation we can rewrite \eqref{eq:stoch_primal_dual_cond3_biased} as 
\begin{eqnarray}
      \frac{1}{2}R_N^2 &\le& \frac{1}{2}R_0^2 + \delta^2\sum\limits_{k=0}^{N-1}\frac{A_{k+1}}{\tL} + 3\delta\sum\limits_{k=0}^{N-1}\alpha_{k+1}\widetilde{R}_k \notag \\
    &&\quad+  \sum_{k=0}^{N-1}\alpha_{k+1}\left\la \tnabla \Psi(\tilde{y}^{k+1}, \Bxi^{k+1})  - \EE_k\left[\tnabla\Psi(\tilde{y}^{k+1},\Bxi^{k+1})\right], \a^k\right\ra \notag \\
 &&\quad+  \sum_{k=0}^{N-1}\alpha_{k+1}^2\left\|\tnabla \Psi(\tilde{y}^{k+1}, \Bxi^{k+1})  - \EE_k\left[\tnabla\Psi(\tilde{y}^{k+1},\Bxi^{k+1})\right]\right\|_{2}^2,\label{eq:radius_for_prima_dual_biased}
\end{eqnarray}
where $\|\a^k\|_2 = \|\tilde{y}^* - z^k\|_2 \le \widetilde{R}_k$. Note that \eqref{eq:radius_for_prima_dual_biased} holds for all $N \ge 1$.

Let us denote $\eta^k =  \tnabla \Psi(\tilde{y}^{k+1}, \Bxi^{k+1})  - \EE_k\left[\tnabla\Psi(\tilde{y}^{k+1},\Bxi^{k+1})\right]$. Theorem~2.1 from \cite{juditsky2008large} (see Lemma~\ref{lem:jud_nem_large_dev} in the Section~\ref{sec:aux_results}) says that $$\PP\left\{\|\eta^k\|_2 \ge \left(\sqrt{2} + \sqrt{2}\gamma\right)\sqrt{\frac{\sigma_\psi^2}{r_{k+1}}}\mid\eta^0,\ldots,\eta^{k-1} \right\} \le \exp\left(-\frac{\gamma^2}{3}\right).$$ Using this and Lemma~2 from \cite{jin2019short} (see Lemma~\ref{lem:jin_lemma_2} in the Section~\ref{sec:aux_results}) we get that $$\EE\left[\exp\left({\frac{\|\eta^k\|_2^2}{\sigma_k^2}}\right)\mid\eta^0,\ldots,\eta^{k-1}\right] \le \exp(1),$$ where $\sigma_k^2 \le \frac{\widetilde{C}\sigma_\psi^2}{r_{k+1}} \le \frac{C\varepsilon}{\widetilde{\alpha}_{k+1}\ln(\frac{N}{\delta})}$, $\widetilde{C}$ and $C = \widetilde{C}\cdot\hat{C}$ are some positive constants. From \eqref{eq:alpha_estimate} we have that $\alpha_{k+1} \le \widetilde{\alpha}_{k+1} = \frac{k+2}{2\tL}$. Moreover, $\a^k$ depends only on $\eta^0,\ldots,\eta^{k-1}$. Putting all together in \eqref{eq:radius_for_prima_dual_biased} and changing the indices we get that for all $l =1,\ldots,N$
\begin{equation*}
    \frac{1}{2}R_l^2 \le \frac{1}{2}R_0^2 + \delta^2\sum\limits_{k=0}^{N-1}\frac{A_{k+1}}{\tL} + 3\delta\sum\limits_{k=0}^{l-1}\alpha_{k+1}\widetilde{R}_k +  \sum\limits_{k=0}^{l-1}\alpha_{k+1}\la\eta^k,\a^k\ra + \sum\limits_{k=0}^{l-1}\alpha_{k+1}^2\|\eta^k\|_2^2.
\end{equation*}
Next we apply Lemma~\ref{lem:tails_estimate_biased} with the constants $A = \frac{1}{2}R_0^2 + \delta^2\sum\limits_{k=0}^{N-1}\frac{A_{k+1}}{\tL}, h = 3, u = 1, c = 1, D = \frac{1}{2\tL}, d = 1$, $\varepsilon \le \frac{H\tL R_0^2}{N^2}$ and $\delta \le \frac{G\tL R_0}{(N+1)^3}$, and get that with probability at least $1-2\beta$ the inequalities
\begin{equation}
    \widetilde{R}_l \le JR_0\label{eq:bounding_tilde_R_l_biased}
\end{equation}
and
\begin{equation}
    \sum\limits_{k=0}^{l-1}\alpha_{k+1}\la\eta^k,\a^k \ra + \sum\limits_{k=0}^{l-1}\alpha_{k+1}^2\|\eta^k\|_2^2 \le \left(12CH + \frac{3GJ}{2} + C_1\sqrt{\frac{CHJg(N)}{2}}\right)R_0^2\label{eq:stoch_terms_estimation_biased}
\end{equation}
hold for all $l=1,\ldots,N$ simultaneously, where $C_1$ is some positive constant, $g(N) = \frac{\ln\left(\frac{N}{\beta}\right) + \ln\ln\left(\frac{B}{b}\right)}{\ln\left(\frac{N}{\beta}\right)}$, $B = CHR_0^2\left(2A + 2R_0^2 + 72CHR_0^2 + \frac{9G^2\tL R_0^2}{2}\right)4^N$, $b = \sigma_0^2\widetilde{\alpha}_{1}^2R_0^2$ and $$J = \max\left\{1, C_1\sqrt{\frac{CH g(N)}{2}} + \frac{3G}{2} + \sqrt{\left(C_1\sqrt{\frac{CH g(N)}{2}} + \frac{3G}{2}\right)^2 + \frac{2A}{R_0^2} + 24CH}\right\}.
$$

To estimate the duality gap we need again refer to \eqref{eq:stoch_primal_dual_cond_biased}. Since $\tilde{y}$ is chosen arbitrary we can take the minimum in $\tilde{y}$ over the  set $B_{2R_y}(0) = \{\tilde{y}: \|\tilde{y}\|_2\leq 2R_y\}$:
\begin{eqnarray}
      A_N\psi(y^N)  &\leq&  \min_{\tilde{y} \in B_{2R_y}(0)} \Bigg\{ \frac{1}{2}\|\tilde{y} - z^0\|_2^2\notag\\
      &&\qquad\qquad\qquad + \sum_{k=0}^{N-1} \alpha_{k+1} \left( \psi(\tilde{y}^{k+1}) + \left\la \tnabla \Psi(\tilde{y}^{k+1}, \Bxi^{k+1}), \tilde{y} - \tilde{y}^{k+1}\right\ra \right) \Bigg\} \notag \\ 
  &&\quad +  \sum_{k=0}^{N-1}A_{k}\left\la \tnabla \Psi(\tilde{y}^{k+1}, \Bxi^{k+1})  - \EE_k\left[\tnabla\Psi(\tilde{y}^{k+1},\Bxi^{k+1})\right] , y^k- \tilde{y}^{k+1} \right\ra \notag\\
 &&\quad +  \sum_{k=0}^{N-1}\frac{A_{k+1}}{2\tL}\left\|\EE_k\left[\tnabla\Psi(\tilde{y}^{k+1},\Bxi^{k+1})\right] -\tnabla \Psi(\tilde{y}^{k+1}, \Bxi^{k+1})\right\|_{2}^2\notag\\
 &&\quad + \delta\sum\limits_{k=0}^{N-1}A_k\|y^k-\tilde{y}^{k+1}\|_2 + \delta^2\sum\limits_{k=0}^{N-1}\frac{A_{k+1}}{\tL}\notag\\
       &\leq& 2R_y^2 + \min_{\tilde{y} \in B_{2R_y}(0)}  \sum_{k=0}^{N-1} \alpha_{k+1} \left( \psi(\tilde{y}^{k+1}) + \left\la \tnabla \Psi(\tilde{y}^{k+1}, \Bxi^{k+1}), \tilde{y} - \tilde{y}^{k+1}\right\ra \right)  \notag \\ 
  &&\quad +  \sum_{k=0}^{N-1}A_{k}\left\la \tnabla \Psi(\tilde{y}^{k+1}, \Bxi^{k+1})  - \EE_k\left[\tnabla\Psi(\tilde{y}^{k+1},\Bxi^{k+1})\right] , y^k- \tilde{y}^{k+1} \right\ra \notag\\
 &&\quad +  \sum_{k=0}^{N-1}\frac{A_{k+1}}{2\tL}\left\|\EE_k\left[\tnabla\Psi(\tilde{y}^{k+1},\Bxi^{k+1})\right] -\tnabla \Psi(\tilde{y}^{k+1}, \Bxi^{k+1})\right\|_{2}^2\notag\\
 &&\quad + \delta\sum\limits_{k=0}^{N-1}A_k\|y^k-\tilde{y}^{k+1}\|_2 + \delta^2\sum\limits_{k=0}^{N-1}\frac{A_{k+1}}{\tL},\label{eq:stoch_primal_dual_cond_min_biased}
\end{eqnarray}
 where we also used $\frac{1}{2}\|\tilde{y}-z^N\|^2_2\geq 0$ and $z^0=0$. By adding and subtracting\\ $\sum_{k=0}^{N-1}\alpha_{k+1}\left\la \EE_k\left[\tnabla\Psi(\tilde{y}^{k+1},\Bxi^{k+1})\right], \tilde{y} - \tilde{y}^{k+1}\right\ra$ under the minimum in \eqref{eq:stoch_primal_dual_cond_min_biased} we obtain
\begin{eqnarray}
      \min_{\tilde{y} \in B_{2R_y}(0)}  \sum_{k=0}^{N-1} \alpha_{k+1} \left( \psi(\tilde{y}^{k+1}) + \left\la \tnabla \Psi(\tilde{y}^{k+1}, \Bxi^{k+1}), \tilde{y} - \tilde{y}^{k+1}\right\ra \right) &
       \notag \\
      &\hspace{-8cm}\le\min\limits_{\tilde{y} \in B_{2R_y}(0)}  \sum\limits_{k=0}^{N-1} \alpha_{k+1} \left( \psi(\tilde{y}^{k+1}) + \left\la \EE_k\left[\tnabla\Psi(\tilde{y}^{k+1},\Bxi^{k+1})\right], \tilde{y} - \tilde{y}^{k+1}\right\ra \right)\notag\\
      &\hspace{-7cm}+ \max\limits_{\tilde{y}\in B_{2R_y}(0)}\sum\limits_{k=0}^{N-1} \alpha_{k+1}\left\la\tnabla \Psi(\tilde{y}^{k+1}, \Bxi^{k+1})  - \EE_k\left[\tnabla\Psi(\tilde{y}^{k+1},\Bxi^{k+1})\right], \tilde{y}\right\ra\notag\\
      &\hspace{-7.6cm}+ \sum\limits_{k=0}^{N-1} \alpha_{k+1}\left\la\tnabla \Psi(\tilde{y}^{k+1}, \Bxi^{k+1})  - \EE_k\left[\tnabla\Psi(\tilde{y}^{k+1},\Bxi^{k+1})\right], -\tilde{y}^{k+1}\right\ra\notag.
\end{eqnarray}
Since $-\tilde{y}^*\in B_{2R_y}(0)$ we can bound the last term in the previous inequality as follows
\begin{eqnarray*}
    \sum\limits_{k=0}^{N-1} \alpha_{k+1}\left\la\tnabla \Psi(\tilde{y}^{k+1}, \Bxi^{k+1})  - \EE_k\left[\tnabla\Psi(\tilde{y}^{k+1},\Bxi^{k+1})\right], -\tilde{y}^{k+1}\right\ra &
      \notag \\
      &\hspace{-7.5cm}= \sum\limits_{k=0}^{N-1} \alpha_{k+1}\left\la\tnabla \Psi(\tilde{y}^{k+1}, \Bxi^{k+1})  - \EE_k\left[\tnabla\Psi(\tilde{y}^{k+1},\Bxi^{k+1})\right], \tilde{y}^*-\tilde{y}^{k+1}\right\ra\notag\\
      &\hspace{-7.5cm}\quad + \sum\limits_{k=0}^{N-1} \alpha_{k+1}\left\la\tnabla \Psi(\tilde{y}^{k+1}, \Bxi^{k+1})  - \EE_k\left[\tnabla\Psi(\tilde{y}^{k+1},\Bxi^{k+1})\right], -\tilde{y}^{*}\right\ra\\
      &\hspace{-7.5cm}\le \sum\limits_{k=0}^{N-1} \alpha_{k+1}\left\la\tnabla \Psi(\tilde{y}^{k+1}, \Bxi^{k+1})  - \EE_k\left[\tnabla\Psi(\tilde{y}^{k+1},\Bxi^{k+1})\right], \tilde{y}^*-\tilde{y}^{k+1}\right\ra \\
      &\hspace{-6.35cm}\quad + \max\limits_{\tilde{y}\in B_{2R_y}(0)}\sum\limits_{k=0}^{N-1} \alpha_{k+1}\left\la\tnabla \Psi(\tilde{y}^{k+1}, \Bxi^{k+1})  - \EE_k\left[\tnabla\Psi(\tilde{y}^{k+1},\Bxi^{k+1})\right], \tilde{y}\right\ra.
\end{eqnarray*}
 Putting all together in \eqref{eq:stoch_primal_dual_cond_min_biased} and using \eqref{eq:y_blm_biased} and line 2 from Algorithm~\ref{Alg:PDSTM} we get
\begin{eqnarray}
    A_N\psi(y^N)  &\leq&  2R_y^2 + \min_{\tilde{y} \in B_{2R_y}(0)}  \sum_{k=0}^{N-1} \alpha_{k+1} \left( \psi(\tilde{y}^{k+1}) + \left\la \EE_k\left[\tnabla\Psi(\tilde{y}^{k+1},\Bxi^{k+1})\right] , \tilde{y} - \tilde{y}^{k+1}\right\ra \right)  \notag \\ 
    &&\quad+ 2\max\limits_{\tilde{y}\in B_{2R_y}(0)}\sum_{k=0}^{N-1} \alpha_{k+1}\left\la\tnabla \Psi(\tilde{y}^{k+1}, \Bxi^{k+1}) - \EE_k\left[\tnabla\Psi(\tilde{y}^{k+1},\Bxi^{k+1})\right], \tilde{y}\right\ra\notag\\
    &&\quad+  \sum_{k=0}^{N-1}\alpha_{k+1}\left\la \tnabla \Psi(\tilde{y}^{k+1}, \Bxi^{k+1})-\EE_k\left[\tnabla\Psi(\tilde{y}^{k+1},\Bxi^{k+1})\right], \a^k\right\ra\notag\\ 
    &&\quad+  \sum_{k=0}^{N-1}\alpha_{k+1}^2\left\|\tnabla \Psi(\tilde{y}^{k+1}, \Bxi^{k+1})-\EE_k\left[\tnabla\Psi(\tilde{y}^{k+1},\Bxi^{k+1})\right]\right\|_{2}^2\notag\\
    &&\quad + \delta\sum\limits_{k=0}^{N-1}\alpha_{k+1}\|\tilde{y}^{k+1}-z^k\|_2 + \delta^2\sum\limits_{k=0}^{N-1}\frac{A_{k+1}}{\tL},\label{eq:stoch_primal_dual_cond_min2_biased}
\end{eqnarray}
where $\a^k = \tilde{y}^* - z^k$. From \eqref{eq:bounding_tilde_R_l_biased} and \eqref{eq:stoch_terms_estimation_biased} we have that with probability at least $1-2\beta$ the following inequality holds:
\begin{eqnarray}
    A_N\psi(y^N)  &\leq& \min_{\tilde{y} \in B_{2R_y}(0)}  \sum_{k=0}^{N-1} \alpha_{k+1} \left( \psi(\tilde{y}^{k+1}) + \left\la \EE_k\left[\tnabla\Psi(\tilde{y}^{k+1},\Bxi^{k+1})\right] , \tilde{y} - \tilde{y}^{k+1}\right\ra \right)  \notag \\ 
    &&\quad+ 2\max\limits_{\tilde{y}\in B_{2R_y}(0)}\sum_{k=0}^{N-1} \alpha_{k+1}\left\la\tnabla \Psi(\tilde{y}^{k+1}, \Bxi^{k+1}) - \EE_k\left[\tnabla\Psi(\tilde{y}^{k+1},\Bxi^{k+1})\right], \tilde{y}\right\ra\notag\\
    &&\quad +  2R_y^2 +  \left(12CH + \frac{5GJ}{2} + \frac{G^2}{2(N+1)} + C_1\sqrt{\frac{CHJg(N)}{2}}\right)R_0^2,\label{eq:stoch_primal_dual_cond_min3_biased}
\end{eqnarray}
where we used that $A_{k+1} \le \frac{(k+2)^2}{2\tL}$ due to $\alpha_{k+1} \le \frac{k+2}{2\tL}$ and
\begin{eqnarray*}
    \delta\sum\limits_{k=0}^{N-1}\alpha_{k+1}\|\tilde{y}^{k+1}-z^k\|_2 &\le& 2\delta J R_0 \sum\limits_{k=0}^{N-1} \alpha_{k+1} \le \frac{2G\tL R_0^2J}{(N+1)^2} \frac{1}{2\tL} \sum\limits_{k=0}^{N-1} (k+2) \le GJR_0^2,\\
    \delta^2\sum\limits_{k=0}^{N-1}\frac{A_{k+1}}{\tL} &\le& \frac{G^2\tL^2R_0^2}{(N+1)^4} \sum\limits_{k=0}^{N-1} \frac{(k+2)^2}{2\tL^2} \le \frac{G^2R_0^2}{2(N+1)}
\end{eqnarray*}
By the definition of the norm we get
\begin{eqnarray}
      \max\limits_{\tilde{y}\in B_{2R_y}(0)}\sum_{k=0}^{N-1} \alpha_{k+1}\left\la\tnabla \Psi(\tilde{y}^{k+1}, \Bxi^{k+1}) - \EE_k\left[\tnabla\Psi(\tilde{y}^{k+1},\Bxi^{k+1})\right], \tilde{y}\right\ra&\notag\\
      &\hspace{-7cm}\leq  2R_{y}\left\|\sum\limits_{k=0}^{N-1}\alpha_{k+1}\left(\tnabla \Psi(\tilde{y}^{k+1}, \Bxi^{k+1}) - \EE_k\left[\tnabla\Psi(\tilde{y}^{k+1},\Bxi^{k+1})\right)\right]\right\|_2.\label{eq:maximum_estimate_first_step_biased}
\end{eqnarray}
Next we apply Lemma~\ref{lem:jud_nem_large_dev} to the right-hand side of the previous inequality and get
\begin{eqnarray*}
    \PP\Bigg\{\left\|\sum_{k=0}^{N-1}\alpha_{k+1}\left(\tnabla \Psi(\tilde{y}^{k+1}, \Bxi^{k+1}) - \EE_k\left[\tnabla\Psi(\tilde{y}^{k+1},\Bxi^{k+1})\right]\right)\right\|_2&\\&\hspace{-3cm} \ge \left(\sqrt{2} + \sqrt{2}\gamma\right)\sqrt{\sum\limits_{k=0}^{N-1}\alpha_{k+1}^2\frac{\sigma_\psi^2}{r_{k+1}}}\Bigg\}
    \le \exp\left(-\frac{\gamma^2}{3}\right).
\end{eqnarray*}
Since $N^2\le \frac{H\tL R_0^2}{\e}$ and $r_k = \Omega\left(\max\left\{1,\frac{ \sigma^2_\psi {\alpha}_k \ln(N/\beta)}{\e}\right\}\right)$ one can choose such $C_2 > 0$ that $\frac{\sigma_\psi^2}{r_k} \le \frac{C_2\varepsilon}{\alpha_k\ln\left(\frac{N}{\beta}\right)} \le \frac{H\tL C_2R_0^2}{\alpha_k N^2\ln\left(\frac{N}{\beta}\right)}$. Moreover, let us choose $\gamma$ such that $\exp\left(-\frac{\gamma^2}{3}\right) = \beta ~ \Longrightarrow ~ \gamma = \sqrt{3\ln\frac{1}{\beta}}$. From this we get that with probability at least $1-\beta$
\begin{eqnarray}
    \left\|\sum_{k=0}^{N-1}\alpha_{k+1}\left(\tnabla \Psi(\tilde{y}^{k+1}, \Bxi^{k+1}) - \EE_k\left[\tnabla\Psi(\tilde{y}^{k+1},\Bxi^{k+1})\right]\right)\right\|_2 &\notag\\
    &\hspace{-10.4cm}\le \sqrt{2}\left(1 + \sqrt{\ln\frac{1}{\beta}}\right)R_y\sqrt{\frac{H\tL C_2}{\ln\left(\frac{N}{\beta}\right)}}\sqrt{\sum\limits_{k=0}^{N-1}\frac{\alpha_{k+1}}{N^2}}\notag\\
    &\hspace{-7cm}\overset{\eqref{eq:alpha_estimate}}{\le} 2\sqrt{2}R_y\sqrt{H\tL C_2}\sqrt{\sum\limits_{k=0}^{N-1}\frac{k+2}{2\tL N^2}}
    = 2R_y\sqrt{HC_2}\sqrt{\frac{N(N+3)}{N^2}} \le 4R_y\sqrt{HC_2}\label{eq:maximum_estimate_biased}.
\end{eqnarray}
In the above inequality we used the fact that $R_y = R_0$. Putting all together and using union bound we get that with probability at least $1-3\beta$
\begin{eqnarray}
    A_N\psi(y^N)&\notag\\
    &\hspace{-1.5cm}\overset{\eqref{eq:stoch_primal_dual_cond_min3_biased}+\eqref{eq:maximum_estimate_first_step_biased}+\eqref{eq:maximum_estimate_biased}}{\le} \min_{\tilde{y} \in B_{2R_y}(0)}  \sum_{k=0}^{N-1} \alpha_{k+1} \left( \psi(\tilde{y}^{k+1}) + \left\la \EE_k\left[\tnabla\Psi(\tilde{y}^{k+1},\Bxi^{k+1})\right], \tilde{y} - \tilde{y}^{k+1}\right\ra \right)  \notag \\ 
    &\hspace{1cm}+\left(8 \sqrt{HC_2}+  2 + 12CH + \frac{5GJ}{2} + \frac{G^2}{2(N+1)^3} \right.\notag\\
    &\hspace{6.4cm}\left. + C_1\sqrt{\frac{CHJg(N)}{2}}\right)R_y^2\notag\\
    &\hspace{-3.2cm}\le\min_{\tilde{y} \in B_{2R_y}(0)}  \sum_{k=0}^{N-1} \alpha_{k+1} \left( \psi(\tilde{y}^{k+1}) + \left\la \nabla \psi(\tilde{y}^{k+1}), \tilde{y} - \tilde{y}^{k+1}\right\ra \right)  \notag \\ 
    &\hspace{-1cm} +\max_{\tilde{y} \in B_{2R_y}(0)} \sum\limits_{k=0}^{N-1}\alpha_{k+1}\left\la \EE_k\left[\tnabla\Psi(\tilde{y}^{k+1},\Bxi^{k+1})\right] - \nabla \psi(\tilde{y}^{k+1}), \tilde{y} - \tilde{y}^{k+1}\right\ra\notag\\
    &\hspace{-2.4cm}+\left(8 \sqrt{HC_2}+  2 + 12CH + \frac{5GJ}{2} + \frac{G^2}{2(N+1)} + C_1\sqrt{\frac{CHJg(N)}{2}}\right)R_y^2\label{eq:dual_func_bound_biased}
\end{eqnarray}
First of all, we notice that in the same probabilistic event we have $\|\tilde{y}^{k+1} - \tilde{y}^*\|_2 \le \widetilde{R}_k \overset{\eqref{eq:bounding_tilde_R_l_biased}}{\le}JR_0$. Therefore, in the same probabilistic event we get that $\|\tilde{y}^{k+1} - \tilde{y}\|_2 \le \|\tilde{y}^{k+1} - \tilde{y}^*\|_2 + \|\tilde{y}^* - \tilde{y}\|_2 \le (J+4)R_y$ for all $\tilde{y}\in B_{2R_y}(0)$, where we used $R_0 = R_y$. It implies that in the same probabilistic event we have
\begin{eqnarray*}
    \max_{\tilde{y} \in B_{2R_y}(0)} \sum\limits_{k=0}^{N-1}\alpha_{k+1}\left\la \EE_k\left[\tnabla\Psi(\tilde{y}^{k+1},\Bxi^{k+1})\right] - \nabla \psi(\tilde{y}^{k+1}), \tilde{y} - \tilde{y}^{k+1}\right\ra &\\
    &\hspace{-7cm}\le \max\limits_{\tilde{y} \in B_{2R_y}(0)} \sum\limits_{k=0}^{N-1}\alpha_{k+1}\left\| \EE_k\left[\tnabla\Psi(\tilde{y}^{k+1},\Bxi^{k+1})\right] - \nabla \psi(\tilde{y}^{k+1})\right\|_2\cdot \left\|\tilde{y} - \tilde{y}^{k+1}\right\|_2\\
    &\hspace{-8.2cm}\overset{\eqref{eq:bias_stoch_grad}}{\le} \sum\limits_{k=0}^{N-1}\alpha_{k+1}\delta (J+4)R_y \le \sum\limits_{k=0}^{N-1}\frac{k+2}{2\tL} \frac{G\tL R_0}{(N+1)^2}(J+4)R_y \le \frac{G(J+4)R_y^2}{2}.
\end{eqnarray*}
Secondly, using the same trick as in the proof of Theorem~1 from \cite{chernov2016fast} we get that for arbitrary point $y$
\begin{eqnarray*}
    \psi(y) - \la\nabla \psi(y), y \ra &\overset{\eqref{eq:dual_function} + \eqref{eq:gradient_dual_function}}{=}& \la y, Ax(A^\top y)\ra -  f\left(x(A^\top y)\right) - \la A x(A^\top y), y\ra = -f(x(A^\top y)).
\end{eqnarray*} 
Using these relations in \eqref{eq:dual_func_bound_biased} we obtain that with probability at least $1-3\beta$
\begin{eqnarray}
    A_N\psi(y^N)  &\le& -\sum\limits_{k=0}^{N-1}\alpha_{k+1}f(x(A^\top \tilde{y}^{k+1})) + \min_{\tilde{y} \in B_{2R_y}(0)}  \sum_{k=0}^{N-1} \alpha_{k+1} \la \nabla \psi(\tilde{y}^{k+1}), \tilde{y}\ra\notag \\ 
    &&\quad+\left(8 \sqrt{HC_2}+  2 + 12CH + \frac{G(6J+4)}{2} + \frac{G^2}{2(N+1)} + \right.\notag\\
    &&\quad\quad\quad\quad\quad\quad\quad\quad\quad\quad\quad\quad\quad\quad\quad\quad\left. + C_1\sqrt{\frac{CHJg(N)}{2}}\right)R_y^2.\label{eq:dual_func_bound2_biased}
\end{eqnarray}
To bound the first term in \eqref{eq:dual_func_bound2_biased} we apply convexity of $f$ and introduce the virtual primal iterate $\hat{x}^N = \frac{1}{A_N}\sum\limits_{k=0}^{N-1} \alpha_{k+1} x(A^\top\tilde{y}^{k+1})$:
\begin{eqnarray*}
    -\sum\limits_{k=0}^{N-1}\alpha_{k+1}f(x(A^\top \tilde{y}^{k+1})) = -A_N\sum\limits_{k=0}^{N-1}\frac{\alpha_{k+1}}{A_N}f(x(A^\top \tilde{y}^{k+1})) \le -A_Nf(\hat{x}^N).
\end{eqnarray*}
In order to bound the second term in the right-hand side of the previous inequality we use the definition of the norm we have
\begin{eqnarray*}
    \min_{\tilde{y} \in B_{2R_y}(0)}  \sum_{k=0}^{N-1} \alpha_{k+1} \la \nabla \psi(\tilde{y}^{k+1}), \tilde{y}\ra &=& \min_{\tilde{y} \in B_{2R_y}(0)}  \left\la \sum_{k=0}^{N-1} \alpha_{k+1}\nabla \psi(\tilde{y}^{k+1}), \tilde{y}\right\ra\\
    &=& -2R_y\left\|\sum_{k=0}^{N-1} \alpha_{k+1}\nabla \psi(\tilde{y}^{k+1})\right\|_2\\
    &=&-2R_yA_N\|A\hat{x}^N\|_2,
\end{eqnarray*}
where we used equality \eqref{eq:gradient_dual_function}. Putting all together we obtain that with probability at least $1 - 3\beta$
\begin{eqnarray}
    \psi(y^N) + f(\hat{x}^N) + 2R_y\|A\hat{x}^N\|_2  &\le& \frac{R_y^2}{A_N}\left(8 \sqrt{HC_2}+  2 + 12CH + \frac{G(6J+4)}{2} \right.\notag\\
    &&\quad\quad\quad\quad\quad\left. + \frac{G^2}{2(N+1)} + C_1\sqrt{\frac{CHJg(N)}{2}}\right).\label{eq:pr_dual_almost_done}
\end{eqnarray}
Lemma~\ref{lem:jud_nem_large_dev} implies that for all $\gamma > 0$
\begin{eqnarray*}
    \PP\Bigg\{\left\|\sum_{k=0}^{N-1}\alpha_{k+1}\left(\tx(A^\top\tilde{y}^{k+1},\Bxi^{k+1}) - \EE\left[\tx(A^\top\tilde{y}^{k+1},\Bxi^{k+1})\mid\tilde{y}^{k+1}\right]\right)\right\|_2 &\\
    &\hspace{-4cm}\ge (\sqrt{2} + \sqrt{2}\gamma)\sqrt{\sum\limits_{k=0}^{N-1}\frac{\alpha_{k+1}^2\sigma_{x}^2}{r_{k+1}}}\Bigg\}\le \exp\left(-\frac{\gamma^2}{3}\right).
\end{eqnarray*}
Using this inequality with $\gamma = \sqrt{3\ln\frac{1}{\beta}}$ and $r_k \ge \frac{\sigma_\psi^2\alpha_k\ln\frac{N}{\beta}}{C_2\varepsilon}$ we get that with probability at least $1 - \beta$
\begin{eqnarray}
    \|\tx^N - \hat x^N\|_2 &=& \frac{1}{A_N}\left\|\sum\limits_{k=0}^{N-1}\alpha_{k+1}\left(\tx(A^\top\tilde{y}^{k+1},\Bxi^{k+1}) - x(A^\top\tilde{y}^{k+1})\right)\right\|_2\notag\\
    &\le& \frac{1}{A_N}\left\|\sum\limits_{k=0}^{N-1}\alpha_{k+1}\left(\tx(A^\top\tilde{y}^{k+1},\Bxi^{k+1}) - \EE\left[\tx(A^\top\tilde{y}^{k+1},\Bxi^{k+1})\mid\tilde{y}^{k+1}\right]\right)\right\|_2\notag\\
    &&\quad + \frac{1}{A_N}\left\|\sum\limits_{k=0}^{N-1}\alpha_{k+1}\left(\EE\left[\tx(A^\top\tilde{y}^{k+1},\Bxi^{k+1})\mid\tilde{y}^{k+1}\right] - x(A^\top\tilde{y}^{k+1})\right)\right\|_2\notag\\
    &\le& \frac{\sqrt{2}}{A_N}\left(1 + \sqrt{3\ln\frac{1}{\beta}}\right)\sqrt{\sum\limits_{k=0}^{N-1}\frac{\alpha_{k+1}^2\sigma_x^2}{r_{k+1}^2}}\notag\\
    &&\quad + \frac{1}{A_N}\sum\limits_{k=0}^{N-1}\alpha_{k+1}\left\|\EE\left[\tx(A^\top\tilde{y}^{k+1},\Bxi^{k+1})\mid\tilde{y}^{k+1}\right] - x(A^\top\tilde{y}^{k+1})\right\|_2\notag\\
    &\overset{\eqref{eq:noise_level_x}}{\le}& \frac{2}{A_N}\sqrt{6\ln\frac{1}{\beta}}\frac{1}{\sqrt{\ln\frac{N}{\beta}}}\sqrt{\sum\limits_{k=0}^{N-1}\frac{C_2\alpha_{k+1}\varepsilon}{\lambda_{\max}(A^\top A)}} + \frac{1}{A_N}\sum\limits_{k=0}^{N-1}\alpha_{k+1}\delta_y\notag\\
    &\le& \frac{2}{A_N}\sqrt{\frac{6C_2}{\lambda_{\max}(A^\top A)}}\sqrt{\sum\limits_{k=0}^{N-1}\frac{(k+2)H\tL R_y^2}{2\tL N^2}}\notag\\
    &&\quad+ \frac{1}{A_N}\sum\limits_{k=0}^{N-1}\frac{k+2}{2\tL}\cdot\frac{G\tL R_y}{(N+1)^2\sqrt{\lambda_{\max}(A^\top A)}}\notag\\
    &\le& \frac{2R_y}{A_N}\left(\sqrt{\frac{6C_2H}{\lambda_{\max}(A^\top A)}} + \frac{G}{4\sqrt{\lambda_{\max}(A^\top A)}}\right).\label{eq:pr_dual_Lend_of_the_proof_3}
\end{eqnarray}
It implies that with probability at least $1 - \beta$
\begin{eqnarray}
    \|A \tx^N - A \hat x^N\|_2 &\le& \|A\|_2 \cdot \|\tx^N - \hat x^N\|_2\notag\\
    &\overset{\eqref{eq:pr_dual_Lend_of_the_proof_3}}{\le}& \sqrt{\lambda_{\max}(A^\top A)}\frac{2R_y}{A_N}\left(\sqrt{\frac{6C_2H}{\lambda_{\max}(A^\top A)}} + \frac{G}{4\sqrt{\lambda_{\max}(A^\top A)}}\right)\notag\\
    &=& \frac{R_y}{2A_N}\left(\sqrt{96C_2H} + G\right)\label{eq:pr_dual_Lend_of_the_proof_4}
\end{eqnarray}
and due to triangle inequality with probability $\ge 1 - \beta$
\begin{eqnarray}
    2R_y \|A\hat x^N\|_2 &\ge& 2R_y \|A \tx^N\|_2 - 2R_y A_N\|A\hat x^N - A \tx^N\|_2\notag\\
    &\overset{\eqref{eq:pr_dual_Lend_of_the_proof_4}}{\ge}& 2R_y \|A \tx^N\|_2 - \frac{R_y^2\left(\sqrt{96C_2H} + G\right)}{A_N}.\label{eq:pr_dual_Lend_of_the_proof_5}
\end{eqnarray}

The next step is in applying Lipschitz continuity of $f$ on $B_{R_f}(0)$. Recall that $$x(y) \eqdef \argmax_{x\in \R^n}\left\{\la y, x\ra - f(x)\right\}$$ and due to Demyanov-Danskin theorem $x(y) = \nabla\varphi(y)$. Together with $L_\varphi$-smoothness of $\varphi$ it implies that
\begin{eqnarray}
    \|x(A^\top\tilde{y}^{k+1})\|_2 &=& \|\nabla \varphi(A^\top\tilde{y}^{k+1})\|_2 \le \|\nabla \varphi(A^\top\tilde{y}^{k+1}) - \nabla\varphi(A^\top y^{*})\|_2 + \|\nabla\varphi(A^\top y^{*})\|_2\notag\\
    &\le& L_\varphi\|A^\top\tilde{y}^{k+1}-A^\top y^{*}\|_2 + \|x(A^\top y^{*})\|_2\notag\\
    &\le& \frac{\sqrt{\lambda_{\max}(A^\top A)}}{\mu}\|\tilde{y}^{k+1} - y^*\|_2 + R_x.\notag
\end{eqnarray}
From this and \eqref{eq:bounding_tilde_R_l_biased} we get that with probability at least $1-2\beta$ the inequality
\begin{eqnarray}
    \|x(A^\top\tilde{y}^{k+1})\|_2 &\overset{\eqref{eq:bounding_tilde_R_l_biased}}{\le}&  \left(\frac{\sqrt{\lambda_{\max}(A^\top A)}J}{\mu} + \frac{R_x}{R_y}\right)R_y\label{eq:pr_dual_Lend_of_the_proof_6}
\end{eqnarray}
holds for all $k = 0,1,2,\ldots, N-1$ simultaneously since $\tilde{y}^{k+1}\in B_{R_k}(y^*) \subseteq B_{\widetilde{R}_{k+1}}(y^*)$. Using the convexity of the norm we get that with probability at least $1-2\beta$
\begin{eqnarray}
    \|\hat x^N\|_2 \le \frac{1}{A_N}\sum\limits_{k=0}^{N-1}\alpha_{k+1}\|x(A^\top \tilde{y}^{k+1})\|_2 \overset{\eqref{eq:pr_dual_Lend_of_the_proof_6}}{\le} \left(\frac{\sqrt{\lambda_{\max}(A^\top A)}J}{\mu} + \frac{R_x}{R_y}\right)R_y. \label{eq:pr_dual_Lend_of_the_proof_7}
\end{eqnarray}
We notice that the last inequality lies in the same probability event when \eqref{eq:bounding_tilde_R_l_biased} holds.

Consider the probability event $E = \{\text{inequalities } \eqref{eq:pr_dual_almost_done}-\eqref{eq:pr_dual_Lend_of_the_proof_7} \text{ hold simultaneously}\}$. Using union bound we get that $\PP\{E\} \ge 1 - 4\beta$. Combining \eqref{eq:pr_dual_Lend_of_the_proof_3} and \eqref{eq:pr_dual_Lend_of_the_proof_7} we get that inequality
\begin{eqnarray}
    \|\tx^N\|_2 &\le& \|\tx^N-\hat x^N\|_2 + \|\hat x^N\|_2 \notag\\
    &\le&\left(\frac{\left(\sqrt{96C_2H} + G\right)}{2A_N\sqrt{\lambda_{\max}(A^\top A)}} + \frac{\sqrt{\lambda_{\max}(A^\top A)}J}{\mu} + \frac{R_x}{R_y}\right)R_y \label{eq:pr_dual_Lend_of_the_proof_8}
\end{eqnarray}
lies in the event $E$. From this we can obtain a lower bound for $R_f$: $$R_f \ge \left(\frac{\left(\sqrt{96C_2H} + G\right)}{2A_N\sqrt{\lambda_{\max}(A^\top A)}} + \frac{\sqrt{\lambda_{\max}(A^\top A)}J}{\mu} + \frac{R_x}{R_y}\right)R_y.$$ Then we get that the fact that points $\tx^N$ and $\hat x^N$ lie in $B_{R_f}(0)$ is a consequence of $E$. Therefore, we can apply Lipschitz-continuity of $f$ for the points $\tx^N$ and $\hat x^N$ and get that inequalities
\begin{eqnarray}
    |f(\hat x^N) - f(\tx^N)| \le L_f\|\hat x^N - \tx^N\|_2 \overset{\eqref{eq:pr_dual_Lend_of_the_proof_3}}{\le} \frac{L_fR_y\left(\sqrt{96C_2H} + G\right)}{2A_N\sqrt{\lambda_{\max}(A^\top A)}}\label{eq:pr_dual_Lend_of_the_proof_9}
\end{eqnarray}
and
\begin{eqnarray}
    f(\hat x^N) = f(\tx^N) + \left(f(\hat x^N) - f(\tx^N)\right) \overset{\eqref{eq:pr_dual_Lend_of_the_proof_9}}{\ge} f(\tx^N) - \frac{L_fR_y\left(\sqrt{96C_2H} + G\right)}{2A_N\sqrt{\lambda_{\max}(A^\top A)}}\label{eq:pr_dual_Lend_of_the_proof_10}
\end{eqnarray}
also lie in the event $E$. It remains to use inequalities \eqref{eq:pr_dual_Lend_of_the_proof_5} and \eqref{eq:pr_dual_Lend_of_the_proof_10} to bound first and second terms in the right hand side of inequality \eqref{eq:pr_dual_almost_done} and obtain that with probability at least $1-4\beta$
\begin{eqnarray}
    \psi(y^N) + f(\tx^N) +2R_y\|A\tx^N\|_2 &\le& \frac{R_y^2}{A_N}\left(8 \sqrt{HC_2}+  2 + 12CH + \frac{G(6J+4)}{2} \right.\notag\\
    &&\quad\quad\quad\left. + \frac{L_f\left(\sqrt{96C_2H} + G\right)}{2R_y\sqrt{\lambda_{\max}(A^\top A)}} + \frac{G^2}{2(N+1)}\right.\notag\\
    &&\quad\quad\quad\left. + C_1\sqrt{\frac{CHJg(N)}{2}} + \sqrt{96C_2H} + G\right).\label{eq:pr_dual_Lend_of_the_proof_11}
\end{eqnarray}
Using that $A_N$ grows as $\Omega\left(\frac{N^2}{\tL}\right)$ \cite{nesterov2004introduction}, $\tL\leq \frac{2\lambda_{\max}(A^\top A)}{\mu}$ and $R_y \leq \frac{\|\nabla f(x^*)\|_2^2}{\lambda^+_{\min}(A^\top A)}$ (see Section~V-D from \cite{dvinskikh2019dual} for the details),  we obtain that the choice of $N$ in the theorem statement guarantees that the r.h.s. of the last inequality is no greater than $\e$. By weak duality $-f(x^*)\leq \psi(y^*)$ and we have with probability at least $1-4\beta$
\begin{align}\label{eq:func_math_fin}
	f(\tx^N)  - f(x^*) \leq f(\tx^N) + \psi(y^*) \leq f(\tx^N) + \psi(y^N)  
    &\leq  \e.
\end{align}
Since $y^*$ is the solution of the dual problem, we have, for any $x$, $f(x^*)\leq f(x) - \la y^*, Ax \ra$. Then using assumption $\|y^*\|_2\leq R_{y}$, Cauchy-Schawrz inequality $\la y,Ax\ra \ge -\|y^*\|_2\cdot\|Ax\|_2 \ge -R_y\|Ax\|_2$ and choosing $x = \tx^N$, we get
\begin{equation}\label{eq:weak_dual}
f(\tx^N) \geq f(x^*) - R_{y}\|A\tx^N\|_2
\end{equation}
Using this and weak duality $-f(x^*)\leq \psi(y^*)$, we obtain
\begin{align*}
\psi(y^N) + f(\tx^N) \geq \psi(y^*) + f(\tx^N) \geq -f(x^*) + f(\tx^N) \geq -R_{y}\|A\tx^N\|_2,
\end{align*}
which implies that inequality
\begin{equation}\label{eq:MathIneq}
\|Ax^N\|_2 \overset{\eqref{eq:pr_dual_Lend_of_the_proof_11}+\eqref{eq:func_math_fin}}{\le} \frac{\e}{R_{y}}
\end{equation}
holds together with \eqref{eq:func_math_fin} with probability at least $1-4\beta$. The total number of stochastic gradient oracle calls is $\sum\limits_{k=1}^Nr_k$, which gives the bound in the problem statement since $\sum\limits_{k=1}^N\alpha_{k+1}=A_N$.
\end{proof}

\section{Missing Proofs from Section~\ref{sec:restarts}}
\subsection{Proof of Theorem~\ref{thm:restarted-rrma-ac-sa2_convergence}}
For simplicity we analyse only the first restart since the analysis of the later restarts is the same. We apply Theorem~\ref{thm:rrma-ac-sa2_convergence} with $N = \bar{N}$ such that 
	$$
	\frac{CL_\psi^2\ln^4\bar{N}}{\mu_\psi^2\bar{N}^4} \le \frac{1}{32}
	$$ 
	and batch-size
	$$
	r_1 = \max\left\{1, \frac{64C\sigma_\psi^2\ln^6\bar{N}}{\bar{N}\|\nabla\Psi(y^0,\Bxi^{0}, \hat{r}_1)\|_2^2}\right\}
	$$
	together with simple inequality $\|\nabla\psi(y^0)\|_2 \ge \mu_\psi\|y^0 - y^*\|_2$ and get for all $p = 1,\ldots,p_1$
	\begin{eqnarray}
	   \EE\left[\|\nabla\psi(\bar{y}^{1,p})\|_2^2\mid y^0, r_1, \hat{r}_1\right] &\le& \frac{\|\nabla\psi(y^0)\|_2^2}{32} + \frac{\|\nabla\Psi(y^0,\Bxi^{0}, \hat{r}_1)\|_2^2}{64}\notag\\
	   &\overset{\eqref{eq:squared_norm_sum}}{\le}& \frac{\|\nabla\psi(y^0)\|_2^2}{16} + \frac{\|\nabla\Psi(y^0,\Bxi^{0}, \hat{r}_1) - \nabla\psi(y^0)\|_2^2}{32}.\label{eq:restarted_epectation_bound}
	\end{eqnarray}
	By Markov's inequality we have for each $p = 1,\ldots,p_1$ that for fixed $\nabla\Psi(y^0,\Bxi^{0}, \hat{r}_1)$ with probability at most $\nicefrac{1}{2}$
	\begin{equation*}
	    \|\nabla\psi(\bar{y}^{1,p})\|_2^2 \ge \frac{\|\nabla\psi(y^0)\|_2^2}{8} + \frac{\|\nabla\Psi(y^0,\Bxi^{0}, \hat{r}_1) - \nabla\psi(y^0)\|_2^2}{16}.
	\end{equation*}
	Then, with probability at least $1 - \nicefrac{1}{2^{p_1}} \ge 1 - \nicefrac{\beta}{l}$
	\begin{equation}
	    \|\nabla\psi(\bar{y}^{1,\hat{p}_1})\|_2^2 \le \frac{\|\nabla\psi(y^0)\|_2^2}{8} + \frac{\|\nabla\Psi(y^0,\Bxi^{0}, \hat{r}_1) - \nabla\psi(y^0)\|_2^2}{16},\label{eq:restarted_technical_1}
	\end{equation}
	where $\hat{p}_1$ is such that $\|\nabla\psi(\bar{y}^{1,\hat{p}_1})\|_2^2 = \min_{p=1,\ldots,p_1}\|\nabla\psi(\bar{y}^{1,p})\|_2^2$. From Lemma~\ref{lem:jud_nem_large_dev} we have for all $p=1,\ldots,p_1$
	\begin{equation*}
	    \PP\left\{\left\|\nabla\Psi(\bar{y}^{1,p},\Bxi^{1,p}, \bar{r}_1) - \nabla\psi(\bar{y}^{1,p})\right\|_2 \ge \left(\sqrt{2} + \sqrt{2\gamma}\right)\sqrt{\frac{\sigma_\psi^2}{\bar{r}_1}}\mid \bar{y}^{1,p}\right\} \le \exp\left(-\frac{\gamma^2}{3}\right).
	\end{equation*}
	Since $\bar r_1 = \max\left\{1,\frac{128\sigma_\psi^2\left(1 + \sqrt{3\ln\frac{lp_1}{\beta}}\right)^2R_y^2}{\e^2}\right\}$ we can take $\gamma = \sqrt{3\ln\frac{lp_1}{\beta}}$ in the previous inequality and get that for all $p=1,\ldots,p_1$ and fixed points $\bar{y}^{1,p}$ with probability at least $1 - \nicefrac{\beta}{(lp_1)}$
	\begin{equation*}
	    \left\|\nabla\Psi(\bar{y}^{1,p},\Bxi^{1,p}, \bar{r}_1) - \nabla\psi(\bar{y}^{1,p})\right\|_2^2 \le \frac{\e^2}{64R_y^2}.
	\end{equation*}
	Using union bound we get that with probability at least $1 - \nicefrac{\beta}{l}$ inequality
	\begin{equation}
	    \left\|\nabla\Psi(\bar{y}^{1,p},\Bxi^{1,p}, \bar{r}_1) - \nabla\psi(\bar{y}^{1,p})\right\|_2^2 \le \frac{\e^2}{64R_y^2}.\label{eq:restarted_choose_grad_after_amplif_diff}
	\end{equation}
	holds for all $p=1,\ldots,p_1$ simultaneously with fixed points $\bar{y}^{1,p}$. Using union bound again we get that with probability at least $1 - \nicefrac{2\beta}{l}$ for fixed $\nabla\Psi(y^0,\Bxi^{0}, \hat{r}_1)$
	\begin{eqnarray}
	    \|\nabla\psi(\bar{y}^{1,p(1)})\|_2^2 &\overset{\eqref{eq:squared_norm_sum}}{\le}& 2\left\|\nabla\Psi(\bar{y}^{1,p(1)},\Bxi^{1,p(1)}, \bar{r}_1)\right\|_2^2\notag\\
	    &&\quad+ 2\left\|\nabla\Psi(\bar{y}^{1,p(1)},\Bxi^{1,p(1)}, \bar{r}_1) - \nabla\psi(\bar{y}^{1,p(1)})\right\|_2^2\notag\\
	    &\overset{\eqref{eq:restarted_choose_grad_after_amplif_diff}}{\le}& 2\left\|\nabla\Psi(\bar{y}^{1,\hat{p}_1},\Bxi^{1,\hat{p}_1}, \bar{r}_1)\right\|_2^2 + \frac{\e^2}{32R_y^2}\notag\\
	    &\overset{\eqref{eq:squared_norm_sum}}{\le}& 4\|\nabla\psi(\bar{y}^{1,\hat{p}_1})\|_2^2 + 4\left\|\nabla\Psi(\bar{y}^{1,\hat{p}_1},\Bxi^{1,\hat{p}_1}, \bar{r}_1) - \nabla\psi(\bar{y}^{1,\hat{p}_1})\right\|_2^2 + \frac{\e^2}{32R_y^2}\notag\\
	    &\overset{\eqref{eq:restarted_technical_1}+\eqref{eq:restarted_choose_grad_after_amplif_diff}}{\le}&\frac{\|\nabla\psi(y^0)\|_2^2}{2} + \frac{\|\nabla\Psi(y^0,\Bxi^{0}, \hat{r}_1) - \nabla\psi(y^0)\|_2^2}{4} + \frac{\e^2}{8R_y^2}.\label{eq:restarted_technical_2}
	\end{eqnarray}
	Using Lemma~\ref{lem:jud_nem_large_dev} with $\gamma = \sqrt{3\ln\frac{l}{\beta}}$ and $\hat r_1 = \max\left\{1, \frac{4\sigma_\psi^2\left(1 + \sqrt{3\ln\frac{l}{\beta}}\right)^2R_y^2}{\e^2}\right\}$ we get that with probability at least $1 - \nicefrac{\beta}{l}$
	\begin{equation}
	    \|\nabla\Psi(y^0,\Bxi^{0}, \hat{r}_1) - \nabla\psi(y^0)\|_2^2 \le \frac{\e^2}{2R_y^2}.\label{eq:restarted_technical_3}
	\end{equation}
	Applying union bound again we get that with probability at least $1 - \nicefrac{3\beta}{l}$ the following inequality holds:
	\begin{equation*}
	    \|\nabla\psi(\bar{y}^{1,p(1)})\|_2^2 \overset{\eqref{eq:restarted_technical_2}+\eqref{eq:restarted_technical_3}}{\le} \frac{\|\nabla\psi(y^0)\|_2^2}{2} + \frac{\e^2}{4R_y^2}.
	\end{equation*}
	
Similarly, for all $k=1,\ldots,l$ with probability at least $1 - \nicefrac{3\beta}{l}$
	\begin{equation*}
	    \|\nabla\psi(\bar{y}^{k,p(k)})\|_2^2 \le \frac{\|\nabla\psi(\bar{y}^{k-1,p(k-1)})\|_2^2}{2} + \frac{\e^2}{4R_y^2}.
	\end{equation*}
	Using union bound we get that with probability at least $1-3\beta$ the inequality
	\begin{equation}
	    \|\nabla\psi(\bar{y}^{k,p(k)})\|_2^2 \le \frac{\|\nabla\psi(\bar{y}^{k-1,p(k-1)})\|_2^2}{2} + \frac{\e^2}{4R_y^2}\label{eq:restarted_technical_4}
	\end{equation}
	holds for all $k=1,\ldots,l$ simultaneously. Finally, unrolling the recurrence an using our choice of $l = \max\left\{1,\log_2\left(\nicefrac{2R_y^2\|\nabla\psi(y^0)\|_2^2}{\e^2}\right)\right\}$ we obtain that with probability at least $1-3\beta$
	\begin{eqnarray*}
	    \|\nabla\psi(\bar{y}^{l,p(l)})\|_2^2 &\overset{\eqref{eq:restarted_technical_4}}{\le}& \frac{\|\nabla\psi(y^0)\|_2^2}{2^l} + \frac{\e^2}{4R_y^2}\sum\limits_{k=0}^{l-1}2^{-k}\\
	    &\le& \frac{\e^2}{2R_y^2} + \frac{\e^2}{4R_y^2}\sum\limits_{k=0}^{\infty}2^{-k}\\
	    &=& \frac{\e^2}{2R_y^2} + \frac{\e^2}{4R_y^2}\cdot 2 = \frac{\e^2}{R_y^2},
	\end{eqnarray*}
	which concludes the proof. To get \eqref{eq:restarted_number_of_oracle_calls} we need to estimate $\sum\limits_{k=1}^{l}(\hat{r}_k + \bar{N}p_kr_k + p_k\bar{r}_k)$ using our choice of parameters stated in \eqref{eq:r-rrma-ac-sa2_params}.

\subsection{Proof of Corollary~\ref{cor:r-rrma-ac-sa2_connect_with_primal}}
Theorem~\ref{thm:restarted-rrma-ac-sa2_convergence}, Corollary~\ref{cor:r-rrma-ac-sa2_norm} and inequality $\e\le\mu_\psi R_y^2$ imply that with probability at least $1-3\beta$
	\begin{equation}
		\|\nabla\psi(\bar{y}^{l,p(l)})\|_2 \le \frac{\e}{R_y},\quad \|\bar{y}^{l,p(l)}\|_2 \le \|\bar{y}^{l,p(l)} - y^*\|_2 + \|y^*\|_2 \overset{\eqref{eq:restarted_norm_difference}}{\le} 2R_y.\label{eq:restarts_primal_norm_bounded}
	\end{equation}
	Applying Theorem~\ref{thm:grad_norm_testarts_motivation} we get that with probability $1-3\beta$ we also have
	\begin{equation}
		f(\hat{x}^l) - f(x^*) \le 2\e,\quad \|A\hat{x}^l\|_2 \le \frac{\e}{R_y},\label{eq:restarts_primal_connect_almost_done}
	\end{equation}
	where $\hat{x}^l \eqdef x(A^\top \bar{y}^{l,p(l)})$. Next, we show that points $\hat{x}^{l,p} = x(A^\top \bar{y}^{l,p})$ and $x^{l,p} \eqdef x(A^\top\bar{y}^{l,p},\Bxi^{l,}, \bar{r}_l)$ are \textit{close} to each other with high probability for all $p=1,\ldots,p_l$ and both lie in $B_{R_f}(0)$ with high probability. Lemma~\ref{lem:jud_nem_large_dev} states that
	\begin{equation*}
		\PP\left\{\left\|\hat{x}^{l,p} - x^{l,p}\right\|_2 \ge (\sqrt{2} + \sqrt{2\gamma})\sqrt{\frac{\sigma_x^2}{\bar{r}_l}}\mid \bar{y}^{l,p(l)}\right\} \le \exp\left(-\frac{\gamma^2}{3}\right).
	\end{equation*}
	Taking $\gamma = \sqrt{3\ln\frac{p_l}{\beta}}$ and using $\bar{r}_l = \max\left\{1,\frac{128\sigma_\psi^2\left(1+\sqrt{3\ln\frac{lp_l}{\beta}}\right)R_y^2}{\e^2}\right\}$ we get that for all $p=1,\ldots,p_l$ with probability at least $1-\nicefrac{\beta}{p_l}$
	\begin{equation*}
		\|\hat{x}^{l,p} - x^{l,p}\|_2 \le \frac{\e}{8R_y}\cdot\sqrt{\frac{\sigma_x^2}{\sigma_\psi^2}} = \frac{\e}{8R_y\sqrt{\lambda_{\max}(A^\top A)}},
	\end{equation*}
	where we use $\sigma_\psi = \sqrt{\lambda_{\max}(A^\top A)}\sigma_x$. Using union bound we get that with probability at least $1-\beta$ the inequality
	 \begin{equation*}
		\|\hat{x}^{l,p} - x^{l,p}\|_2 \le \frac{\e}{8R_y\sqrt{\lambda_{\max}(A^\top A)}},
	\end{equation*}
	holds for all $p=1,\ldots,p(l)$ simultaneously and, in particular, we get that with probability at least $1 - \beta$
	\begin{equation}
		\|\hat{x}^{l} - x^{l}\|_2 \le \frac{\e}{8R_y\sqrt{\lambda_{\max}(A^\top A)}}.\label{eq:restarted_primal_technical_1}
	\end{equation}
	It implies that with probability at least $1-\beta$
	\begin{eqnarray}
		\|A\hat{x}^{l} - Ax^{l}\|_2 &\le& \|A\|_2\cdot\|\hat{x}^{l} - x^{l}\|_2\notag\\
		&\overset{\eqref{eq:restarted_primal_technical_1}}{\le}& \sqrt{\lambda_{\max}(A^\top A)}\frac{\e}{8R_y\sqrt{\lambda_{\max}(A^\top A)}} = \frac{\e}{8R_y},\label{eq:restarted_primal_technical_2}
	\end{eqnarray}
	and due to triangle inequality with probability $\ge 1 - \beta$
	\begin{eqnarray}
		\|A\hat{x}^l\|_2  \ge \|Ax^l\|_2 - \|A\hat{x}^l - Ax^l\|_2 \overset{\eqref{eq:restarted_primal_technical_2}}{\ge} \|Ax^l\|_2 - \frac{\e}{8R_y}.\label{eq:restarted_primal_technical_3}
	\end{eqnarray}
	Applying Demyanov-Danskin's theorem, $L_\varphi$-smoothness of $\varphi$ with $L_\varphi = \nicefrac{1}{\mu}$ and $\e \le \mu_\psi R_y^2$ we obtain that with probability at least $1 - \beta$
	\begin{eqnarray}
		\|\hat{x}^l\|_2 &=& \|\nabla\varphi(A^\top\bar{y}^{l,p(l)})\|_2 \le \|\nabla\varphi(A^\top\bar{y}^{l,p(l)}) - \nabla\varphi(A^\top y^*)\|_2 + \|\nabla\varphi(A^\top y^*)\|_2\notag\\
		&\le& L_\varphi\|A^\top\bar{y}^{l,p(l)} - A^\top y^*\|_2 + \|x(A^\top y^*)\|_2 \le \frac{\sqrt{\lambda_{\max}(A^\top A)}}{\mu}\|\bar{y}^{l,p(l)} - y^*\|_2 + R_x\notag\\
		&\overset{\eqref{eq:restarted_norm_difference}}{\le}& \frac{\sqrt{\lambda_{\max}(A^\top A)}\e}{\mu\mu_\psi R_y} + R_x \le \left(\frac{\sqrt{\lambda_{\max}(A^\top A)}}{\mu} + \frac{R_x}{R_y}\right)R_y\label{eq:restarted_primal_technical_4}
	\end{eqnarray}
	and also
	\begin{eqnarray}
		\|x^l\|_2 &\le&\|x^l - \hat{x}^l\|_2 + \|\hat{x}^l\|_2\notag\\ &\overset{\eqref{eq:restarted_primal_technical_1} + \eqref{eq:restarted_primal_technical_4}}{\le}& \left(\frac{\mu_\psi}{8\sqrt{\lambda_{\max}(A^\top A)}} + \frac{\sqrt{\lambda_{\max}(A^\top A)}}{\mu} + \frac{R_x}{R_y}\right)R_y.\label{eq:restarted_primal_technical_5}
	\end{eqnarray}
	That is, we proved that with probability at least $1-\beta$ points $\hat{x}^l$ and $x^l$ lie in the ball $B_{R_f}(0)$. In this ball function $f$ is $L_f$-Lipschitz continuous, therefore, with probability at least $1-\beta$
	\begin{eqnarray}
		f(\hat{x}^l) &=& f(x^l) + f(\hat{x}^l) - f(x^l) \ge f(x^l) - |f(\hat{x}^l) - f(x^l)|\notag\\
		&\ge& f(x^l) - L_f\|\hat{x}^l - x^l\|_2 \overset{\eqref{eq:restarted_primal_technical_1}}{\ge} f(x^l) - \frac{\e L_f}{8R_y\sqrt{\lambda_{\max}(A^\top A)}}\label{eq:restarted_primal_technical_6}.
	\end{eqnarray}
	Combining inequalities \eqref{eq:restarts_primal_connect_almost_done}, \eqref{eq:restarted_primal_technical_3} and \eqref{eq:restarted_primal_technical_6} and using union bound we get that with probability at least $1-4\beta$
	\begin{equation*}
		f(x^l) - f(x^*) \le \left(2 + \frac{L_f}{8R_y\sqrt{\lambda_{\max}(A^\top A)}}\right)\e,\quad \|Ax^l\| \le \frac{9\e}{8R_y}.
	\end{equation*}
Finally, in order to get the bound for the total number of oracle calls from \eqref{eq:restarted_number_of_oracle_calls_connextion_with_primal} we use \eqref{eq:restarted_number_of_oracle_calls} together with $\sigma_\psi^2 = \sigma_x^2\lambda_{\max}(A^\top A)$ and \eqref{R}.

\section{Missing Proofs from Section~\ref{sec:str_cvx_dual}}
\subsection{Proof of Lemma~\ref{lem:stm_str_cvx_g_k+1}}
We prove \eqref{eq:stm_str_cvx_g_k+1} by induction. For $k=0$ this inequality is trivial since $A_k = \frac{1}{L}$, $\ty^1 = y^0$ and $z^0 = \ty^0$. Next, assume that \eqref{eq:stm_str_cvx_g_k+1} holds for some $k\ge 0$ and prove it for $k+1$. By definition of $g_{k+1}(z)$ we have
    \begin{eqnarray}
        \tilde{g}_{k+1}(z^{k+1}) &=& \tilde{g}_k(z^{k+1})\label{eq:stm_str_cvx_g_k+1_prove_1}\\
        &&+ \alpha_{k+1}\left(\psi(\tilde{y}^{k+1}) + \la\tnabla \Psi(\tilde{y}^{k+1},\Bxi^{k+1}),z^{k+1} - \tilde{y}^{k+1}\ra + \frac{\mu_\psi}{2}\|z^{k+1} - \tilde{y}^{k+1}\|_2^2\right).\notag
    \end{eqnarray}
    Since $\tg_{k}(z)$ is $(1+A_k\mu_\psi)$-strongly convex we can estimate the first term in the r.h.s.\ of the previous inequality as follows:
    \begin{eqnarray}
        \tg_{k}(z^{k+1}) &\ge& \tg_k(z) + \frac{1+A_k\mu_\psi}{2}\|z^{k+1} - z^k\|_2^2\notag\\
        &\overset{\eqref{eq:stm_str_cvx_g_k+1}}{\ge}& A_k\psi(y^k) + \frac{1+A_k\mu_\psi}{2}\|z^{k+1} - z^k\|_2^2\notag\\
        &&\quad + \sum\limits_{l=0}^{k-1}\frac{A_l\mu_\psi}{2}\|y^l - \tilde{y}^{l+1}\|_2^2 - \sum\limits_{l=0}^{k}\frac{\alpha_{l}}{2\mu_\psi}\left\|\tnabla\Psi(\tilde{y}^{l},\Bxi^{l}) - \nabla\psi(\tilde{y}^{l})\right\|_2^2\notag
    \end{eqnarray}
    Applying $\mu_\psi$-strong convexity of $\psi$ and the relation
    \begin{eqnarray*}
        y^{k+1} &=& \frac{A_k y^k + \alpha_{k+1}z^{k+1}}{A_{k+1}} = \frac{A_k y^k + \alpha_{k+1}z^{k}}{A_{k+1}} + \frac{\alpha_{k+1}}{A_{k+1}}\left(z^{k+1}-z^k\right)\\
        &=& \tilde{y}^{k+1} + \frac{\alpha_{k+1}}{A_{k+1}}\left(z^{k+1}-z^k\right)
    \end{eqnarray*}
    to the previous inequality we get
    \begin{eqnarray}
        \tg_k(z^{k+1}) &\ge& A_k\psi(\tilde{y}^{k+1}) + \la\nabla\psi(\tilde{y}^{k+1}), A_k(y^k - \tilde{y}^{k+1})\ra + \frac{A_k\mu_\psi}{2}\|y^k - \tilde{y}^{k+1}\|_2^2 \notag\\
        &&\quad+ \frac{A_{k+1}^2(1+A_k\mu_\psi)}{2\alpha_{k+1}^2}\|y^{k+1}-\tilde{y}^{k+1}\|_2^2 + \sum\limits_{l=0}^{k-1}\frac{A_l\mu_\psi}{2}\|y^l - \tilde{y}^{l+1}\|_2^2\notag\\
        &&\quad- \sum\limits_{l=0}^{k}\frac{\alpha_{l}}{2\mu_\psi}\left\|\tnabla\Psi(\tilde{y}^{l},\Bxi^{l}) - \nabla\psi(\tilde{y}^{l})\right\|_2^2.\label{eq:stm_str_cvx_g_k+1_prove_2}
    \end{eqnarray}
    Next, we use \eqref{eq:stm_str_cvx_g_k+1_prove_2} in \eqref{eq:stm_str_cvx_g_k+1_prove_1} together with relations $A_{k+1} = A_k + \alpha_{k+1}$, $A_{k+1}(1+A_k\mu_\psi) = \alpha_{k+1}^2L_\psi$ and $A_k(y^k - \tilde{y}^{k+1}) + \alpha_{k+1}(z^{k+1}-\ty^{k+1}) = A_{k+1}(y^{k+1} - \ty^{k+1})$:
    \begin{eqnarray*}
        \tg_{k+1}(z^{k+1}) &\ge& A_{k+1}\psi(\ty^{k+1}) + \la\nabla\psi(\ty^{k+1}), A_k(y^k - \tilde{y}^{k+1}) + \alpha_{k+1}(z^{k+1}-\ty^{k+1})\ra\notag\\
        &&\quad+ \frac{A_{k+1}^2(1+A_k\mu_\psi)}{2\alpha_{k+1}^2}\|y^{k+1}-\tilde{y}^{k+1}\|_2^2 + \sum\limits_{l=0}^{k}\frac{A_l\mu_\psi}{2}\|y^l - \tilde{y}^{l+1}\|_2^2\notag\\
        &&\quad- \sum\limits_{l=0}^{k}\frac{\alpha_{l}}{2\mu_\psi}\left\|\tnabla\Psi(\tilde{y}^{l},\Bxi^{l}) - \nabla\psi(\tilde{y}^{l})\right\|_2^2\notag\\
        &&\quad + \alpha_{k+1}\left\la\tnabla\Psi(\tilde{y}^{l+1},\Bxi^{l+1}) - \nabla\psi(\tilde{y}^{l+1}), z^{k+1}-\ty^{k+1}\right\ra\\
        &&\quad+ \frac{\alpha_{k+1}\mu_\psi}{2}\|z^{k+1}-\ty^{k+1}\|_2^2 \\
        &=& A_{k+1}\left(\psi(\ty^{k+1}) + \la\nabla\psi(\ty^{k+1}),y^{k+1}-\ty^{k+1}\ra + \frac{L_\psi}{2}\|y^{k+1} - \ty^{k+1}\|_2^2\right)\\
        &&\quad + \sum\limits_{l=0}^{k}\frac{A_l\mu_\psi}{2}\|y^l - \tilde{y}^{l+1}\|_2^2 - \sum\limits_{l=0}^{k}\frac{\alpha_{l}}{2\mu_\psi}\left\|\tnabla\Psi(\tilde{y}^{l},\Bxi^{l}) - \nabla\psi(\tilde{y}^{l})\right\|_2^2\\
        &&\quad + \alpha_{k+1}\left\la\tnabla\Psi(\tilde{y}^{l+1},\Bxi^{l+1}) - \nabla\psi(\tilde{y}^{l+1}), z^{k+1}-\ty^{k+1}\right\ra\\
        &&\quad+ \frac{\alpha_{k+1}\mu_\psi}{2}\|z^{k+1}-\ty^{k+1}\|_2^2.
    \end{eqnarray*}
    From $L_\psi$-smoothness of $\psi$ we have
    \begin{equation*}
        \psi(\ty^{k+1}) + \la\nabla\psi(\ty^{k+1}),y^{k+1}-\ty^{k+1}\ra + \frac{L_\psi}{2}\|y^{k+1} - \ty^{k+1}\|_2^2 \ge \psi(y^{k+1}).
    \end{equation*}
    Next, Fenchel-Young inequality (see inequality \eqref{eq:fenchel_young_inequality}) implies that
    \begin{eqnarray*}
        \left\la\tnabla\Psi(\tilde{y}^{l+1},\Bxi^{l+1}) - \nabla\psi(\tilde{y}^{l+1}), z^{k+1}-\ty^{k+1}\right\ra&\\
        &\hspace{-2.5cm}\ge -\frac{1}{2\mu_\psi}\left\|\tnabla\Psi(\tilde{y}^{l+1},\Bxi^{l+1}) - \nabla\psi(\tilde{y}^{l+1})\right\|_2^2 - \frac{\mu_\psi}{2}\|z^{k+1}-\ty^{k+1}\|_2^2.
    \end{eqnarray*}
    Putting all together and rearranging the terms we get
    \begin{equation*}
        \tg_{k+1}(z^{k+1}) \ge A_{k+1}\psi(y^{k+1}) + \sum\limits_{l=0}^{k}\frac{A_l\mu_\psi}{2}\|y^l - \tilde{y}^{l+1}\|_2^2 - \sum\limits_{l=0}^{k+1}\frac{\alpha_{l}}{2\mu_\psi}\left\|\tnabla\Psi(\tilde{y}^{l},\Bxi^{l}) - \nabla\psi(\tilde{y}^{l})\right\|_2^2.
    \end{equation*}

\subsection{Proof of Lemma~\ref{lem:tails_estimate_str_cvx}}
The idea behind the proof of this lemma is exactly the same as for Lemma~\ref{lem:tails_estimate_biased}. We start with applying Cauchy-Schwarz inequality to the second and the third terms, i.e.\ 
    \begin{eqnarray*}
        h\delta(R_k + \widetilde{R}_k) &\le& Dh^2\delta^2 + \frac{R_k^2}{4D} + Dh^2\delta^2 + \frac{\widetilde{R}_k^2}{4D} = 2Dh^2\delta^2 + \frac{R_k^2 + \widetilde{R}_k^2}{4D},\\
        u\la\eta^k, a^k + \tilde{a}^k\ra &\le& u\|\eta^k\|_2\cdot\|a^k\|_2 + u\|\eta^k\|_2\cdot\|\tilde{a}^k\|_2 \le u\|\eta^k\|_2R_k + u\|\eta^k\|_2\widetilde{R}_k\\
        &\le& u^2D\|\eta^k\|_2^2 + \frac{R_k^2}{4D} + u^2D\|\eta^k\|_2^2 + \frac{\widetilde{R}_k^2}{4D} \le 2u^2D\|\eta^k\|_2^2 + \frac{R_k^2 + \widetilde{R}_k^2}{4D},
    \end{eqnarray*}
    in the right-hand side of \eqref{eq:radius_recurrence_str_cvx}:
    \begin{eqnarray}\label{eq:radius_recurrence2_str_cvx}
        A_lR_l^2 + \sum\limits_{k=0}^{l-1}A_k\widetilde{R}_k^2 &\le& A + 2Dh^2\delta^2\underbrace{\sum\limits_{k=0}^{l-1}\alpha_{k+1}}_{A_l} + \frac{1}{2D}\sum\limits_{k=0}^{l-1}\alpha_{k+1}(R_k^2 + \widetilde{R}_k^2)\notag\\
        &&\quad+ \left(c + 2Du^2\right)\sum\limits_{k=0}^{l-1}\alpha_{k+1}\|\eta^k\|_2^2.
    \end{eqnarray}
    Using Lemma~\ref{lem:jud_nem_large_dev} we get that with probability at least $1 - \frac{\beta}{N}$
    \begin{eqnarray}
        \|\eta^k\|_2 &\le& \sqrt{2}\left(1 + \sqrt{3\ln\frac{N}{\beta}}\right)\sigma_k \le \sqrt{2}\left(1 + \sqrt{3\ln\frac{N}{\beta}}\right)\frac{\sqrt{C\varepsilon}}{N\left(1 + \sqrt{3\ln\frac{N}{\beta}}\right)}\notag\\
        &=& \sqrt{2C\e}.\label{eq:eta_k_norm_bound_str_cvx}
    \end{eqnarray}
    Using union bound and $\alpha_{k+1} \le DA_k$ we get that with probability $\ge 1 - \beta$ inequalities
    \begin{eqnarray}
        A_lR_l^2 + \sum\limits_{k=0}^{l-1}A_k\widetilde{R}_k^2 &\le& A + 2Dh^2\delta^2A_l + \frac{1}{2}\sum\limits_{k=0}^{l-1}A_k(R_k^2 + \widetilde{R}_k^2) + 2C\left(c+2Du^2\right)A_l\e,\notag\\
        A_lR_l^2 + \frac{1}{2}\sum\limits_{k=0}^{l-1}A_k\widetilde{R}_k^2 &\le& A + 2Dh^2\delta^2A_l + \frac{1}{2}\sum\limits_{k=0}^{l-1}A_kR_k^2 + 2C\left(c+2Du^2\right)A_l\e\label{eq:tails_lemma_str_cvx_technical_1}
    \end{eqnarray}
    hold for all $l=1,\ldots,N$ simultaneously. Therefore, with probability $\ge 1 - \beta$ the inequality
    \begin{eqnarray*}
        A_lR_l^2 &\le&  A + 2Dh^2\delta^2A_l + 2C\left(c+2Du^2\right)A_l\e + \frac{1}{2}\sum\limits_{k=0}^{l-1}A_kR_k^2\notag\\
        &\le& \frac{3}{2}A + 2Dh^2\delta^2\underbrace{\left(A_l + \frac{1}{2}A_{l-1}\right)}_{\le \frac{3}{2}A_l} + 2C\left(c+2Du^2\right)\e\underbrace{\left(A_l + \frac{1}{2}A_{l-1}\right)}_{\le \frac{3}{2}A_l} + \frac{3}{2}\cdot\frac{1}{2}\sum\limits_{k=0}^{l-2}A_kR_k^2\notag\\
        &\le& \frac{3}{2}\left(A + 2Dh^2\delta^2A_l + 2C\left(c+2Du^2\right)A_l\e + \frac{1}{2}\sum\limits_{k=0}^{l-2}A_kR_k^2\right),\notag
    \end{eqnarray*}
    holds for all $l = 1,\ldots, N$ simultaneously. Unrolling the recurrence we get that with probability $\ge 1 - \beta$
    \begin{eqnarray*}
      A_lR_l^2 &\le& \left(\frac{3}{2}\right)^l\left(A + 2Dh^2\delta^2A_l + 2C\left(c+2Du^2\right)A_l\e\right),
    \end{eqnarray*}
    for all $l = 1,\ldots, N$. We emphasize that it is very rough estimate, but as for the convex case we show next that such a bound does not spoil the final result too much.
    It implies that with probability $\ge 1-\beta$
    \begin{equation}\label{eq:bound_sum_squared_radius_str_cvx}
        \sum\limits_{k=0}^{l-1}A_kR_k^2 \le  l\left(\frac{3}{2}\right)^l\left(A + 2Dh^2\delta^2A_l + 2C\left(c+2Du^2\right)A_l\e\right),
    \end{equation}
    for all $l=1,\ldots,N$ simultaneously. Moreover, since \eqref{eq:tails_lemma_str_cvx_technical_1} holds we have in the same probability event that inequalities
    \begin{equation}\label{eq:bound_sum_squared_radius_str_cvx_tilde}
        \sum\limits_{k=0}^{l-1}A_k\widetilde{R}_k^2 \le  \left(l\left(\frac{3}{2}\right)^l + 2\right)\left(A + 2Dh^2\delta^2A_l + 2C\left(c+2Du^2\right)A_l\e\right)
    \end{equation}
    hold with probability $\ge 1 -\beta$ for all $l=1,\ldots,N$ simultaneously with \eqref{eq:bound_sum_squared_radius_str_cvx}. Next we apply delicate result from \cite{jin2019short} which is presented in Section~\ref{sec:aux_results} as Lemma~\ref{lem:jin_corollary}. We consider random variables $\xi^k = \alpha_{k+1}\la\eta^k, a^k + \tilde{a}^k \ra$. Note that $\EE\left[\xi^k\mid \xi^0,\ldots,\xi^{k-1}\right] = \alpha_{k+1}\left\la\EE\left[\eta^k\mid \eta^0,\ldots,\eta^{k-1}\right], a^k \right\ra = 0$ and
    \begin{eqnarray*}
        \EE\left[\exp\left(\frac{(\xi^k)^2}{2\sigma_k^2\alpha_{k+1}^2(R_k^2 +\widetilde{R}_k^2)}\right)\mid \xi^0,\ldots,\xi^{k-1}\right] &\le& \EE\left[\exp\left(\frac{\alpha_{k+1}^2\|\eta^k\|_2^2\|a^k + \tilde{a}^k\|_2^2}{2\sigma_k^2\alpha_{k+1}^2(R_k^2 +\widetilde{R}_k^2)}\right)\mid \eta^0,\ldots,\eta^{k-1}\right]\\
        &=& \EE\left[\exp\left(\frac{\|\eta^k\|_2^2}{\sigma_k^2}\right)\mid \eta^0,\ldots,\eta^{k-1}\right] \le \exp(1)
    \end{eqnarray*}
    due to Cauchy-Schwarz inequality and assumptions of the lemma. If we denote $\hat\sigma_k^2 = 2\sigma_k^2\alpha_{k+1}^2(R_k^2+\widetilde{R}_k^2)$ and apply Lemma~\ref{lem:jin_corollary} with $$B = 8H CDR_0^2\left(N\left(\frac{3}{2}\right)^N + 1\right)\left(A + 2Dh^2G^2R_0^2 + 2C\left(c+2Du^2\right)HR_0^2\right)$$ and $b=\hat{\sigma}_0^2$, we get that for all $l=1,\ldots,N$ with probability $\ge 1-\frac{\beta}{N}$
    \begin{equation*}
        \text{either} \sum\limits_{k=0}^{l-1}\hat\sigma_k^2 \ge B \text{ or } \left|\sum\limits_{k=0}^{l-1}\xi^k\right| \le C_1\sqrt{\sum\limits_{k=0}^{l-1}\hat\sigma_k^2\left(\ln\left(\frac{N}{\beta}\right) + \ln\ln\left(\frac{B}{b}\right)\right)}
    \end{equation*}
    with some constant $C_1 > 0$ which does not depend on $B$ or $b$. Using union bound we obtain that with probability $\ge 1 - \beta$
    \begin{equation*}
        \text{either} \sum\limits_{k=0}^{l-1}\hat\sigma_k^2 \ge B \text{ or } \left|\sum\limits_{k=0}^{l-1}\xi^k\right| \le C_1\sqrt{\sum\limits_{k=0}^{l-1}\hat\sigma_k^2\left(\ln\left(\frac{N}{\beta}\right) + \ln\ln\left(\frac{B}{b}\right)\right)}
    \end{equation*}
    and it holds for all $l=1,\ldots, N$ simultaneously. Note that $\alpha_{k+1} \le A_{k+1}$, $\e \le \frac{HR_0^2}{A_N}$, $\delta \le \frac{GR_0}{N\sqrt{A_N}}$ and with probability at least $1-\beta$
    \begin{eqnarray*}
        \sum\limits_{k=0}^{l-1}\hat\sigma_k^2 &=& 2\sum\limits_{k=0}^{l-1}\sigma_k^2\alpha_{k+1}^2(R_k^2+\widetilde{R}_k^2) \le \frac{2C\varepsilon}{N^2\left(1 + \sqrt{3\ln\frac{N}{\beta}}\right)^2}\sum\limits_{k=0}^{l-1}A_{k+1}\cdot DA_k(R_k^2+\widetilde{R}_k^2)\\
        &\le& 2\e CDA_N\sum\limits_{k=0}^{l-1}A_k(R_k^2 + \widetilde{R}_k^2)\\
        &\overset{\eqref{eq:bound_sum_squared_radius_str_cvx}+\eqref{eq:bound_sum_squared_radius_str_cvx_tilde}}{\le}& 4\e CDA_N\left(l\left(\frac{3}{2}\right)^l + 1\right)\left(A + 2Dh^2\delta^2A_l + 2C\left(c+2Du^2\right)A_l\e\right)\\
        &\le& 4HCDR_0^2\left(N\left(\frac{3}{2}\right)^N + 1\right)\left(A + 2Dh^2G^2R_0^2 + 2C\left(c+2Du^2\right)HR_0^2\right)\\
        &=& \frac{B}{2}
    \end{eqnarray*}
    for all $l=1,\ldots, N$ simultaneously. Using union bound again we get that with probability $\ge 1 - 2\beta$ the inequality
    \begin{equation}\label{eq:bound_inner_product_str_cvx}
        \left|\sum\limits_{k=0}^{l-1}\xi^k\right| \le C_1\sqrt{\sum\limits_{k=0}^{l-1}\hat\sigma_k^2\left(\ln\left(\frac{N}{\beta}\right) + \ln\ln\left(\frac{B}{b}\right)\right)}
    \end{equation}
    holds for all $l=1,\ldots,N$ simultaneously.
    
    Note that we also proved that \eqref{eq:eta_k_norm_bound_str_cvx} is in the same event together with \eqref{eq:bound_inner_product_str_cvx} and holds with probability $\ge 1 - 2\beta$. Putting all together in \eqref{eq:radius_recurrence_str_cvx}, we get that with probability at least $1-2\beta$ the inequality
    \begin{eqnarray*}
        A_lR_l^2 + \sum\limits_{k=0}^{l-1}A_k\widetilde{R}_k^2 &\overset{\eqref{eq:radius_recurrence_str_cvx}}{\le}&  A + h\delta\sum\limits_{k=0}^{l-1}\alpha_{k+1}(R_k+\widetilde{R}_k) + u\sum\limits_{k=0}^{l-1}\alpha_{k+1}\la\eta^k, a^k + \tilde{a}^k\ra\\
        &&\quad+ c\sum\limits_{k=0}^{l-1}\alpha_{k+1}\|\eta^k\|_2^2\\
        &\overset{\eqref{eq:eta_k_norm_bound_str_cvx}+\eqref{eq:bound_inner_product_str_cvx}}{\le}& A + h\delta \sum\limits_{k=0}^{l-1}\alpha_{k+1}(R_k+\widetilde{R}_k)\\ &&\quad+ uC_1\sqrt{\sum\limits_{k=0}^{l-1}\hat\sigma_k^2\left(\ln\left(\frac{N}{\beta}\right) + \ln\ln\left(\frac{B}{b}\right)\right)}  + 2cC\varepsilon A_l
    \end{eqnarray*}
    holds for all $l=1,\ldots,N$ simultaneously. For brevity, we introduce new notation: $g(N) = \frac{\ln\left(\frac{N}{\beta}\right) + \ln\ln\left(\frac{B}{b}\right)}{\left(1+\sqrt{3\ln\left(\frac{N}{\beta}\right)}\right)^2} \approx 1$ (neglecting constant factor). Using our assumptions $\sigma_k^2 \le \frac{C\varepsilon}{N^2\left(1+\sqrt{3\ln\left(\frac{N}{\beta}\right)}\right)^2}$, $\e \le \frac{HR_0^2}{A_N}$, $\delta \le \frac{GR_0}{N\sqrt{A_N}}$ and definition $\hat\sigma_k^2 = 2\sigma_k^2\alpha_{k+1}^2(R_k^2+\widetilde{R}_k^2)$ we obtain that with probability at least $1-2\beta$ the inequality
    \begin{eqnarray}\label{eq:radius_recurrence_large_prob_str_cvx}
        A_lR_l^2 + \sum\limits_{k=0}^{l-1}A_k\widetilde{R}_k^2 &\le& A + h\delta\sum\limits_{k=0}^{l-1}\alpha_{k+1}(R_k+\widetilde{R}_k) + u\sum\limits_{k=0}^{l-1}\alpha_{k+1}\la\eta^k, a^k + \tilde{a}^k\ra\notag\\
        &&\quad+ c\sum\limits_{k=0}^{l-1}\alpha_{k+1}\|\eta^k\|_2^2\notag\\
        &\le& A + \frac{hGR_0}{N\sqrt{A_N}} \sum\limits_{k=0}^{l-1}\alpha_{k+1}(R_k+\widetilde{R}_k)\notag\\
        &&\quad+ \frac{uC_1R_0\sqrt{2HCg(N)}}{N\sqrt{A_N}}\sqrt{\sum\limits_{k=0}^{l-1}\alpha_{k+1}^2(R_k^2+\widetilde{R}_k^2)} + 2cHCR_0^2\notag\\
        &\le& \left(\frac{A}{R_0^2}+2cHC\right)R_0^2\notag\\
        &&\quad+ \frac{\left(hG + uC_1\sqrt{2HCg(N)}\right)R_0}{N\sqrt{A_N}}\sum\limits_{k=0}^{l-1}\alpha_{k+1}(R_k+\widetilde{R}_k)
    \end{eqnarray}
    holds for all $l=1,\ldots,N$ simultaneously, where in the last row we applied well-known inequality: $\sqrt{\sum_{i=1}^m a_i^2} \le \sum_{i=1}^m a_i$ for $a_i \ge 0$, $i=1,\ldots,m$. Next we use Lemma~\ref{lem:new_recurrence_lemma_str_cvx_case} with $A = \frac{A}{R_0^2}+2cHC$, $B = hG + uC_1\sqrt{2HCg(N)}$, $r_k = R_k$, $\tilde{r}_k = \widetilde{R}_k$ and get that with probability at least $1-2\beta$ inequalities
    \begin{eqnarray*}
        R_l \le \frac{JR_0}{\sqrt{A_l}},\quad \widetilde{R}_{l-1} \le \frac{JR_0}{\sqrt{A_{l-1}}}
    \end{eqnarray*}
    hold for all $l=1,\ldots,N$ simultaneously with $$J = \max\left\{\sqrt{A_0}, \frac{3B_1D + \sqrt{9B_1^2D^2 + \frac{4A}{R_0^2}+8cHC}}{2}\right\},\; B_1 = hG + uC_1\sqrt{2HCg(N)}.$$
    It implies that with probability at least $1-2\beta$ the inequality
    \begin{eqnarray*}
        A + h\delta\sum\limits_{k=0}^{l-1}\alpha_{k+1}(R_k+\widetilde{R}_k) + u\sum\limits_{k=0}^{l-1}\alpha_{k+1}\la\eta^k, a^k + \tilde{a}^k\ra + c\sum\limits_{k=0}^{l-1}\alpha_{k+1}\|\eta^k\|_2^2 &\\
        &\hspace{-6cm}\le\left(\frac{A}{R_0^2}+2cHC\right)R_0^2 + \frac{2J\left(hG + uC_1\sqrt{2HCg(N)}\right)R_0^2}{N\sqrt{A_N}}\sum\limits_{k=0}^{l-1}\frac{\alpha_{k+1}}{\sqrt{A_k}}\\
        &\hspace{-6.1cm}\le A + \left(2cHC + \frac{2JD\left(hG + uC_1\sqrt{2HCg(N)}\right)}{N\sqrt{A_N}}\sum\limits_{k=0}^{l-1}\sqrt{A_k}\right)R_0^2\\
        &\hspace{-6.2cm}\le A + \left(2cHC + \frac{2JD\left(hG + uC_1\sqrt{2HCg(N)}\right)}{N\sqrt{A_N}}l\sqrt{A_{l-1}}\right)R_0^2\\
        &\hspace{-6.35cm}\le A + \left(2cHC + 2JD\left(hG + uC_1\sqrt{2HCg(N)}\right)\right)R_0^2
    \end{eqnarray*}
    holds for all $l=1,\ldots,N$ simultaneously.

\subsection{Proof of Theorem~\ref{thm:str_cvx_biased_main_result}}
From Lemma~\ref{lem:stm_str_cvx_g_k+1} we have
    \begin{eqnarray}
        A_k\psi(y^k) \le \tilde{g}_{k}(z^k) - \sum\limits_{l=0}^{k-1}\frac{A_l\mu_\psi}{2}\|y^l - \tilde{y}^{l+1}\|_2^2 + \sum\limits_{l=0}^{k}\frac{\alpha_{l}}{2\mu_\psi}\left\|\tnabla\Psi(\tilde{y}^{l},\Bxi^{l}) - \nabla\psi(\tilde{y}^{l})\right\|_2^2\label{eq:str_cvx_main_theorem_technical_1}
    \end{eqnarray}
    for all $k\ge0$. By definition of $z^k$ we get that
    \begin{eqnarray}
        \tg_k(z^k) &=& \min\limits_{z\in\R^n} \left\{\frac{1}{2}\|z - z^0\|_2^2 + \sum\limits_{l=0}^{k}\alpha_{l}\left(\psi(\tilde{y}^{l}) + \la\tnabla \Psi(\tilde{y}^{l},\Bxi^{l}),z - \tilde{y}^{l}\ra + \frac{\mu_\psi}{2}\|z - \tilde{y}^{l}\|_2^2\right)\right\}\notag\\
        &\le& \frac{1}{2}\|y^* - z^0\|_2^2 + \sum\limits_{l=0}^{k}\alpha_{l}\left(\psi(\tilde{y}^{l}) + \la\tnabla \Psi(\tilde{y}^{l},\Bxi^{l}),y^* - \tilde{y}^{l}\ra + \frac{\mu_\psi}{2}\|y^* - \tilde{y}^{l}\|_2^2\right)\notag\\
        &=& \frac{1}{2}\|y^* - z^0\|_2^2 + \sum\limits_{l=0}^{k}\alpha_{l}\left(\psi(\tilde{y}^{l}) + \la\nabla \psi(\tilde{y}^{l}),y^* - \tilde{y}^{l}\ra + \frac{\mu_\psi}{2}\|y^* - \tilde{y}^{l}\|_2^2\right)\notag\\
        &&\quad + \sum\limits_{l=0}^{k}\alpha_l\la\tnabla \Psi(\tilde{y}^{l},\Bxi^{l}) - \nabla \psi(\tilde{y}^{l}),y^* - \tilde{y}^{l}\ra\notag\\
        &\le& \frac{1}{2}\|y^* - y^0\|_2^2 + A_k\psi(y^*) + \sum\limits_{l=0}^{k}\alpha_l\la\tnabla \Psi(\tilde{y}^{l},\Bxi^{l}) - \nabla \psi(\tilde{y}^{l}),y^* - \tilde{y}^{l}\ra,\label{eq:str_cvx_main_theorem_technical_2}
    \end{eqnarray}
    where the last inequality follows from $\mu_\psi$-strong convexity of $\psi$ and $A_k = \sum_{l=0}^k\alpha_l$. For brevity, we introduce new notation: $R_k \eqdef \|y^k - y^*\|_2$ and $\widetilde{R}_k \eqdef \|y^k - \ty^{k+1}\|_2$ for all $k\ge 0$. Using this and another consequence of strong convexity, i.e.\ $\psi(y) - \psi(y^*) \ge \frac{\mu_\psi}{2}\|y-y^*\|_2^2$, we obtain
    \begin{eqnarray}
        \frac{A_k\mu_\psi}{2}R_k^2 + \sum\limits_{l=0}^{k-1}\frac{A_l\mu_\psi}{2}\widetilde{R}_l^2 &\le& A_k\left(\psi(y^k) - \psi(y^*)\right) + \sum\limits_{l=0}^{k-1}\frac{A_l\mu_\psi}{2}\widetilde{R}_l^2\notag\\
        &\overset{\eqref{eq:str_cvx_main_theorem_technical_1} + \eqref{eq:str_cvx_main_theorem_technical_2}}{\le}& \frac{1}{2}R_0^2 + \sum\limits_{l=0}^{k}\alpha_l\la\tnabla \Psi(\tilde{y}^{l},\Bxi^{l}) - \nabla \psi(\tilde{y}^{l}),y^* - \tilde{y}^{l}\ra\notag\\
        &&\quad + \sum\limits_{l=0}^{k}\frac{\alpha_{l}}{2\mu_\psi}\left\|\tnabla\Psi(\tilde{y}^{l},\Bxi^{l}) - \nabla\psi(\tilde{y}^{l})\right\|_2^2.\label{eq:str_cvx_main_theorem_technical_3}
    \end{eqnarray}
	From Cauchy-Schwarz inequality and the well-known fact that $\|a+b\|_2^2 \le 2a^2 + 2b^2$ for all $a,b\in\R^n$ we have
	\begin{eqnarray*}
		\la\tnabla \Psi(\tilde{y}^{l},\Bxi^{l}) - \nabla \psi(\tilde{y}^{l}),y^* - \tilde{y}^{l}\ra &=& \left\la\EE\left[\tnabla \Psi(\tilde{y}^{l},\Bxi^{l})\right] - \nabla \psi(\tilde{y}^{l}),y^* - \tilde{y}^{l}\right\ra\\
		&&\quad + \left\la\tnabla \Psi(\tilde{y}^{l},\Bxi^{l}) - \EE\left[\tnabla \Psi(\tilde{y}^{l},\Bxi^{l})\right],y^* - \tilde{y}^{l}\right\ra\\
		&\overset{\eqref{eq:bias_batched_stoch_grad}}{\le}& \delta\|y^* - \tilde{y}^l\|_2 + \left\la\tnabla \Psi(\tilde{y}^{l},\Bxi^{l}) - \EE\left[\tnabla \Psi(\tilde{y}^{l},\Bxi^{l})\right],y^* - \tilde{y}^{l}\right\ra,\\
		\left\|\tnabla \Psi(\tilde{y}^{l},\Bxi^{l}) - \nabla \psi(\tilde{y}^{l})\right\|_2^2 &\le& 2\left\|\EE\left[\tnabla \Psi(\tilde{y}^{l},\Bxi^{l})\right] - \nabla \psi(\tilde{y}^{l})\right\|_2^2\\
		&&\quad + 2\left\|\tnabla \Psi(\tilde{y}^{l},\Bxi^{l}) - \EE\left[\tnabla \Psi(\tilde{y}^{l},\Bxi^{l})\right]\right\|_2^2\\
		&\overset{\eqref{eq:bias_batched_stoch_grad}}{\le}& 2\delta^2 + 2\left\|\tnabla \Psi(\tilde{y}^{l},\Bxi^{l}) - \EE\left[\tnabla \Psi(\tilde{y}^{l},\Bxi^{l})\right]\right\|_2^2
	\end{eqnarray*}
    for all $l\ge 0$. Next, we introduce new notation
    \begin{eqnarray}
    	\tilde{A} &\eqdef& \frac{1}{2}R_0^2 + \delta\alpha_0 R_0 + \frac{A_N\delta^2}{\mu_\psi} + \alpha_0\left\la\tnabla \Psi(\tilde{y}^{0},\Bxi^{0}) - \EE\left[\tnabla \Psi(\tilde{y}^{0},\Bxi^{0})\right],y^* - \tilde{y}^{0}\right\ra\notag\\
    	&&\quad + \frac{\alpha_0}{\mu_\psi}\left\|\tnabla \Psi(\tilde{y}^{0},\Bxi^{0}) - \EE\left[\tnabla \Psi(\tilde{y}^{0},\Bxi^{0})\right]\right\|_2^2.\label{eq:tildeA_definition}
    \end{eqnarray}
    Putting all together in \eqref{eq:str_cvx_main_theorem_technical_3} we get
    \begin{eqnarray}
    	\frac{A_k\mu_\psi}{2}R_k^2 + \sum\limits_{l=0}^{k-1}\frac{A_l\mu_\psi}{2}\widetilde{R}_l^2 &\le& \frac{1}{2}R_0^2 + \delta\sum\limits_{l=0}^k\alpha_l\|y^*-\tilde{y}^l\|_2\notag\\
    	&&+ \sum\limits_{l=0}^k \alpha_l\left\la\tnabla \Psi(\tilde{y}^{l},\Bxi^{l}) - \EE\left[\tnabla \Psi(\tilde{y}^{l},\Bxi^{l})\right],y^*-\tilde{y}^l\right\ra\notag\\
    	&& + \frac{\delta^2}{\mu_\psi}\sum\limits_{l=0}^k\alpha_l + \frac{1}{\mu_\psi}\sum\limits_{l=0}^k\alpha_l\left\|\tnabla \Psi(\tilde{y}^{l},\Bxi^{l}) - \EE\left[\tnabla \Psi(\tilde{y}^{l},\Bxi^{l})\right]\right\|_2^2\notag\\
    	&\le& \tilde{A} + \delta\sum\limits_{l=0}^{k-1}\alpha_{l+1}\|y^* - \tilde{y}^{l+1}\|_2\notag\\
    	&& + \sum\limits_{l=0}^{k-1} \alpha_{l+1}\left\la\tnabla \Psi(\tilde{y}^{l+1},\Bxi^{l+1}) - \EE\left[\tnabla \Psi(\tilde{y}^{l+1},\Bxi^{l+1})\right],y^*-\tilde{y}^{l+1}\right\ra\notag\\
    	&& + \frac{1}{\mu_\psi}\sum\limits_{l=0}^{k-1}\alpha_{l+1}\left\|\tnabla \Psi(\tilde{y}^{l+1},\Bxi^{l+1}) - \EE\left[\tnabla \Psi(\tilde{y}^{l+1},\Bxi^{l+1})\right]\right\|_2^2.\label{eq:str_cvx_main_theorem_technical_4}
    \end{eqnarray}
	To simplify previous inequality we define new vectors $a^l \eqdef y^* - y^l$, $\tilde{a}^l \eqdef y^l - \tilde{y}^{l+1}$, $\eta^l \eqdef \tnabla \Psi(\tilde{y}^{l+1},\Bxi^{l+1}) - \EE\left[\tnabla \Psi(\tilde{y}^{l+1},\Bxi^{l+1})\right]$ for all $l \ge 0$. Note that $\|a^l\|_2 = R_l$, $\|\tilde{a}^l\|_2 = \widetilde{R}_l$ and $\tilde{a}^0 = y^0 - \tilde{y}^1 = 0$. Using this we can rewrite \eqref{eq:str_cvx_main_theorem_technical_4} in the following form:
	\begin{eqnarray}
		A_kR_k^2 + \sum\limits_{l=0}^{k-1}A_l\widetilde{R}_l^2 &\le& A + \frac{2\delta}{\mu_\psi}\sum\limits_{l=0}^{k-1}\alpha_{l+1}(R_l + \widetilde{R}_l) + \frac{2}{\mu_\psi}\sum\limits_{l=0}^{k-1}\alpha_{l+1}\la\eta^l, a^l + \tilde{a}^l\ra \notag\\
		&&\quad+ \frac{2}{\mu_\psi^2}\sum\limits_{l=0}^{k-1}\alpha_{l+1}\|\eta^l\|_2^2,\label{eq:str_cvx_main_theorem_technical_5}
	\end{eqnarray}
	where we used $A \eqdef \frac{2\tilde{A}}{\mu_\psi}$ and triangle inequality, i.e.\ $\|y^* - \tilde{y}^{l+1}\|_2 \le \|y^* - y^l\|_2 + \|y^l - \ty^{l+1}\|_2 = R_l + \widetilde{R}_l$. Next, we apply Lemma~\ref{lem:tails_estimate_str_cvx} with $h = u = \frac{2}{\mu_\psi}$, $c = \frac{2}{\mu_\psi^2}$ and get that with probability at least $1-2\beta$
	\begin{equation}
		R_N^2 \le \frac{J^2R_0^2}{A_N}\label{eq:str_cvx_main_theorem_radius_bound}
	\end{equation}
	where 
	$$g(N) = \frac{\ln\left(\frac{N}{\beta}\right) + \ln\ln\left(\frac{B}{b}\right)}{\left(1+\sqrt{3\ln\left(\frac{N}{\beta}\right)}\right)^2},\; b = \frac{2\sigma_1^2\alpha_{1}^2R_0^2}{r_1},\; D \overset{\eqref{eq:alpha_k+1_upper_bound_str_cvx}}{=} 1+\frac{\mu_\psi}{L_\psi} + \sqrt{1+\frac{\mu_\psi}{L_\psi}},$$
	\begin{eqnarray*}
	    B &=& 8H C\left(\frac{L_\psi}{\mu_\psi}\right)^{\nicefrac{3}{2}}DR_0^2\left(N\left(\frac{3}{2}\right)^N + 1\right)\Bigg(A + 2Dh^2G^2R_0^2\\
	    &&\qquad\qquad\qquad\qquad\qquad\qquad\qquad\qquad\qquad\;+ 2C\left(\frac{L_\psi}{\mu_\psi}\right)^{\nicefrac{3}{2}}\left(c+2Du^2\right)HR_0^2\Bigg),
	\end{eqnarray*}
	$$J = \max\left\{\sqrt{A_0}, \frac{3B_1D + \sqrt{9B_1^2D^2 + \frac{4A}{R_0^2}+8cHC\left(\frac{L_\psi}{\mu_\psi}\right)^{\nicefrac{3}{2}}}}{2}\right\},$$ 
	$$B_1 = hG + uC_1\sqrt{2HC\left(\frac{L_\psi}{\mu_\psi}\right)^{\nicefrac{3}{2}}g(N)}$$
	and $C_1$ is some positive constant. However, $J$ depends on $A$ which is stochastic. That is, to finish the proof we need first to get an upper bound for $A$. Recall that $A = \frac{2\tilde{A}}{\mu_\psi}$ and
	\begin{eqnarray}
		A &\overset{\eqref{eq:tildeA_definition}}{=}& \frac{R_0^2}{\mu_\psi} + \frac{2\delta\alpha_0 R_0}{\mu_\psi} + \frac{2A_N\delta^2}{\mu_\psi^2} + \frac{2\alpha_0}{\mu_\psi}\left\la\tnabla \Psi(\tilde{y}^{0},\Bxi^{0}) - \EE\left[\tnabla \Psi(\tilde{y}^{0},\Bxi^{0})\right],y^* - \tilde{y}^{0}\right\ra\notag\\
    	&&\quad + \frac{2\alpha_0}{\mu_\psi^2}\left\|\tnabla \Psi(\tilde{y}^{0},\Bxi^{0}) - \EE\left[\tnabla \Psi(\tilde{y}^{0},\Bxi^{0})\right]\right\|_2^2.\label{eq:str_cvx_main_theorem_technical_6}
	\end{eqnarray}		
	Lemma~\ref{lem:jud_nem_large_dev} implies that
	\begin{equation*}
		\PP\left\{\left\|\tnabla \Psi(\tilde{y}^{0},\Bxi^{0}) - \EE\left[\tnabla \Psi(\tilde{y}^{0},\Bxi^{0})\right]\right\|_2 \ge \sqrt{2}(1+\sqrt{\gamma}) \sqrt{\frac{\sigma_\psi^2}{r_0}}\right\} \le \exp\left(-\frac{\gamma^2}{3}\right).
	\end{equation*}
	Taking $\gamma = \sqrt{3\ln\frac{1}{\beta}}$ and using  $r_0 \ge \left(\frac{\mu_\psi}{L_\psi}\right)^{\nicefrac{3}{2}}\frac{N^2\sigma_\psi^2\left(1+\sqrt{3\ln\frac{N}{\beta}}\right)^2}{C\e}$, $\e \le \frac{HR_0^2}{A_N}$ we get that with probability at least $1-\beta$
	\begin{eqnarray}
		\left\la\tnabla \Psi(\tilde{y}^{0},\Bxi^{0}) - \EE\left[\tnabla \Psi(\tilde{y}^{0},\Bxi^{0})\right],y^* - \tilde{y}^{0}\right\ra &\le& \left\|\tnabla \Psi(\tilde{y}^{0},\Bxi^{0}) - \EE\left[\tnabla \Psi(\tilde{y}^{0},\Bxi^{0})\right]\right\|_2\cdot\|y^* - y^0\|_2\notag\\
		&\le& \left(\frac{L_\psi}{\mu_\psi}\right)^{\nicefrac{3}{4}}\frac{\sqrt{2C\e}R_0}{N} \notag\\
		&\le& \left(\frac{L_\psi}{\mu_\psi}\right)^{\nicefrac{3}{4}}\frac{\sqrt{2CH}R_0^2}{N\sqrt{A_N}},\label{eq:str_cvx_main_theorem_technical_7}\\
		\left\|\tnabla \Psi(\tilde{y}^{0},\Bxi^{0}) - \EE\left[\tnabla \Psi(\tilde{y}^{0},\Bxi^{0})\right]\right\|_2^2 &\le& \left(\frac{L_\psi}{\mu_\psi}\right)^{\nicefrac{3}{2}}\frac{2C\e}{N^2} \le \left(\frac{L_\psi}{\mu_\psi}\right)^{\nicefrac{3}{2}}\frac{2CHR_0^2}{N^2A_N}.\label{eq:str_cvx_main_theorem_technical_8}
	\end{eqnarray}
	From this and $\delta \le \frac{GR_0}{N\sqrt{A_N}}$ we obtain that with probability $\ge 1 - \beta$
	\begin{eqnarray}
		A &\overset{\eqref{eq:str_cvx_main_theorem_technical_6}+\eqref{eq:str_cvx_main_theorem_technical_7} + \eqref{eq:str_cvx_main_theorem_technical_8}}{\le}& \hat{A}R_0^2,\notag\\
		\hat{A} &\eqdef& \frac{1}{\mu_\psi} + \frac{2G }{L_\psi\mu_\psi N\sqrt{A_N}} + \frac{2G^2}{\mu_\psi^2 N^2} + \left(\frac{L_\psi}{\mu_\psi}\right)^{\nicefrac{3}{4}}\frac{2\sqrt{2CH}}{L_\psi\mu_\psi N\sqrt{A_N}}\notag\\
		&&\quad+ \left(\frac{L_\psi}{\mu_\psi}\right)^{\nicefrac{3}{2}}\frac{4CH}{L_\psi\mu_\psi^2N^2A_N}.\notag
	\end{eqnarray}
	Using union bound we get that with probability at least $1-3\beta$
	\begin{eqnarray}
		R_N^2 \le \frac{\hat{J}^2R_0^2}{A_N},\notag
	\end{eqnarray}
	where 
	$$\hat{g}(N) = \frac{\ln\left(\frac{N}{\beta}\right) + \ln\ln\left(\frac{\hat{B}}{b}\right)}{\left(1+\sqrt{3\ln\left(\frac{N}{\beta}\right)}\right)^2},$$
	$$\hat{B} = 8H C\left(\frac{L_\psi}{\mu_\psi}\right)^{\nicefrac{3}{2}}DR_0^4\left(N\left(\frac{3}{2}\right)^N + 1\right)\left(\hat{A} + 2Dh^2G^2 + 2C\left(\frac{L_\psi}{\mu_\psi}\right)^{\nicefrac{3}{2}}\left(c+2Du^2\right)H\right),$$ 
$$\hat{J} = \max\left\{\sqrt{A_0}, \frac{3\hat{B}_1D + \sqrt{9\hat{B}_1^2D^2 + 4\hat{A}+8cHC\left(\frac{L_\psi}{\mu_\psi}\right)^{\nicefrac{3}{2}}}}{2}\right\},$$ 
$$\hat{B}_1 = hG + uC_1\sqrt{2HC\left(\frac{L_\psi}{\mu_\psi}\right)^{\nicefrac{3}{2}}\hat{g}(N)}.$$
    Note that
    $$
    A_k \overset{\eqref{eq:A_k_lower_bound_str_cvx}}{\ge} \frac{1}{L_\psi}\left(1+\frac{1}{2}\sqrt{\frac{\mu_\psi}{L_\psi}}\right)^{2k}.
    $$
    It means that in order to achieve $R_N^2 \le \e$ with probability at least $1-3\beta$ the method requires $N = \tilde{O}\left(\sqrt{\frac{L_\psi}{\mu_\psi}}\ln\frac{1}{\e}\right)$ iterations and 
    $$
    \sum\limits_{k=0}^{N}r_k = \widetilde{O}\left(\max\left\{\sqrt{\frac{L_\psi}{\mu_\psi}},\frac{\sigma_\psi^2}{\e}\ln\frac{1}{\beta}\right\}\ln\frac{1}{\e}\right)
    $$ oracle calls where $\widetilde{O}(\cdot)$ hides polylogarithmic factors depending on $L_\psi, \mu_\psi, R_0, \e$ and $\beta$.

\subsection{Proof of Corollary~\ref{cor:sstm_str_cvx_connect_with_primal}}
Corollary~\ref{cor:radius_grad_norm_guarantee_str_cvx} implies that with probability at least $1-3\beta$
    \begin{eqnarray}
        \|y^N\|_2 \le 2R_y,\quad \|\nabla\psi(y^N)\|_2 \le \frac{\e}{R_y}\notag
    \end{eqnarray}
    and the total number of oracle calls to get this is of order \eqref{eq:primal_oracle_calls_str_cvx}. Together with Theorem~\ref{thm:grad_norm_testarts_motivation} it gives us that with probability at least $1-3\beta$
    \begin{equation}
		f(\hat{x}^N) - f(x^*) \le 2\hat{\e},\quad \|A\hat{x}^N\|_2 \le \frac{\hat{\e}}{R_y},\label{eq:str_cvx_primal_connect_almost_done}
	\end{equation}
	where $\hat{x}^N \eqdef x(A^\top y^N)$. It remains to show that $\tilde{x}^N$ and $\hat{x}^N$ are \textit{close} to each other with high probability. Lemma~\ref{lem:jud_nem_large_dev} states that
	\begin{equation*}
		\PP\left\{\left\|\tilde{x}^{N} - \EE\left[\tilde{x}^{N}\mid y^N\right]\right\|_2 \ge (\sqrt{2} + \sqrt{2\gamma})\sqrt{\frac{\sigma_x^2}{r_N}}\mid y^N\right\} \le \exp\left(-\frac{\gamma^2}{3}\right).
	\end{equation*}
	Taking $\gamma = \sqrt{3\ln\frac{1}{\beta}}$ and using $r_N \ge \frac{1}{C}\frac{\sigma_\psi^2R_y^2\left(1+\sqrt{3\ln\frac{1}{\beta}}\right)^2}{\e^2}$ we get that with probability at least $1-\beta$
	\begin{eqnarray}
	    \left\|\tilde{x}^{N} - \EE\left[\tilde{x}^{N}\mid y^N\right]\right\|_2 &\le& \sqrt{2C\frac{\sigma_x^2\e^2}{\sigma_\psi^2R_y^2}} = \frac{\sqrt{2C}\e}{R_y \sqrt{\lambda_{\max}(A^\top A)}},\notag\\
	    \left\|\tilde{x}^{N} - \hat{x}^N\right\|_2 &\le& \left\|\tilde{x}^{N} - \EE\left[\tilde{x}^{N}\mid y^N\right]\right\|_2 + \left\|\EE\left[\tilde{x}^{N}\mid y^N\right]-\hat{x}^N\right\|_2\notag\\
	    &\overset{\eqref{eq:noise_level_x}}{\le}& \frac{\sqrt{2C}\e}{R_y \sqrt{\lambda_{\max}(A^\top A)}} + \frac{G_1\e}{NR_y}\notag\\
	    &\le& \left(\sqrt{\frac{2C}{\lambda_{\max}(A^\top A)}} + G_1\right)\frac{\e}{R_y}.\label{eq:str_cvx_primal_technical_1}
	\end{eqnarray}
	It implies that with probability at least $1-\beta$
	\begin{eqnarray}
		\|A\tilde{x}^{N} - A\hat{x}^{N}\|_2 &\le& \|A\|_2\cdot\|\tilde{x}^{N} - \hat{x}^N\|_2\notag\\
		&\overset{\eqref{eq:str_cvx_primal_technical_1}}{\le}& \sqrt{\lambda_{\max}(A^\top A)}\left(\sqrt{\frac{2C}{\lambda_{\max}(A^\top A)}} + G_1\right)\frac{\e}{R_y}\notag\\
		&=& \left(\sqrt{2C} + G_1\sqrt{\lambda_{\max}(A^\top A)}\right)\frac{\e}{R_y},\label{eq:str_cvx_primal_technical_2}
	\end{eqnarray}
	and due to triangle inequality with probability $\ge 1 - \beta$
	\begin{eqnarray}
		\|A\hat{x}^N\|_2  &\ge& \|A\tilde{x}^N\|_2 - \|A\hat{x}^N - A\tilde{x}^N\|_2\notag\\
		&\overset{\eqref{eq:str_cvx_primal_technical_2}}{\ge}& \|A\tilde{x}^N\|_2 - \left(\sqrt{2C} + G_1\sqrt{\lambda_{\max}(A^\top A)}\right)\frac{\e}{R_y}.\label{eq:str_cvx_primal_technical_3}
	\end{eqnarray}
	Applying Demyanov-Danskin theorem and $L_\varphi$-smoothness of $\varphi$ with $L_\varphi = \nicefrac{1}{\mu}$ we obtain that with probability at least $1 - \beta$
	\begin{eqnarray}
		\|\hat{x}^N\|_2 &=& \|\nabla\varphi(A^\top y^N)\|_2 \le \|\nabla\varphi(A^\top y^N) - \nabla\varphi(A^\top y^*)\|_2 + \|\nabla\varphi(A^\top y^*)\|_2\notag\\
		&\le& L_\varphi\|A^\top y^N - A^\top y^*\|_2 + \|x(A^\top y^*)\|_2 \le \frac{\sqrt{\lambda_{\max}(A^\top A)}}{\mu}\|y^N - y^*\|_2 + R_x\notag\\
		&\overset{\eqref{eq:radius_grad_norm_main_str_cvx}}{\le}& \frac{\sqrt{\lambda_{\max}(A^\top A)}\e}{\mu  R_y} + R_x \label{eq:str_cvx_primal_technical_4}
	\end{eqnarray}
	and also
	\begin{eqnarray}
		\|\tilde{x}^N\|_2 &\le&\|\tilde{x}^N - \hat{x}^N\|_2 + \|\hat{x}^N\|_2 \notag\\
		&\overset{\eqref{eq:str_cvx_primal_technical_1} + \eqref{eq:str_cvx_primal_technical_4}}{\le}& \left(\sqrt{\frac{2C}{\lambda_{\max}(A^\top A)}} + G_1 + \frac{\sqrt{\lambda_{\max}(A^\top A)}}{\mu}\right)\frac{\e}{R_y} + R_x.\label{eq:str_cvx_primal_technical_5}
	\end{eqnarray}
	That is, we proved that with probability at least $1-\beta$ points $\hat{x}^l$ and $\tilde{x}^l$ lie in the ball $B_{R_f}(0)$. In this ball function $f$ is $L_f$-Lipschitz continuous, therefore, with probability at least $1-\beta$
	\begin{eqnarray}
		f(\hat{x}^N) &=& f(\tilde{x}^N) + f(\hat{x}^N) - f(\tilde{x}^N) \ge f(\tilde{x}^N) - |f(\hat{x}^N) - f(\tilde{x}^N)|\notag\\
		&\ge& f(\tilde{x}^N) - L_f\|\hat{x}^N - \tilde{x}^N\|_2\notag\\
		&\overset{\eqref{eq:str_cvx_primal_technical_1}}{\ge}& f(\tilde{x}^N) - \left(\sqrt{\frac{2C}{\lambda_{\max}(A^\top A)}} + G_1\right)\frac{L_f \e}{R_y}\label{eq:str_cvx_primal_technical_6}.
	\end{eqnarray}
	Combining inequalities \eqref{eq:str_cvx_primal_connect_almost_done}, \eqref{eq:str_cvx_primal_technical_3} and \eqref{eq:str_cvx_primal_technical_6} and using union bound we get that with probability at least $1-4\beta$
	\begin{eqnarray*}
	    f(\tx^N) - f(x^*) &\le& \left(2 + \left(\sqrt{\frac{2C}{\lambda_{\max}(A^\top A)}} + G_1\right)\frac{L_f}{R_y}\right)\e,\\
	    \|A\tx^N\|_2 &\le& \left(1 + \sqrt{2C} + G_1\sqrt{\lambda_{\max}(A^\top A)}\right)\frac{\e}{R_y}.
	\end{eqnarray*}
	Finally, in order to get the bound for the total number of oracle calls from \eqref{eq:primal_oracle_calls_str_cvx} we use \eqref{eq:radius_grad_norm_oracle_calls_str_cvx} together with $\sigma_\psi^2 = \sigma_x^2\lambda_{\max}(A^\top A)$ and \eqref{R}.

\end{document}